\theoremstyle{plain}
\newtheorem{thm}{Theorem}[section]
\newtheorem{fact}[thm]{Fact}
\newtheorem{prop}[thm]{Proposition}
\newtheorem{clm}[thm]{Claim}
\newtheorem{cor}[thm]{Corollary}
\newtheorem{lem}[thm]{Lemma}
\newtheorem*{clm*}{Claim}
\theoremstyle{definition}
\newtheorem{rem}[thm]{Remark}
\newtheorem{dfn}[thm]{Definition}
\newtheorem{exmp}[thm]{Example}
\newtheorem{conj}[thm]{Conjecture}
\newtheorem{obs}[thm]{Observation}
\newmdtheoremenv[skipabove=2mm,skipbelow=4mm]{const}[thm]{Construction}
\numberwithin{equation}{section}
\DeclareMathOperator{\ex}{ex}
\DeclareMathOperator{\dist}{dist}
\DeclarePairedDelimiter\ceil{\lceil}{\rceil}
\DeclarePairedDelimiter\floor{\lfloor}{\rfloor}
\DeclarePairedDelimiter{\final}{\langle}{\rangle}
\newcommand\NN{\mathbb N}
\newcommand\ZZ{\mathbb Z}
\newcommand\RR{\mathbb R}
\newcommand\eps{\varepsilon}
\newcommand\cH{\mathcal{H}}
\newcommand\cF{\mathcal{F}}
\newcommand\cL{\mathcal{L}}
\newcommand\cE{\mathcal{E}}
\newcommand\cR{\mathcal{R}}
\newcommand\cC{\mathcal{C}}
\newcommand\EE{\mathbb E}
\newcommand\PP{\mathbb P}
\let\phi=\varphi
\let\emptyset=\varnothing
\newenvironment{claimproof}[1]{\par\noindent\underline{Proof of Claim:}\space#1}{\hfill $\blacksquare$}
\title[Slow graph bootstrap percolation  III]{Slow graph bootstrap percolation  III:  \\Chain constructions}
\author{David Fabian $^{1,\ast}$}
\author{Patrick Morris $^{2,\dagger}$}
\author{Tibor Szab\'o $^{3,\ddagger}$}
\address{$^2$ Departament de Matem\`atiques, Universitat Polit\`ecnica de Catalunya (UPC), Barcelona, Spain.}
\address{$^3$ Institute of Mathematics, Freie Universit\"at Berlin, Germany}
\thanks{$^\ast$ Research supported by the Deutsche Forschungsgemeinschaft (DFG)
	Graduiertenkolleg “Facets of Complexity” (GRK 2434). }
\thanks{$^\dagger$ Research supported by   the DFG Walter Benjamin program - project number 504502205, and by   the European Union's Horizon Europe   Marie Sk{\l}odowska-Curie grant RAND-COMB-DESIGN - project number
101106032 {\euflag}.}
\thanks{$^\ddagger$ Research
	supported by the DFG under Germany’s Excellence Strategy - The Berlin Mathematics Research Center
	MATH+ (EXC-2046/1, project ID: 390685689).}
\email{david.fabian@uni-ulm.de, pmorrismaths@gmail.com,  szabo@math.fu-berlin.de}
\date{\today}
\begin{document}

	\begin{abstract}
For graphs $H$, we study the extremal function $M_H(n)$ which is the \textit{maximum running time} (until stabilisation)  of an $H$-bootstrap percolation process  on $n$ vertices. 
Building on previous work in the clique case $H=K_k$, we develop a general framework of \textit{chain constructions}. We demonstrate the flexibility of this framework by applying several variations of the method to 
give lower bounds on $M_H(n)$ for a wide variety of different graphs $H$ including dense graphs, random graphs and complete  bipartite graphs. In particular, we focus on the question of whether  $M_H(n)$ is (almost) quadratic or not and our lower bounds develop  connections with additive combinatorics, utilising constructions of sets free of solutions to certain linear equations. Finally, our lower bounds are complemented by upper bounds which connect $M_H(n)$ to other problems in extremal graph theory such as the Ruzsa-Szemer\'edi (6,3)-Theorem. 
  \end{abstract}
\maketitle
\section{Introduction} \label{sec:intro}
Given graphs $H$ and $G$, let $n_H(G)$ denote the number of copies of $H$ in $G$. The $H$\textit{-bootstrap percolation process}
($H$\emph{-process} for short)
on a graph $G$ is the sequence $(G_i)_{i\geq 0}$ of graphs defined by $G_0 := G$ and
  \begin{linenomath}
    \begin{equation*}
    V(G_i) := V(G), \qquad \quad
    E(G_i) := E(G_{i-1}) \cup \left\{e\in\binom {V(G)} 2 : n_H\left(G_{i-1}+e\right)>n_H(G_{i-1})\right\},
              \end{equation*} \end{linenomath}
for $i\geq 1$.
We call $G$ the \emph{starting graph} of the process and $\tau_H(G):=\min\{t\in \NN: G_t=G_{t+1}\}$ the \emph{running time} of the $H$-process on $G$, which is the point at which the process stabilises. Finally, we define $G_\tau$ with $\tau=\tau_H(G)$ to be the \emph{final graph} of the process, and denote it $\final{G}_H:=G_\tau$. 

The $H$-process is an example of a cellular automaton and has been studied in various contexts \cite{balogh2012linear,balogh2012graph,bartha2024weakly}. In particular, over 50 years ago Bollob\'as \cite{bollobas1968weakly} introduced the model through the notion   of \emph{weak saturation}, asking for the smallest number of edges an $n$-vertex graph $G$ can have such that $\final{G}_H=K_n$. Here our focus will be on a more recent  extremal question, also raised by Bollob\'as, which asks for the maximum running time of an $H$-process on $n$ vertices. We define 
\[M_H(n):=\max\{\tau_H(G):v(G)=n\}\]
and we will be primarily interested in the asymptotics of $M_H(n)$. Note that trivially, one always has $M_H(n)=O(n^2)$ as the $H$-process on $G$ has to add a new edge at each time step $1\leq t\leq \tau_H(G)$.

\subsection{Cliques} Initial exploration of $M_H(n)$ focused on cliques $H=K_k$. Bollob\'as, Przykucki, Riordan and Sahasrabudhe \cite{bollobas2017maximum} and independently Matzke \cite{matzke2015saturation}, showed that $M_{K_4}(n)=n-3$,  in contrast to $K_3$ for which $M_{K_3}(n)=\ceil{\log (n-1)}$.  Both sets of authors  also gave lower bounds for $M_{K_k}(n)$ for larger $k$, with it being shown in \cite{bollobas2017maximum}  that there are constants $\lambda_k>0$ which tend to $0$ as $k$ tends to infinity, such that $M_{K_k}(n)\geq n^{2-\lambda_k}$. However, the authors believed that the $K_k$-process would always fall short of lasting as long as the trivial upper bound, conjecturing that $M_{K_k}(n)=o(n^2)$ for all $k\in \NN$. Somewhat surprisingly, this was then refuted by Balogh, Kronenberg, Pokrovskiy and Szab\'o \cite{balogh2019maximum} who showed that $M_{K_k}(n)=\Omega(n^2)$ for all $k\geq 6$. This leaves open the question of whether the conjecture holds for $K_5$, which remains the most intriguing outstanding question in the field. Although they did not show that $M_{K_5}(n)$ is quadratic, Balogh, Kronenberg, Pokrovskiy and Szab\'o \cite{balogh2019maximum} did show that $M_{K_5}(n)\geq n^{2-O(1/\sqrt{\log n})}= n^{2-o(1)}$ by building a connection with subsets of the integers that are free of arithmetic progressions of size 3 and using a famous construction of Behrend \cite{behrend1946sets}. 

\subsection{Graphs with fast running time} In this series of papers on the running time of graph percolation processes, we study $M_H(n)$ for different graphs $H$. In the first paper \cite{FMSz1}, we addressed cycles showing the $M_{C_k}(n)$ is logarithmic for all $3\leq k\in \NN$. In fact, we determined $M_{C_k}(n)$ exactly for large $n$, with the expression depending on the parity of $k$. The second paper \cite{FMSz2} explored properties of $H$ which lead to a maximum running time that is (sub-)linear. We showed for example that all trees $T$ have constant running time $M_T(n)=O(1)$ and that $M_{K_{2,s}}(n)$ is linear in $n$ for all   $3\leq s\in \NN$. We also proved general results exploring the relationship between  $M_H(n)$ with small values of certain graph parameters of $H$ such as connectivity. We refer to the paper \cite{FMSz2} for the details and will return briefly to discuss some of the results in Section \ref{sec: intro insep}. 
Suffice it to say here that our findings in the second paper \cite{FMSz2} suggest  that having a (sub-)linear running time $M_H(n)$  requires $H$ to have atypical and sparse structural properties, as with the examples listed above. 

\subsection{Graphs with (almost) quadratic running time} In this current paper, we continue our exploration of $M_H(n)$ and its dependence on properties of $H$. One of our main findings is that cliques of size at least $5$ are far from  unique in having maximum running time that is (almost) quadratic. In fact, graphs $H$ with $M_H(n)=\Theta(n^2)$  are abundant. One indication of this is our first  result which looks at the binomial random graph $H=G(k,p)$ on $k$ vertices with each edge present independently with probability $p=p(k)$.

\begin{thm}\label{thm:random}
	Let $H=G(k,p)$. Then with probability tending to 1 as $k\to\infty$ we have that 
\begin{linenomath}
 \begin{numcases}
 {M_H(n)=}
	 O(1) & if  $p = o(\log k/k)$; \label{random 0}\\ 
	\Omega(n^2) & if  $p= \omega(\log k/k)$. \label{random 1}
	\end{numcases}   
    \end{linenomath} 
	\end{thm}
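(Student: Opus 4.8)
The plan is to handle the two regimes separately, with the $O(1)$ bound being relatively quick and the $\Omega(n^2)$ bound requiring the chain-construction machinery that the paper develops.

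For the upper bound \eqref{random 0}, I would aim to show that when $p=o(\log k/k)$, with high probability $H=G(k,p)$ has some structural feature that forces $M_H(n)=O(1)$ for \emph{all} $n$. The natural candidate, given the discussion of the second paper \cite{FMSz2}, is that $H$ has an isolated vertex, or more generally a vertex whose removal leaves the process unable to propagate; indeed at $p$ just below the connectivity threshold $\log k/k$, with probability tending to $1$ the graph $G(k,p)$ has an isolated vertex. If the results quoted from \cite{FMSz2} include that a graph with an isolated vertex (or more plausibly, with a vertex of degree $\le 1$, or a leaf-like structure, or more generally any $H$ failing a suitable connectivity/minimum-degree condition) has $M_H(n)=O(1)$, then \eqref{random 0} follows immediately. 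I would phrase the argument as: w.h.p.\ $G(k,p)$ contains the relevant sparse obstruction, hence by the cited result $M_H(n)=O(1)$. The only subtlety is matching the precise threshold $p=o(\log k/k)$ to the precise structural hypothesis of the cited theorem; a first-moment calculation on the number of isolated vertices (or low-degree vertices) pins this down.

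For the lower bound \eqref{random 1}, the plan is to apply the general chain-construction framework. One shows that when $p=\omega(\log k/k)$, with high probability $H$ contains a spanning subgraph, or has some minimum-degree / edge-density property, that makes $H$ a ``good'' graph for the chain construction — i.e.\ $H$ admits the kind of gadget (a rigid ``link'' whose completion forces the next link) that the framework turns into a quadratic-time process. Concretely, at $p=\omega(\log k/k)$ the random graph w.h.p.\ has minimum degree $\omega(1)$ and is highly connected, and I expect the framework to require something like: $H$ has two vertices of sufficiently high degree, or $H$ minus an edge still has many copies in a controlled way, so that adding a single carefully chosen edge to a large sparse host creates exactly one new copy of $H$ and thereby triggers the next step. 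The chain is then assembled from $\Theta(n)$ links each of length $\Theta(n)$, yielding running time $\Omega(n^2)$.

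The main obstacle I anticipate is the lower bound: verifying that a \emph{typical} $H=G(k,p)$ genuinely satisfies the hypotheses of the chain-construction theorem. Unlike the clique, $G(k,p)$ has no symmetry, so one must argue that the requisite gadget vertices/edges exist and that the copy-counting condition (adding edge $e$ strictly increases $n_H$) can be controlled inside the host graph. The likely route is a probabilistic argument: show that w.h.p.\ $H$ has a pair of vertices $u,v$ with $uv\notin E(H)$ whose neighbourhoods are large and ``generic'' enough that $H-v$ embeds into the host in a unique extendable way; this should follow from standard concentration (Chernoff bounds on degrees and codegrees in $G(k,p)$) once $p=\omega(\log k/k)$ guarantees all the relevant quantities are $\omega(1)$. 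Stitching this local gadget into the global chain and checking that no \emph{unintended} copies of $H$ appear — the usual ``no premature percolation'' bookkeeping — will be the technical heart, but it parallels the clique case and the other applications in the paper.
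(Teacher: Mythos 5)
There is a genuine gap in both halves. For the $O(1)$ bound, the obstruction you propose (an isolated vertex, a degree-$1$ vertex, or a general failure of connectivity) does not do the job: an isolated vertex in $H$ merely makes the $H$-process coincide with the process for $H$ minus that vertex, and a vertex of degree $1$ is no obstruction at all --- the paper's concluding remarks even point out that there are graphs with $\delta(H)=1$ and $M_H(n)=\Omega(n^2)$, so there is no citable result of the kind you hope for in \cite{FMSz2}. What the paper actually uses is that for $p=o(\log k/k)$ the graph $G(k,p)$ is a.a.s.\ either empty or contains an \emph{isolated edge} (Fact~\ref{fact:G(k,p) isolated edge}); with an isolated edge the argument is short and self-contained (Proposition~\ref{prop:random}): once any copy of $H$ is completed, its isolated edge can be re-embedded onto every pair outside the copy, so all those pairs are added in one step and the process finishes within three steps, giving $M_H(n)\le 3$. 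Your plan as stated would not yield a constant bound.

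For the $\Omega(n^2)$ bound your outline stays at the level of ``check the framework's hypotheses probabilistically'' and misses the two ingredients that actually carry the proof. First, the quadratic length does not come from a local gadget with a unique extendable embedding; it comes from packing $\Theta(n)$ chains, each of length $\Theta(n)$, onto the \emph{same} $O(n)$ vertices via the ladder construction (Construction~\ref{const:ladder}, with one chain per slope $a\in[m]$) and then linking them (Lemma~\ref{lem:linking chains}). Second, verifying that this collection is proper for a typical $H=G(k,p)$ requires (i) that $H$ is self-stable, $\final{H}_H=H$, which is a nontrivial asymmetry statement for random graphs and is imported from Kim--Sudakov--Vu (Fact~\ref{fact: G(k,p) self-stable}), together with $4$-connectivity (Fact~\ref{fact: G(k,p) connected}); and (ii) specific pseudorandom properties --- degree concentration into the two halves, absence of linear-sized bipartite holes, and sparsity of small vertex sets (Lemma~\ref{lem:G(k,p) properties}) --- which are exactly what forces every stray copy of $H-e$ in the union of the chains to sit inside a single $L_j\cup R_{j'}$ and hence inside one chain. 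Minimum degree $\omega(1)$ is far too weak to plug into the paper's degree-based results (those need $\delta(H)>3k/4$ or $\ge k/2+1$), so without these additional ideas the proposal does not close the argument.
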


 We remark that here, and throughout, the asymptotics of the function $M_H(n)$ are in terms of the parameter $n$ \emph{only}. 
 The asymptotic dependence on $k$ in the statement of Theorem \ref{thm:random} should be treated first and throughout this paper, the reader should think of $k=v(H)$ as being fixed and $n$ tending to infinity. For example, phrasing part \eqref{random 1} of Theorem \ref{thm;dense quad} explicitly we have that for any $\eps>0$ and for any function $p(k)$ such that $p(k)k/\log k$ tends to infinity with $k$, there exists some $k_0\in \NN$ such that the following holds  for all $k\geq k_0$. There is some $c=c(k)>0$ such that with probability at least $1- \eps$ the graph $H=G(k,p)$ satisfies  $M_H(n)\geq cn^2$ for all $n\in \NN$.

Note that Theorem \ref{thm:random} shows a stark phase transition in the behaviour of $H=G(k,p)$ around the connectivity threshold $\log k/k$. Below this threshold, asymptotically almost surely  $H(k,p)$ will have an isolated edge which leads to a very fast percolation process no matter the starting graph. In contrast, as soon as $p(k)$ is significantly above the connectivity threshold, asymptotically almost surely there is a choice of starting graph $G$ such that the $H$-process with $H=G(k,p)$ takes quadratically many steps before stabilising. This shows, in a very strong sense, that almost all graphs have quadratic maximum running time even when the average degree drops  to (almost) logarithmic in $k$. 

In relation to \emph{minimum degree} conditions, we establish the following.

	\begin{thm} \label{thm;dense quad}
    Suppose $5\leq k\in \NN$ and  $H$ is a graph with $v(H)=k$. We have that 
    \begin{linenomath}
 \begin{numcases}
 {M_H(n)\geq }
	 \Omega(n^2) & if  $k\geq 6$ and $\delta(H)> 3k/4$; \label{thm:dense}\\ 
	n^{2-O(1/\sqrt{\log n})}=n^{2-o(1)} & if  $\delta(H)\geq k/2+1$. \label{thm: min deg almost quad}
	\end{numcases}   
    \end{linenomath}
	\end{thm}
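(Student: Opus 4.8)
The plan is to derive both bounds from our chain construction framework, using the minimum degree hypothesis only to verify the combinatorial conditions that make the chain advance one edge per step. We start with \eqref{thm:dense}. For $6\le k\le 8$ the hypothesis $\delta(H)>3k/4$ already forces every vertex of $H$ to have degree $k-1$, so $H=K_k$ and the estimate is the theorem of Balogh, Kronenberg, Pokrovskiy and Szab\'o \cite{balogh2019maximum}. For general $k$ we write $H=K_k-F$, so that the hypothesis becomes $\Delta(F)<k/4$, and we run the ``genuinely quadratic'' chain (a suitably structured starting graph $G$ on $n$ vertices) that witnesses $M_{K_k}(n)=\Omega(n^2)$, now as an \emph{$H$}-process. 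Two things must be checked. First, at each designed step the designed edge $e$ creates a new copy of $H$: this holds because the designed move turns $G_t[S]$ from $K_k-e$ into $K_k\supseteq H$ on the relevant $k$-set $S$, and $K_k$ contains a copy of $H$ through any prescribed edge, so $n_H(G_t+e)>n_H(G_t)$. Second, no edge is added prematurely: for every non-designed pair $e$ and every $k$-set $S\ni e$, the graph $(G_t+e)[S]$ should contain no copy of $H$ through $e$, and it suffices for this that the ``holes'' of $G_t[S]$ other than $e$ do not form a subgraph of $F=\overline H$ under any identification of $S$ with $V(H)$. The second point is where $\Delta(F)<k/4$ enters: in the $K_k$-chain these holes are always too rich in a wrong position --- they carry more edges than $F$ has, or else a vertex of degree at least $k/4>\Delta(F)$ --- so they cannot be squeezed into $F$. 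Assembling $\Theta(n)$ such links, each active for $\Theta(n)$ steps, yields $M_H(n)=\Omega(n^2)$.

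For \eqref{thm: min deg almost quad} we instead run the Behrend-type chain behind $M_{K_5}(n)\ge n^{2-o(1)}$: we index its links by a set $B\subseteq[N]$ free of three-term arithmetic progressions with $|B|=N^{1-o(1)}$, so that on $n=\Theta(N)$ vertices the chain has length $\Theta(N|B|)=N^{2-o(1)}=n^{2-o(1)}$, and the three-AP-freeness of $B$ is exactly what rules out the ``shortcut'' copies of $H$ that would collapse it. The structural input needed is now milder: one only uses that any two vertices of $H$ share at least two common neighbours, which follows from $\delta(H)\ge k/2+1$ since $2(k/2+1)-k=2$, and this is enough both to fit the $k-2$ remaining vertices of a copy of $H$ into the overlapping pieces of a link and to certify, via the same ``holes are too rich'' mechanism (now with the weaker bound $\Delta(\overline H)\le k/2-2$), that no copy of $H$ is completed ahead of schedule.

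The step I expect to be the main obstacle is the premature-copy analysis, i.e. the second point above. Even for $H=K_k$ this is the technical core of the constructions in \cite{balogh2019maximum}; for a general $H$ one additionally has to account for the automorphisms of $H$ and for all the ways the non-edges of $\overline H$ might be aligned with the holes present in the construction at a given time. The thresholds $3k/4$ and $k/2+1$ are calibrated precisely so that no such alignment is possible away from the designed positions, which is what allows the $H$-process to run for quadratically, respectively almost quadratically, many steps.
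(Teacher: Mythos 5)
There is a genuine gap: in both parts you reduce the theorem to the claim that, when the clique-based quadratic (resp.\ Behrend-based) construction is run as an $H$-process, no edge is ever added ahead of schedule, and then you assert this claim rather than prove it. The assertion is exactly the hard part, and the mechanism you offer does not suffice. For part \eqref{thm:dense}, your ``holes are too rich'' dichotomy (every non-designed $k$-set either has more missing edges than $\overline H$ or a vertex with $\geq k/4$ missing edges) is not established and is not obviously true for the evolving graph: the union of the closures of $\Theta(n)$ overlapping links contains potential copies of $H-e$ that straddle two consecutive blocks on one side of the construction, or use crossing edges coming from several different links, and for balanced straddles the complement within the $k$-set is roughly a complete bipartite graph whose size and maximum degree are comparable to those allowed by $\delta(H)>3k/4$, so a naive hole count does not immediately exclude an embedding into $\overline H$. (Also note $\Delta(\overline H)<k/4-1$, not $<k/4$.) What actually makes the argument go through in the paper is the stronger structural fact that $\delta(H)>3k/4$ forces $H$ to be $(\ceil{k/2},1)$-inseparable (Lemma \ref{lem:min deg implies robust conn}), i.e.\ every induced subgraph on $\floor{k/2}$ vertices is $2$-edge-connected; this connectivity, not an edge count, is what drives the case analysis (stages \ref{S1}--\ref{S5} in Theorem \ref{thm: robust conn}) showing every copy of $H-e$ in the final graph sits inside a single link, together with the extra ``furthermore'' condition (needed so that a link is not triggered early), which requires choosing the bipartition $U\cup W$ of $V(H)$ to minimise $e_H(U,W)$ --- a point absent from your sketch.

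For part \eqref{thm: min deg almost quad} the gap is of the same nature but worse: ``any two vertices have at least two common neighbours'' is not the property the argument needs, and plain $3$-AP-freeness of the slope set is not enough once $H$ is not $K_5$. The paper's route is to show that $\delta(H)\geq k/2+1$ makes $H-e$ \emph{Behrendian} for every $e$ (Lemma \ref{lem: H minus coherent}), so that a copy of $H-e$ whose edges come from several chains produces a non-monochromatic cycle decomposable into at most three monochromatic paths; translating these paths into index differences yields a non-trivial solution of a two- or three-variable equation with coefficients bounded by $k^2$, and one therefore needs a set free of non-trivial solutions to \emph{all} such bounded-coefficient equations (a $(K,3)$-fold solution-free set, Theorem \ref{thm:behrend extended}), which is a strictly stronger requirement than avoiding $3$-term progressions. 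Your proposal neither proves the common-neighbour condition implies any such colouring/cycle property of $H-e$, nor explains how $3$-AP-freeness alone would rule out copies of $H-e$ spanning several chains, nor addresses copies that would trigger a link before its scheduled time. Since you yourself flag the premature-copy analysis as ``the main obstacle'', the proposal should be regarded as an outline of the strategy (which is indeed close in spirit to the paper's chain framework) rather than a proof: the content of Theorem \ref{thm;dense quad} lies precisely in the properness verifications that are missing here.
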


Note that Theorem \ref{thm;dense quad} generalises the lower bounds for cliques given by Balogh, Kronenberg, Pokrovskiy and Szab\'o \cite{balogh2019maximum}. It turns out that Theorem \ref{thm;dense quad} is tight in that there are $k$-vertex graphs $H$ with $\delta(H)=k/2$ which have much faster maximum running time. Indeed, the following example was shown in \cite{FMSz2} to have linear running time. This shows that there is also a sharp phase transition when considering   the  behaviour of $M_H(n)$ under extremal minimum degree conditions on $H$. 

   \begin{figure}[h]
    \centering
    \includegraphics[width=0.4\linewidth]{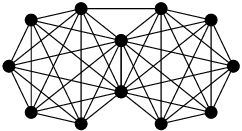}
    \caption{An illustration of the graph $H'_7$.  }
    \label{fig:connectivity}
    \end{figure}

    \begin{exmp} \label{ex:H'k}
    Let $k\geq 3$ and  $H'_k$ be the $(2k-2)$-vertex graph composed by `gluing together' two cliques of size $k$ along a singular edge $e$ and adding one more edge $e'$ between two non-adjacent vertices. (An illustration of $H'_7$ is given in Figure \ref{fig:connectivity}).    Then $M_{H'_k}(n)=\Theta(n)$ as shown in \cite{FMSz2}. 
    \end{exmp}

Returning to Theorem \ref{thm:random}, we remark that  $\log k/k$ is also the threshold for the random graph $H=G(k,p)$ to contain a \emph{Hamilton cycle}, that is, a cycle covering all the vertices of  $H$. It is not true that any graph containing a Hamilton cycle has a slow running time. Indeed, one can simply take the cycle itself which has logarithmic running time. However, it turns out that containing the \emph{square} of a Hamilton cycle is enough to guarantee a running time that is almost quadratic. Here, the \emph{square} of a graph $J$ is obtained by adding all edges between vertices of distance  2 in $J$.

    \begin{thm} \label{thm: square ham cycle}
          If $H$ with $k\geq 5$ vertices contains the square of a Hamilton cycle, then \[M_H(n)\geq n^{2-O(1/\sqrt{\log n})}=n^{2-o(1)}.\]
    \end{thm}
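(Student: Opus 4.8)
The plan is to extend the quadratic-type construction of Balogh, Kronenberg, Pokrovskiy and Szab\'o \cite{balogh2019maximum} for $M_{K_5}(n)$ --- which is precisely the case $k=5$ here, since $C_5^2=K_5$ --- to an arbitrary $H$ containing the square of a Hamilton cycle, feeding this structure into (a variant of) our chain construction framework. It is worth stressing that for $k\geq 6$ the graph $C_k^2$ is $4$-regular and even $K_4$-free, so this really does go beyond the case where $H$ contains a $K_5$; what now plays the role of the clique is the cyclic, ``thickened cycle'' geometry of $C_k^2$.

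Fix a Hamilton cycle $C=v_1v_2\cdots v_kv_1$ of $H$ with $C^2\subseteq E(H)$, let $A\subseteq[N]$ be a set free of three-term arithmetic progressions, and recall Behrend's bound $|A|=N^{1-O(1/\sqrt{\log N})}$ from \cite{behrend1946sets}. I would build a starting graph $G$ on $\Theta(N)=\Theta(n)$ vertices: a bounded number of vertex classes together with a ``track'' of $\Theta(|A|\cdot N)$ bounded-size gadgets laid out on those $\Theta(n)$ vertices (so that vertices are heavily reused), where each gadget is a copy of $H$ with a single edge $e^\star$ deleted, the edges of $H$ outside $C^2$ being present in every gadget and $e^\star$ corresponding to a fixed edge of $C^2$ at the ``seam'' where the cyclic order of $C$ wraps around. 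Writing $e^\star_1,e^\star_2,\dots$ for the deleted edges in the order their gadgets occur along the track --- the gadget positions being coordinatised by an $A$-by-$[N]$ grid --- the construction is arranged so that adding $e^\star_j$ is exactly what causes a copy of $H-e^\star_{j+1}$ to appear for the first time (that copy using $e^\star_j$ as one of its present edges), so that $e^\star_{j+1}$ becomes addable at the next step. The overlap making this ``sliding'' possible is that $C^2$ is invariant under the rotation $v_i\mapsto v_{i+1}$ and that consecutive triangles $v_iv_{i+1}v_{i+2}$ of $C^2$ share an edge, so one edge can simultaneously be the deleted seam chord of one copy of $C^2$ and a present edge of the next. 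Iterating along the whole track forces the $H$-process to add one edge per step for $\Omega(|A|\cdot N)$ steps, and since $|A|\cdot N=n^{2-O(1/\sqrt{\log n})}$ this yields $\tau_H(G)\geq n^{2-O(1/\sqrt{\log n})}$, hence the theorem.

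The heart of the matter --- and what I expect to be the main obstacle --- is to verify that \emph{no} edge of $G$ is added before its scheduled time, that is, that for each $t$ the only copies of $H$ in the intermediate graph $G_t$ are those dictated by the schedule. Every copy of $H$ contains a copy of $C^2$, and under the $A$-by-$[N]$ coordinatisation each edge of a copy of $C^2$ translates into a linear relation among the coordinates of its endpoints: the chords $v_iv_{i+2}$ supply the ``local'' relations that pin a copy down up to translation along the track, while the two edges of $C^2$ meeting at the seam supply a ``closing'' relation. One then checks that a copy of $C^2$ that used an edge not yet present in the intended order would force three coordinates into arithmetic progression, contradicting the choice of $A$; this is precisely where the \emph{square} of the Hamilton cycle enters, as it provides both the local relations and the closing one. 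Since the edges of $H$ outside $C^2$ impose only \emph{further} linear constraints on a would-be copy, they cannot create any new shortcuts, so this copy-count reduces to the case $H=C^2$; and since those edges lie in every gadget, they do not obstruct the scheduled copies either. The remaining points --- that each scheduled copy of $H$ is genuinely completed on time, and the bookkeeping relating $N$, the track length and $|A|$ to $v(G)$ and $\tau_H(G)$ --- follow the $K_5$ analysis of \cite{balogh2019maximum}.
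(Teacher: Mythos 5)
Your overall strategy (a Behrend-type additive construction driving a long chain of copies of $H$, with the overlapping triangles of $C_k^2$ providing the ``sliding'' overlap) is the same engine as the paper's proof, which runs the argument through dilation chains (Construction \ref{const:dil}) and Theorem \ref{thm:coherent almost quad}. However, the step you dispose of with ``one then checks'' is precisely the technical heart of the matter, and as stated it has a genuine gap. To rule out edges being added early you must exclude every rogue copy of $H$ minus one edge in the evolving graph, and such a copy may have its edges distributed over \emph{many} gadget rows/slopes, not just two rows plus a seam. Summing the linear relations around a cycle of such a copy then produces a bounded-coefficient equation in as many variables of $A$ as there are rows involved; if four or more rows can occur, you only get a relation like $\beta_1a_1+\beta_2a_2+\beta_3a_3+\beta_4a_4=0$, and no set of size $N^{1-o(1)}$ can avoid non-trivial solutions to such four-variable equations (already $a_1-a_2+a_3-a_4=0$ has non-trivial solutions in any set of size $\omega(\sqrt N)$). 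So the whole argument hinges on a structural lemma saying that in any rogue copy of $H-e'$ the edges can be grouped so that some cycle meets at most \emph{three} rows. This is exactly what the paper isolates as the Behrendian property of $H-e$ (Definition \ref{def:coherent}): every non-monochromatic edge-colouring contains a non-monochromatic cycle that is a union of at most three monochromatic paths, and proving this for all $H\supseteq C_k^2$ and all $e\in E(H)$ (Lemma \ref{lem: H minus coherent}, including the delicate case where $e$ lies on the square itself) is a real piece of work that your sketch neither proves nor identifies as a needed statement. Note also that it is copies of $H-e'$ for \emph{every} $e'$, not copies of the full square, that must be controlled; these have one relation fewer and are the harder case.

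A second, smaller omission: plain $3$-AP-freeness of $A$ is not sufficient for general $k$. Monochromatic paths of length up to $k$ inside one row produce coefficients of size up to $\Theta(k^2)$, and you also need two-variable relations $\alpha a=\beta b$ to be excluded so that each edge of the host graph belongs to a unique row (otherwise the ``colouring'' of a rogue copy by rows is not even well defined). Hence one needs sets free of non-trivial solutions to \emph{all} two- and three-variable equations with coefficients bounded in terms of $k$, including those whose coefficients do not sum to zero; this requires the generalised Behrend construction together with a random-translation argument (Theorem \ref{thm:behrend extended}), not just \cite{behrend1946sets}. Both gaps are fixable — indeed fixing them essentially reproduces the paper's route — but as written the proposal defers exactly the points where the proof could fail.
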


Note that Theorem \ref{thm: square ham cycle} also generalises the result on $K_5$ of Balogh, Kronenberg, Pokrovskiy and Szab\'o \cite{balogh2019maximum} as the square of a cycle of length 5 is $K_5$. Note also that by taking $H$ itself to be a square of a cycle, Theorem \ref{thm: square ham cycle} gives a sequence of extremely sparse $k$-vertex graphs for $k\geq 5$, that reach close to the maximum running time. Indeed, the square of a cycle is regular with vertex degrees just 4.

\subsection{The wheel graph} \label{intro sec: wheel}
One question raised by the results above is whether the lower bounds of the form $n^{2-o(1)}$ in Theorems \ref{thm;dense quad} and \ref{thm: square ham cycle} are tight or if they can be improved to prove quadratic running time. Of particular interest is the case $H=K_5$ which, as mentioned above, was conjectured to have $M_{K_5}(n)=o(n^2)$ by Bollob\'as, Przykucki, Riordan and Sahasrabudhe \cite{bollobas2017maximum}. Although Balogh, Kronenberg, Pokrovskiy and Szab\'o \cite{balogh2019maximum} disproved the conjecture for cliques of size at least 6, they stated that they tend to agree that the running time should be sub-quadratic for $K_5$.

Our next main result shows that it is possible to have a sub-quadratic running time that is  $n^{2-o(1)}$. 
The \emph{wheel graph} $W_k$ is defined to be the graph with $k+1$ vertices   obtained by taking a cycle of length $k$ and adding a new vertex that is adjacent to all vertices of  the cycle.

	\begin{thm}\label{thm:wheel}
	Let $k\geq 7$ be an odd integer.
	The wheel graph $W_k$ satisfies \[M_{W_k}(n) \geq n^{2-o(1)} \quad \mbox{ and } \quad  M_{W_k}(n) = o(n^2).\]
	\end{thm}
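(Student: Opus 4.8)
\emph{Structure of the argument.} The two inequalities in Theorem~\ref{thm:wheel} require unrelated techniques: the lower bound comes from the chain-construction method of this paper fed with a Behrend-type set, while the upper bound is a reduction to the Ruzsa--Szemer\'edi $(6,3)$-theorem. For the lower bound, note first that every rim vertex of $W_k$ has degree $3$, so $W_k$ contains no square of a Hamilton cycle and Theorem~\ref{thm: square ham cycle} does not apply; a bespoke construction is needed. The plan is to build an $n$-vertex starting graph $G$ by arranging many constant-size gadgets so that the $W_k$-process is forced to add edges one at a time along a long chain of `activations', each activation being the unique new edge that completes a new copy of $W_k$. The length of the chain, hence a lower bound on $\tau_{W_k}(G)$, is controlled by an auxiliary set $A\subseteq[m]$ that avoids the nontrivial solutions of a fixed linear equation --- the equation encoding how the spokes and the rim of $W_k$ must align for a copy to be created --- so that the equation-free property of $A$ is exactly what stops the process from skipping ahead. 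Feeding in a Behrend-type set~\cite{behrend1946sets}, for which $|A|=m^{1-o(1)}$, and taking $n=\Theta(m)$, one gets $M_{W_k}(n)\ge\tau_{W_k}(G)\ge n^{2-o(1)}$. The delicate part is the bookkeeping that makes the rim of each triggered copy close up at precisely the intended step and not before.

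\emph{The upper bound.} Assume for contradiction that some $n$-vertex $G$ has $\tau:=\tau_{W_k}(G)\ge\gamma n^2$ for a fixed $\gamma>0$. For each $t\in[\tau]$ fix an edge $e_t$ added at step $t$ and a copy $W^{(t)}$ of $W_k$ with $e_t\in E(W^{(t)})$, $E(W^{(t)})\setminus\{e_t\}\subseteq E(G_{t-1})$, and hub $h_t$; the $e_t$ are distinct, so there are $\Omega(n^2)$ such timed copies. By pigeonhole I may assume $e_t$ is always a rim edge $\{x_t,y_t\}$ of $W^{(t)}$ with $h_t\notin\{x_t,y_t\}$, so that just before step $t$ the hub $h_t$ is already adjacent to $x_t$ and $y_t$ (in fact to the whole rim) and the rim of $W^{(t)}$ is a $k$-vertex path between $x_t$ and $y_t$; the steps whose new edge is a spoke are bounded by a parallel, simpler argument. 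I then package these copies into an auxiliary $3$-uniform hypergraph $\cH$ on $V(G)$ --- the first guess being the triples $\{x_t,y_t,h_t\}$, after a cleaning step that keeps $|\cH|=\Omega(n^2)$ --- engineered so that $\cH$ has the $(6,3)$-property: no $6$ vertices span $3$ edges of $\cH$. The $(6,3)$-theorem then forces $|\cH|=o(n^2)$, a contradiction, giving $M_{W_k}(n)=o(n^2)$. The reduction can equivalently be routed through the corners theorem or the triangle removal lemma.

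\emph{The main obstacle.} The heart of the proof is verifying the $(6,3)$-property, and this is where $k$ odd and $k\ge7$ enter. If three triples of $\cH$, at times $s<t<u$, lay inside a common $6$-set $S$, one uses that right before step $t$ (and before step $u$) the relevant hub is joined to an entire $k$-vertex rim path whose endpoints are those of $e_t$ (resp.\ $e_u$); as $k\ge 7>6$ no such path fits inside $S$, which forces the three copies to overlap on $S$ in a very rigid way. Parity then kills the one remaining case: the odd rim cycle required at step $u$ cannot be pieced together from shorter paths already present in $G_{u-1}$, whereas it could be if $k$ were even --- so $e_u$ would in fact already be present, contradicting that the $e_t$ are distinct. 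Making the two cleaning steps compatible (so a quadratically large $\cH$ survives) and disposing of the spoke case are the remaining technicalities, but the rigidity-plus-parity analysis inside a $6$-set is the real work.
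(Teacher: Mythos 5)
Your lower-bound plan is, in outline, the route taken here (a long chain of copies of $W_k$ whose non-interaction is certified by a Behrend-type solution-free set), but the two points you defer as ``bookkeeping'' are precisely the substance of the argument, and neither is routine. First, since the rim vertices of $W_k$ have degree $3$, the wheel is $(2,1)$-separable, so the generic machinery for inseparable graphs does not apply: one must prove separately that simple $W_k$-chains are proper (Lemma~\ref{lem:wheel simple}, exploiting that the hub is the unique vertex of large degree), and linking many chains into one long chain requires the variant Lemma~\ref{lem:linking chains plus}, whose hypothesis that the ends of each chain avoid all other chains is the reason Construction~\ref{const:dil} carries the auxiliary vertex class $W$. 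Second, ``a fixed linear equation'' is not what needs to be avoided: one must exclude every copy of $W_k-e$ whose edges are spread over several chains, and the structural input reducing this to equations in at most three variables with bounded coefficients is that $W_k-e$ is Behrendian for every $e\in E(W_k)$ (Lemma~\ref{lem: H minus coherent}); only then do the $(K,3)$-fold solution-free sets of Theorem~\ref{thm:behrend extended} yield a proper collection, via Theorem~\ref{thm:coherent almost quad}. Without these ingredients your construction carries no proof that the process cannot ``skip ahead''.

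Your upper bound has a genuine gap at its central step. You form the triples $\{x_t,y_t,h_t\}$ and claim the $(6,3)$-property, arguing that because $k\geq 7$ the rim path ``does not fit inside'' the $6$-set $S$ --- but nothing requires the rim to lie in $S$; only the three marked vertices of each copy must, the rims may use arbitrary outside vertices, and no rigidity follows. The parity step is also misplaced: at time $u$ the rim cycle minus $e_u$ is simply a path already present in $G_{u-1}$, which is no obstruction for either parity; in the actual proof oddness enters through Lemma~\ref{lem:cycles_smalldistance} (the $C_k$-process on a connected graph of order at least $k+1$ containing $C_k$ completes to a clique, which fails for even $k$ since the final graph can be complete bipartite). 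Nor can the non-rim steps be waved away: the case $e_t=u_tu_{t+1}$ (both endpoints hubs) requires constructing substitute rim edges $f_t$ and a multiplicity count bounded by $k!$. The paper's argument has a different shape: either some hub neighbourhood contains a connected subgraph on at least $k+1$ vertices containing a $C_k$, in which case cliques grow (Lemmas~\ref{lem:cycles_smalldistance} and~\ref{lem:wheel cliques grow}) and the process terminates within $O(n)$ further steps, or every union of rim cycles over a common hub decomposes into at most $\binom{k}{2}$ matchings that are induced in $\tilde G$, so $\tilde G$ is a union of $O(n)$ induced matchings and Theorem~\ref{thm (6,3)} bounds $e(\tilde G)$ by $\eps' n^2$, contradicting a count of $\Omega(n^2)$ distinct rim edges produced along the chain. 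Your triple-hypergraph reduction might conceivably be repaired, but as written the $(6,3)$-property is unjustified and the role of parity is wrong.
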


This suggests that such a running time could also be possible for $K_5$ and the other graphs captured by Theorem \ref{thm;dense quad} part \eqref{thm: min deg almost quad} and Theorem \ref{thm: square ham cycle}. We remark that the restriction to odd $k$ is for somewhat technical reasons and is necessary for the upper bound only. We expect that the result also holds for even $k$ but did not pursue this. 


\subsection{Bipartite graphs} \label{sec intro bip} The known results for maximum running times for graphs $H$ seem to indicate that apart from a few exceptions having maximum running times that are constant (trees), logarithmic (cycles) and linear  ($K_4$, $K_{2,s}$ for $s\geq 3$ and the graphs $H'_k$ in Example \ref{ex:H'k}), all other running times are either quadratic or of the form $n^{2-o(1)}$. Indeed the results mentioned above  cover a wide range of different graphs including graphs that are very sparse  such as the square of a cycle and the wheel. This raises questions as to whether other types of running times are possible and whether there is some large class of graphs which have running times polynomially separated from the trivial upper bound $n^2$.

Our next  result, whose proof is simple,  gives the first example of an effective upper bound on running times that covers a general class of graphs, namely bipartite graphs.  Recall that the  \textit{extremal number} (or\textit{ Tur\'an number}) of a graph, denoted $\ex(n,H)$, is the maximum number of edges of an $n$-vertex graph that does not contain $H$ as a subgraph.

	\begin{prop} \label{prop:bipartite_extremalnumber}
	Let $H$ be a graph with at least 2 edges. Then we have that
	\[
	M_H(n) \leq 2\mathrm{ex}(n,H).
	\]
	\end{prop}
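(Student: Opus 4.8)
The plan is, for an arbitrary starting graph $G$ on $n$ vertices with $\tau := \tau_H(G)$, to produce two \emph{$H$-free} graphs on $V(G)$ with disjoint edge sets whose union has size exactly $\tau$. Since an $H$-free graph on $n$ vertices has at most $\ex(n,H)$ edges by definition, this immediately gives $\tau \le 2\ex(n,H)$, and maximising over $G$ yields the proposition.

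To build these graphs, first observe that for each round $1\le i\le\tau$ the process strictly gains edges, so $A_i := E(G_i)\setminus E(G_{i-1})\neq\emptyset$; fix some $e_i\in A_i$. The edges $e_1,\dots,e_\tau$ are pairwise distinct, since $e_i$ has ``arrival round'' exactly $i$. Partition them by the parity of the arrival round into $B_{\mathrm o}:=\{e_i : i \text{ odd}\}$ and $B_{\mathrm e}:=\{e_i : i \text{ even}\}$, regarded as subgraphs of the complete graph on $V(G)$; then $|B_{\mathrm o}|+|B_{\mathrm e}|=\tau$.

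The key step is to show that $B_{\mathrm o}$ (and, symmetrically, $B_{\mathrm e}$) contains no copy of $H$. Suppose $C\subseteq B_{\mathrm o}$ were a copy of $H$; as $H$ has at least two edges, so does $C$. Let $e_j$ be the edge of $C$ with the largest arrival round $j$. Every edge of $C$ has odd arrival round and at least one is smaller than $j$, so every edge of $C$ other than $e_j$ has arrival round at most $j-2$ and hence lies in $G_{j-2}$; thus $C-e_j\subseteq G_{j-2}$. On the other hand $e_j\in A_j$ gives $e_j\notin E(G_{j-1})\supseteq E(G_{j-2})$, so $G_{j-2}+e_j$ contains the copy $C$ of $H$, a copy using $e_j\notin E(G_{j-2})$ and therefore not counted in $n_H(G_{j-2})$. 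Hence $n_H(G_{j-2}+e_j)>n_H(G_{j-2})$, which by the definition of the process forces $e_j\in E(G_{j-1})$ — contradicting $e_j\in A_j$. The even case is verbatim the same.

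I expect the only genuinely non-obvious point to be the parity split. Taking all of $e_1,\dots,e_\tau$ at once, the argument above would only show that the arrival round of $e_j$ exceeds those of the remaining edges of the copy by at least one, which is consistent with $e_j$ first appearing precisely at round $j$ and so gives no contradiction. Splitting into two colour classes by parity upgrades ``exceeds by at least one'' to ``exceeds by at least two'', which is exactly enough to force $e_j$ into $E(G_{j-1})$; this is what costs the factor $2$. Everything else is routine bookkeeping with the definition of the $H$-process.
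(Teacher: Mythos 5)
Your proof is correct and is essentially the paper's own argument: the paper also selects one new edge $e_i$ per round, keeps only those with odd index, and shows this graph is $H$-free via the maximal-index edge, concluding $\tau/2\leq e(G')\leq \ex(n,H)$. Your only (immaterial) variation is to use both parity classes and sum their sizes rather than bounding the odd class alone.
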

    \begin{proof}
        Let $(G_i)_{i\geq 0}$ be an $H$-process with $\tau=\tau_H(G_0)=M_H(n)$. For $1\leq i\leq \tau$, let $e_i\in E(G_i)\setminus E(G_{i-1})$ be an edge added at time $i$. Now consider the graph $G'$ with $V(G')=V(G_0)$ and \[E(G')=\{e_i:i\in [\tau], i=1 \mbox{ mod }2\}.\]
        We claim that $G'$ has no copy of $H$. Indeed, if there is some copy $F$ of $H$ in $G'$ and $i_*\in [\tau]$ is the maximal index such that $e_{i_*}\in E(F)$, then $E(F)\setminus \{e_{i_*}\}\subseteq E(G_{i_*-2})$ and so $e_{i_*}$ would be added by time $i_*-1$, contradicting the definition of $e_{i_*}$. Thus, $\tau/2\leq e(G')\leq \ex(n,H)$ as required.  
    \end{proof}

As a consequence of Proposition \ref{prop:bipartite_extremalnumber} and the celebrated K\H{o}v\'ari-S\'os-Tur\'an theorem \cite{khovari_problem_1954}, we get the following upper bound for bipartite graphs.

	\begin{cor}\label{cor:extremal_bipartite}
	Let $H$ be a bipartite graph such that the two partite sets of $H$ have size $r$ and $s$, respectively, where $1\leq r\leq s$. Then
		\[
		M_H(n) = O(n^{2-1/r}).
		\]
	\end{cor}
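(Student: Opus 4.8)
The plan is to combine Proposition~\ref{prop:bipartite_extremalnumber} with a classical bound on the Tur\'an number of bipartite graphs. Since $H$ is bipartite with parts of size $r\le s$, it embeds into the complete bipartite graph $K_{r,s}$; hence any graph containing $K_{r,s}$ also contains $H$, which gives $\ex(n,H)\le \ex(n,K_{r,s})$. (We also need $H$ to have at least two edges to apply Proposition~\ref{prop:bipartite_extremalnumber}; if $H$ has at most one edge then the $H$-process never gets started and $M_H(n)=O(1)$, so the bound holds trivially in that degenerate case.)

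Next I would invoke the K\H{o}v\'ari--S\'os--Tur\'an theorem, which states that $\ex(n,K_{r,s})=O\bigl(n^{2-1/r}\bigr)$ whenever $1\le r\le s$ (the implied constant depending on $r$ and $s$ only, which is fine since we treat $k=v(H)$ as fixed). Chaining the inequalities yields
\begin{linenomath}
\begin{equation*}
M_H(n)\;\le\; 2\,\ex(n,H)\;\le\; 2\,\ex(n,K_{r,s})\;=\;O\bigl(n^{2-1/r}\bigr),
\end{equation*}
\end{linenomath}
as claimed.

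There is really no substantial obstacle here: the only things to check are that the embedding $H\hookrightarrow K_{r,s}$ is legitimate (immediate from the definition of a bipartite graph with parts of sizes $r$ and $s$) and that the hypotheses of Proposition~\ref{prop:bipartite_extremalnumber} and of K\H{o}v\'ari--S\'os--Tur\'an are met. The mild edge case to mention explicitly is when $H$ has fewer than two edges, handled as above. This is why the authors describe the proof as simple --- it is essentially a two-line deduction from a tool already proved in the paper and a textbook extremal result.
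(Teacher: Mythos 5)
Your proposal is correct and matches the paper's (implicit) argument exactly: the corollary is deduced by combining Proposition~\ref{prop:bipartite_extremalnumber} with the K\H{o}v\'ari--S\'os--Tur\'an bound via the containment $H\subseteq K_{r,s}$, which gives $\ex(n,H)\le \ex(n,K_{r,s})=O(n^{2-1/r})$. Your explicit treatment of the degenerate case where $H$ has fewer than two edges is a reasonable extra precaution but does not change the substance of the argument.
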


Corollary \ref{cor:extremal_bipartite} shows that for bipartite graphs $H$, the running time $M_H(n)$ cannot reach quadratic length. However it could be that, as with our known examples of trees, cycles and $K_{2,s}$, the running time actually drops to linear. Our next  result shows that this is not the case if the bipartite graph is sufficiently dense.   

\begin{thm} \label{thm:bip dense}
Let $3\leq r\leq s$ and suppose $H$ is a bipartite graph with parts $X,Y$ with $|X|=r$, $|Y|=s$ and such that $d(x)\geq s/2+1$ for all $x\in X$ and $d(y)\geq r/2+1$ for all $y\in Y$. Then \[M_H(n)\geq n^{3/2-o(1)}.\]
\end{thm}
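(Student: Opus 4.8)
The plan is to prove the bound by the \emph{chain-construction} method: for every large $n$ we build an $n$-vertex starting graph $G$ together with an explicit list of non-edges $e_1,\dots,e_T$ with $T=n^{3/2-o(1)}$, such that the $H$-process on $G$ adds precisely $e_1,\dots,e_T$, one per step and in this order, whence $\tau_H(G)=T$. Using the minimum-degree hypothesis, fix an edge $x^*y^*\in E(H)$ with $x^*\in X$, $y^*\in Y$, and set $L_Y=N_H(x^*)\setminus\{y^*\}$, $L_X=N_H(y^*)\setminus\{x^*\}$, so that $|L_Y|\ge s/2$ and $|L_X|\ge r/2$; only these bounds on the "link" of $x^*y^*$, together with the fact that the hypothesis forces $H$ to be tightly connected (any two vertices on the same side have at least two common neighbours), will be used about $H$. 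The host graph is essentially bipartite: on one side a \emph{bulk} set $B$ of size $\Theta(n)$ together with a small gadget set $W_X$, and on the other a \emph{control} set $A$ of size $n^{1/2-o(1)}$ together with a gadget set $W_Y$, where all $W_X$--$W_Y$ edges are present from the start, so that $W_X\cup W_Y$ carries a copy of $H-\{x^*,y^*\}$ on any choice of $r-1$ vertices of $W_X$ and $s-1$ vertices of $W_Y$. The $\Theta(|A|\cdot|B|)=n^{3/2-o(1)}$ pairs between $A$ and $B$ are the top-level chain edges, and each such pair $ab$ will, when added, complete a copy of $H$ that maps $x^*\mapsto a$, $y^*\mapsto b$ and the remaining $k-2$ vertices of $H$ into $W_X\cup W_Y$, at locations $P_X(a,b)$, $P_Y(a,b)$ depending on $a$ and $b$; realising this copy needs, besides $ab$ itself, only the $|L_Y|$ edges from $a$ to $P_Y(a,b)$ and the $|L_X|$ edges from $b$ to $P_X(a,b)$, all of which are to have been added earlier.

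Serialisation is the crux. We index $A$ and $B$ by elements of an abelian group and let $P_X(\cdot,\cdot)$, $P_Y(\cdot,\cdot)$ be determined by simple linear combinations of these indices; crucially, the index set used for $A$ is taken to be a set $S$ \emph{free of solutions to a fixed linear equation} -- a Behrend-type set, so that $|S|=n^{1/2-o(1)}$ -- and this is exactly what forbids "shortcuts". Concretely, if the prerequisites of some pair $ab$ were satisfied before its designated time (which would let the process add $ab$ prematurely and cut the run short), then from the witnessing configuration one could read off a solution to the forbidden equation inside $S$, a contradiction. The prerequisite edges from $A$ to $W_Y$ and from $B$ to $W_X$ are themselves absent at the start; they are produced, in the correct interleaved order, by a bounded number of analogous \emph{lower levels} of the construction (each using another edge of $H$ in the role of $x^*y^*$), each contributing at most $n^{1+o(1)}$ edges and so not affecting the leading order of $T$.

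I would carry out the argument in the following steps: (1) extract $x^*y^*$ and the link bounds from the minimum-degree condition; (2) fix the set $S$ and the indexing, and define the gadget graph on $W_X\cup W_Y$ and the placement maps $P_X,P_Y$; (3) specify the starting graph $G$ and the intended order $e_1,\dots,e_T$; (4) show that at each step the next edge in the list \emph{is} addable -- its designed $H$-copy has all other edges already present -- which is immediate from the construction; and (5), the main obstacle, show that at each step \emph{no other} edge is addable, i.e.\ that no copy of $H$ other than the designed ones ever appears in any graph reached by the process. Step (5) is where the real work lies: one must argue that the host bipartition together with the high minimum degree of $H$ forces every copy of $H$ to have the designed shape -- so that the edge playing $x^*y^*$ must be routed through an $A$--$B$ pair and the rest through $W_X\cup W_Y$ -- and then invoke the linear-equation-free property of $S$ to rule out any designed copy appearing ahead of schedule. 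Granting Step (5), the process runs for $T=\Theta(|A||B|)=n^{3/2-o(1)}$ steps, which proves the theorem.
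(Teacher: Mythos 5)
There is a genuine gap, and it is architectural: your construction cannot run for more than about $n^{1+o(1)}$ steps, no matter how clever the solution-free indexing is. The running time of an $H$-process counts \emph{time steps}, not added edges, and in your design the designated copies of $H$ do not trigger one another: the copy completed by an $A$--$B$ pair $ab$ uses $ab$ as its only $A$--$B$ edge, with all other vertices in $W_X\cup W_Y$, so the addition of one top-level pair is never a prerequisite of another. Consequently each pair $ab$ is added exactly one step after its last prerequisite (an $A$--$W_Y$ or $B$--$W_X$ edge) appears, and the number of distinct times at which top-level pairs can be added is bounded by the number of steps during which your ``lower levels'' are still adding edges. Since you cap the lower levels at $n^{1+o(1)}$ edges in total, they stabilise within $n^{1+o(1)}$ steps; one step later every pair whose prerequisites ever get completed has been added (many of them simultaneously), and the process stops. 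Having $\Theta(|A||B|)=n^{3/2-o(1)}$ designated addable edges therefore buys you at most $n^{1+o(1)}$ steps, not $n^{3/2-o(1)}$: serialisation requires at least one \emph{new} trigger edge per step, so you would need $n^{3/2-o(1)}$ prerequisite edges, which your hub-and-spoke layout (and your explicit cap on the lower levels) does not provide. A secondary but real issue is Step (5), which you defer entirely: controlling stray copies of $H-e$ that straddle several designed copies is exactly where the quantitative input is needed, and a plain Behrend-type set is the wrong object here — a set of size $n^{1/2-o(1)}$ avoiding the relevant four-variable equations is Sidon-type, and the existence of sufficiently dense sets avoiding \emph{all} the required bounded-coefficient equations is itself a nontrivial point (the paper needs a bespoke variant, Theorem \ref{thm: sidon extended}, precisely because dense $K$-fold Sidon sets are not known).

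The way to make the count $|A|\cdot\Theta(n)$ actually translate into time is to let the designated copies themselves form chains in which consecutive copies share an edge, so that each addition is the trigger for the next. This is what the paper does: it reduces Theorem \ref{thm:bip dense} to Theorem \ref{thm:bip coherent run time} via Lemmas \ref{lem:bip dense insep} and \ref{lem:dense bip coh} (your degree hypothesis gives $(1,1,1)$-inseparability and that $H-e$ is Sidonian), and then builds, for each element $a$ of a modified Sidon-type dilation set $A\subset\ZZ_p$ of size $p^{1/2-\eps}$, a \emph{simple} $H$-chain of length $\Theta(p)$ obtained by dilating a fixed chain by $a$ (Construction \ref{const: bip dil}). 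Properness of the collection is proved by colouring each edge of a putative stray copy of $H-e$ by the chain it lies in and using the Sidonian property to extract a non-monochromatic $C_4$, whose four ``slopes'' yield a forbidden non-trivial solution among the dilations; the chains are then linked into one proper chain (Lemma \ref{lem:linking chains}) and Lemma \ref{lem:chain_runningtime} gives $M_H(n)\geq \Theta(p)\cdot p^{1/2-\eps}=n^{3/2-o(1)}$. If you want to salvage your outline, you would have to replace the common gadget $W_X\cup W_Y$ by per-pair overlapping structure of exactly this kind, at which point you have rediscovered the dilation-chain construction.
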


Theorem \ref{thm:bip dense} can be seen as a bipartite analogue to Theorem \ref{thm;dense quad} and in particular gives a running time of $M_{K_{r,s}}\geq n^{3/2-o(1)}$ for all complete bipartite graphs $K_{r,s}$ with $3\leq r\leq s$. This lower bound still falls short of the upper bound in Corollary \ref{cor:extremal_bipartite} in all cases. For larger complete bipartite graphs, we can improve the lower bound, giving the following.

\begin{thm} \label{thm:complete bip}
    For $3\leq r\leq s$, the maximum running time $M_{K_{r,s}}(n)$ is bounded from below by
	\begin{equation} \label{eq:comp bip lower}
	M_{K_{r,s}}(n) \geq n^{2-\tfrac{r+s-1}{r(s-1)-1}-o(1)}.
	\end{equation}
\end{thm}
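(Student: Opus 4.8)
The plan is to build, for each large $n$, a starting graph $G = G_n$ on $n$ vertices whose $K_{r,s}$-process has running time $n^{2 - \frac{r+s-1}{r(s-1)-1} - o(1)}$, via a \emph{chain construction}: we fix a long sequence $e_1, \dots, e_L$ of non-edges of $G$ and engineer $G$ so that the process adds $e_i$ at exactly step $i$, which forces $\tau_{K_{r,s}}(G) \ge L$. The elementary mechanism is that a copy of $K_{r,s}$ missing a single edge propagates: if all of $K_{r,s} - f$ lies in $G_t$ except for one further edge $g$, and $g$ is added at step $t$, then $f$ becomes addable at step $t+1$. Linking such units into a single path (each unit a fresh $K_{r,s}$ sharing its ``input'' edge with the previous one) only costs $r+s-2$ new vertices per unit and hence yields $\tau = \Theta(n)$; to reach a near-quadratic running time we instead arrange the units so that every vertex participates in many of them --- a higher-dimensional array of dependent edges in which consecutive units overlap in all but a bounded number of vertices. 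The price of this heavy reuse is the danger of \emph{shortcuts}: spurious copies of $K_{r,s}$, straddling several units, that would let the process add some $e_i$ out of order and collapse the running time.

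To rule shortcuts out we label the vertices of $G$ by integers (or short tuples of integers) from a set $A \subseteq \{1, \dots, N\}$ and decide which edges to put in $G$ by linear conditions on the labels. A case analysis over the possible embeddings of $K_{r,s}$ then shows that any copy appearing at any stage of the process is either one of the intended units or else produces a nontrivial solution, within $A$, of one fixed translation-invariant linear equation $L = L_{r,s}$ whose coefficients are determined by which edges of $K_{r,s} - f$ are forced and which are free. Choosing $A$ to be free of solutions to $L$ then eliminates all shortcuts, and since $L$ is translation-invariant there are Behrend-type constructions of such sets with $|A| \ge N^{1-o(1)}$ --- the additive-combinatorial input announced in the introduction.

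Finally we count: the vertex count is $n = \Theta(N^{a})$ and the chain length is $L = \Theta(N^{b})$ with $a, b$ depending on $r, s$ through the dimension of the array and the size of one unit; plugging in $|A| = N^{1-o(1)}$ (which only affects the $o(1)$) and optimising the aspect ratio of $G$ gives $\tau \ge n^{2 - \frac{r+s-1}{r(s-1)-1} - o(1)}$. We expect the exponent $\frac{r+s-1}{r(s-1)-1}$ to emerge as the ratio between the number of free label-variables in a would-be shortcut ($r+s-1$) and the number of edge-slots governed by one unit ($r(s-1)-1$); this is consistent with the bound tending to the K\H{o}v\'ari--S\'os--Tur\'an exponent $2 - 1/r$ of Corollary~\ref{cor:extremal_bipartite} as $s \to \infty$.

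The hard part will be the no-shortcut lemma: one has to check not only that $G$ itself contains no unintended $K_{r,s}$, but that none appears after arbitrarily many chain edges have been added, all while keeping each unit small enough that the counting yields the claimed exponent. Those two demands conflict --- a smaller unit leaves more room for stray embeddings --- and making them compatible is exactly what pins down the correct labelling set and the correct equation $L_{r,s}$.
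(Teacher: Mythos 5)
There is a genuine gap, and it sits exactly where you flag the ``hard part''. You claim that every shortcut --- every stray copy of $K_{r,s}$ or of $K_{r,s}$ minus an edge straddling several units --- reduces to a non-trivial solution in $A$ of one fixed translation-invariant linear equation $L_{r,s}$, and that Behrend-type sets of density $N^{1-o(1)}$ free of such solutions then kill all shortcuts. Neither half of this is right. First, translation-invariance alone does not give dense solution-free sets: the Sidon equation $x_1+x_2-x_3-x_4=0$ is translation-invariant, yet solution-free sets have size $O(N^{1/2})$. Second, in any construction of this type the dangerous configurations include four-cycles (and larger bipartite pieces) whose edges come from four \emph{distinct} units; written in label arithmetic these are exactly bounded-coefficient Sidon-type equations in four variables, so no set of density $N^{1-o(1)}$ can exclude them. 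This is visible in the paper: Lemma \ref{lem:C4s in dil bip} shows a Behrend-type $(K,3)$-fold solution-free dilation set only controls a $C_4$ whose edges meet two or three chains, and the rainbow case is precisely what such a set cannot rule out (handling it by $4$-variable avoidance is what caps Theorem \ref{thm:bip coherent run time} at $n^{3/2-o(1)}$). Relatedly, your accounting never identifies the actual source of the polynomial loss $\tfrac{r+s-1}{r(s-1)-1}$: if shortcut-avoidance only cost $N^{o(1)}$ in density, your scheme would be headed for $n^{2-o(1)}$, contradicting Corollary \ref{cor:extremal_bipartite}; the ``aspect ratio of the array'' is never specified, and the ratio-of-variables heuristic is not a mechanism.

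The paper's proof supplies the two ingredients your sketch is missing. It runs the bipartite dilation chains of Construction \ref{const: bip dil} with a Behrend-type $(8k,3)$-fold solution-free dilation set $A$, and then performs a \emph{random sparsification} of $A$: keep each dilation independently with probability $q$ and delete one dilation for every copy $F$ of $K_{r,s}$ minus an edge whose edges meet at least $r(s-1)$ distinct chains. Balancing the expected number $\le p^{k}q^{r(s-1)}$ of such bad copies (with $k=r+s$) against $q|A|$ forces $q\approx (|A|/p^{k})^{1/(r(s-1)-1)}$, and this alteration --- not the density of a solution-free set, nor any array geometry --- is what produces the exponent $\tfrac{r+s-1}{r(s-1)-1}$. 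After the deletion, every surviving copy of $K_{r,s}$ minus an edge has two edges in a common chain $\final{G^b}_H$, and a $C_4$-spreading argument (bootstrapping Lemma \ref{lem:C4s in dil bip} edge by edge across the copy) shows the whole copy lies in $\final{G^b}_H$, which is what makes the collection of chains strongly proper before linking via Lemma \ref{lem:linking chains}. Without the alteration step and this spreading argument, the no-shortcut lemma as you state it fails, and the claimed exponent has no derivation.
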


The bound in Theorem \ref{thm:complete bip} becomes stronger than that of Theorem \ref{thm:bip dense} when $s>\tfrac{3r-1}{r-2}$, in particular for $K_{r,r}$ with $r\geq 5$. With $r$ fixed and $s$ growing, the bound of Theorem \ref{thm:complete bip} approaches the upper bound given by Corollary \ref{cor:extremal_bipartite}. Indeed $\tfrac{r+s-1}{r(s-1)-1}\rightarrow \tfrac{1}{r}$ as $s \rightarrow \infty$. 
 Moreover, Theorem \ref{thm:complete bip} implies infinitely many different possible running times. 

    \begin{cor} \label{cor:compl bip lower}
  For every $3\leq r\in \NN$ there exists $s_r\in \NN$ such that
  \[n^{2-1/({r-1})} \ll M_{K_{r,s_r}}(n)\leq O(n^{2-1/r}).\]
    \end{cor}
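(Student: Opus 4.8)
The plan is to get the upper bound for free from Corollary~\ref{cor:extremal_bipartite} and the lower bound from Theorem~\ref{thm:complete bip} by choosing $s_r$ sufficiently large. The upper bound is immediate: for any $s_r\ge r$ the graph $K_{r,s_r}$ is bipartite with smaller part of size $r$, so Corollary~\ref{cor:extremal_bipartite} gives $M_{K_{r,s_r}}(n)=O(n^{2-1/r})$ whatever the value of $s_r$. Hence the only real task is to pin down how large $s_r$ must be for the lower bound \eqref{eq:comp bip lower} to overtake $n^{2-1/(r-1)}$.

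For that, I would use the observation (already recorded in the discussion after Theorem~\ref{thm:complete bip}) that the exponent $2-\tfrac{r+s-1}{r(s-1)-1}$ appearing in \eqref{eq:comp bip lower} increases in $s$ with limit $2-\tfrac1r$ as $s\to\infty$. Since $\tfrac1r<\tfrac1{r-1}$, for all sufficiently large $s$ we have $\tfrac{r+s-1}{r(s-1)-1}<\tfrac1{r-1}$; a short computation (cross-multiplying, the denominators being positive for $r\ge 3$ and $s$ large) shows this inequality is equivalent to $s>r^2-r+2$. So any fixed integer $s_r>r^2-r+2$ works, and such $s_r$ automatically satisfies $r\le s_r$, as required by the hypotheses of both quoted results.

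Fixing such an $s_r$ and writing $c:=\tfrac{r+s_r-1}{r(s_r-1)-1}<\tfrac1{r-1}$, which is a constant since $r$ and $s_r$ are fixed, Theorem~\ref{thm:complete bip} gives $M_{K_{r,s_r}}(n)\ge n^{2-c-o(1)}$, whence
\[
\frac{M_{K_{r,s_r}}(n)}{n^{2-1/(r-1)}}\;\ge\; n^{1/(r-1)-c-o(1)}\;\longrightarrow\;\infty
\]
because $\tfrac1{r-1}-c>0$ is a fixed positive constant while the $o(1)$ term (a function of $n$ only) tends to $0$. Combined with the upper bound this is exactly the claimed chain of inequalities. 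I do not foresee a genuine obstacle: the corollary is a direct bookkeeping consequence of Theorem~\ref{thm:complete bip} and Corollary~\ref{cor:extremal_bipartite}. The only substantive point worth flagging is that Theorem~\ref{thm:complete bip} is already strong enough to drive the lower-bound exponent arbitrarily close to the K\H{o}v\'ari--S\'os--Tur\'an value $2-\tfrac1r$, and in particular past the intermediate value $2-\tfrac1{r-1}$ — so all the difficulty lives in that theorem, not in this deduction.
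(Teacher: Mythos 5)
Your proposal is correct and matches the paper's intended argument: the paper gives no separate proof of Corollary \ref{cor:compl bip lower}, treating it exactly as you do, namely as a direct consequence of the lower bound in Theorem \ref{thm:complete bip} (with $s_r$ fixed large enough that $\tfrac{r+s_r-1}{r(s_r-1)-1}<\tfrac{1}{r-1}$, which your computation $s_r>r^2-r+2$ correctly quantifies) together with the K\H{o}v\'ari--S\'os--Tur\'an upper bound of Corollary \ref{cor:extremal_bipartite}. Your handling of the $o(1)$ in the exponent and the check that $s_r\geq r$ are both fine, so there is nothing to add.
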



We remark that one can in fact prove a slightly better exponent in Theorem \ref{thm:complete bip}, giving that $M_{K_{r,s}}(n)\geq n^{2-\tfrac{1}{r} - \tfrac{1}{s-1}-o(1)}$.  
The proof is similar but requires some additional technicalities. As we do not believe that either lower bound is tight, and the main motivation for Theorem \ref{thm:complete bip} is Corollary \ref{cor:compl bip lower} and showing that the bounds approach the upper bound of Corollary \ref{cor:extremal_bipartite} as $s\rightarrow \infty$, we opt to prove the bound as in \eqref{eq:comp bip lower} and just sketch the improvement in the exponent after, see Remark \ref{rem:comp bip better}. 

In general, it is unclear which of our bounds are closer to the truth for bipartite graphs and the picture is complicated by the fact that the extremal number is not known for many bipartite graphs $H$ of interest, see for example \cite{furedi2013history}. We tend to think that the bound from Theorem \ref{thm:bip dense} is close to being tight for small graphs such as $K_{3,3}$ or the 3 dimensional cube graph $Q_3$. In fact, for the latter we can slightly improve the lower bound, removing the $o(1)$ from the exponent.

	\begin{thm}\label{thm:cube}
	The running time of the cube $Q_3$  has bounds
  \[\Omega(n^{3/2})\leq M_{Q_3}(n)\leq O(n^{8/5}).\] 
	\end{thm}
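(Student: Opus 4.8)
The plan is to establish the two bounds separately, and for each of them to specialise the general machinery already introduced in the paper. For the upper bound $M_{Q_3}(n)=O(n^{8/5})$, I would apply Proposition \ref{prop:bipartite_extremalnumber}, which reduces the task to bounding $\ex(n,Q_3)$. The cube $Q_3$ is bipartite with both parts of size $4$, so Corollary \ref{cor:extremal_bipartite} only gives $O(n^{2-1/4})=O(n^{7/4})$, which is weaker than what we want; hence the key input must be the known improved Turán bound $\ex(n,Q_3)=O(n^{8/5})$ due to Erd\H{o}s--Simonovits (and refined by F\"uredi and others, see \cite{furedi2013history}). Plugging this into $M_{Q_3}(n)\leq 2\ex(n,Q_3)$ immediately yields $M_{Q_3}(n)=O(n^{8/5})$. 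So the upper bound is essentially a citation combined with Proposition \ref{prop:bipartite_extremalnumber}.

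For the lower bound $M_{Q_3}(n)=\Omega(n^{3/2})$, the starting point is that $Q_3$ satisfies the degree hypotheses of Theorem \ref{thm:bip dense}: it is bipartite with parts of size $r=s=4$, and it is $3$-regular, so $d(x)=3\geq s/2+1=3$ and $d(y)=3\geq r/2+1=3$ for all vertices. Thus Theorem \ref{thm:bip dense} already gives $M_{Q_3}(n)\geq n^{3/2-o(1)}$, and the content here is to remove the $o(1)$ from the exponent. The $n^{3/2-o(1)}$ bound in Theorem \ref{thm:bip dense} presumably comes from a chain construction whose length is governed by the size of a set of integers avoiding solutions to some linear equation, and the $o(1)$ loss is the usual Behrend-type penalty incurred when such sets must avoid three-term arithmetic-progression-like configurations. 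For $Q_3$ specifically, I would argue that the relevant linear equation is one for which \emph{no} sub-polynomial loss is necessary --- for instance, an equation of the form $x+y=z+w$ (a Sidon-type / $B_2^+$ condition) or a genuinely ``linear'' single-variable progression that a perfect difference set or a Sidon set of size $\Theta(\sqrt{N})$ in $[N]$ can avoid exactly. Concretely, I expect one can tailor the chain construction so that the gadget attached at each step of the chain forces the newly-created edges to be indexed by a Sidon set (or a set avoiding $x_1+x_2=2x_3$ with $x_1\ne x_2$, i.e. a progression-free set where one only needs to kill \emph{nontrivial} three-term APs), and classical constructions of such sets of density $n^{-1/2}$ in an interval of length $n$ give a chain of length $\Theta(n^{3/2})$ with no $o(1)$ loss.

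The main obstacle, and the part requiring genuine care, is verifying that the specific structure of $Q_3$ (as opposed to a general bipartite graph meeting the degree bounds of Theorem \ref{thm:bip dense}) permits this sharpening: one must check that the ``forbidden configuration'' that the chain construction needs to avoid, in order that no unwanted copy of $Q_3$ is created prematurely and that every intended edge is added at the correct time, is exactly a Sidon-type condition rather than the stronger AP-free condition --- and this hinges on a careful analysis of which $4+4$-vertex configurations in the host graph can close up into a copy of $Q_3$. I would proceed by (i) fixing the explicit chain construction underlying Theorem \ref{thm:bip dense} restricted to $H=Q_3$, (ii) enumerating, using the automorphism group and the bipartite double-cover structure of $Q_3$, all the ways a copy of $Q_3$ could appear on the relevant vertex set, (iii) showing each such appearance corresponds to a solution of $x+y=z+w$ among the indices, so that choosing the index set to be a Sidon set suppresses all of them, and (iv) confirming that a Sidon set of size $\Theta(\sqrt{n})$ indeed yields a process of running time $\Theta(n^{3/2})$. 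Steps (ii) and (iii) are where the $Q_3$-specific combinatorics lives and where I would expect the argument to be delicate.
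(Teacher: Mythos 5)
Your upper bound is exactly the paper's: Proposition \ref{prop:bipartite_extremalnumber} together with the Erd\H{o}s--Simonovits bound $\ex(n,Q_3)=O(n^{8/5})$, so nothing to add there. The lower bound, however, has a genuine gap in the route you propose. You plan to take ``the chain construction underlying Theorem \ref{thm:bip dense}'' (i.e.\ the bipartite dilation chains of Construction \ref{const: bip dil}, via Theorem \ref{thm:bip coherent run time}) and argue that for $H=Q_3$ the only configurations one must avoid correspond to solutions of the plain Sidon equation $x_1+x_2=x_3+x_4$. That is not what happens in the dilation construction: there, an edge of $\final{G^a}_H$ joining vertices with indices $\lambda,\mu$ satisfies $\mu-\lambda=\alpha a \bmod p$ for some bounded but \emph{variable} integer coefficient $\alpha$ (Observations \ref{obs:edge equation} and \ref{obs: dil edge equation bip}), so an unwanted $C_4$ or $Q_3^-$ spanning several chains yields an equation $\alpha_{11}a_{11}-\alpha_{12}a_{12}+\alpha_{22}a_{22}-\alpha_{21}a_{21}=0$ with arbitrary bounded coefficients, not unit coefficients. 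Killing all such equations with a set of size $\Theta(\sqrt{p})$ is precisely the $K$-fold Sidon problem of Lazebnik--Verstra\"ete, which the paper explicitly flags as open (Section \ref{sec:dil add}); the workaround in Theorem \ref{thm: sidon extended} only yields sets of size $p^{1/2-\eps}$, which is exactly the source of the $o(1)$ you are trying to remove. So step (iii) of your plan fails for that construction, and no amount of $Q_3$-specific enumeration in step (ii) rescues it, because the offending coefficients come from index differences inside a single copy of $H$, not from the structure of $Q_3$ itself.

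The missing idea is a change of construction: the paper proves $M_{Q_3}(n)=\Omega(n^{3/2})$ with \emph{ladder} chains (Construction \ref{const:ladder}), taking the slope set $A\subseteq[m]$ to be a Sidon set of size $\Theta(\sqrt{m})$. In the ladder geometry, each crossing edge between the left side $L$ and the right side $R$ has index displacement exactly $12a$ for a unique $a\in A$ (see \eqref{eq:cubenbrs}), so a $C_4$ all of whose edges cross gives $a_1+a_2=a_3+a_4$ with coefficient $1$ on every term --- the genuine Sidon condition --- and Claim \ref{clm:C4s in cube ladders} plus the $(1,1,1)$-inseparability and self-stability of $Q_3$ then force every copy of $Q_3^-$ to sit inside a single chain. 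This is why the clean exponent $3/2$ is attainable for $Q_3$ while Theorem \ref{thm:bip dense} retains its $o(1)$ loss: the improvement comes not from a sharper additive input to the same construction, but from a different chain geometry in which only unit-coefficient equations arise. Your intuition that a Sidon set should suffice for $Q_3$ is correct, but as stated your argument would stall at the $K$-fold Sidon obstruction.
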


   We remark that the bounds on $M_{Q_3}(n)$ in Theorem \ref{thm:cube} match the best known bounds on $\ex(n,Q_3)$, see for example \cite{furedi2013history}. Indeed,  the upper bound of Theorem \ref{thm:cube} simply follows from Proposition \ref{prop:bipartite_extremalnumber} and the best known upper bound $\ex(n,Q_3)=O(n^{8/5})$ due to Erd\H{o}s and Simonovits \cite{erdos_extremal_1970}. The lower bound will be proven in Section \ref{sec: lad cube}.

\subsection{Inseparable graphs} \label{sec: intro insep}
The final topic covered in this paper returns to the main topic of our second paper \cite{FMSz2} in this series which concentrated on properties of graphs $H$ which have (sub-)linear running time. There we showed that all connected graphs $H$ that have sub-linear running time and are not cycles, must have some vertex of degree 1. When considering graphs with linear running time this condition is no longer necessary as we know from examples such as $K_4$ and $K_{2,s}$. To separate linear and super-linear running times, it turns out that the following notion of connectivity is useful, where we recall that a graph $F$ is $k$-connected if $v(F)>k$ and deleting any collection of at most $k-1$ vertices (and their incident edges) from $F$ leaves a connected graph.

	\begin{dfn}
	\label{def:inseparable}
 A graph $H$ is \emph{$(2,1)$-inseparable} if $H-e$ is 3-connected for any choice of $e\in E(H)$. 
	\end{dfn}

Note that a $(2,1)$-inseparable graph cannot be separated by deleting at most 2 vertices and one edge, hence the terminology. We call a graph \textit{$(2,1)$-separable} if it is not $(2,1)$-inseparable. 
Our last theorem of this paper shows that any graph $H$ with a running time that is at most linear, must be $(2,1)$-separable. 

	\begin{thm}\label{thm:connectivity}
	Any $(2,1)$-inseparable  graph $H$ with $v(H)=k$ satisfies
		\[
		M_H(n) = \Omega\left( n^{1+\tfrac{2}{3k-2}} \right) .
		\]
	\end{thm}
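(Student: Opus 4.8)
The plan is to exhibit, for every $(2,1)$-inseparable graph $H$ on $k$ vertices, a starting graph $G$ on $n$ vertices whose $H$-process runs for $\Omega(n^{1+2/(3k-2)})$ steps. The natural vehicle is the \emph{chain construction} advertised in the paper's title: build $G$ as a long sequence of ``links'', each a bounded-size gadget built from (a copy of) $H$ minus an edge, arranged so that the $H$-process is forced to fire the links one after another in a prescribed linear order. The key structural input is that $(2,1)$-inseparability is exactly the property that makes a single link robust: if we place a copy of $H-e$ somewhere and the current graph does not yet contain the edge $e$, then $e$ does \emph{not} get added prematurely (because no \emph{other} copy of $H$ is created by adding $e$), so the only way $e$ can appear is via the intended copy of $H$; and conversely, 3-connectedness of $H-e$ guarantees that once the link's ``input'' edge is present the whole copy of $H-e$ is rigidly determined and the ``output'' edge $e$ fires at the next step and at no earlier step. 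So the first task is to prove a clean \emph{single-link lemma}: given $H$ $(2,1)$-inseparable, there is a graph $L$ on $O(k)$ vertices with two distinguished edges $e_{\mathrm{in}}, e_{\mathrm{out}}$ such that in any ambient graph, $e_{\mathrm{out}}$ is added by the $H$-process exactly one step after $e_{\mathrm{in}}$ is present (and not before), with no interference between links that share only a bounded interface.

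Next I would set up the global construction. Concatenate $t$ copies $L_1,\dots,L_t$ of the link, identifying the ``output'' interface of $L_j$ with the ``input'' interface of $L_{j+1}$, so that firing $L_j$ at time $\approx j$ triggers $L_{j+1}$ at time $\approx j+1$; feed the first link by a single seed edge present in $G_0$. Since each link uses $O(k)$ fresh vertices we can fit $t = \Omega(n/k)$ links on $n$ vertices, giving only a \emph{linear} running time, which is not enough. The exponent $1 + 2/(3k-2)$ forces a more economical, overlapping arrangement: the links must \emph{reuse} vertices so that roughly $t$ links live on only $\approx t^{(3k-2)/(3k)}$ vertices, i.e. each vertex participates in many links. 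The bookkeeping here is where the exponent $3k-2$ comes from: a link built on $H-e$ where $H$ is $k$-vertex and $3$-connected uses on the order of $3k-2$ ``coordinates'' worth of freedom when we encode links by tuples drawn from a ground set, so a ground set of size $m$ supports $\sim m^{3k-2}$ links arranged so that consecutive links overlap in all but a bounded number of coordinates and so that no two \emph{non-consecutive} links create a spurious copy of $H$. One clean way to organise this: index links by $(3k-2)$-tuples over $[m]$, let the ground vertex set be $[3k-2]\times[m]$ (so $n \approx (3k-2)m$), place the link indexed by a tuple on the corresponding transversal of vertices, and order the tuples so that adjacent ones in the order differ in a single coordinate (a Gray-code-type ordering), making $e_{\mathrm{out}}$ of one link serve as $e_{\mathrm{in}}$ of the next. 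This yields $t \approx m^{3k-2} \approx (n/(3k-2))^{3k-2}$, and since $t$ steps happen on $n = \Theta(m)$ vertices we need $n^{1+2/(3k-2)}$ rather than $m^{3k-2}$ — so in fact the right move is to take a \emph{shorter} chain and a \emph{bigger} ground set, balancing so that the number of steps $t$ and the number of vertices $n$ satisfy $t = \Theta(n^{1+2/(3k-2)})$; concretely one takes $m$ links worth of ``length'' and $m^{(3k-4)/(3k-2)}$-ish worth of ``width'', chosen so the arithmetic works out. I would do this balancing last, after the structural lemmas are in place, since it is just an optimisation over two free parameters.

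The main obstacle — and the part that genuinely uses $(2,1)$-inseparability rather than mere $3$-connectedness — is ruling out \emph{interference}: one must show that throughout the entire process, the only edges ever added are the intended $e_{\mathrm{out}}$ edges of the links, fired in the intended order. There are two failure modes to exclude. First, a ``shortcut'': a copy of $H$ straddling several links (or a link and the ambient structure) that would let a later link fire early; this is controlled by choosing the link gadget so that any copy of $H$ in the (final) construction is confined to a single link, which in turn follows from the link being built as $H-e$ with a rigid $3$-connected core plus the $(2,1)$-inseparability of $H$ itself (adding any single edge to the union of links along the interface creates \emph{no} new copy of $H$ except the designated one, precisely because deleting the interface — at most $2$ vertices and the edge $e$ — from $H-e$ would have to disconnect it, contradicting $(2,1)$-inseparability). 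Second, one must check the process does \emph{not stabilise early} — that each $e_{\mathrm{out}}$ really does get added once its $e_{\mathrm{in}}$ is present — which is the easy direction, a one-step local computation inside a single link. I expect the interference analysis to require a careful case distinction on how a hypothetical bad copy of $H$ intersects the interface sets between consecutive links, and getting the gadget's interface down to the minimum size (so that the counting gives the claimed exponent and not something weaker) is where the $3k-2$ must be pinned down exactly. Everything else — the Gray-code ordering of tuples, the parameter optimisation, the verification that $G_0$ has the claimed number of vertices — is routine once the single-link lemma and the no-interference lemma are established.
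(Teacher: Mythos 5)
Your general framework (simple ``links'' built from $H-e$ that fire in sequence, properness of single chains via $(2,1)$-inseparability, and a global no-interference argument) matches the paper's chain machinery, but the step that actually produces the exponent $1+\tfrac{2}{3k-2}$ is missing, and the combinatorial structure you propose in its place does not work. In the paper the super-linear gain comes from Construction \ref{const:lin}: one places a simple $H$-chain on each hyperedge of an $L$-uniform \emph{linear} hypergraph of girth at least $k+1$ supplied by Theorem \ref{thm:high girth hyp} (Lazebnik--Ustimenko--Woldar), with $L=\Omega(N^{2/(3k-2)})$ --- this is the sole source of the constant $3k-2$ (it is $3g-5$ with $g=k+1$), not any count of ``degrees of freedom'' of the gadget. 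Non-interference is then genuinely structural: linearity makes the chains pairwise edge-disjoint, and a copy of $H-e$ straddling two chains would, by Lemma \ref{lem:insep edges in cycle} (any two edges of $H-e$ lie on a common cycle, of length at most $k$), force a Berge cycle of length at most $k$ in the hypergraph, contradicting the girth. Finally the chains are joined by Lemma \ref{lem:linking chains} and the running time follows from Lemma \ref{lem:chain_runningtime}; this is Theorem \ref{thm:insep sup lin}.

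Your replacement --- indexing links by $(3k-2)$-tuples over $[m]$ on a ground set of $\Theta(m)$ vertices with a Gray-code ordering --- fails on two counts. Quantitatively, $m^{3k-2}$ links on $\Theta(m)$ vertices is impossible: each link must contribute at least one new edge when it fires, so the number of steps is capped by $\binom{n}{2}$, and the subsequent ``balancing'' you defer to the end is in fact the entire problem; the exponent $1+\tfrac{2}{3k-2}$ is never derived. Qualitatively, nothing in the tuple/Gray-code scheme controls how two links intersect: links whose tuples agree in many coordinates share many vertices, and you give no mechanism preventing their union from containing a spurious copy of $H-e$ (the analogue of the paper's girth argument). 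Saying the gadget is ``rigid'' because $H-e$ is $3$-connected does not address copies of $H-e$ assembled from edges of several different links, which is exactly the failure mode the linear, high-girth incidence structure is there to exclude. So the proposal, as written, has a genuine gap at its core: it lacks any concrete incidence structure with the intersection and girth properties needed both for the counting and for the no-interference lemma.
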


Note that $(2,1)$-inseparability is not necessary for super-linear running time.
For example, the wheel graph $W_k$ for odd $k\geq 7$, which has almost quadratic running time by Theorem \ref{thm:wheel}, can be disconnected by removing two vertices and an edge as it has vertices of degree three. However, in \cite{FMSz2} we gave a partial converse to Theorem \ref{thm:connectivity} showing that if a graph $H$ is $(2,1)$-separable \emph{and} it has the property that it \textit{self-percolates}, that is, $\final{H}_H=K_{v(H)}$, then $M_H(n)$ is at most linear in $n$. In particular, this was used in \cite{FMSz2} to show linear running time for the graphs $H'_k$ from Example \ref{ex:H'k} and for $K_5^-$ which is the graph obtained by removing an edge from $K_5$. 

\subsection{Methods and organisation}
 All of the lower bounds on running times stated above and proved in this paper follow from a general framework of \textit{chain constructions}. We defer the full definition and motivation of these constructions to Section \ref{sec:chains} and simply say here that it is useful to imagine a chain as a sequence of copies of $H$ with consecutive copies intersecting at a single edge. The hope is to create an $H$-process based on a chain that will run through the copies with the completion of one copy triggering the completion of the next copy in the chain at the next time step. Under some technical conditions, principally that the copies of $H$ in the chain do not induce some unwanted copy $F$ of $H-e$ for some $e\in E(H)$, one can show that the corresponding $H$-process behaves as desired and  lasts as long as the sequence of copies of $H$. 

The use of chains for constructions giving lower bounds for maximum running times  dates back to the work of Bollob\'as, Przykucki, Riordan and Sahasrabudhe \cite{bollobas2017maximum}
  on $M_{K_k}(n)$ for $k\geq 5$ who placed chains using a random process. The idea was then used again by Balogh, Kronenberg, Pokrovskiy and Szab\'o \cite{balogh2019maximum} who also introduced a method of linking chains for their lower bound on $M_{K_5}(n)$. Chain constructions have also featured in recent works looking at the analogous question for hypergraph cliques \cite{espuny_diaz_long_2022,hartarsky_maximal_2022,noel_running_2022}. One of the key contributions of our work here is to build a general theory of chain constructions which unifies  previous work and delves into the precise necessary conditions for chain constructions to work to give lower bounds on $M_H(n)$. Indeed, in Section \ref{sec:chains} we will motivate the use of chains and introduce the concept of \textit{proper chains} which are  those for which we can control the corresponding $H$-process. Section \ref{sec:linking} is then devoted to the process of linking chains, building on the initial idea present in \cite{balogh2019maximum}. In both these sections, we produce general lemmas that can be used for many different types of chain construction. We then demonstrate the applicability of this theory by giving three different types of chain construction, namely \textit{ladder chains} (Section \ref{sec:ladder}), \textit{dilation chains} (Section \ref{sec:dilation}) and \textit{line chains} (Section \ref{sec:line}). The first of these builds on the construction of Balogh, Kronenberg, Pokrovskiy and Szab\'o \cite{balogh2019maximum} giving a quadratic lower bound on $M_{K_k}(n)$ for $k\geq 6$. The second two constructions are distinct from previous constructions and introduce new ideas, in particular developing connections with other problems in additive number theory and extremal graph theory  and utilising additive constructions free from solutions to certain equations and constructions of graphs of large girth.  

 The constructions of Sections \ref{sec:ladder}, \ref{sec:dilation} and \ref{sec:line} cover all of the new lower bounds in our results listed above. In fact, in some cases the results are more general and apply to classes of graphs that in particular cover some of the cases listed in the introduction. Hence, after some simple preliminaries in Section \ref{sec:prelims} and before delving into our theory of chains, in Section \ref{sec:graph classes} we explore properties and connections between the different classes of graph $H$ for which our methods apply. After our sections covering chain constructions, in Section \ref{sec:upper} we give proofs of our upper bounds, principally the upper bound for the wheel in Theorem \ref{thm:wheel} which uses the Ruzsa-Szemer\'edi $(6,3)$-theorem \cite{ruzsa_solving_1993}. Finally in Section \ref{sec:conclude} we give some concluding remarks. 

\section{Notation and preliminaries} \label{sec:prelims}

\subsection{Notation} For a graph $H$ and an edge $e\in E(H)$, we use the notation  $H-e$ to denote the graph with vertex set $V(H-e)=V(H)$ and edge set $E(H-e)=E(H)\setminus \{e\}$. Similarly, for $e\in \binom{V(H)}{2}$ (which may or may not be an edge of $H$) the notation $H+e$ denotes the graph with the same vertex set $V(H+e)=V(H)$  and edge set $E(H+e)=E(H)\cup \{e\}$. If $U$ is some set of vertices (not necessarily a subset of $V(H)$), the graph   $H-U$ 
is the graph with vertex set $V(H-U)=V(H)\setminus (V(H)\cap U)$ and all edges induced by $H$ on this vertex set. When $U=\{u\}$ is a single vertex, we drop the set brackets and simply write $H-u$. We also drop set brackets for edges unless necessary, writing for example $e=uw$, instead of the more cumbersome $e=\{u,w\}$.

Given a graph $H$,  vertices $u,w\in V(H)$ and vertex subsets $U,W\subset V(H)$, we have that $N_H(u,U)$ denotes the set of neighbours of $u$ in $U$. If $U=V(H)$, we simply write $N_H(u)$. Similarly, the degree of $u$ in $U$ is  $d_H(u,U):=|N_H(u,U)|$ and $d_H(u)$ if $U=V(H)$. The set $E_H(U,W)$ denotes all edges with one endpoint in $U$ and the other in $W$ (with edges in $H[U\cap W]$ counted once) and $e_H(U,W):=|E_H(U,W)|$. Finally $\dist_H(u,w)$ denotes the distance between $u$ and $w$ in $H$, that is, the number of edges in a shortest path from $u$ to $w$ in $H$. In all of the above, we omit the subscript if the graph $H$ is clear from context. 

Given a set of integers $A$ in some abelian group and $t\in \ZZ$, the set $tA:=\{ta:a\in A\}$ denotes all the dilations of elements in $A$ by $t$. Finally, for $a,b,c,d\in \mathbb{R}$ we write $a=(b\pm c)d$ as shorthand for the  statement that $(b-c)d\leq a\leq (b+c)d$. 

\subsection{Graph bootstrap processes}
Recall that $\final{G}_H$ denotes the final graph of the $H$-process on $G$, that is, the graph  at which the $H$-process  stabilises. 
The following  observation can be proven easily by induction on $i\geq 0$, see \cite[Observation 2.1]{FMSz1}

    \begin{obs}\label{obs:hom}
   Let $\varphi: G \to G'$ be an injective graph homomorphism, and let $(G_i)_{i\geq0}$, $(G'_i)_{i\geq0}$ be the respective $H$-processes on $G$ and $G'$.
    Then $\varphi $ is also a homomorphism from $G_i$ to $G_i'$ for every $i\geq 0$.   In particular, $\varphi(\final{G}_H)\subseteq \final{G'}_H$. 
    \end{obs}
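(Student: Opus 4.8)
The plan is to prove by induction on $i\geq 0$ that any injective graph homomorphism $\varphi: G\to G'$ is also a homomorphism $G_i\to G_i'$. The base case $i=0$ is the hypothesis, since $G_0=G$ and $G_0'=G'$. For the inductive step, assume $\varphi$ is a homomorphism from $G_{i-1}$ to $G_{i-1}'$; I want to show it maps every edge of $G_i$ to an edge of $G_i'$. Since $E(G_i)=E(G_{i-1})\cup\{e : n_H(G_{i-1}+e)>n_H(G_{i-1})\}$, an edge $uw\in E(G_i)$ either already lies in $E(G_{i-1})$ — in which case $\varphi(u)\varphi(w)\in E(G_{i-1}')\subseteq E(G_i')$ by the inductive hypothesis — or it is a newly added edge, meaning $G_{i-1}+uw$ contains a copy of $H$ through $uw$ that is not present in $G_{i-1}$.

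The key step is to transport such a copy of $H$ across $\varphi$. Suppose $F\cong H$ is a subgraph of $G_{i-1}+uw$ containing the edge $uw$. Because $\varphi$ is an \emph{injective} homomorphism, the image $\varphi(F)$ is again a copy of $H$: injectivity guarantees $\varphi$ restricted to $V(F)$ is a bijection onto its image preserving all edges of $F$, so $\varphi(F)\subseteq G_{i-1}'+\varphi(u)\varphi(w)$ and $\varphi(F)\cong H$. Moreover $\varphi(F)$ uses the edge $\varphi(u)\varphi(w)$, and since $F$ genuinely needed the edge $uw$ (it was not a copy of $H$ inside $G_{i-1}$), one checks that $\varphi(u)\varphi(w)\notin E(G_{i-1}')$: indeed if it were an edge of $G_{i-1}'$ then, pulling back along $\varphi$ and using injectivity, $uw$ would have to be an edge of $G_{i-1}$, contradicting that $uw$ was newly added. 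Hence $\varphi(F)$ is a copy of $H$ in $G_{i-1}'+\varphi(u)\varphi(w)$ not contained in $G_{i-1}'$, so $n_H(G_{i-1}'+\varphi(u)\varphi(w))>n_H(G_{i-1}')$ and therefore $\varphi(u)\varphi(w)\in E(G_i')$. This completes the induction.

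Finally, for the last assertion: the process on $G$ stabilises at $G_\tau=\final{G}_H$ and the process on $G'$ stabilises at $\final{G'}_H=G'_{\tau'}$. Taking $i$ large enough that both processes have stabilised (say $i\geq\max(\tau,\tau')$), the statement just proved gives that $\varphi$ is a homomorphism from $G_i=\final{G}_H$ to $G_i'=\final{G'}_H$, which is exactly the claim $\varphi(\final{G}_H)\subseteq\final{G'}_H$ (interpreting the image as a subgraph). I expect the only genuinely delicate point to be the bookkeeping around injectivity — ensuring both that images of copies of $H$ are honest copies of $H$ (not degenerate homomorphic images) and that a \emph{newly} added edge maps to a \emph{newly} added edge — but this is routine once injectivity is used in both directions as above.
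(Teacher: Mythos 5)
Your overall strategy --- induction on $i$, transporting a newly created copy of $H$ through the injective homomorphism --- is exactly the straightforward induction the paper has in mind (the paper itself only sketches this and cites its predecessor), and the skeleton of your argument is sound. However, one step as written is incorrect: you claim that if $\varphi(u)\varphi(w)\in E(G'_{i-1})$ then, ``pulling back along $\varphi$ and using injectivity'', $uw$ would have to be an edge of $G_{i-1}$. An injective graph homomorphism preserves edges but does not \emph{reflect} them: nothing prevents $\varphi$ from mapping a non-edge of $G_{i-1}$ onto an edge of $G'_{i-1}$ (for instance, the map sending two isolated vertices onto the two endpoints of an edge is an injective homomorphism). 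So you cannot conclude that $\varphi(u)\varphi(w)\notin E(G'_{i-1})$, and in general this can fail.

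Fortunately that claim is not needed, and the repair is one line. The induction step only requires $\varphi(u)\varphi(w)\in E(G'_i)$. If $\varphi(u)\varphi(w)$ already lies in $E(G'_{i-1})$, then it lies in $E(G'_i)$ because the process never deletes edges, and you are done. If it does not, then your transported copy $\varphi(F)\cong H$ sits inside $G'_{i-1}+\varphi(u)\varphi(w)$ and uses the pair $\varphi(u)\varphi(w)$, which is not an edge of $G'_{i-1}$; hence this copy is absent from $G'_{i-1}$, so $n_H\left(G'_{i-1}+\varphi(u)\varphi(w)\right)>n_H(G'_{i-1})$ and the edge is added at time $i$. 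With this case distinction replacing the faulty pull-back argument, the induction goes through, and your final step (taking $i\geq\max(\tau,\tau')$ so that $G_i=\final{G}_H$ and $G'_i=\final{G'}_H$, whence $\varphi(\final{G}_H)\subseteq\final{G'}_H$) is fine.
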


An immediate consequence of Observation \ref{obs:hom}  is that if $ G' \subseteq G$  and $(G'_i)_{i\geq0}$ and $(G_i)_{i\geq0}$ are the respective $H$-processes on $G'$ and $G$, 
    then $G'_i\subseteq G_i$ for every $i\geq 0$.
We say that a graph $G$ is $H$\emph{-stable} if $\final{G}_H=G$ and 
 we call $H$ \emph{self-stable} if $\final{H}_H=H$. Clearly cliques and complete bipartite-graphs are self-stable. Moreover the following are examples of self-stable graphs. 

\begin{obs} \label{obs:self-stable}
    Let $H$ be a wheel  with at least $5$ vertices or the cube graph $Q_3$. Then $\final{H}_H=H$. 
\end{obs}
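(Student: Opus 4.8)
The plan is to show that for each relevant $H$ — namely a wheel $W_k$ with $k \geq 4$ (so $v(H) = k+1 \geq 5$) and the cube $Q_3$ — there is no edge $e \in \binom{V(H)}{2} \setminus E(H)$ whose addition to $H$ creates a new copy of $H$. By the definition of the $H$-process, if no non-edge can be added at the first step, then $H_1 = H_0 = H$, so $\final{H}_H = H$ and $H$ is self-stable. Thus the entire task reduces to the following finite check: for every non-adjacent pair $\{u,w\}$ in $H$, verify that $H + uw$ contains no copy of $H$ other than (necessarily) as a spanning subgraph that uses the edge $uw$ — equivalently, that $H$ itself is not a subgraph of $H + uw$ using $uw$, i.e.\ that $H - e \not\subseteq H$ for the appropriate $e$. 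More precisely, a new copy of $H$ in $H + uw$ must use the edge $uw$ (otherwise it already lived in $H$ and is not new); deleting $uw$ from that copy exhibits a subgraph of $H$ isomorphic to $H - f$ for some edge $f$ of $H$, where moreover the two endpoints of $f$ are non-adjacent in the host $H$. So it suffices to show: for every edge $f \in E(H)$, the graph $H - f$ does not embed into $H$ in such a way that the image of the two endpoints of $f$ is a non-edge of $H$.

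For the wheel $W_k$ (hub $c$ adjacent to all rim vertices, rim a cycle $v_1 \cdots v_k$), I would argue by a degree/structure count. The hub has degree $k$, which is strictly larger than every rim degree ($= 3$) as soon as $k \geq 4$, so in any embedding of $W_k$ into itself the hub must map to the hub, and the rim cycle must map to the rim cycle; since both rims are cycles of the same length, $W_k$ is in fact rigid (its only self-embedding is an automorphism). Now $W_k - f$ for $f$ a spoke has a vertex of degree $k-1$ (the hub lost a neighbour) and one rim vertex of degree $2$; for $f$ a rim edge, all spokes remain and two rim vertices drop to degree $2$. In either case, reattaching the missing edge inside $W_k$ would require the non-edge $uw$ of the host to be a spoke or a rim chord — but every pair at the hub is already an edge, and rim chords are the only non-edges, while adding a rim chord cannot recreate the cycle structure needed for a wheel. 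I would make this precise by noting that in any copy of $W_k$ inside $W_k + uw$ the hub image has degree $k$, forcing it to be $c$ (the added edge $uw$ does not touch $c$, since $c$ is already universal), hence the rim image is a Hamilton cycle of $W_k + uw$ on the rim vertices using the chord $uw$ — impossible, since the rim vertices induce exactly one cycle (the rim) plus the single chord $uw$, and a graph that is a cycle plus one chord contains no spanning cycle other than the original, which does not use $uw$.

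For $Q_3$ the check is even more concrete: $Q_3$ is $3$-regular on $8$ vertices, bipartite with parts of size $4$, girth $4$, and vertex-transitive with a well-understood automorphism group. A new copy of $Q_3$ in $Q_3 + uw$ must use $uw$; since $Q_3$ is $3$-regular and $Q_3 + uw$ has exactly two vertices ($u$ and $w$) of degree $4$ and the rest of degree $3$, any $3$-regular spanning subgraph using $uw$ would have to drop one further edge at $u$ and one at $w$. One then checks that $uw$ is either a ``diagonal of a face'' (distance $2$) or the ``main diagonal'' (distance $3$) in $Q_3$, and in both cases a short case analysis — or invoking that $Q_3 + uw$ is no longer bipartite when $uw$ joins two vertices at odd distance, and when $uw$ joins vertices at distance $2$ it creates a triangle-free-violating or $C_3$/$C_5$ obstruction — shows no copy of $Q_3$ arises. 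I expect the main (though still entirely routine) obstacle to be organising the $Q_3$ case cleanly: one wants a slick invariant rather than brute enumeration, and bipartiteness handles the distance-$3$ diagonal immediately, while for distance-$2$ non-edges the cleanest route is to observe that $Q_3 + uw$ then contains two triangles through $uw$ but $Q_3$ is triangle-free, so no subgraph isomorphic to $Q_3$ can contain the edge $uw$. This disposes of all cases and completes the proof.
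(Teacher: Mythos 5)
Your reduction is the same as the paper's: since the host has only $v(H)$ vertices, a new copy of $H$ in $H+e_1$ must be a spanning subgraph with $e(H)$ edges containing the non-edge $e_1$, i.e.\ it is exactly $H-e_0+e_1$ for a single edge $e_0\in E(H)$; the paper then kills both cases in one line by noting that at least one endpoint of $e_1$ is not an endpoint of $e_0$, hence has degree $4$ in $H-e_0+e_1$, while $H$ has no vertex of degree $4$. Your $Q_3$ discussion sets up precisely this (3-regularity forces dropping an edge at $u$ and an edge at $w$) but then does not close it, and the two shortcuts you substitute are both wrong. Adding an edge between vertices at \emph{odd} distance preserves bipartiteness: the antipodal pair lies in different parts, and $Q_3$ plus a main diagonal is a subgraph of $K_{4,4}$, so bipartiteness says nothing about the distance-$3$ non-edge; it is the distance-$2$ chords that create odd cycles. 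And the triangle argument is a non sequitur: a copy of $Q_3$ through $uw$ is a subgraph and need not contain the host's triangles through $uw$ (it may omit their other edges), so triangle-freeness of $Q_3$ does not forbid $uw$ from lying in such a copy. The correct finish was one line from where you stood: the copy has $12=e(Q_3+uw)-1$ edges, so it omits exactly one edge of $Q_3$, and to bring both $u$ and $w$ down to degree $3$ that omitted edge would have to be $uw$ itself, which is not an edge of $Q_3$.

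Your wheel argument, where it works, is a genuinely different and correct route from the paper's degree count: force the hub of the copy onto $c$, then observe that no Hamilton cycle of $C_k$ plus one chord uses the chord. But the forcing step ``the hub image has degree $k$, hence equals $c$'' requires $k\ge 5$: for $k=4$ the chord endpoints also have degree $4$ in $W_4+uw$, and the hub of a new copy really can be such an endpoint --- $W_4-cv_1+v_1v_3$ is a wheel with hub $v_3$ --- so the $5$-vertex wheel is in fact not self-stable and your claim ``as soon as $k\ge 4$'' cannot be repaired. (The paper's own degree-$4$ argument likewise tacitly assumes the wheel has no degree-$4$ vertex, i.e.\ $k\ge 5$; its applications only use $k\ge 7$ and $Q_3$, so this is a boundary issue with the statement rather than with you alone.) To make your proposal sound, restrict the wheel case to $k\ge 5$ and replace the $Q_3$ shortcuts with the one-omitted-edge degree count above.
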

\begin{proof}
   In both cases, fixing $e_1\in \binom{V(H)}{2}\setminus E(H)$ and any $e_{0}\in E(H)$, we have that $H-e_0+e_1$ has a vertex of degree 4 and so is not a copy of $H$. This shows that $e_1$ cannot complete a copy of $H$ with edges of $H$ and so $H$ is indeed self-stable. 
\end{proof}

By Observation \ref{obs:hom}, any $H$-stable graph containing $G$ must also contain every graph of the $H$-process on $G$. This implies the following observation. 

\begin{obs} \label{obs:final}
   If $G$ is an $n$-vertex graph, then 
   \[\langle G \rangle_H=\bigcap\big\{G':G' \mbox{ is a } H\mbox{-stable graph on } V(G) \mbox{ with }G\subseteq G' \big\},\]
 where here the intersection of a collection of $n$-vertex graphs refers to the graph obtained by intersecting the edge sets of the graphs in the collection.   
   \end{obs}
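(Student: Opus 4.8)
The plan is to prove the two inclusions separately, both of which I expect to follow almost immediately from the monotonicity statement recorded right after Observation~\ref{obs:hom}: if $G\subseteq G'$ then $G_i\subseteq G_i'$ for every $i\geq 0$, where $(G_i)_{i\geq 0}$ and $(G_i')_{i\geq 0}$ are the respective $H$-processes.

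For the inclusion $\langle G\rangle_H\supseteq\bigcap\{\dots\}$, the idea is simply to observe that $\langle G\rangle_H$ is itself one of the graphs appearing in the intersection. It contains $G$ since the process only ever adds edges, so $G=G_0\subseteq G_{\tau_H(G)}=\langle G\rangle_H$. It is $H$-stable: writing $\tau=\tau_H(G)$, the defining equality $G_\tau=G_{\tau+1}$ propagates by determinism of the process to $G_{\tau+j}=G_\tau$ for all $j\geq 0$, so the $H$-process started from $G_\tau$ is the constant sequence and $\langle G_\tau\rangle_H=G_\tau$. Hence the intersection, taken over a family that includes $\langle G\rangle_H$, is contained in $\langle G\rangle_H$.

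For the reverse inclusion, I would fix an arbitrary $H$-stable graph $G'$ on $V(G)$ with $G\subseteq G'$ and argue $\langle G\rangle_H\subseteq G'$: by the monotonicity above $G_i\subseteq G_i'$ for all $i$, while $H$-stability of $G'$ gives $G_i'=G'$ for all $i$, so $\langle G\rangle_H=G_\tau\subseteq G'$. As $G'$ ranged over all such graphs, $\langle G\rangle_H$ lies in the intersection, completing the argument.

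I do not anticipate any genuine obstacle here; the only point requiring a moment's care is the (routine) verification that $\langle G\rangle_H$ really belongs to the intersected family — that it both contains $G$ and is $H$-stable — since this is exactly what powers the ``$\supseteq$'' direction, and both facts are immediate from the definitions in Section~\ref{sec:prelims}.
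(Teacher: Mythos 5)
Your proof is correct and follows essentially the same route as the paper, which derives the observation directly from the monotonicity consequence of Observation \ref{obs:hom} (any $H$-stable graph containing $G$ contains every graph of the $H$-process) together with the fact that $\langle G\rangle_H$ itself contains $G$ and is $H$-stable. Your write-up just makes these two immediate verifications explicit.
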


\subsection{Additive constructions} \label{sec:additive}
In some of our constructions, we  use sets of integers that are free to solutions of certain additive equations. Given an abelian group $\Gamma$ and an equation $E$ of the form 
\begin{equation} \label{eq:eq}
    \sum_{i=1}^h\alpha_ix_i=0, 
\end{equation}
with $\alpha_i\in \mathbb{Z}$ for $i\in [h]$, we say a solution $(x_1,\ldots,x_h)\in \Gamma^h$ to the equation is \emph{trivial} if there is some partition of $[h]$ as $[h]=T_1\cup \cdots \cup T_{\ell}$  such that for each $j\in [\ell]$ we have that $\sum_{i\in T_j}\alpha_i=0$  and $x_i=x_{i'}$ for all $i\neq i'\in T_j$. 
We denote by $r_E(\Gamma)$ the size of a largest subset of $\Gamma$ that is free from non-trivial solutions to the equation $E$ and similarly, $r_E(N)$ denotes the size of the largest subset of $[N]$  free from non-trivial solutions to $E$.

The problem of determining these extremal functions encapsulates many well studied problems in additive combinatorics. For example, when $E$ is the equation $x_1+x_2-x_3-x_4=0$, the only trivial solutions are when $\{x_1,x_2\}=\{x_3,x_4\}$ and a subset free of non-trivial solutions is called a \emph{Sidon set}. It is known that the largest Sidon set $S\subset [N]$ has size $r_E(N)=N^{1/2}(1+o(1))$ with the upper bound due to Erd\H{o}s and Tur\'an \cite{erdos_problem_1941} and the lower bound being a construction of Singer \cite{singer1938theorem}. When the equation $E$ is $x_1+x_2-2x_3=0$, non-trivial solutions are precisely 3-term arithmetic progressions with non-zero common difference. In this case, Behrend's famous construction \cite{behrend1946sets} gives that $r_E(N)\geq N^{1-O(1/\sqrt{\log N})}=N^{1-o(1)}$ whilst recent breakthrough results \cite{bloom2023improvement,kelley2023strong}  have shown that $r_E(N)\leq N^{1-\Omega(1/(\log N)^{8/9})}$. In general, Ruzsa \cite{ruzsa_solving_1993} initiated a systematic study of the behaviour of $r_E(N)$ and its dependence on the defining equation $E$. In Section \ref{sec:dil add} we will revisit the methods of Ruzsa to give sets simultaneously avoiding non-trivial solutions to several equations at once.

\subsection{Chernoff's inequality} \label{sec:chernoff}
We will use Chernoff's inequality  which we give in the following form (see~\cite[Theorem 2.1, Corollary 2.4 and Theorem 2.8]{Janson2011}).

\begin{thm}[Chernoff bounds] \label{thm:chernoff}
Let~$X$ be the sum of a set of mutually  independent Bernoulli random variables and let~$\lambda=\EE[X]$. Then for any~$0<\delta<\tfrac{3}{2}$, we have that 
\[\PP[X\geq (1+\delta)\lambda]\leq  e^{-\delta^2\lambda/3 } \hspace{2mm} \mbox{ and } \hspace{2mm} \PP[X\leq (1-\delta)\lambda] \leq  e^{-\delta^2\lambda/2 }.\]
Furthermore, if~$x\geq 7 \lambda$, then~$\PP[X\geq x]\leq e^{-x}$.
\end{thm}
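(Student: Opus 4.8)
The plan is to run the classical exponential-moment (Chernoff) argument. Write $X=\sum_i X_i$ with $X_i\sim\mathrm{Bernoulli}(p_i)$ mutually independent and $\lambda=\EE[X]=\sum_i p_i$; we may assume $\lambda>0$, since $\lambda=0$ forces $X=0$ and all three bounds are then trivial. For $t>0$, Markov's inequality applied to the nonnegative random variable $e^{tX}$ gives $\PP[X\ge a]\le e^{-ta}\EE[e^{tX}]$, and independence together with $1+x\le e^x$ yields $\EE[e^{tX}]=\prod_i\big(1+p_i(e^t-1)\big)\le e^{(e^t-1)\lambda}$. Hence $\PP[X\ge a]\le\exp\!\big(-ta+(e^t-1)\lambda\big)$ for every $t>0$, and symmetrically (working with $e^{-tX}$) $\PP[X\le a]\le\exp\!\big(ta+(e^{-t}-1)\lambda\big)$; everything else is a matter of choosing $t$ well and estimating.

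For the upper tail I would put $a=(1+\delta)\lambda$ and optimise the exponent in $t$, which gives $t=\ln(1+\delta)$ and the bound $\PP[X\ge(1+\delta)\lambda]\le\big(e^{\delta}(1+\delta)^{-(1+\delta)}\big)^{\lambda}$; it then remains to verify the elementary inequality $(1+\delta)\ln(1+\delta)-\delta\ge\delta^2/3$ for $0<\delta<\tfrac32$. For the lower tail, taking $a=(1-\delta)\lambda$ with $0<\delta<1$, the optimal choice is $t=-\ln(1-\delta)>0$, giving $\PP[X\le(1-\delta)\lambda]\le\big(e^{-\delta}(1-\delta)^{-(1-\delta)}\big)^{\lambda}$, and I would then need $(1-\delta)\ln(1-\delta)+\delta-\delta^2/2\ge0$ on $(0,1)$; for $\delta\in[1,\tfrac32)$ the left-tail bound is vacuous because $(1-\delta)\lambda\le0\le X$. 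For the final assertion I would not optimise at all: plugging $t=2$ into $\PP[X\ge x]\le\exp(-tx+(e^t-1)\lambda)$ and using $e^2-1<7\le x/\lambda$ gives exponent $-2x+(e^2-1)\lambda\le-2x+x=-x$, which is exactly $\PP[X\ge x]\le e^{-x}$.

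The routine but slightly delicate part is the two elementary inequalities, and I expect the first, $f(\delta):=(1+\delta)\ln(1+\delta)-\delta-\delta^2/3\ge0$ on $[0,\tfrac32]$, to be the main obstacle, because $f$ is not convex on the whole interval so a one-line convexity argument does not apply. I would handle it by noting $f(0)=f'(0)=0$ and $f''(\delta)=\tfrac1{1+\delta}-\tfrac23$, which is positive for $\delta<\tfrac12$ and negative afterwards; hence $f'$ rises from $0$ and then falls, so $f$ increases up to a single interior critical point and then decreases, and combined with $f(0)=0$ and a single endpoint check $f(\tfrac32)>0$ this forces $f\ge0$ throughout. The second inequality is easier: its function $g(\delta)=(1-\delta)\ln(1-\delta)+\delta-\delta^2/2$ has $g(0)=g'(0)=0$ and $g''(\delta)=\delta/(1-\delta)\ge0$ on $(0,1)$, so convexity alone gives $g\ge0$.
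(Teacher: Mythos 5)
The paper does not prove this statement at all: it is quoted verbatim from Janson, \L{}uczak and Ruci\'nski (Theorem 2.1, Corollary 2.4 and Theorem 2.8 there), so there is no in-paper argument to compare against. Your proposal is the standard exponential-moment proof that underlies those cited results, and it is essentially correct: the MGF bound $\EE[e^{tX}]\le e^{(e^t-1)\lambda}$, the optimised choices $t=\ln(1+\delta)$ and $t=-\ln(1-\delta)$, the two elementary inequalities $(1+\delta)\ln(1+\delta)-\delta\ge\delta^2/3$ on $[0,\tfrac32]$ (your shape argument with $f(0)=f'(0)=0$, the sign change of $f''$ at $\delta=\tfrac12$ and the endpoint check $f(\tfrac32)>0$ is valid) and $(1-\delta)\ln(1-\delta)+\delta\ge\delta^2/2$ on $(0,1)$ by convexity, and the non-optimised choice $t=2$ with $e^2-1<7\le x/\lambda$ for the last assertion all go through. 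The only slip is your dismissal of the lower tail for $\delta\in[1,\tfrac32)$ as ``vacuous'': at $\delta=1$ the event is $\{X\le 0\}=\{X=0\}$, which can have positive probability, so the bound is not vacuous there; it nevertheless holds, since $\PP[X=0]=\prod_i(1-p_i)\le e^{-\lambda}\le e^{-\delta^2\lambda/2}$, and for $\delta>1$ (with $\lambda>0$) the event is indeed impossible. With that one-line patch the proof is complete and is, for all practical purposes, the same argument as in the cited source.
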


\section{Graph classes} \label{sec:graph classes}

Our chain constructions give lower bounds on maximum running times $M_H(n)$ for a wide range of different graphs $H$. Indeed, some of our results discussed in the introduction are in fact more general than stated there and apply to larger classes of graphs $H$ with certain properties.  Although the definitions of these properties are tailored to our methods and what we are able to prove, the resulting graph classes are easy to define and have some interesting consequences and connections to other concepts. 
In this section, we explore simple properties of the classes of graphs $H$ we are interested in, first looking at \textit{inseparable} graphs and then introducing and exploring    \textit{Behrendian} and \textit{Sidonian} graphs. Finally, we derive properties of the random graph (which is simply a distribution on the class of all graphs) that we need to prove Theorem \ref{thm:random}.

\subsection{Inseparable graphs} \label{sec:inseparable}

Recall from Definition \ref{def:inseparable} that we say a graph $H$ is $(2,1)$-inseparable if  we cannot disconnect $H$ by removing an edge and at most two vertices. We remark that it  is implicit in the definition that a  $(2,1)$-inseparable graph is necessarily non-empty and has $v(H)\geq 4$. Moreover, as deleting an edge can only decrease the connectivity, we have that $(2,1)$-inseparable graphs are themselves 3-connected. As the only 3-connected graph $H$ with $v(H)=4$  is $K_4$ and deleting an edge from $K_4$ gives a graph with a vertex cut of size 2, we in fact have that any $(2,1)$-inseparable graph $H$ necessarily has $v(H)\geq 5$. 

We also have that 
any 4-connected graph is $(2,1)$-inseparable as if one could remove an edge and two vertices to get a separated graph, then removing those two vertices and an appropriate endpoint of the edge will also disconnect the graph.  Thus the class of $(2,1)$-inseparable graphs lies between 3-connected and 4-connected graphs. In particular, this has the following consequences.

    \begin{obs}\label{obs:mindegree_inseparable and square ham} 
    A graph $H$ with $v(H)=k\geq 5$ is $(2,1)$-inseparable if: 
      \begin{enumerate}[label=(\roman*)]
          \item \label{insep:min deg} $\delta(H)\geq k/2 + 1$; or 
          \item \label{insep:sq ham} $H$  contains the square of a Hamilton cycle.
      \end{enumerate} 
    \end{obs}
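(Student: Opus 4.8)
The plan is to verify each of the two sufficient conditions separately, in both cases reducing the claim to a statement about $3$-connectivity of $H-e$ for an arbitrary edge $e$. Recall that $(2,1)$-inseparability asks precisely that $H-e$ is $3$-connected for every $e \in E(H)$, so fix an arbitrary edge $e = xy \in E(H)$ and set $H' := H - e$, which has $v(H') = k \geq 5$. We must show $H'$ is $3$-connected, i.e.\ that it remains connected after deleting any two vertices.

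\textbf{Case (i): minimum degree condition.} Suppose $\delta(H) \geq k/2 + 1$. Deleting the edge $e$ drops the degrees of $x$ and $y$ by one, so $\delta(H') \geq k/2$. A standard fact (going back to essentially Chv\'atal--Erd\H{o}s type reasoning, or just a direct argument) is that a graph on $k$ vertices with minimum degree at least $k/2$ is connected, and in fact $(2\delta - k + 2)$-connected is not quite what we want here; the cleanest route is: if $S$ is a vertex cut of $H'$ with $|S| \leq 2$, then $H' - S$ has at least two components, say with vertex sets $A$ and $B$. Every vertex $a \in A$ has all its $H'$-neighbours inside $A \cup S$, so $d_{H'}(a) \leq |A| - 1 + |S| \leq |A| + 1$, giving $|A| \geq \delta(H') - 1 \geq k/2 - 1$; symmetrically $|B| \geq k/2 - 1$. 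Then $k = v(H') \geq |A| + |B| + |S| \geq (k/2 - 1) + (k/2 - 1) + 0 = k - 2$, which is not yet a contradiction, so one must be slightly more careful and use $|S| \le 2$ together with the fact that at most one of the (at most two) vertices $x,y$ whose degree was reduced can lie in a fixed component; I would instead argue directly that for any two vertices $u, v$, we have $|N_{H'}(u)| + |N_{H'}(v)| \geq k - 2 + \text{(something)}$ forcing a common neighbour outside $\{u,v\}$ once we also delete up to two more vertices — concretely, after deleting any two vertices the remaining graph on $k - 2$ vertices still has minimum degree at least $k/2 - 2 > (k-2)/2 - 1$, and a graph on $m$ vertices with minimum degree $> (m-2)/2$... again I would fall back to the robust statement that minimum degree $\geq k/2$ forces Hamiltonicity (Dirac) when $k/2 \geq k/2$, hence $H'$ is Hamiltonian hence $2$-connected, and then push one step further for $3$-connectivity. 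The point I want to land on: with $\delta(H') \ge k/2$ and $k \ge 5$, deleting any single vertex leaves minimum degree $\ge k/2 - 1 \ge (k-1)/2$, so $H' - w$ is Hamiltonian (Dirac) and in particular connected for every $w$; since also $H'$ itself is connected, $H'$ is $2$-connected, and deleting two vertices leaves minimum degree $\geq k/2 - 2 \ge (k-2)/2 - 1$ on $k - 2$ vertices, which for $k \geq 7$ is $\geq (k-2)/2$ so again Hamiltonian, and the small cases $k \in \{5,6\}$ I would check by hand (for $k=5$, $\delta(H) \ge 4$ forces $H = K_5$ and $H' = K_5 - e$ which is $3$-connected; for $k = 6$, $\delta(H) \ge 4$).

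\textbf{Case (ii): square of a Hamilton cycle.} Suppose $H$ contains the square $C_k^2$ of a Hamilton cycle on vertices $v_1, \dots, v_k$ in cyclic order. I would first observe that it suffices to prove $C_k^2 - e$ is $3$-connected for every edge $e$ of $C_k^2$, because deleting further edges and vertices only goes up in the subgraph order and $3$-connectivity of a spanning subgraph implies $3$-connectivity of the supergraph — wait, that direction is correct: if a spanning subgraph is $3$-connected then so is the graph, so it is enough to handle $H = C_k^2$. Actually more care is needed: $e$ might be an edge of $H$ that is \emph{not} an edge of $C_k^2$; but then $H - e \supseteq C_k^2$ is still $3$-connected provided $C_k^2$ itself is $3$-connected, which it is (for $k \geq 5$ it is $4$-regular and one checks connectivity directly). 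So the only genuine work is: for $e \in E(C_k^2)$, show $C_k^2 - e$ is $3$-connected. Here $e$ is either a ``short'' chord $v_i v_{i+1}$ or a ``long'' chord $v_i v_{i+2}$; by symmetry fix $e = v_1 v_2$ (the cycle edge case) or $e = v_1 v_3$. One then shows that after removing any two vertices $S$ the graph stays connected, by exhibiting explicit paths through the remaining cycle-and-chord structure — the key mechanism being that along the $k$-cycle, consecutive runs of length $\le 3$ stay connected even after two deletions because the square provides ``skip'' edges. I would present this as: the vertices not in $S$ can be listed in cyclic order, and between consecutive surviving vertices there is an edge of $C_k^2 - e$ unless they are ``far'' and we routed around $e$, and a short case analysis on where the (at most two) missing vertices and the single missing edge fall finishes it; $k \geq 5$ (so $k - 2 \geq 3$) guarantees enough room.

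\textbf{Main obstacle.} The conceptual content is light — both cases are ``well-known-ish'' structural facts — so the real difficulty is purely bookkeeping: getting the off-by-one estimates right in Case (i) (the naive count gives $k \geq k - 2$, not a contradiction, so one must genuinely use $|S| \le 2$ rather than $|S| \le 1$, or invoke Dirac-type Hamiltonicity as a black box), and organising the case split in Case (ii) cleanly enough that it does not balloon. I expect the authors will simply cite or sketch these, and I would do the same: state that (i) follows since $\delta(H') \ge k/2$ makes $H'$ and all its single-vertex deletions Hamiltonian (Dirac), handling $k \le 6$ separately, and that (ii) follows from direct inspection of the $4$-regular structure of $C_k^2$, noting $3$-connectivity is inherited by supergraphs.
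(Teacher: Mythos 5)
Your case (i) contains a genuine gap. The Dirac-based route you settle on rests on false inequalities: with $\delta(H')\geq k/2$, deleting one vertex leaves minimum degree at least $k/2-1=(k-2)/2$, which is strictly \emph{below} the threshold $(k-1)/2$ you claim it meets, and after deleting two vertices the bound $k/2-2$ is strictly below $(k-2)/2$, not at least it as you assert; so neither Hamiltonicity nor even connectedness of $H'-S$ follows from a minimum-degree count alone. This is not mere sloppiness that a constant fixes: the two endpoints of the deleted edge $e$ really can have degree $k/2-2$ in $H'-S$, below half the order $k-2$, so any correct argument must use that only those two vertices are deficient and that they are joined by $e$ in $H$ — exactly the extra bookkeeping you flag when your cut-counting attempt stalls at ``$k\geq k-2$'' and then never carry out (the count does close if one notes that a small component can contain at most one endpoint of $e$ whose partner lies outside it, and treats the component $\{x\}$ separately using $k\geq 5$). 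The small cases $k\in\{5,6\}$ are likewise only announced. Case (ii) is sounder in outline: the reduction to showing $C_k^2-e$ is $3$-connected (plus $3$-connectivity of $C_k^2$ itself when $e\notin E(C_k^2)$) is legitimate, but the decisive case analysis is only promised, not performed, and ``$4$-regular'' by itself does not certify connectivity.

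For comparison, the paper never deletes the edge at all: it proves that $H$ is $4$-connected and invokes its earlier remark that $4$-connected graphs are $(2,1)$-inseparable (if deleting two vertices and an edge disconnected $H$, deleting those two vertices and a suitable endpoint of the edge would too). Concretely, for any $U\subseteq V(H)$ with $|U|=k-3$ it shows $H[U]$ is connected: under $\delta(H)\geq k/2+1$, a smallest component $W$ of $H[U]$ would contain a vertex of degree at most $|W|-1+3\leq \lfloor (k-3)/2\rfloor+2<k/2+1$; and when $H$ contains the square of a Hamilton cycle, of the two arcs between $u,w\in U$ one misses at most one vertex of $U$, and the square edges let the path skip it. Working with $H$ rather than $H-e$ keeps the full minimum degree available and sidesteps precisely the boundary failure that breaks your Dirac argument; if you want to keep your direct ``$H-e$ is $3$-connected'' formulation, you must repair the cut-counting argument with the endpoint-of-$e$ tracking above and actually write out the $C_k^2-e$ analysis.
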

  \begin{proof}
    Let $U\subseteq V(H)$ be an arbitrary subset of $k-3$ vertices.
    We will show that $H[U]$ is connected and thus $H$ is $4$-connected and $(2,1)$-inseparable.
   In the case that $H$ satisfies \ref{insep:min deg}, if  $H[U]$ is disconnected then  there exists $W \subseteq U$ such that $|W|\leq |U|/2$ and $H[W]$ is a connected component of $H[U]$.
    However, then for any $u\in W$,
        \[
        d_H(u) \leq d_{H[W]}(u) + 3 \leq |W|-1 + 3 \leq \floor*{(k-3)/2} + 2 < k/2 + 1,
        \]
    which contradicts our assumption on $\delta(H)$. In the case that $H$ satisfies \ref{insep:sq ham}, for $u,w\in U$ arbitrary, we have two  paths between $u$ and $w$ given by traversing the Hamilton cycle in different directions. One of these paths, say $P$, has at most one vertex not in $U$ and so there is a path from $u$ to $w$ in $H[U]$ by traversing $P$ using an edge between two vertices of distance 2 in $P$ to pass over the vertex of $P$ missing in $U$ (if necessary). Thus $H[U]$ is indeed connected. 
    \end{proof}

\subsubsection{$(1,1,1)$-inseparable graphs} \label{sec:bip insep}
As discussed in the introduction, we are particularly interested in \textit{bipartite} graphs as for such graphs we have upper bounds coming from the extremal number (Proposition \ref{prop:bipartite_extremalnumber}). In the context of inseparable graphs, many of our proofs, in particular our super-linear lower bound for inseparable graphs (Theorem \ref{thm:connectivity}) extend to bipartite graphs (see Theorem \ref{thm:insep sup lin}) under slightly weaker conditions. We make the following definition which captures this.

\begin{dfn}[$(1,1,1)$-inseparable] \label{def:bip insep} A  bipartite graph $H$ is \emph{$(1,1,1)$-inseparable}  if
it  cannot be disconnected by removing an edge and at most one vertex from each partite set.
\end{dfn}

Again, implicit in the definition of $(1,1,1)$-inseparability is the fact that we only consider non-empty  graphs to be $(1,1,1)$-inseparable. Moreover, $(1,1,1)$-inseparable graphs must have at least 2 vertices in each part (otherwise we disconnect the graph by removing just one vertex) and in fact all $(1,1,1)$-inseparable graphs $H$ have $v(H)\geq 5$ as the only candidate for such a 4-vertex graph is $C_4$ which is not $(1,1,1)$-inseparable.

Note that the definition of $(1,1,1)$-inseparable  matches Definition \ref{def:inseparable} except that in the case that we can disconnect by removing one edge and two vertices, in the bipartite case we will only consider this to be valid if the two chosen vertices are in different parts of the bipartition. There are graphs for which this makes a difference. In particular, both the 3-dimensional cube $Q_3$ and the complete bipartite graph $K_{3,s}$ with $s\geq 3$ are  \textit{not} $(2,1)$-inseparable as they have vertices with degree 3. On the other hand, all these graphs \emph{are} $(1,1,1)$-inseparable.



\begin{lem} \label{lem:bip dense insep}
    Let $3\leq r\leq s$ and suppose $H$ is a bipartite graph with parts $X,Y$ with $|X|=r$, $|Y|=s$ and such that $d(x)\geq s/2+1$ for all $x\in X$ and $d(y)\geq r/2+1$ for all $y\in Y$. Then $H$ is $(1,1,1)$-inseparable.  \end{lem}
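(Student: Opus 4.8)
The plan is to follow the template of Observation 3.3 almost verbatim, but adapted to the bipartite setting and the weaker separation we need to rule out. Recall that to show $H$ is $(1,1,1)$-inseparable I must show that deleting one edge $e$ together with at most one vertex $x_0\in X$ and at most one vertex $y_0\in Y$ cannot disconnect $H$. So fix such a deletion and let $X' := X\setminus\{x_0\}$, $Y' := Y\setminus\{y_0\}$, so $|X'|\geq r-1$ and $|Y'|\geq s-1$; let $H'$ be the graph obtained from $H[X'\cup Y']$ by additionally removing the edge $e$ (if both its endpoints survive). I want to show $H'$ is connected.

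First I would handle connectivity ignoring the edge deletion, i.e. show $H[X'\cup Y']$ is connected, and then deal with the single missing edge at the end. Suppose for contradiction that $H[X'\cup Y']$ is disconnected, and let $C$ be a connected component containing at most half of each part is too strong a claim; instead, let $C$ be a smallest connected component and write $C_X := C\cap X$, $C_Y := C\cap Y$. I would split into cases according to whether $C_X$ or $C_Y$ is empty. If $C_X=\emptyset$ then $C$ consists of a single vertex $y\in Y'$ (a component with no $X$-vertices has no edges, hence is a single vertex), but $d_H(y)\geq r/2+1\geq 2$ and $y$ loses at most one neighbour (namely $x_0$), so $y$ still has a neighbour in $X'$, a contradiction; symmetrically if $C_Y=\emptyset$. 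So both $C_X,C_Y$ are nonempty. Now since $C$ is a smallest component, $|C_X|\leq |X'|/2$ and $|C_Y|\leq |Y'|/2$ — wait, this needs care if there are only two components; more robustly, since $C$ is a component and $X'\setminus C_X$, $Y'\setminus C_Y$ are nonempty (as the complement of a smallest component among $\geq 2$ components is nonempty and contains vertices of both parts unless a part is entirely in $C$; but if $X'\subseteq C_X$ then $Y\setminus C_Y$ has a vertex $y$ with all neighbours in $X'\subseteq C$, forcing $y\in C$, so $Y'\subseteq C_Y$ and $C$ is everything, contradiction). Hence $|C_X|\leq r-1$ actually we need $|C_X|\leq |X|-1$ is automatic; the useful bound is the degree estimate: take $x\in C_X$; all its neighbours in $H$ lie in $C_Y\cup\{y_0\}$, so
\[
d_H(x)\leq |C_Y| + 1.
\]
Similarly for $y\in C_Y$, $d_H(y)\leq |C_X|+1$. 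Summing the edge count inside $C$ two ways, or just using $|C_Y|\leq |Y'|$ is too weak — the key point is to bootstrap: from $d_H(x)\geq s/2+1$ we get $|C_Y|\geq s/2$, and from $d_H(y)\geq r/2+1$ we get $|C_X|\geq r/2$; doing the same for the complementary component (or for $X'\setminus C_X$, $Y'\setminus C_Y$, which by the argument above is also nonempty and is a union of components, pick a vertex there) gives $|Y'\setminus C_Y|\geq s/2$ and $|X'\setminus C_X|\geq r/2$, whence $|Y'|\geq s$ and $|X'|\geq r$, contradicting $|X'|\leq r-1$. (One must check $s/2$ rounds up correctly: $|C_Y|\geq d_H(x)-1\geq s/2$, and likewise $|Y'\setminus C_Y|\geq s/2$, so $s-1\geq |Y'|\geq |C_Y|+|Y'\setminus C_Y|\geq s$, the desired contradiction, valid for all parities.)

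Finally, reinstating the deleted edge $e$: if $H[X'\cup Y']$ is connected but $H[X'\cup Y']-e$ is disconnected, then $e$ is a bridge of $H[X'\cup Y']$, so one side of the bridge is a component $C$ of $H[X'\cup Y']-e$ in which every vertex except one endpoint of $e$ has all its $H$-neighbours inside $C\cup\{x_0,y_0\}$; rerunning the degree count with an extra "$+1$" slack for the one endpoint of $e$ still forces $|C_X|\geq r/2$ or so on both sides — I would just absorb the edge into the "$x_0,y_0$" bookkeeping from the start, i.e. prove directly that $H$ minus an edge and one vertex from each part is connected, so that each vertex loses at most $1+[\text{incident to }e]$ potential neighbours; since a single edge touches only two vertices the bound $d_H(x)\leq |C_Y|+2$ holds for at most one $x$ and $d_H(x)\leq|C_Y|+1$ otherwise, which still yields $|C_Y|\geq \lceil s/2\rceil -1$ comfortably because $s\geq 3$ gives enough room, and the symmetric bound on the complement again produces $|Y'|\geq s$. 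The main obstacle is purely the bookkeeping of the "$+1$" and "$+2$" slacks against the floors/ceilings in the degree hypothesis; since the hypotheses are $d(x)\geq s/2+1$ and $d(y)\geq r/2+1$ (strictly more than half), there is exactly enough slack, and I expect no genuine difficulty beyond being careful with these constants, exactly as in the proof of Observation 3.3.
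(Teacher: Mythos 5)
Your strategy -- assume the graph minus one vertex from each part and one edge is disconnected, and play the degree hypotheses against the sizes of the two sides of the separation -- is a genuinely different route from the paper's proof (which shows that any two vertices of $X'\setminus\{x'\}$ still have degree exceeding $|Y'|/2$ and hence a common neighbour, then attaches $Y'\setminus\{y'\}$ and finally the endpoints $x',y'$ of the deleted edge), and it can be made to work; but as written the final counting step fails. A small slip first: the contradictions ``$|X'|\le r-1$'' and ``$s-1\ge|Y'|$'' presuppose that a vertex really was deleted from that part; if $t_Y:=|Y\setminus Y'|=0$ then $|Y'|=s$ and you must instead use the improved bounds $|C_Y|,\,|Y'\setminus C_Y|\ge s/2+1$ -- harmless, but it shows the slack must be tracked case by case. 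The genuine gap is where you absorb the deleted edge $e=x'y'$: you claim that the bounds $d_H(x)\le|C_Y|+2$ for at most one $x$ (namely $x'$) and $d_H(x)\le|C_Y|+1$ otherwise ``again produce $|Y'|\ge s$''. They do not. In the worst case $t_Y=1$ (so $|Y'|=s-1$) and the component $C$ containing $x'$ satisfies $C\cap X'=\{x'\}$; then your bounds give only $|C\cap Y'|\ge s/2-1$ and $|Y'\setminus C|\ge s/2$, whose sum is $s-1=|Y'|$, which is no contradiction at all (for even $s$ this is sharp; in general $2(\lceil s/2\rceil-1)\le s-1$, never $\ge s$). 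So counting on the $Y$-side alone cannot close this configuration, and it is exactly the configuration created by the deleted edge.

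The missing case is easy to finish, but it needs an observation you do not make: if $C\cap X'=\{x'\}$, then every $y\in C\cap Y'$ has all of its $H$-neighbours inside $\{x'\}\cup\{x_0\}$, so $d_H(y)\le 2<r/2+1$ (using $r\ge 3$), a contradiction since $C\cap Y'\ne\emptyset$; whereas if both $C\cap X'$ and its complement contain a vertex different from $x'$, the stronger bound $|C\cap Y'|,\,|Y'\setminus C|\ge s/2+1-t_Y$ applies and the count forces $t_Y\ge 2$, a contradiction. With this subcase added (together with your existing treatment of components meeting only one side), the proof goes through; without it, the critical case is not ruled out, so the argument as proposed is incomplete. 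Compared with the paper's common-neighbour argument, your approach is somewhat longer once all subcases are written out, but it needs nothing beyond counting; the paper's proof gets the same conclusion in a few lines precisely because the common-neighbour trick avoids analysing the shape of a putative separation.
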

    \begin{proof}
        Suppose $H'$ is obtained from $H$ by removing at most one vertex from $X$ to get $X'$, at most one vertex from $Y$ to get $Y'$ and an edge in $H$ between $x'\in X'$ and $y'\in Y'$. Now if $x_1,x_2\in X'\setminus \{x'\}$, then as  $d_{H'}(x_1),d_{H'}(x_2)\geq s/2+1-|Y\setminus Y'|>|Y'|/2$, we have that $x_1$ and $x_2$ have a common neighbour in $H'$ and therefore lie in the same connected component. Similarly for any $y_1,y_2\in Y'\setminus \{y'\}$, we have that $d_{H'}(y_1),d_{H'}(y_2)>|X'|/2$ and all the vertices in $Y'\setminus \{y'\}$ lie in the same connected component in $H'$. Moreover, all the vertices in $Y'\setminus \{y'\}$ lie in the same connected component as the vertices in $X'\setminus \{x'\}$ because for $y\in Y'\setminus \{y'\}$, we have that $d_{H'}(y)\geq r/2\geq 3/2$ and so $y$ has at least one neighbour in $X'\setminus \{x'\}$. Finally it remains to establish that $x'$ and $y'$ are connected to the remainder of $H'$ but this follows because $d_{H'}(x')\geq s/2+1-2>0$ and similarly for $y'$. 
    \end{proof}

The next  lemma shows that if a starting graph $G_0$ is bipartite and $H$ is $(1,1,1)$-inseparable  then the graphs $G_i$ in the $H$-process will remain bipartite throughout. 
In fact $H$ being 2-edge-connected would suffice for this.

 \begin{lem}\label{lem:staying bipartite}
    Let $H$ be a $(1,1,1)$-inseparable graph. If $G$ is a bipartite graph with partite sets $X,Y\subset V(G)$, so is $\final{G}_{H}$.
    \end{lem}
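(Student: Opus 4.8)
The plan is to argue by induction on the steps of the $H$-process, showing that each $G_i$ remains bipartite with the same fixed bipartition $(X,Y)$ as $G=G_0$. Since $\final{G}_H = G_\tau$ for some finite $\tau$, this suffices. The base case $i=0$ holds by hypothesis. For the inductive step, suppose $G_{i-1}$ is bipartite with parts $X,Y$; I want to show that no edge added at step $i$ can have both endpoints in $X$ or both in $Y$. An edge $e$ is added precisely when it completes a new copy of $H$ in $G_{i-1}+e$, i.e. when there is an (injective) embedding of some $H-f$ (for an edge $f\in E(H)$) into $G_{i-1}$ whose two ``missing'' vertices (the endpoints of $f$) are exactly the endpoints of $e$. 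So it is enough to prove: whenever $H-f$ embeds into a bipartite graph with bipartition $X,Y$, the two endpoints of $f$ land in different parts. Equivalently, $H-f$ itself is connected and bipartite, and in any $2$-colouring of $H-f$ the two endpoints of $f$ receive different colours.

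The key structural input is $(1,1,1)$-inseparability. First, $H$ is bipartite, hence so is $H-f$; and $H-f$ is connected because removing a single edge (and no vertices) from a $(1,1,1)$-inseparable graph cannot disconnect it. A connected bipartite graph has a unique $2$-colouring (up to swapping the two colour classes), so the bipartition of $H-f$ is forced to coincide (as an unordered pair of classes) with the bipartition of $H$ restricted to $V(H-f)=V(H)$. Since $f=xy$ is an edge of the bipartite graph $H$, its endpoints $x,y$ lie in different parts of $H$, hence in different colour classes of $H-f$. Now given an embedding $\psi\colon H-f \hookrightarrow G_{i-1}$, the map $\psi$ carries the (unique) $2$-colouring of the connected graph $H-f$ to a proper $2$-colouring of the image, which must be consistent with $(X,Y)$ on each connected component of the image; in particular $\psi(x)$ and $\psi(y)$ lie in different parts among $\{X,Y\}$. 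Hence the newly added edge $\psi(x)\psi(y)$ respects the bipartition, and $G_i$ is again bipartite with parts $X,Y$. This completes the induction.

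The one point that needs a little care — and is really the only obstacle — is making sure that an embedding of $H-f$ into a disconnected-looking piece of $G_{i-1}$ cannot ``mix'' the parts: since $H-f$ is connected, its image lies in a single connected component of $G_{i-1}$, and on that component the bipartition $(X,Y)$ is the unique $2$-colouring, so the colour classes of $H-f$ are mapped bijectively (as a pair) to $(X\cap C, Y\cap C)$ for that component $C$; thus $x$ and $y$ cannot both map into $X$ nor both into $Y$. As the remark after the statement notes, the full strength of $(1,1,1)$-inseparability is not needed here — only that $H-f$ stays connected for every $f$, i.e. that $H$ is $2$-edge-connected — and the proof above uses exactly this.
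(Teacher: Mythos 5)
Your proof is correct and takes essentially the same route as the paper: both rest on the fact that $(1,1,1)$-inseparability (indeed just $2$-edge-connectedness) keeps $H-f$ connected and bipartite with the endpoints of $f$ in different parts, which forces any newly added edge to cross the bipartition of $G_{i-1}$. The paper phrases this as a minimal-counterexample argument via an odd path between the endpoints, while you use induction and the uniqueness of the $2$-colouring of a connected bipartite graph; these are the same underlying parity argument.
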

    
    \begin{proof}
    Fix parts $X,Y$ of a bipartition of $G$ and let $(G_i)_{i\geq 0}$ be the $H$-process on $G$.  Suppose for a contradiction that the final graph $\final{G}_{H}$ is not bipartite and pick the smallest $i\in \NN$ for which $G_i$ contains an edge $e$ whose endpoints lie in the same part of $G$. Furthermore,  let $F$ be a copy of $H$ in $G_i$ completed by $e$.
    Then as $H$ is $(1,1,1)$-inseparable, we have that $F-e$ is connected  and so there exists a path  between the endpoints of $e$ in $F-e\subseteq G_{i-1}$. As this path connects two vertices from different parts of  the bipartite graph $F-e$,  the path must have odd length but this contradicts that $G_{i-1}$ is a bipartite graph respecting the bipartition $X,Y$. 
    \end{proof}

\subsubsection{Edges in inseparable graphs} Finally we   need the following easy consequences of inseparability. 

\begin{obs} \label{obs: disjoint edges}
    Let $H$ be a $(2,1)$-inseparable graph or a $(1,1,1)$-inseparable bipartite graph. Then for any $e\in E(H)$, there exists an $f\in E(H)$ that is vertex-disjoint from $e$. In particular, $H$ contains a pair of non-incident edges. 
\end{obs}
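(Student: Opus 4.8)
The plan is to prove the displayed statement by contradiction, treating the two types of inseparability in parallel as far as possible. Suppose some edge $e = uv \in E(H)$ has the property that \emph{every} edge of $H$ is incident to $u$ or to $v$. Then the induced subgraph $H - \{u,v\}$ has no edges at all. The idea is now simply that deleting $u$ and $v$ (plus the edge $e$, in the bipartite bookkeeping) is a permissible operation in the relevant inseparability definition, yet it leaves a disconnected graph, a contradiction. The ``in particular'' clause will then follow because such an $H$ is always non-empty, so one may take any edge as $e$.

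In the $(2,1)$-inseparable case, $H - e$ is $3$-connected, so $v(H-e) = v(H) \geq 4$ and removing the two vertices $u,v$ from $H - e$ leaves a connected graph. But $H - e - \{u,v\} = H - \{u,v\}$ has at least two vertices and, by our assumption, no edges, so it cannot be connected --- contradiction. (One could equally delete $u$ and $v$ directly from $H$, using that $(2,1)$-inseparable graphs are themselves $3$-connected.) Hence a vertex-disjoint edge $f$ exists, and it lies in $E(H - \{u,v\}) \subseteq E(H)$.

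In the case that $H$ is a $(1,1,1)$-inseparable bipartite graph with parts $X$ and $Y$, the endpoints of $e$ lie in distinct parts since $H$ is bipartite, say $u \in X$ and $v \in Y$; recall $v(H) \geq 5$. Removing the edge $e$ together with the single vertex $u$ from $X$ and the single vertex $v$ from $Y$ yields exactly $H - \{u,v\}$, which by assumption is edgeless on at least three vertices and hence disconnected, contradicting Definition~\ref{def:bip insep}. Finally, for the last sentence: a $(2,1)$-inseparable graph is $3$-connected and a $(1,1,1)$-inseparable bipartite graph is non-empty by convention, so in either case $E(H) \neq \emptyset$; picking any $e \in E(H)$ and applying the above produces a non-incident $f \in E(H)$. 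The argument is entirely routine and I do not expect any genuine obstacle --- the only point needing a moment's care is that in the bipartite case the two deleted vertices automatically lie in different parts, which is precisely what makes their deletion (alongside $e$) admissible in the definition of $(1,1,1)$-inseparability.
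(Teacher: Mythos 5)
Your proof is correct and follows essentially the same route as the paper's: assuming no vertex-disjoint edge exists, the deletion of $e$ and its two endpoints (which, in the bipartite case, automatically lie in different parts) leaves an edgeless graph on at least two vertices, contradicting inseparability. Your write-up is just a more detailed, case-by-case version of the paper's one-line argument.
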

\begin{proof} 
    If no such $f\in E(H)$ existed, then we could disconnect $H$ by removing $e$ and its two vertices (using here that $v(H)\geq 4$). This would contradict the definition of inseparability. 
\end{proof}

\begin{lem} \label{lem:insep edges in cycle}
Let $H$ be a $(2,1)$-inseparable graph or a $(1,1,1)$-inseparable bipartite graph,  $e\in E(H)$ and  $F=H-e$. Then for any pair of edges $f,f'\in E(F)$, there is a cycle in $F$ containing both $f$ and $f'$. 
\end{lem}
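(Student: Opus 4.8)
The plan is to reduce the statement to the classical fact that in a $2$-connected graph any two edges lie on a common cycle, after first checking that $F=H-e$ is $2$-connected in both of the cases allowed by the hypothesis. So the proof splits into two parts: (i) $F$ is $2$-connected; (ii) in a $2$-connected graph, any two edges lie on a common cycle.

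\textbf{Step (i): $F=H-e$ is $2$-connected.} If $H$ is $(2,1)$-inseparable this is immediate, since by Definition~\ref{def:inseparable} the graph $F=H-e$ is even $3$-connected. If $H$ is a $(1,1,1)$-inseparable bipartite graph with parts $X,Y$, I would argue as follows. First, $v(F)=v(H)\geq 5\geq 3$. Next, $F$ is connected, since deleting the single edge $e$ and no vertices is a legal $(1,1,1)$-separation, so $(1,1,1)$-inseparability forbids it from disconnecting $H$. Finally, $F$ has no cut-vertex: if $w$ were one, then $w$ lies in $X$ or in $Y$, and deleting $e$ together with $w$ (one vertex from $w$'s part, none from the other) would disconnect $H$, contradicting $(1,1,1)$-inseparability. (If $w$ happens to be an endpoint of $e$, then $F-w=H-w$ is already disconnected, and one may additionally delete an arbitrary remaining edge to turn this into the same forbidden separation.) Hence $F$ is $2$-connected.

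\textbf{Step (ii): two edges of a $2$-connected graph share a cycle.} Let $f,f'\in E(F)$. If $f=f'$, then since $F$ is $2$-connected with at least three vertices, $f$ is not a bridge, so $F-f$ contains a path joining the endpoints of $f$, which together with $f$ is a cycle. If $f\neq f'$, subdivide $f$ with a new vertex $a$ and $f'$ with a new vertex $b$, obtaining $F'$; subdividing edges preserves $2$-connectivity, so $F'$ is $2$-connected, and $a,b$ are non-adjacent of degree $2$. By Menger's theorem there are two internally vertex-disjoint $a$--$b$ paths in $F'$, whose union is a cycle $C'$ through $a$ and $b$; since $\deg_{F'}(a)=\deg_{F'}(b)=2$, the cycle $C'$ traverses both edges at $a$ and both at $b$. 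Suppressing the degree-$2$ vertices $a$ and $b$ turns $C'$ into a cycle of $F$ containing both $f$ and $f'$. (Alternatively this step can simply be cited, as it is a standard proposition in graph theory texts.)

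\textbf{Main obstacle.} There is little genuine difficulty here; the only real point is the observation that inseparability forces $H-e$ to be $2$-connected. The $(1,1,1)$-inseparable case is the slightly more delicate one, since there $H-e$ need not be $3$-connected — for instance $K_{3,s}-e$ has a vertex of degree $2$ — so one really does have to use that deleting the single edge plus one vertex from one part is a permissible separation. Everything after that is the classical cycle-through-two-edges argument.
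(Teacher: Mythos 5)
Your proof is correct and follows essentially the same route as the paper: deduce from inseparability that $F=H-e$ is $2$-connected (including the observation that in the $(1,1,1)$-inseparable case deleting $e$ plus a single vertex is a forbidden separation), and then apply the classical fact that any two edges of a $2$-connected graph lie on a common cycle. The only cosmetic difference is that you prove that classical step by subdividing the two edges and applying Menger's theorem to the new vertices, whereas the paper applies Menger directly to the edge endpoints in two cases (incident versus vertex-disjoint edges).
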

\begin{proof}
   Due to $H$ being inseparable, the deletion of any one vertex from $F$ leaves a connected graph, that is, $F$ is 2-connected. Let $f=uv$ and $f'=xy$ and suppose first that $v=y$.  By Menger's theorem (see for example \cite[Section 4.2]{west2001introduction}), there are two internally-vertex-disjoint  paths with endpoints $u$ and $x$, one of which avoids $v=y$ and forms a cycle with $f$ and $f'$. On the other hand, if $f$ and $f'$ are vertex-disjoint then a well-known consequence of Menger's theorem (see for example \cite[Exercise 4.2.9]{west2001introduction}) gives that there are two vertex-disjoint paths with one endpoint in $\{u,v\}$ and the other in $\{x,y\}$. Together with $f$ and $f'$, these paths give the desired cycle. 
\end{proof}

\subsection{Higher inseparability} \label{sec:robust conn}

In Section \ref{sec:lad robust conn}, we will explore a class of graphs $H$ which have a higher level of inseparability.


\begin{dfn} \label{def:robust conn}
   We say that a graph $H$ is {\em $(\ell,1)$-inseparable} if it cannot be disconnected by the removal of an edge and at most $\ell$ vertices.
\end{dfn}

By definition, we have that $(\ell,1)$-inseparable graphs are $(\ell',1)$-inseparable for all $
1\leq \ell'\leq \ell$.

\begin{rem} \label{rem:rob conn equiv}
    If $H$ with $k=v(H)\geq 6$ is $(\ceil{k/2},1)$-inseparable then for any $e\in E(H)$ and $F=H-e$, we have that $F[U]$ is connected for all $U\subset V(F)$ with $|U|$ \textit{at least} $\floor{k/2}$.  
\end{rem}

In Theorem \ref{thm: robust conn} we will show that any $(\ceil{k/2},1)$-inseparable graph $H$ with $k\geq 6$ vertices has $M_H(n)=\Omega(n^2)$. Note that this recovers the result of Balogh, Kronenberg, Pokrovskiy and Szab\'o \cite{balogh2019maximum} that  $M_{K_k}(n)$ is quadratic in $n$ for all $k\geq 6$. Indeed, for any subset $U\subset V(K_k)$ with $|U|=\floor{k/2}$, we have that $K_k[U]$ is a clique of size at least 3 and so is 2-edge-connected. In fact, any graph $H$ with a large enough minimum degree is $(\ceil{k/2},1)$-inseparable. 

\begin{lem} \label{lem:min deg implies robust conn}
    If $H$ is a graph with $k:=v(H)\geq 6$ vertices and minimum degree $\delta(H)>3k/4$. Then $H$ is $(\ceil{k/2},1)$-inseparable. 
\end{lem}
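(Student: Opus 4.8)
The plan is to show directly that deleting an edge $e$ and any set $S$ of at most $\lceil k/2\rceil$ vertices leaves a connected graph. So fix $e=xy\in E(H)$ and $S\subseteq V(H)$ with $|S|\le \lceil k/2\rceil$, and set $H' := (H-e)-S$, which has at least $\lfloor k/2\rfloor \ge 3$ vertices. Suppose for contradiction that $H'$ is disconnected; then its vertex set partitions as $A\cup B$ with no edges between $A$ and $B$ in $H'$, and we may assume $|A|\le |B|$, so $|A|\le \lfloor |V(H')|/2\rfloor \le \lfloor k/4\rfloor$. Pick any vertex $u\in A$. Every neighbour of $u$ in $H$ must lie in $A\setminus\{u\}$, in $S$, or be the other endpoint of $e$ (in case $u\in\{x,y\}$ and the $e$-partner lies in $B$ or $S$). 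Hence
\[
d_H(u) \;\le\; (|A|-1) + |S| + 1 \;\le\; \left\lfloor \tfrac{k}{4}\right\rfloor + \left\lceil \tfrac{k}{2}\right\rceil \;\le\; \tfrac{k}{4} + \tfrac{k+1}{2} \;=\; \tfrac{3k}{4} + \tfrac12 .
\]

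The main obstacle is that this crude bound gives $d_H(u)\le 3k/4 + 1/2$, which is not quite a contradiction with $\delta(H) > 3k/4$ by itself — one needs to be slightly more careful to save the missing constant. The fix is to note that the bound $|A|\le\lfloor k/4\rfloor$ together with $|S|\le\lceil k/2\rceil$ cannot both be tight in a way that lets $d_H(u)$ exceed $3k/4$: more precisely, $|A|-1+|S|+1 = |A|+|S| \le |V(H')| - |B| + |S| \le k - |S| - |B| + |S|$... let me instead argue via $|A|+|S| \le |A| + (k - |V(H')|) = |A| + |S|$ and use $|A| \le (k-|S|)/2$, giving $d_H(u)\le (k-|S|)/2 + |S| = (k+|S|)/2 \le (k + \lceil k/2\rceil)/2 \le (k + (k+1)/2)/2 = (3k+1)/4$. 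Since $d_H(u)$ is an integer and $\delta(H) > 3k/4$ forces $d_H(u) \ge \lfloor 3k/4\rfloor + 1 \ge 3k/4 + 1/4$ (as $3k/4$ is never an integer-plus-more-than... actually when $4\mid k$ we get $d_H(u)\ge 3k/4 + 1$), a short case analysis on $k \bmod 4$ closes the gap: in every residue class, $(3k+1)/4 < \lfloor 3k/4\rfloor + 1$ once $k\ge 6$, contradicting $d_H(u)>3k/4$.

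Concretely I would organise the writeup as: (1) reduce to showing $H' = (H-e)-S$ connected for $|S|\le\lceil k/2\rceil$; (2) assume a bipartition $A\cup B$ of a disconnected $H'$ with $|A|\le |B|$ and observe $|A| \le (k-|S|)/2$; (3) bound $d_H(u) \le |A| - 1 + |S| + [u\in\{x,y\}] \le |A| + |S| \le (k+|S|)/2$; (4) plug in $|S|\le\lceil k/2\rceil$ and verify $(k+\lceil k/2\rceil)/2 \le 3k/4$ when $k$ is even and $< \lfloor 3k/4\rfloor + 1$ for all $k\ge 6$, contradicting $\delta(H)>3k/4$. The only delicate point, and the one I'd spend the most care on, is step (4) — making sure the floor/ceiling bookkeeping genuinely yields a contradiction rather than an equality — and that the edge endpoint contributes at most $+1$ to the degree count (since $u$ can be at most one of $x,y$, and the contribution is only relevant when $u$'s $e$-partner escapes $A$).
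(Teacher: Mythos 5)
Your overall strategy is sound and is essentially a direct rephrasing of the paper's argument, but the arithmetic you defer to step (4) contains a genuine gap: the claimed inequality $(3k+1)/4 < \lfloor 3k/4\rfloor + 1$ is \emph{false} whenever $k\equiv 1 \pmod 4$. Concretely, take $k=9$: you may have $|S|=\lceil k/2\rceil=5$ and $|A|=\lfloor (k-|S|)/2\rfloor=2$, so your bound gives $d_H(u)\le |A|+|S|=7$, while $\delta(H)>27/4$ only forces $\delta(H)\ge 7$ --- equality, no contradiction. Moreover this cannot be repaired by more careful floor/ceiling bookkeeping on the same quantity: as long as you bound the degree of an \emph{arbitrary} vertex $u\in A$ and allow the $+1$ for the endpoint of $e$, the bound $|A|+|S|$ is what you get, and for $k\equiv 1\pmod 4$ it exactly meets the minimum degree.

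The missing idea --- and it is exactly how the paper's proof closes this case --- is to choose $u$ more carefully. Since $(H-e)-S$ has no $A$--$B$ edges, the only possible $A$--$B$ edge of $H$ is $e$ itself, so at most one vertex of $A$ has a neighbour in $B$. Hence if $|A|\ge 2$ you can pick $u\in A$ all of whose neighbours lie in $(A\setminus\{u\})\cup S$, giving $d_H(u)\le |A|-1+|S|\le (k+\lceil k/2\rceil)/2-1\le (3k-3)/4<3k/4$, a contradiction in every residue class; if $|A|=1$ then $d_H(u)\le |S|+1\le \lceil k/2\rceil+1\le 3k/4$ for $k\ge 6$ (this is precisely where the hypothesis $k\ge 6$ enters), again contradicting $\delta(H)>3k/4$. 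With this two-case refinement your proof goes through and coincides, up to phrasing, with the paper's own proof, which reduces the statement to showing that $H[U]$ is $2$-edge-connected for every $U$ of size $\lfloor k/2\rfloor$ and then runs the same case split ($|U'|\ge 2$ with a vertex having no neighbour across the cut, versus $|U'|=1$).
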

    \begin{proof}
    It suffices to show that for any $U\subseteq V(H)$ with $|U|=\floor{k/2}$ we have that  $H[U]$ is  $2$-edge-connected. So suppose for a contradiction that there is some $U$ for which this is not the case. 
	Then we can find  $U'\subset U$ with $1\leq |U'|\leq \floor{|U|/2}$ such that there is at most one edge between $U'$ and $U\setminus U'$.
	If $|U'|\geq 2$ we can pick a vertex $u\in U'$ without neighbours in $U\setminus U'$ and arrive at
		 \begin{linenomath}  \begin{equation*}
		\delta(H) \leq d(u) \leq k - 1- |U\setminus U'|  \leq k  - 1 - \ceil*{\frac{\floor{k/2}}{2}} \leq \floor*{\frac{3k}{4}},
		\end{equation*} \end{linenomath}
	a contradiction.
	If $U' = \{u\}$ is a single vertex then $u$ has at most one neighbour in $U\setminus\{ u \}$ and so
	 \begin{linenomath}	\begin{equation*}\label{eq:finaldense}
		\delta(H) \leq d(u) \leq k - |U| + 1 = \ceil*{\frac{k}{2}} +1 \leq \floor*{\frac{3k}{4}},
		\end{equation*} \end{linenomath}
	which is again a contradiction, where we used  that $k\geq 6$ in the final inequality. 
    \end{proof}

Theorem \ref{thm;dense quad} part \eqref{thm:dense} is thus an immediate consequence of  Lemma \ref{lem:min deg implies robust conn} and Theorem \ref{thm: robust conn} mentioned above, which gives quadratic running times for all $(\ceil{k/2},1)$-inseparable $k$-vertex graphs.
The connection between minimum degree and $(\ceil{k/2},1)$-inseparable $k$-vertex graphs goes further. 

\begin{obs} \label{obs:robust conn min deg}
    If $H$  is a $k$-vertex $(\ceil{k/2},1)$-inseparable graph, then $\delta(H)\geq k/2+2$. 
\end{obs}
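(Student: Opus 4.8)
The plan is a short contradiction argument: by deleting almost all of the neighbourhood of a low-degree vertex, together with the single permitted edge, we isolate that vertex. Suppose for contradiction that $\delta(H)\le k/2+1$, and let $v$ attain the minimum degree, so that $|N(v)|=\delta(H)\le k/2+1$. First I would dispose of the degenerate possibility $\delta(H)=0$: then $v$ is isolated and, $H$ having an edge (we discard the empty graph as a degenerate non-case), deleting that edge and no vertices already disconnects $H$, contradicting $(\ceil{k/2},1)$-inseparability; so we may assume $\delta(H)\ge 1$.

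I would then fix any neighbour $u\in N(v)$ and set $S:=N(v)\setminus\{u\}$ and $e:=vu$. The two things to check are the budget and the size of the leftover graph: $|S|=\delta(H)-1\le k/2\le\ceil{k/2}$, so $S$ is an admissible vertex-deletion set, while $|V(H)|-|S|=k-\delta(H)+1\ge k/2\ge 3$, using $k\ge 6$. In $H-e-S$ every neighbour of $v$ has been removed — those in $N(v)\setminus\{u\}=S$ as vertices, and the last one via the edge $e=vu$ — so $v$ is an isolated vertex of a graph on at least three vertices, which is therefore disconnected. This contradicts $(\ceil{k/2},1)$-inseparability, and hence $\delta(H)\ge k/2+2$.

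The only step that is not completely mechanical is the bookkeeping in the borderline case $\delta(H)=k/2+1$, which forces $k$ to be even: there $N(v)$ itself has $\ceil{k/2}+1$ vertices, one over budget, and it is precisely for this reason that one retains a single neighbour outside $S$ and spends the lone available edge deletion on the edge joining $v$ to it. Beyond that the argument is just floor/ceiling arithmetic, with $k\ge 6$ used only to guarantee $k-|S|\ge 3>1$, so that the reduced graph genuinely has more than one vertex and being ``disconnected'' is meaningful.
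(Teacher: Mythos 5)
Your construction is essentially the paper's own: the paper picks a set $U\ni v$ of size $\lfloor k/2\rfloor$ meeting $N_H(v)$ in at most one vertex, deletes the $\lceil k/2\rceil$ vertices outside $U$ together with the at most one edge from $v$ into $U$, and thereby isolates $v$ — exactly your ``delete $N(v)$ minus one vertex, then spend the edge deletion on the last incident edge'' idea. The one genuine problem is the setup of the contradiction: since degrees are integers, the negation of $\delta(H)\geq k/2+2$ is $\delta(H)\leq\lceil k/2\rceil+1$, not $\delta(H)\leq k/2+1$. For even $k$ these coincide, but for odd $k$ your hypothesis $\delta(H)\leq k/2+1$ amounts to $\delta(H)\leq\lceil k/2\rceil$, so the case $\delta(H)=\lceil k/2\rceil+1=(k+3)/2$ is never addressed and, as written, your argument only yields $\delta(H)\geq\lceil k/2\rceil+1$ rather than the claimed $\delta(H)\geq k/2+2$ (which for odd $k$ means $\delta(H)\geq\lceil k/2\rceil+2$). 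Your closing remark that the borderline case ``forces $k$ to be even'' is exactly where this slip hides.

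The fix is immediate and uses nothing new: start from the correct negation $\delta(H)\leq\lceil k/2\rceil+1$. Then $|S|=\delta(H)-1\leq\lceil k/2\rceil$ is still within the vertex-deletion budget, and $k-|S|\geq\lfloor k/2\rfloor\geq 2$, so deleting $S=N(v)\setminus\{u\}$ and the edge $vu$ isolates $v$ in a graph with at least one other vertex, contradicting $(\lceil k/2\rceil,1)$-inseparability — this is precisely how the paper argues, with $d(v)\leq\lceil k/2\rceil+1$ as the standing assumption. Two minor points: your separate treatment of $\delta(H)=0$ is fine (the paper absorbs it into the same argument, since with no neighbour in $U$ no edge needs deleting), and for the disconnection you only need the surviving graph to have at least two vertices, so $k\geq 4$ suffices for that step; $k\geq 6$ is not actually stated in the observation, only in the surrounding results.
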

\begin{proof}
    Suppose that there is some vertex $v\in V(H)$ with $d(v)\leq \ceil{k/2}+1$. Then we can take a set $v\in U\subset V(H)$ such that $|U|=\floor{k/2}$ and $|U\cap N_H(v)|\leq 1$. Removing at most one edge in $U$ separates $v$ from the rest of $U$, implying that $H$ is not $(\ceil{k/2},1)$-inseparable. 
\end{proof}

Therefore the  class of $(\ceil{k/2},1)$-inseparable $k$-vertex  graphs is sandwiched between $k$-vertex graphs with minimum degree  greater than $3k/4$ and the class of graphs with minimum degree greater than  $k/2+1$. Subject to the  condition imposed by Observation \ref{obs:robust conn min deg}, the notion of $(\ceil{k/2},1)$-inseparability captures many graphs $H$ satisfying weak pseudorandom conditions. For example, we say a pair of disjoint vertex sets $A,B\subset V(H)$ with $|A|=|B|=\ell$ and $e_H(A,B)=0$ is a \emph{bipartite hole} of size $\ell$ (these were introduced by McDiarmid and Yolov \cite{mcdiarmid2017hamilton} in the context of Hamiltonicity). For  $2\leq \ell \leq k/2$, one can show that any $k$-vertex graph $H$ with $\delta(H)\geq k/2+\ell$  and no bipartite hole of size $\ell$, is $(\ceil{k/2},1)$-inseparable. Further natural examples of $k$-vertex $(\ceil{k/2},1)$-inseparable graphs are complete tripartite graphs with no part larger than $\floor{k/2}-2$ and $t^{th}$ powers of cycles for $t\geq k/4+1$, which are obtained by cyclically ordering the vertices and connecting each vertex to all vertices within distance $t$ in the ordering.  All these graphs   have quadratic maximum running time, as we will show in Theorem \ref{thm: robust conn}. 


\subsection{Behrendian and Sidonian graphs} \label{sec:coherent}

In Section \ref{sec:dilation} we will give a general  construction for lower bounds on running times which are defined via a set of \emph{dilations}.
When this set of dilations is generated using Behrend's construction of arithmetic progression-free sets, the  construction will allow us to prove lower bounds of the form $M_H(n)\geq n^{2-o(1)}$. It turns out that this works whenever we can guarantee that $H-e$ is \emph{Behrendian} for any $e\in E(H)$, with Behrendian graphs being defined as follows. 

\begin{dfn}[Behrendian graphs] \label{def:coherent}
    We say a connected graph $H$ is \emph{Behrendian} if any  colouring of $E(H)$ which is non-monochromatic results in a non-monochromatic cycle in $H$ which is the union of at most three monochromatic paths.
\end{dfn}

\begin{rem}
    We restrict our attention to Behrendian graphs which are connected;  any disconnected graph satisfying the  condition would have one Behrendian component and all other components being isolated vertices. 
\end{rem}

At first sight, the definition of Behrendian graphs may appear somewhat mysterious. We will show that the notion can be thought of as yet another type of connectivity and encompasses many graphs of interest. For example, a graph $H$ is \emph{triangularly connected} if  for any pair of distinct edges $e,f\in E(H)$, there is a sequence of triangles $T_1,\ldots,T_m$ in $H$ with $e\in E(T_1)$, $f\in E(T_m)$ and consecutive triangles in the ordering intersecting in an edge. 
It is not hard to see that triangularly connected graphs are Behrendian. Indeed, if $E(H)$ is coloured in a non-monochromatic fashion, there are edges $e,f\in E(H)$ with distinct colours but if $H$ is Behrendian, then there is no triangle that is \emph{not} monochromatic, forcing all the triangles in the sequence $T_1,\ldots,T_m$ between $e$ and $f$ to be the same colour as $e$, a contradiction. 
Triangularly connected graphs have been studied in the context of integer flows \cite{fan2008nowhere,hou2012z3,li2024integer} and provide a large family of Behrendian graphs which includes any triangulation of a surface and the square of any connected graph (see e.g.\ \cite[Proposition 6.1]{fan2008nowhere}).

For us, the significance of Behrendian graphs comes in the applications of our dilation chain constructions which will give almost quadratic running times for graphs with minimum degree at least $k/2+1$ (Theorem \ref{thm;dense quad} part \eqref{thm: min deg almost quad}), odd wheels (Theorem \ref{thm:wheel}) and graphs containing the square of a Hamilton cycle (Theorem \ref{thm: square ham cycle}). For these applications, we need that these graphs are Behrendian, even after deleting an edge. Before proving this in these various contexts, we explore the definition further, giving the following equivalent characterisation of Behrendian graphs.  

   \begin{lem} \label{lem:cohe equiv}
    Consider the class $\mathcal{C}$ of graphs obtained by the following rules:
        \begin{enumerate}
        \item \label{coh equiv 1} $H = K_2\in \cC$;
        \item \label{coh equiv 2} If $H$ has subgraphs $H_1, H_2 \subseteq H$ such that $H = H_1 \cup H_2$, $|V(H_1) \cap V(H_2)| \geq 2$ and $H_1, H_2 \in \mathcal{C}$, then $H \in \cC$;
        \item \label{coh equiv 3} If $H$ has subgraphs $H_1, H_2, H_3 \subseteq H$ and distinct vertices $u,v,w \in V(H)$ such that $H = H_1 \cup H_2 \cup H_3$, $V(H_1)\cap V(H_2) = \{u\}$, $V(H_1)\cap V(H_3) = \{v\}$, $V(H_2)\cap V(H_3) = \{w\}$, and $H_1, H_2, H_3 \in \mathcal{C}$, then $H\in \cC$.
        \end{enumerate}
   Then $\cC$ is precisely the class of Behrendian graphs.
    \end{lem}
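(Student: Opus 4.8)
The plan is to prove separately that every graph in $\cC$ is Behrendian and that every Behrendian graph with at least one edge lies in $\cC$. Throughout, call a non-monochromatic cycle that is the union of at most three monochromatic paths a \emph{good cycle} and a non-monochromatic colouring of $E(H)$ admitting no good cycle a \emph{bad colouring}, so that a connected graph is Behrendian precisely when it has no bad colouring. For the first statement I would induct on the number of applications of rules \ref{coh equiv 2} and \ref{coh equiv 3} in a derivation of $H\in\cC$; as every graph in $\cC$ is connected it suffices to produce a good cycle for each non-monochromatic colouring. The case $H=K_2$ is vacuous. If $H=H_1\cup H_2$ comes from rule \ref{coh equiv 2} and the colouring is non-monochromatic on some $H_i$, induction yields a good cycle inside $H_i\subseteq H$; otherwise $H_1$ and $H_2$ are monochromatic in distinct colours $a\neq b$ (each has an edge and $H$ is non-monochromatic), and choosing $x\neq y$ in $V(H_1)\cap V(H_2)$, monochromatic $x$--$y$ paths $P_1\subseteq H_1$ and $P_2\subseteq H_2$, and the first vertex $z$ of $P_1$ after $x$ on $P_2$, the cycle $P_1[x,z]\cup P_2[x,z]$ is good and has length at least $3$ (the two arcs cannot coincide, as $H$ is simple and $a\neq b$). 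Rule \ref{coh equiv 3} is analogous: if no $H_i$ is non-monochromatic, its three colours are not all equal, and a $u$--$v$ path in $H_1$, a $v$--$w$ path in $H_3$ and a $w$--$u$ path in $H_2$ pairwise meet only in $u,v,w$ by the prescribed intersection pattern, so their union is a good cycle.

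For the converse I would induct on $e(H)$. If $e(H)=1$ then $H=K_2\in\cC$, so assume $e(H)\geq 2$. Then $H$ is $2$-connected: a bridge coloured differently from every other edge, or a cut vertex whose two sides receive distinct colours, is a bad colouring, which $H$ being Behrendian forbids. The crux is the \emph{structural claim} that a $2$-connected Behrendian graph $H$ with $e(H)\geq 2$ has an edge $e$ with $H-e$ Behrendian, or a degree-$2$ vertex $v$ with $H-v$ Behrendian. Granting this, the induction closes: $H-e$ and $H-v$ have fewer edges and hence lie in $\cC$ by induction, and then $H=(H-e)\cup\{e\}$ is an instance of rule \ref{coh equiv 2} (the two endpoints of $e$ lie in $V(H-e)$), while $H=\{xv\}\cup\{vy\}\cup(H-v)$ is an instance of rule \ref{coh equiv 3} with distinguished vertices $v,x,y$, where $x\neq y$ denote the two neighbours of $v$.

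Towards the structural claim I would first record that a Behrendian graph which is not a triangle has no edge with both endpoints of degree $2$: if $w_1w_2$ were such an edge, with $p_1,p_2$ the other neighbours of $w_1,w_2$, then colouring $p_1w_1$ by $1$, $w_1w_2$ by $2$, $w_2p_2$ by $1$ and all remaining edges by $3$ is a bad colouring, because a cycle through $w_1$ must traverse $p_1w_1w_2p_2$ and then return to $p_1$ using colour-$3$ edges only, giving the cyclic colour pattern $1,2,1,3,\dots,3$ with exactly four runs, whereas every cycle avoiding $w_1$ is monochromatic. I would then handle a degree-$2$ vertex $v$ with adjacent neighbours $x,y$: here $H-v$ is Behrendian, since from any non-monochromatic colouring $c'$ of $H-v$ one extends to $H$ by colouring $vx$ and $vy$ with $c'(xy)$, and a good cycle of $H$ cannot be the monochromatic triangle $vxy$, so replacing its detour $x$--$v$--$y$ by the edge $xy$ yields a good cycle of $H-v$ with the same run pattern. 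The cases still to treat are (i) a degree-$2$ vertex $v$ whose neighbours $x,y$ are non-adjacent and (ii) $\delta(H)\geq 3$. For (i), adding the edge $xy$ keeps the graph Behrendian (the only colourings to worry about are those constant off $xy$, where $xy$ together with a length-$\geq 2$ $x$--$y$ path of $H$ forms a good cycle), so the adjacent-neighbours case applies to $H+xy$ and yields $(H-v)+xy\in\cC$ by induction; one must then descend to $H$ itself, which I would do by showing directly that $H-v$ is Behrendian through a more careful analysis of the few-run $x$--$y$ paths of $H-v$. For (ii) one argues that if no edge of $H$ were removable then every $H-e$, being non-Behrendian, would carry a bad colouring, and that these local bad colourings can be combined into a bad colouring of $H$, a contradiction.

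I expect the structural claim --- in particular case (ii), where there is no degree-$2$ vertex on which to localise the construction --- to be the main obstacle: one must convert the \emph{absence} of a reduction into a single explicit non-monochromatic colouring of $H$ all of whose cycles have at least four runs, and engineering such a colouring requires controlling the run patterns of all cycles of $H$ at once rather than just those through a fixed local configuration.
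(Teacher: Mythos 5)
Your first direction ($\cC\subseteq$ Behrendian) is correct and essentially identical to the paper's argument: induct on the derivation, and in the non-trivial cases either inherit a good cycle from a non-monochromatic piece or splice monochromatic paths through the prescribed intersections (your ``first common vertex'' trick for rule \eqref{coh equiv 2} is exactly what the paper does).

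The converse, however, has a genuine gap. Your whole induction on $e(H)$ rests on the unproved \emph{structural claim} that a $2$-connected Behrendian graph has either an edge $e$ with $H-e$ Behrendian or a degree-$2$ vertex $v$ with $H-v$ Behrendian, and you yourself flag its two hard cases: in case (i) you only promise ``a more careful analysis of the few-run $x$--$y$ paths,'' and in case (ii) you assert that the bad colourings of the individual graphs $H-e$ ``can be combined into a bad colouring of $H$'' without any mechanism for doing so --- these local colourings live on different edge sets, need not agree anywhere, and there is no obvious compactness or gluing principle that produces a single non-monochromatic colouring of $H$ all of whose cycles have at least four colour runs. So as written the converse is not a proof but a reduction to a statement at least as hard as the lemma itself (and whose truth is not evident). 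The paper avoids deletion-based induction entirely: given a Behrendian $H$, it takes a decomposition $H=H_1\cup\cdots\cup H_r$ with all $H_i\in\cC$ and $r$ minimal (well-defined since single edges are in $\cC$), notes that minimality forces $|V(H_i)\cap V(H_j)|\leq 1$ for $i\neq j$ (else rule \eqref{coh equiv 2} would merge them), colours each edge by the index of its piece, and applies the Behrendian property of $H$ itself to this colouring: the resulting good cycle is a union of two or three monochromatic paths, which is impossible with two (two internally disjoint paths sharing both endpoints would force two common vertices of two pieces) and with three forces $H_i\cup H_j\cup H_\ell\in\cC$ via rule \eqref{coh equiv 3}, contradicting minimality of $r$. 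You may want to replace your structural claim by this ``minimal decomposition plus index colouring'' argument, which needs no $2$-connectivity, no minimum-degree case split, and no control of cycles in deleted subgraphs.
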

\begin{proof}
We first show that any graph in $\cC$ is a Behrendian graph. Connectivity follows easily from the rules as $H=K_2$ is connected and inputs of connected $H_1,H_2,H_3\in \cC$ in \eqref{coh equiv 2} or \eqref{coh equiv 3} give a connected output $H\in \cC$. We also have that $H=K_2$ is  trivially Behrendian as any edge-colouring of $K_2$ is necessarily monochromatic. Now suppose that $H$ is as in \eqref{coh equiv 2} with $H_1, H_2$ both Behrendian (and thus connected) and let $\chi$ be a non-monochromatic colouring of $E(H)$.
    If one of the restricted colourings $\chi\vert_{E(H_1)}$, $\chi\vert_{E(H_2)}$ is non-monochromatic we can find the desired non-monochromatic cycle in $H$ by the assumption that $H_1$ and  $H_2$ are Behrendian.
   Therefore we can assume that $\chi\vert_{E(H_1)}$ and $\chi\vert_{E(H_2)}$ are both monochromatic with different colours. 
    Take distinct vertices $v_1, v_2 \in V(H_1) \cap V(H_2)$, a $v_1v_2$-path $P$ in $H_1$, and a $v_1v_2$-path $Q$ in $H_2$ (using here that both $H_1$ and $H_2$ are connected).
   If $P\cup Q$ is a cycle then it is the union of two monochromatic paths in different colours and we are done. If $P\cup Q$ is not a cycle, then there is a non-empty set of vertices $W$ such that $W\subset V(P)\cap V(Q)\setminus \{v_1,v_2\}$. Let $w\in W$ be the first vertex in $W$ which is hit when traversing from $v_1$ to $v_2$ in $P$. Taking the segment of $P$ between $v_1$ and $w$ and the segment of $Q$ from $v_1$ to $w$ now gives a cycle which is the union of two monochromatic paths and we are done. Finally assume that $H$ is a graph as in \eqref{coh equiv 3} such that  $H_1, H_2, H_3$ are all Behrendian (and connected) and let $\chi$ be a non-monochromatic colouring of $E(H)$.
    Again, if one of $\chi\vert_{E(H_1)}$, $\chi\vert_{E(H_2)}$, $\chi\vert_{E(H_3)}$ is non-monochromatic, we are done.
    If not, choose a $uv$-path in $H_1$, a $uw$-path in $H_2$, and a $vw$-path in $H_3$.
    These three paths are monochromatic with at most two of them having the same colour and their union is a cycle in $H$, therefore witnessing that $H$ is Behrendian.

    It remains to show that any  Behrendian graph lies in $\cC$. So fix $H$ to be  Behrendian  and 
    take the smallest $r\in\NN$ such that $H = H_1 \cup\ldots\cup H_r$ for some $H_1,\ldots,H_r\in \mathcal{C}$.
    This is well-defined as $H$ can be written as the union of its edges which are all in $\cC$ by \eqref{coh equiv 1}.
    We will show that $r=1$.
    Suppose for a contradiction that $r\geq 2$.
    Any two of the $H_i$ intersect in at most one vertex for if $H_i$ and $H_j$ have two common vertices for some $i\neq j$, we obtain $H_i \cup H_j \in \mathcal{C}$ and hence $r$ would not be minimal.
    In particular, the $H_i$ are pairwise edge-disjoint.
    Define an $r$-colouring $\chi: E(H) \to [r]$ by sending $e \in E(H)$ to the unique $i\in[r]$ with $e\in E(H_i)$.
    Since $\chi$ is non-monochromatic we can find a non-monochromatic cycle $C$ that is the union of two or three monochromatic paths.
    If there exist $i,j\in [r]$, $i\neq j$, and paths $P\subseteq H_i$, $Q\subseteq H_j$ such that $C = P\cup Q$, we have arrived at a contradiction because $P$ and $Q$ have the same endpoints while $|V(H_i)\cap V(H_j)|\leq 1$.
    Assume then that there are distinct $i,j,\ell \in [r]$ and paths $P\subseteq H_i$, $Q\subseteq H_j$, $R\subseteq H_\ell$ such that $C = P\cup Q\cup R$.
    The common endpoint of $P$ and $Q$ is the unique vertex of $H_i\cap H_j$.
    Similarly, $H_i\cap H_\ell = P\cap R$ and $H_j\cap H_\ell = R \cap Q$.
    Thus we see that $H_i \cup H_j \cup H_\ell \in \mathcal{C}$ by the third condition from the definition of the class $\mathcal{C}$, which contradicts the minimality of $r$.
    \end{proof}
    
This alternative characterisation leads to the following easy consequences. 

\begin{obs} \label{obs:coh easy}
   Suppose $H$ is Behrendian. If $H'$ is obtained from $H$ by either
   \begin{enumerate}[label=(\roman*)]
       \item \label{coh add edge} adding an edge with vertices in $V(H)$; or
       \item \label{coh add vx} adding a new vertex with at least two neighbours in $V(H)$;
   \end{enumerate}
   then $H'$ is also Behrendian.
\end{obs}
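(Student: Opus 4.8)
The plan is to exploit the inductive characterisation of Behrendian graphs given by Lemma \ref{lem:cohe equiv}, rather than working directly with the definition in terms of non-monochromatic cycles. Since $\cC$ equals the class of Behrendian graphs, it suffices to show that each of the two operations keeps us inside $\cC$. For part \ref{coh add edge}, suppose $H \in \cC$ and $H' = H + e$ where $e = uv$ with $u,v \in V(H)$. The single edge $e$ forms a copy of $K_2$, which lies in $\cC$ by rule \eqref{coh equiv 1}. Now $H' = H \cup e$, and since $H$ is connected (being Behrendian) and both $u$ and $v$ lie in $V(H)$, the two subgraphs $H$ and (the graph on $V(H)$ consisting of the single edge $e$) share at least the two vertices $u,v$. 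Hence rule \eqref{coh equiv 2} applies with $H_1 = H$ and $H_2 = e$, giving $H' \in \cC$.

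For part \ref{coh add vx}, suppose $H \in \cC$ and $H'$ is obtained by adding a new vertex $z$ together with edges to (at least) two distinct neighbours $a,b \in V(H)$; write $S$ for the set of neighbours of $z$, so $a,b\in S$ and $|S|\ge 2$. Consider the star $H_2$ with centre $z$ and leaves $S$. A star is built inside $\cC$ by repeatedly applying rule \eqref{coh equiv 2}: each edge $za'$ for $a'\in S$ is a $K_2$ in $\cC$ by \eqref{coh equiv 1}, and any two of these edges share the vertex $z$, so one must instead use rule \eqref{coh equiv 3} to glue edges that meet in a single vertex — more carefully, to assemble the star one takes two leaf-edges $za$, $zb$ sharing only $z$ and needs a third subgraph meeting both; here a path in $H$ from $a$ to $b$ (which exists since $H$ is connected) can serve, so in fact it is cleanest to build $H'$ in one step: set $H_1$ = a $za$--edge, i.e. the $K_2$ on $\{z,a\}$; set $H_2$ = the $K_2$ on $\{z,b\}$; and set $H_3 = H$ itself. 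Then $H_1 \cup H_2 \cup H_3$ has vertex $z$ shared by $H_1,H_2$ only, $a$ shared by $H_1,H_3$ only, $b$ shared by $H_2,H_3$ only, and together they cover $H + z + \{za,zb\}$. Rule \eqref{coh equiv 3} with $(u,v,w) = (z,a,b)$ then puts $H + z + \{za,zb\}$ in $\cC$, and finally the remaining edges from $z$ to $S\setminus\{a,b\}$ are added one at a time using part \ref{coh add edge} (equivalently, rule \eqref{coh equiv 2}), yielding $H' \in \cC$.

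The main thing to be careful about is the degenerate bookkeeping in part \ref{coh add vx}: one must check that the three subgraphs in the application of rule \eqref{coh equiv 3} genuinely pairwise intersect in exactly the single prescribed vertex (in particular that $z\notin V(H)$, which holds since $z$ is new, and that $a\ne b$, which is the hypothesis that $z$ has at least two neighbours). No real obstacle arises beyond this; the substance of the observation is already packaged into Lemma \ref{lem:cohe equiv}, and the proof is a short verification that the two elementary gluing operations are instances of rules \eqref{coh equiv 2} and \eqref{coh equiv 3}.
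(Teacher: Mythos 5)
Your proof is correct and follows essentially the same route as the paper: case \ref{coh add edge} is rule \eqref{coh equiv 2} of Lemma \ref{lem:cohe equiv} with $H_1=H$ and $H_2$ the new edge, and case \ref{coh add vx} is rule \eqref{coh equiv 3} with $H$ and two of the new edges at the added vertex, followed by repeated applications of part \ref{coh add edge} for any remaining edges. The brief detour about building a star is unnecessary but harmless, and your bookkeeping of the pairwise single-vertex intersections matches the paper's argument.
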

\begin{proof}
    In case \ref{coh add edge}, we apply Lemma \ref{lem:cohe equiv} \eqref{coh equiv 2} with $H_1=H$ and $H_2$ being the single new edge. In case \ref{coh add vx}, we apply Lemma \ref{lem:cohe equiv} \eqref{coh equiv 3} with $H_1=H$ and $H_2$ and $H_3$ being distinct single edges incident to the new vertex. If the new vertex has more than two neighbours in $V(H)$ then we obtain that $H'$ is Behrendian by adding the remaining edges one at a time and appealing to part \ref{coh add edge} each time.  
\end{proof}

In particular, Observation \ref{obs:coh easy} \ref{coh add vx} shows that the class of connected Behrendian graphs on $k$ vertices is closed under the addition of edges, a fact that is not immediate from the original definition. We now show that the graphs that we will be interested in are indeed Behrendian.

\begin{lem} \label{lem: H minus coherent}
    Any graph $H$ with $v(H)=k\geq 5$ has the property that $H-e$ is  Behrendian for any choice of $e\in E(H)$ if any of the following hold:
    \begin{enumerate}
        \item \label{coh: wheel} $H$ is a wheel; or
        \item \label{coh:sq} $H$ contains the square of a Hamilton cycle; or
        \item \label{coh: min deg} $\delta(H)\geq k/2+1$.  
    \end{enumerate}
\end{lem}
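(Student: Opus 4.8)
The plan is to deduce that $H-e$ is Behrendian from three facts established before Lemma~\ref{lem:cohe equiv}: triangularly connected graphs are Behrendian; the square of a connected graph is triangularly connected (see \cite[Proposition 6.1]{fan2008nowhere}); and, by Observation~\ref{obs:coh easy}\ref{coh add edge}, adding edges on the same vertex set to a connected Behrendian graph keeps it Behrendian. Hence in cases \ref{coh: wheel} and \ref{coh:sq} it suffices to exhibit a \emph{spanning} triangularly connected subgraph of $H-e$: in case \ref{coh: wheel}, $H$ is itself the wheel, and in case \ref{coh:sq}, $H$ contains $C_k^2$ as a spanning subgraph, so $H-e\supseteq C_k^2-e$ (or $\supseteq C_k^2$ if $e\notin E(C_k^2)$) and it is enough to handle $C_k^2-e$.

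For case \ref{coh:sq}, label the Hamilton cycle $v_0,\dots,v_{k-1}$ cyclically, so $C_k^2$ has the ``short'' edges $v_iv_{i+1}$ and the ``long'' edges $v_iv_{i+2}$. If $e=v_iv_{i+1}$ is short, cutting the cycle at $e$ gives a Hamilton path whose square lies inside $C_k^2-e$ and is spanning and triangularly connected. If $e=v_iv_{i+2}$ is long, then for $k\geq 6$ the vertex $v_{i+1}$ is the unique common neighbour of $v_i$ and $v_{i+2}$ in $C_k^2$, so $\{v_i,v_{i+1},v_{i+2}\}$ is the only triangle of $C_k^2$ through $e$; the surviving triangles $\{v_j,v_{j+1},v_{j+2}\}$ with $j\neq i$ still chain along shared edges and cover all of $E(C_k^2-e)$, so $C_k^2-e$ is triangularly connected. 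For $k=5$ one has $C_5^2=K_5$ and $K_5-e$ is triangularly connected directly.

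For case \ref{coh: wheel}, let $h$ be the hub and $v_0,\dots,v_{k-2}$ the rim. If $e$ is a rim edge, then the triangles $\{h,v_i,v_{i+1}\}$ that survive still chain along the spokes at $h$ and cover every edge of $H-e$, so $H-e$ is triangularly connected. If $e=hv_0$ is a spoke, the fan on $\{h,v_1,\dots,v_{k-2}\}$ (the path $v_1\cdots v_{k-2}$ together with $h$ joined to all its vertices) is triangularly connected, and $v_0$ is attached to it only via the two edges $v_0v_1$ and $v_0v_{k-2}$; hence $H-e$ is Behrendian by Observation~\ref{obs:coh easy}\ref{coh add vx}.

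For case \ref{coh: min deg}, set $G:=H-e$, so $\delta(G)\geq k/2$ and every vertex other than the (at most two) endpoints of $e$ has degree at least $k/2+1$ in $G$. First I would observe that $G$ is not bipartite: a bipartite graph on $k$ vertices of minimum degree $\geq k/2$ must be $K_{k/2,k/2}$ (so $k$ is even), but then $H=G+e$ would have $\delta(H)=k/2<k/2+1$ no matter where $e$ lies, a contradiction. Next, any edge of $G$ both of whose endpoints have degree $\geq k/2+1$ satisfies $d(u)+d(v)>k$ and so lies in a triangle; consequently every vertex of $G$, and all but at most one edge of $G$, lies in a triangle, and if a vertex $v$ belongs to two distinct triangle components of $G$ then the two sets of $v$-edges in these components reach disjoint vertex sets with no edge between them (such an edge together with $v$ would span a triangle straddling both components). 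The remaining step, which I expect to be the main obstacle, is to combine these structural constraints with the minimum degree to conclude $G\in\cC$: concretely, to show that $G$ is either triangularly connected or decomposes as $G=G[V_1]\cup G[V_2]$ with $|V_1\cap V_2|\geq 2$ and $G[V_1],G[V_2]\in\cC$, after which Lemma~\ref{lem:cohe equiv}\eqref{coh equiv 2} and Observation~\ref{obs:coh easy}\ref{coh add edge} finish the proof. This is delicate precisely because the threshold $k/2+1$ is essentially best possible: $K_{k/2,k/2}$ has minimum degree $k/2$ and is not Behrendian, and even at minimum degree exactly $k/2+1$ one can build graphs (for instance two copies of $K_{1,1,1,4}$ glued along their independent $4$-sets) that are Behrendian but \emph{not} triangularly connected; so one cannot simply prove triangular connectivity and must track the at most two low-degree vertices throughout the argument.
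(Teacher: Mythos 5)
Your proposal does not prove the lemma: case \ref{coh: min deg} is left open, and you say so yourself ("the remaining step \dots is the main obstacle"). The structural observations you do record (non-bipartiteness of $F=H-e$, that edges with two high-degree endpoints lie in triangles) are not brought anywhere near a decomposition of $F$ into the class $\cC$, so for the minimum-degree case the submission is a plan, not a proof. For what it is worth, the paper closes this case with a much more direct device, which it in fact uses for all three cases: it suffices (by repeated application of Observation \ref{obs:coh easy}, parts \ref{coh add edge} and \ref{coh add vx}) to exhibit an ordering $v_1,\ldots,v_k$ of $V(F)$ with $v_1v_2\in E(F)$ and each $v_i$, $i\geq 3$, having at least two $F$-neighbours among $v_1,\ldots,v_{i-1}$. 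Under $\delta(H)\geq k/2+1$ such an ordering is built greedily once one checks that every $U\subset V(F)$ with $2\leq|U|\leq k-1$ has an outside vertex with at least two $F$-neighbours in $U$; for $|U|\leq k/2+1$ this follows from $e_F(U,V(F)\setminus U)\geq |U|(k/2+2-|U|)-1>k-|U|$ and pigeonhole, and for larger $U$ directly from the degrees, with the (at most two) endpoints of $e$ absorbed by the "$-1$" and a short separate check. In particular the difficulty you anticipate — that one cannot prove triangular connectivity and must instead find a decomposition as in Lemma \ref{lem:cohe equiv}\eqref{coh equiv 2} while tracking the low-degree endpoints of $e$ — simply does not arise: the 2-neighbour build-up never attempts triangular connectivity, so your (correct) example showing triangular connectivity can fail at minimum degree $k/2+1$ is a warning against your route, not against the statement.

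Your treatments of cases \ref{coh: wheel} and \ref{coh:sq} are correct, and they take a genuinely different route from the paper: you verify triangular connectivity of a spanning subgraph of $H-e$ (the surviving triangle fan for the wheel, the square of a Hamilton path or the surviving consecutive triangles of $C_k^2$ for the square) and then add back edges and, for a deleted spoke, the vertex $v_0$ via Observation \ref{obs:coh easy}\ref{coh add vx}; the paper instead reuses the same ordering argument (hub first and degree-2 vertices last for the wheel; along the cycle with the endpoints of $e$ first and last for the square). Both work; the paper's version has the advantage of uniformity across all three cases, which is exactly what your write-up lacks. One small inaccuracy: for $k=6$ the vertex $v_{i+1}$ is not the unique common neighbour of $v_i$ and $v_{i+2}$ in $C_6^2$ (the vertex $v_{i+4}$ is another), so $\{v_i,v_{i+1},v_{i+2}\}$ is not the only triangle through the long edge there; this does not affect your chain argument, which only uses the consecutive triangles $T_j$, $j\neq i$.
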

\begin{proof}
    Let $H$ be a graph satisfying one of the conditions \eqref{coh: wheel}-\eqref{coh: min deg} of the  lemma, let $e\in E(H)$ be arbitrary and let $F=H-e$. By repeated applications of Observation \ref{obs:coh easy}, it suffices to give some ordering $v_1,\ldots,v_k$ of $V(F)=V(H)$ such that $v_1v_2$ is an edge of $F$ and for $3\leq i\leq k$, we have that $v_i$ has at least two neighbours in $F$ amongst $\{v_1, \ldots,v_{i-1}\}$. If $H$ is a wheel, then take $v_1$ to be the centre of the wheel and order the remaining vertices as $v_2, \ldots, v_k$ respecting the cyclic order determined by the cycle of the wheel and such that the (one or two) vertices of degree two in $F$ come last in the order. One can check that this gives a valid ordering. For case \eqref{coh:sq}, by Observation \ref{obs:coh easy} \ref{coh add edge}, we can assume that $H$ itself is the square of a cycle. Similarly to the wheel,  we order the vertices according to the cyclic order given by the cycle, starting with one of the endpoints $v$ of the edge $e\in E(H)\setminus E(F)$ and ensuring that the other endpoint of $e$ is either $v_{k-1}$ or $v_k$, depending on whether $e$ is an edge between vertices of distance two or one on the cycle. 

    For case \eqref{coh: min deg}, we deduce the  existence of a valid ordering by showing that for any  $U\subset V(H)=V(F)$ with $2\leq |U| \leq k-1$, there is a vertex $v\in V(F)\setminus U$ with $d_F(v,U)\geq 2$. Indeed, then the ordering can be constructed greedily by starting with an arbitrary edge of $F$ defining $v_1$ and $v_2$.   Firstly, let $2\leq |U| \leq k/2 + 1$.
    We estimate the number of edges in $F$ between $U$ and $V(F)\setminus U$:
       \[
        e_{F}(U,V(F)\setminus U)
        \geq e_H(U,V(F)\setminus U) - 1 
        \geq |U|  (\delta(H)-|U|+1) - 1 
    \geq|U| (k/2 + 2 - |U|) - 1.\]
    The final quantity is strictly greater than $k-|U|$ for all $2\leq |U| \leq k/2 + 1$ and so there is indeed a vertex $v$ with $d_F(v,U)\geq 2$ by the pigeon hole principle.  
    If $k/2+1<|U|\leq k-1$, then for all vertices $v\in V(F)\setminus U$ we have that $d_H(v)+|U|>k+2$ and so $d_H(v,U)\geq 2$. Thus any vertex $v$ not incident to $e$ is a good choice. If there is no such $v$, then we have that $V(F)\setminus U$ consists only of vertices incident to $e$ and thus has size at most 2 and so for $v\in V(F)\setminus U$, we have that $d_F(v,U)\geq d_H(v,U)-1\geq d_H(v)-2\geq k/2-1$ and hence  $d_F(v,U)\geq 2$ as $k\geq 5$ and $d_F(v,U)\in \NN$.  This implies the existence of the required vertex in all cases and hence concludes the proof. 
    \end{proof}

\subsubsection{Sidonian graphs} \label{sec:bicoherent}
There are no bipartite graphs that satisfy Definition \ref{def:coherent} of being Behrendian as  if we colour a bipartite graph $H$ in a \emph{rainbow} fashion, that is giving each edge a unique colour, then any cycle in $H$ has length at least 4 and therefore induces at least 4 colours. Nonetheless, we will be able to work with a somewhat analogous variant which we call  \textit{Sidonian}. In the context of the dilation chains which we will introduce in Section \ref{sec:dilation}, we will be able to provide lower bounds on running times of the type $M_H(n)\geq n^{3/2-o(1)}$ by using a set of dilations based on a variation of a Sidon set.  We will show in Theorem \ref{thm:bip coherent run time} that this  construction works whenever $H-e$ satisfies the following property of being \emph{Sidonian}, for any choice of $e\in E(H)$.

\begin{dfn} \label{def:bicoh}
    We say a bipartite graph $H$ is \emph{Sidonian} if any non-monochromatic colouring of $E(H)$ results in a non-monochromatic copy of $C_4$.
\end{dfn}

The definition of Sidonian is perhaps not the most natural generalisation of Behrendian to a bipartite setting and is rather formulated to fit with our application to running times in Theorem \ref{thm:bip coherent run time}. 
In particular, we do not get  a   characterisation as in Lemma \ref{lem:cohe equiv} in this bipartite setting. Some consequences of the definition do carry over naturally though. Indeed, analogously to triangularly connected graphs being Behrendian, one can see that if there is a sequence of intersecting $C_4$s between any pair of edges in a bipartite graph $H$, then it is necessarily Sidonian. This includes all \textit{quadrangulations} of a surface, for example. For us, the notion will be used to prove Theorem \ref{thm:bip dense}  where we will need that all bipartite graphs with sufficiently large minimum degree conditions are Sidonian, even after deleting an edge. 

\begin{lem} \label{lem:dense bip coh}
      Let $3\leq r\leq s$ and suppose $H$ is a bipartite graph with parts $X,Y$ with $|X|=r$, $|Y|=s$ and such that $d(x)\geq s/2+1$ for all $x\in X$ and $d(y)\geq r/2+1$ for all $y\in Y$. Then $H-e$ is Sidonian for any choice of $e\in E(H)$.  
\end{lem}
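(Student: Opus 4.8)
The plan is to recast the statement ``$F:=H-e$ is Sidonian'' in purely graph-theoretic terms. Form the auxiliary graph $L$ on vertex set $E(F)$ in which two edges of $F$ are adjacent whenever they lie together on some copy of $C_4$ in $F$. If a colouring $\chi$ of $E(F)$ makes every $C_4$ of $F$ monochromatic, then $\chi$ is constant on each connected component of $L$; so if $L$ is connected, any \emph{non}-monochromatic colouring of $E(F)$ must contain a non-monochromatic $C_4$, which is exactly what Definition~\ref{def:bicoh} asks for. Hence the whole task reduces to proving that $L$ is connected.

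Write $e=x_0y_0$ with $x_0\in X$, $y_0\in Y$. The two inputs I would use repeatedly are: (i) any two distinct vertices of $X\setminus\{x_0\}$ have at least $2(s/2+1)-s=2$ common neighbours in $F$ (their degrees being inherited unchanged from $H$), and dually any two vertices of $Y\setminus\{y_0\}$ have at least $2$ common neighbours; and (ii) $d_F(x_0)\ge s/2\ge 2$ and $d_F(y_0)\ge r/2\ge 2$. Fact (i) immediately produces cliques inside $L$: for each $y\in Y\setminus\{y_0\}$ the set $\{xy:\ x\in N_F(y)\setminus\{x_0\}\}$ is a clique of $L$, since two such edges sit on a common $C_4$ obtained via a second shared neighbour of their $X$-endpoints; symmetrically for each $x\in X\setminus\{x_0\}$ the set $\{xy:\ y\in N_F(x)\setminus\{y_0\}\}$ is a clique of $L$.

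Next I would glue these cliques together. Let $S$ be the set of edges of $F$ incident to neither $x_0$ nor $y_0$; it is nonempty since $r,s\ge3$ and the degree bounds force a vertex of $X\setminus\{x_0\}$ to have a neighbour in $Y\setminus\{y_0\}$. Every edge of $S$ lies in one of the cliques above, and any two of the ``$y$-cliques'' (for $y,y'\in Y\setminus\{y_0\}$) meet an ``$x$-clique'', because by fact (i) the vertices $y,y'$ have a common neighbour $x\neq x_0$; hence $S$ spans a connected subgraph of $L$. It remains to attach the edges incident to $y_0$ or to $x_0$. An edge at $y_0$ has the form $xy_0$ with $x\ne x_0$ (as $x_0y_0\notin E(F)$); by fact (ii) pick $x''\in N_F(y_0)\setminus\{x\}$ (automatically $x''\ne x_0$), by fact (i) pick a common neighbour $y'\ne y_0$ of the distinct vertices $x,x''\in X\setminus\{x_0\}$, and note that $x,y_0,x'',y'$ are four distinct vertices spanning a $C_4$ of $F$ (edges $xy_0,\,y_0x'',\,x''y',\,y'x$) containing both $xy_0$ and the edge $xy'\in S$; so $xy_0$ is joined to $S$ in $L$. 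Running the same argument with the roles of $(X,x_0,r)$ and $(Y,y_0,s)$ interchanged --- legitimate since it only uses $r,s\ge3$ --- attaches every edge at $x_0$ to $S$. Since $E(F)$ is the union of $S$ with the edges at $x_0$ and the edges at $y_0$, this shows $L$ is connected and hence that $F$ is Sidonian.

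The one point of friction is exactly the deleted edge $e$: at its endpoints $x_0,y_0$ the degree guarantee drops by one, so a pair involving $x_0$ (or $y_0$) need only have a single common neighbour and the clean clique structure of $L$ breaks there. The remedy is the detour through an auxiliary vertex $x''$ (resp.\ $y''$) on the opposite side, which is why the bookkeeping has to be carried out inside $F$ with the reduced degrees rather than by appealing to $H$ directly.
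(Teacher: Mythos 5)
Your proof is correct and takes essentially the same route as the paper's: the same degree sums ($\geq s+2$ or $\geq r+2$) produce the common neighbours and hence the $C_4$'s forcing equal colours on edges avoiding $x_0,y_0$, and edges at $x_0$ or $y_0$ are then attached via a second neighbour and a common neighbour on the other side, exactly as in the paper; your auxiliary graph $L$ is just a repackaging, the only substantive difference being that you establish the connectivity of the core directly from common-neighbour counts where the paper invokes $(1,1,1)$-inseparability (Lemma \ref{lem:bip dense insep}). One cosmetic slip: $s/2\geq 2$ fails for $s=3$, but integrality of $d_F(x_0)\geq s/2>1$ still gives $d_F(x_0)\geq 2$, so nothing breaks.
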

\begin{proof}
    Let $e=xy\in E(H)$ be arbitrary with $x\in X$ and $y\in Y$. Consider a colouring $\chi$ of $E(H-e)$ with no non-monochromatic copy of $C_4$. We will show that $\chi$ is necessarily monochromatic and so $H-e$ is Sidonian. Firstly, let $F$ be the graph obtained from $H-e$ by further deleting the endpoints $x,y$ of $e$ and all their incident edges. We claim that $F$ is monochromatic under $\chi$. Indeed for any pair of  distinct edges $f,f'\in E(F)$ with $f\cap f'\neq \emptyset$, letting $v=f\cap f'$ and $\{u,w\}=f\cup f'\setminus \{v\}$, we have that \begin{equation} \label{eq:degs high bip coh} d_{H-e}(u)+d_{H-e}(w)=d_H(u)+d_H(w)\geq \ell+2,\end{equation}
    with $\ell=s$ if $u,w\in X$ and $\ell=r$ if $u,w\in Y$. 
    Hence there is some vertex $v'\in N_{H-e}(u)\cap N_{H-e}(w)\setminus \{v\}$ such that $v,v',u,w$ span a copy of $C_4$, which is monochromatic by assumption and so $\chi(f)=\chi(f')$. Note that we used  here that $\{u,w\}\cap \{x,y\}=\emptyset$ but it is possible that the vertex $v'\in \{x,y\}$. This shows that any pair of incident edges in $F$ are the same colour and using that $H$ is $(1,1,1)$-inseparable 
    (Lemma \ref{lem:bip dense insep}) which implies that $F$ is connected, we get that the whole of $F$ is monochromatic. It remains to show that edges incident to $x$ or $y$ are the same colour as the edges in $F$. Note that $d_{H-e}(x)\geq 2$ by the minimum degree condition on $H$, so for any edge $xz\in E(H-e)$ containing $x$, we can choose another  edge $xz'\in E(H-e)$. Moreover, using that $z,z'\neq y$, as in \eqref{eq:degs high bip coh}, we have that $d_{H-e}(z)+d_{H-e}(z')\geq r+2$ and the edges $xz$ and $xz'$ are contained in a $C_4$ with two edges from $F$. Therefore the edges $xz,xz'$ are the same colour as the edges of $F$. This argument can be repeated for any edge $xz\in E(H-e)$ and analogously for edges incident to $y$, rendering $H-e$ monochromatic and completing the proof. 
\end{proof}

\subsection{Random graphs} \label{sec:random}
We are interested in the  binomial random graph $H=G(k,p)$  with vertex set $[k]$  and each pair of vertices $\{u,v\}\in \binom{[k]}{2}$ appearing with probability $p$ independently. We say an event holds \textit{asymptotically almost surely} (a.a.s.\ for short) in $H$ if the probability it holds tends to 1 as $k$ tends to infinity. Let us first recall the following well-known fact, see e.g.\ \cite[Theorem 5.4]{bollobas_random_2001}. 

\begin{fact} \label{fact:G(k,p) isolated edge}
    If $p=o(\log k/k)$ then a.a.s\ $H=G(k,p)$ is either empty or contains an isolated edge. 
\end{fact}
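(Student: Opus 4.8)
\textbf{Proof plan for Fact~\ref{fact:G(k,p) isolated edge}.} The statement is a standard second-moment (or Poisson-approximation) argument for the appearance of isolated edges in $G(k,p)$ just below the connectivity threshold, so I would prove it by showing that a.a.s.\ either there are no edges at all, or there is at least one component that is a single edge. The plan is to let $Z$ denote the number of \emph{isolated edges} in $H=G(k,p)$, i.e.\ the number of pairs $\{u,v\}\in\binom{[k]}{2}$ such that $uv\in E(H)$ and no other vertex is adjacent to $u$ or to $v$. The first moment is
\begin{linenomath}
\begin{equation*}
\EE[Z]=\binom{k}{2}\, p\,(1-p)^{2(k-2)}.
\end{equation*}
\end{linenomath}
Writing $p=\omega(k)\cdot \tfrac{\log k}{k}$ with $\omega(k)\to 0$ is \emph{not} quite what we want — rather $p=o(\log k/k)$, so set $p = \eta_k \log k / k$ with $\eta_k \to 0$; then $(1-p)^{2(k-2)} = e^{-2pk(1+o(1))} = k^{-2\eta_k(1+o(1))} = k^{-o(1)}$, while $\binom{k}{2}p = \Theta(k\log k \cdot \eta_k)$. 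Hence $\EE[Z]\to\infty$ provided $\eta_k k \log k\to\infty$; if instead $\eta_k$ decays so fast that $\EE[Z]\not\to\infty$, one checks directly that a.a.s.\ $H$ has no edges at all (since $\EE[e(H)]=\binom k2 p\to 0$ would force this, and more generally one handles the regime via the dichotomy in the statement — "either empty or contains an isolated edge"). So the two cases in the conclusion correspond exactly to whether the expected number of edges tends to $0$ or not.

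In the main case $\EE[Z]\to\infty$, I would apply the second moment method: compute $\EE[Z^2]=\sum_{e,f}\PP[e,f\text{ both isolated edges of }H]$, splitting the sum according to whether $e$ and $f$ are equal, share a vertex, or are vertex-disjoint. If $e$ and $f$ share a vertex they cannot both be isolated edges, so that contribution vanishes. For disjoint $e=\{u,v\}$, $f=\{x,y\}$, the event that both are isolated edges requires $uv,xy\in E(H)$ and that none of $u,v,x,y$ has any other neighbour, which has probability $p^2(1-p)^{2(k-2)+2(k-2)-1}$ (the $-1$ correcting for the double-counted non-edge between the two pairs, appropriately); comparing with $(\EE[Z])^2 = \left(\binom k2\right)^2 p^2 (1-p)^{4(k-2)}$ one finds the ratio is $1+o(1)$. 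Therefore $\EE[Z^2]=(1+o(1))(\EE[Z])^2$, and Chebyshev's inequality gives $Z>0$ a.a.s., so $H$ contains an isolated edge.

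The routine-but-slightly-delicate step — and the one I would treat most carefully — is the bookkeeping of the exponents of $(1-p)$ in $\EE[Z^2]$, making sure the overcount/undercount of "forbidden" non-edges between the two pairs is handled so that the ratio $\EE[Z^2]/(\EE[Z])^2$ really is $1+o(1)$; here it helps that $p\to 0$, so $(1-p)^{-O(1)}=1+o(1)$ absorbs any bounded discrepancy. The genuine conceptual point, rather than an obstacle, is simply to phrase the dichotomy correctly: one should not claim $\EE[Z]\to\infty$ unconditionally (it fails when $p$ is tiny, e.g.\ $p=k^{-3}$), but instead observe that if $\EE[e(H)]=\binom k2 p\to 0$ then $H$ is a.a.s.\ empty by Markov's inequality, while if $\binom k2 p\not\to 0$ then — using $p=o(\log k/k)$ — one has $\EE[Z]\to\infty$ after all (since $\EE[Z]=\binom k2 p\cdot(1-p)^{2(k-2)}$ and the second factor is $k^{-o(1)}$, which cannot kill a factor that does not tend to $0$ unless that factor is bounded, in which case a marginally finer first-moment/Markov split on the number of vertices in nontrivial components finishes the job). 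Alternatively, and perhaps cleanest, I would simply cite the well-known result on the component structure of $G(k,p)$ below connectivity (\cite{bollobas_random_2001}, as the statement already does) and present the above only as the short self-contained verification.
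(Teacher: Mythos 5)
Your proposal is sound, but note that the paper does not actually prove this fact: it is stated as a known result with a pointer to \cite[Theorem 5.4]{bollobas_random_2001}, which is precisely the fallback you offer in your final sentence. Your self-contained route --- first and second moments of the number $Z$ of isolated edges, with the dichotomy governed by whether $\binom{k}{2}p$ tends to $0$, stays bounded, or tends to infinity --- is the standard argument and does work, so it is a legitimate replacement for the citation; it buys a short verification at the cost of some case bookkeeping that the reference handles once and for all. Two small corrections to that bookkeeping. First, for two vertex-disjoint candidate edges $\{u,v\},\{x,y\}$ the double-counted forbidden pairs are the \emph{four} pairs joining $\{u,v\}$ to $\{x,y\}$, not one, so the joint probability is $p^2(1-p)^{4(k-2)-4}$; as you observe, any bounded discrepancy is absorbed by $(1-p)^{-O(1)}=1+o(1)$, so $\EE[Z^2]=(1+o(1))(\EE[Z])^2$ and Chebyshev applies whenever $\EE[Z]\to\infty$, which indeed holds whenever $\binom{k}{2}p\to\infty$ and $pk=o(\log k)$ (write $\EE[Z]=\Theta\bigl(kxe^{-2x}\bigr)$ with $x=pk$ and treat $x\leq 1$ and $1\leq x=o(\log k)$ separately). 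Second, the intermediate regime where $\binom{k}{2}p$ is bounded but not tending to $0$ deserves an explicit line rather than your parenthetical gesture: there $p=O(k^{-2})$, so the expected number of two-edge paths is $O(k^3p^2)=O(1/k)\to 0$, and since a nonempty graph with no isolated edge must contain such a path, the bad event has vanishing probability; a routine subsequence argument then covers sequences $p(k)$ that oscillate between the three regimes.
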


Mostly, we are interested in the range $p=\omega(\log k/k)$ above the connectivity threshold. Here we will show that a.a.s.\ $H=G(k,p)$ has maximum running time $M_H(n)$ which is quadratic in $n$.  In order to prove this, we will need several properties of the random graph that hold a.a.s.\ in this range. Firstly, we need that $H$ is highly connected, see \cite[Theorem 7.5]{bollobas_random_2001}. 

\begin{fact} \label{fact: G(k,p) connected}
    If $p=\omega(\log k/k)$ then a.a.s.\ $H=G(k,p)$ is 4-connected and so is $(2,1)$-inseparable. 
\end{fact}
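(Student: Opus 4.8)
The plan is to deduce the statement directly from the classical theorem on the connectivity of binomial random graphs, combined with the elementary observation recorded in Section~\ref{sec:inseparable} that every $4$-connected graph is $(2,1)$-inseparable. Recall the classical result (\cite[Theorem~7.5]{bollobas_random_2001}): for a fixed integer $m\geq 1$, if $p=p(k)$ satisfies $pk-\log k-(m-1)\log\log k\to\infty$ as $k\to\infty$, then a.a.s.\ $G(k,p)$ is $m$-connected. I would apply this with $m=4$. Since the hypothesis $p=\omega(\log k/k)$ is precisely the assertion that $pk/\log k\to\infty$, and since $\log\log k\leq\log k$ for all $k\geq 3$, we obtain
\[
pk-\log k-3\log\log k\ \geq\ pk-4\log k\ =\ \log k\left(\frac{pk}{\log k}-4\right)\ \longrightarrow\ \infty,
\]
so the hypothesis of the cited theorem is met and a.a.s.\ $H=G(k,p)$ is $4$-connected.

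For the final clause I would simply invoke the remark from Section~\ref{sec:inseparable}: a $4$-connected graph $H$ is $(2,1)$-inseparable, because were $H$ disconnected by deleting an edge $e=uv$ together with two further vertices $x,y$, then deleting $x$, $y$ and whichever of $u,v$ is not already separated from the rest of $H$ would disconnect $H$ after removing only three vertices, contradicting $4$-connectivity. Putting the two parts together, a.a.s.\ $H=G(k,p)$ is $(2,1)$-inseparable.

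I do not expect a genuine obstacle here: the only points to verify are that $p=\omega(\log k/k)$ comfortably clears the $m$-connectivity threshold for the fixed value $m=4$ --- which is exactly what the displayed chain of inequalities shows --- and that the cited theorem is being quoted in the correct form (with the $(m-1)\log\log k$ term). Should one prefer a self-contained argument in place of quoting \cite{bollobas_random_2001}, the standard alternative is a first-moment computation: a.a.s.\ there is no set $S\subseteq V(H)$ with $|S|\leq 3$ whose deletion disconnects $G(k,p)$, the dominant contribution being the (vanishing) expected number of vertices of degree at most $3$, with the probability of splitting off a small side of size at least $2$ being of strictly smaller order. I would regard quoting the theorem as the cleaner route.
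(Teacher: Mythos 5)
Your proposal is correct and is essentially the paper's own route: the paper likewise treats 4-connectivity as a direct consequence of the classical threshold result \cite[Theorem 7.5]{bollobas_random_2001} and then applies the observation from Section~\ref{sec:inseparable} that every 4-connected graph is $(2,1)$-inseparable. Your explicit check that $p=\omega(\log k/k)$ clears the $m=4$ threshold, and your spelled-out endpoint argument for the implication, are fine but add nothing beyond what the paper already asserts.
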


We will also use that  $H$ is self-stable. In order to deduce this, note that if $H$ is \emph{not} self-stable, then there is some edge $e\in E(H)$ and a non-trivial\footnote{That is, not the identity map.} bijection $\pi:[k]\rightarrow [k]$  such that  $\pi(H-e)\subset H$ and $\pi(e)\notin E(H)$, that is, every edge of $H-e$ is mapped via $\pi$ to an edge of $H$ on $[k]$ and $e$ is not. This in turn implies that the symmetric difference  $H \triangle \pi(H):=(E(H)\setminus E(\pi(H))\cup (E(
\pi(H))\setminus E(H))$ has size $|H \triangle \pi(H)|=2$. To rule this out, we can appeal to the well-studied topic of the asymmetry of $H=G(k,p)$. Indeed, if there was a non-trivial bijection $\pi:[k]\rightarrow [k]$ with $H\triangle \pi(H)=\emptyset$, this would give a non-trivial automorphism of $H$. In one of the earliest papers studying binomial random graphs, Erd\H{o}s and R\'enyi \cite{erdHos1963asymmetric} showed that a.a.s.\  $H=G(k,p)$ has a trivial automorphism group. Their result is stated for $p=1/2$ but works when
$\min\{p,1-p\}\geq \log k/k$ and both $H$ and its complement are connected. We will use a strengthening of this result due to Kim, Sudakov and Vu \cite[Theorem 3.1]{kim2002asymmetry} which implies that if $\min\{p,1-p\}=\omega (\log k/k)$, then a.a.s.\ for any non-trivial $\pi:[k]\rightarrow [k]$, one has that $|H\triangle\pi(H)|\geq (2-o(1))kp(1-p)>2$ (in fact, they show the stronger statement that there is some vertex $u\in [k]$ and this many edges in the symmetric difference $H\triangle \pi(H)$ that are all incident to $u$, a parameter that they call the \emph{defect} of $H$).
In particular, there is no non-trivial embedding $\pi:H-e\rightarrow H$ for some $e\in E(H)$ and we get the following. 

\begin{fact} \label{fact: G(k,p) self-stable}
     If $\min\{p, 1-p\}=\omega(\log k/k)$ then a.a.s.\ $H=G(k,p)$ is self-stable, that is $\final{H}_H=H$. 
\end{fact}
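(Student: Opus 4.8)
The plan is to reduce self-stability of $H$ to the absence of a completing edge in the very first step of the $H$-process on $H$, and then to rule out such an edge via the known asymmetry of the random graph. Concretely, $\final{H}_H=H$ holds as soon as $H_1=H_0=H$ in the $H$-process on $H$, i.e.\ as soon as there is no $e_1\in\binom{[k]}{2}\setminus E(H)$ with $n_H(H+e_1)>n_H(H)$. Since $H+e_1$ has exactly one more edge than $H$, any such $e_1$ would complete a copy $F$ of $H$ with $e_1\in E(F)$, so $F-e_1\subseteq H$ and $V(F)=[k]$ (as $v(F)=v(H)=k$).

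From such an $F$ I would extract a near-automorphism of $H$. An isomorphism $H\to F$ is a bijection $\pi\colon[k]\to[k]$ with $\pi(H)=F$ (as graphs on $[k]$); writing $e_0:=\pi^{-1}(e_1)\in E(H)$ we get $\pi(H-e_0)=F-e_1\subseteq H$ while $\pi(e_0)=e_1\notin E(H)$, so $\pi$ is non-trivial. Moreover $E(F)\setminus\{e_1\}\subseteq E(H)$ together with $|E(F)|=|E(H)|$ forces $E(F)=(E(H)\setminus\{f\})\cup\{e_1\}$ for a unique $f\in E(H)$, whence $|H\triangle\pi(H)|=|H\triangle F|=2$.

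The final step is to invoke the result of Kim, Sudakov and Vu \cite{kim2002asymmetry}: when $\min\{p,1-p\}=\omega(\log k/k)$, a.a.s.\ every non-trivial permutation $\pi$ of $[k]$ satisfies $|H\triangle\pi(H)|\geq(2-o(1))kp(1-p)$. As $kp(1-p)=\omega(\log k)\to\infty$, this bound strictly exceeds $2$ for all large $k$, contradicting $|H\triangle\pi(H)|=2$. Hence a.a.s.\ no edge is added at step $1$, so $H_1=H_0=H$ and $H$ is self-stable.

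I do not anticipate a genuine obstacle: the substantive input (the KSV asymmetry/defect bound) is quoted, and the rest is the bookkeeping above. The two points deserving a line of care are that the completing copy $F$ must actually use the new edge $e_1$ — immediate since $H+e_1$ and $H$ differ in a single edge — and that the induced permutation $\pi$ is non-trivial, which holds because $\pi$ sends the edge $e_0\in E(H)$ to $e_1\notin E(H)$.
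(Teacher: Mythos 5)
Your proposal is correct and follows essentially the same route as the paper: a non-self-stable $H$ yields, via a copy of $H$ completed by a new edge, a non-trivial permutation $\pi$ of $[k]$ with $|H\triangle\pi(H)|=2$, which is then ruled out a.a.s.\ by the Kim--Sudakov--Vu asymmetry bound $|H\triangle\pi(H)|\geq(2-o(1))kp(1-p)>2$. Your additional bookkeeping (reducing to the first step of the process and checking $\pi$ is non-trivial and the symmetric difference has size exactly $2$) is accurate and matches the paper's reasoning.
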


Finally we will need some basic properties of $H$ which we collect in the following lemma. 

\begin{lem} \label{lem:G(k,p) properties}
    Suppose $0<\eps<1/150$,  $p=\omega(\log k/k)$ and fix a partition $[k]=U\cup W$ with $|U|=\floor{k/2}$ and $W=\ceil{k/2}$. Then a.a.s.\ $H=G(k,p)$ on $[k]$ has the following properties:
    \begin{enumerate}
        \item \label{G(k,p): degrees U W} $d_H(v,U),d_H(v,W)=(1/2\pm\eps)kp$ for all $v\in [k]$;
        \item \label{G(k,p):hole} there are no disjoint sets $A,B\subset [k]$, with $|A|=|B|=m\geq \eps k$ and $e_H(A,B)\leq 1$; and
\item \label{G(k,p): dense small} for every $Y\subset [k]$, with $|Y|\leq 4\eps k$, we have that $e(H[Y])\leq  |Y|kp/10.$ 
    \end{enumerate}
\end{lem}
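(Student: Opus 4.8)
The plan is to prove each of the three properties by a union bound over the relevant substructures, using the Chernoff bounds from Theorem~\ref{thm:chernoff} and crucially the hypothesis $p=\omega(\log k/k)$, which makes each individual failure probability exponentially small in $kp \gg \log k$ and hence beats the at most $4^k$ subsets we union over.

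For property \eqref{G(k,p): degrees U W}, fix $v\in[k]$ and note $d_H(v,U)$ is a sum of at most $\floor{k/2}$ independent Bernoulli$(p)$ variables with mean $\lambda=(|U|-\mathbbm{1}[v\in U])p=(1/2+o(1))kp$; a direct two-sided Chernoff bound gives $\PP[d_H(v,U)\neq (1/2\pm\eps)kp]\leq 2e^{-c\eps^2 kp}$ for a suitable constant $c$ (adjusting $\eps$ slightly to absorb the $o(1)$), and since $kp=\omega(\log k)$ this is $o(1/k)$, so a union bound over the $2k$ choices of $v$ and of $U$ versus $W$ finishes this part.

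For property \eqref{G(k,p):hole}, I would fix disjoint $A,B$ with $|A|=|B|=m\geq\eps k$ and observe $e_H(A,B)$ is a sum of $m^2$ independent Bernoullis with mean $m^2 p\geq \eps^2 k^2 p$; since this mean is $\omega(k)$, the lower-tail Chernoff bound (or the "$x\geq 7\lambda$" clause applied in reverse, i.e.\ a crude lower-tail estimate) gives $\PP[e_H(A,B)\leq 1]\leq e^{-\Omega(m^2 p)}\leq e^{-\Omega(\eps^2 k^2 p)}$, which is far smaller than $\binom{k}{m}^2\leq 4^k$; taking a union bound over all such pairs completes this part.

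For property \eqref{G(k,p): dense small}, fix $Y$ with $|Y|=:y\leq 4\eps k$; then $e(H[Y])$ is a sum of $\binom{y}{2}\leq y^2/2$ independent Bernoullis with mean $\lambda\leq y^2 p/2$, and we want to bound $\PP[e(H[Y])\geq ykp/10]$. Here the target $x:=ykp/10$ compared with $\lambda\leq y^2p/2\leq 2\eps k\cdot yp$ satisfies $x/\lambda\geq (ykp/10)/(2\eps k y p)=1/(20\eps)\geq 7$ since $\eps<1/150$, so the last clause of Theorem~\ref{thm:chernoff} applies and yields $\PP[e(H[Y])\geq ykp/10]\leq e^{-ykp/10}$; union-bounding over the at most $\binom{k}{y}\leq (ek/y)^y\leq k^y$ sets $Y$ of size $y$, and then over $y$, the total failure probability is at most $\sum_{y} k^y e^{-ykp/10}=\sum_y e^{y(\ln k - kp/10)}=o(1)$ because $kp=\omega(\log k)$ dominates $10\ln k$. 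The main obstacle, such as it is, is purely bookkeeping: one must be careful that in parts \eqref{G(k,p):hole} and \eqref{G(k,p): dense small} the mean is genuinely of the right order ($\omega(k)$ for the hole, and for \eqref{G(k,p): dense small} that the ratio $x/\lambda$ exceeds $7$ uniformly in $y$ thanks to $\eps<1/150$) so that the exponential savings comfortably beat the entropy term $y\ln k$; everything else is a routine application of Chernoff plus a union bound.
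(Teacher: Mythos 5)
Your proposal is correct and follows essentially the same route as the paper: all three parts are handled by Chernoff bounds from Theorem \ref{thm:chernoff} plus union bounds, with the same verification for part \eqref{G(k,p): dense small} that the bound $|Y|\leq 4\eps k$ and $\eps<1/150$ force the target $|Y|kp/10$ to exceed $7$ times the mean so the ``furthermore'' clause applies, and the same entropy-versus-exponent comparison using $kp=\omega(\log k)$. The only cosmetic difference is that for part \eqref{G(k,p):hole} the paper records the mean as $\omega(k\log k)$ rather than $\omega(k)$, but either suffices against the $4^k$ union bound.
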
 
\begin{proof}
    Property \eqref{G(k,p): degrees U W} follows from Chernoff bounds (Theorem \ref{thm:chernoff}) and a union bound over the $k$ choices of $v$ and a choice of $X=U$ or $X=W$, noting that $\mathbb{E}[d_H(v,X)]=(k/2\pm 1)p$ and $kp=\omega(\log k)$. Similarly, for property \eqref{G(k,p):hole}, for any choice of $A$ and $B$, we have that \[\mathbb{E}[e_H(A,B)]=|A||B|p=m^2p\geq \eps^2 k^2p=\omega(k\log k)\] and the probability that $e_H(A,B)\leq 1$ is less than $\exp(-\omega(k\log k))$ by Theorem \ref{thm:chernoff}. A union bound over choices of $A$ and $B$ (at most $2^{2k}$ choices) completes the proof of \eqref{G(k,p):hole}.  Finally, for \eqref{G(k,p): dense small}, note that for a fixed $1\leq m'\leq 4 \eps k$ and set $Y$ with $|Y|=m'$, we have that $\lambda=\mathbb{E}[e(H[Y])]=\binom{m'}{2}p \leq m'kp/70$ due to our upper bound on $m'$ and $\eps$. 
    Thus by the `furthermore' part of Theorem \ref{thm:chernoff}, we have that the probability that $e(H[Y])>m'kp/10$ is less than $\exp(-m'kp/10)$. 
    A union bound over the choices of $Y$ gives that the probability that \eqref{G(k,p): dense small} fails is at most 
    \[\sum_{m'=1}^{4\eps k}\binom{k}{m'}\exp(-m'kp/10)\leq \sum_{m'=1}^{4\eps k}\exp(m'(\log k-kp/10)) \leq 4\eps k \exp(\log k-kp/10) \rightarrow 0,\]
    as required. 
\end{proof}

\section{Chains} \label{sec:chains}

In this section, we introduce the concept of \emph{chains}, which will be our  tool for proving lower bounds for running times.  The definition of a chain (Definition \ref{def:H-chain}) is given in generality and we then define a chain to be \emph{proper} (Definition \ref{def:proper chain}) if it satisfies extra conditions that allow us to control the running time of a construction based on the chain. Indeed, the  key result of this section is Lemma \ref{lem:chain_runningtime}, giving a lower bound on the running time of an $H$-process on a starting graph formed by a proper $H$-chain after removing certain edges.  We remark that our methods here are closely related to those used by \cite{balogh2019maximum,bollobas2017maximum} in giving lower  bounds for $M_{K_r}(n)$ with $r\geq 5$ as well as the constructions in ~\cite{espuny_diaz_long_2022,hartarsky_maximal_2022,noel_running_2022} investigating $H$-processes for hypergraph cliques $H$. Indeed, all these previous constructions utilise starting graphs which are defined by certain chains and the proofs work by identifying  properties of the chain that allow control of the running time for these constructions. In \cite{bollobas2017maximum}, they use the terminology of \emph{good} chains whilst in \cite{balogh2019maximum}, they introduce an ordered hypergraph to characterise the chain and appeal to the key property of the hypergraph being \emph{induced $K_r^-$-free}. Our notion of a chain being \emph{proper} is analogous to these previous concepts  but, in order to deal with general graphs $H$, we need to refine our definition. The resulting conditions are geared towards being widely applicable, as we will demonstrate in the remainder of the paper. Before introducing formally our definitions, we provide some motivation for the concept, discussing how chains appear in all $H$-processes.

Consider an $H$-process on some starting graph $G_0$ and some integer $\tau\leq \tau_H(G_0)$. Then one can define a natural sequence $(H_i, e_i)_{i\in [\tau]}$ of copies $H_i$ of $H$ on $V(G_0)$, together with edges $e_i\in E(H_i)$ 
such that the following holds. Each 
 $H_i$ is completed at time $i$ with the addition of the edge $e_i$ and, for $i\in[\tau-1]$, adding $e_i$ then triggers the completion of $H_{i+1}$ in the next step. Indeed, such a sequence can be formed in reverse,   starting with an edge $e_\tau$ added at time $i=\tau$. Then  there is some $H_i$ completed at time $i$ by the edge $e_i$ and $H_i-e_i$ has some edge $e_{i-1}$ which is added at time $i-1$, otherwise $H_i$ already appears at step $i-1$. Repeating this process by using the edge $e_{i-1}$ to define $H_{i-1}$ which then defines the edge $e_{i-2}$ and so on, we create the desired sequence $(H_i, e_i)_{i\in [\tau]}$ which we can visualise as a \emph{chain} of copies of $H$, with $H_i$ and $H_{i+1}$ \emph{linked} by the edges $e_i\in E(H_i)\cap E(H_{i+1})$. The following definition  formalises the idea of a chain of copies of $H$.

 \begin{dfn}[Chains]\label{def:H-chain}
	Let $H$ be a  connected graph, and let $\tau\geq 1$.
	An $H$\emph{-chain}\footnote{When the graph $H$ is clear from context, we will simply refer to a \emph{chain}.} of \emph{length} $\tau$ is a sequence $(H_i, e_i)_{i\in [\tau]}$ of copies $H_i$ of $H$ together with edges $e_i\in E(H_i)$ such that
	 $e_i \in E(H_i)\cap E(H_{i+1})$ for $i\in[\tau-1]$. 
  	We call $G:=\bigcup_{i=1}^\tau H_i$ the \emph{underlying graph} of the $H$-chain.
  \end{dfn}

The discussion above shows that any $H$-process $(G_i)_{i\geq 0}$ defines an $H$-chain of length $\tau_H(G_0)$. In what follows, we will use chains to construct starting graphs $G_0$ for $H$-processes. Indeed by taking  some $H$-chain $(H_i, e_i)_{i\in [\tau]}$, we can hope to construct  $G_0$ with $\tau_H(G_0)\geq \tau$, by taking $G_0$ to be the union of the graphs $H_i$ with all of the $e_i$ removed. The aim will then be to show that for $i\in [\tau]$, the  edge $e_i$ is added at time $i$ and so  $M_H(n)\geq \tau$ where $n=v(G_0)$ is the number of vertices of the starting graph.

 Before discussing this approach in general, we now introduce  the notion of a \emph{simple} $H$-chain, which is a chain in which the edges $e_i$ are the only intersections of the copies $H_i$ of $H$.

\begin{dfn}[Simple chains] \label{def:simple-chain}
Let $4\leq k \in \NN$, $\tau \in \NN$   and suppose that $H$ is a graph with  $V(H)=[k]$  such that $e':=\{1,2\}$ and $f':=\{k-1,k\}$ are both edges of $H$. A \emph{simple} $H$-chain of length $\tau$ is a chain $(H_i,e_i)_{i\in [\tau]}$  on vertex set $V=\{v_1,\ldots,v_{\tau(k-2)+2}\}$ such that for $i\in [\tau]$, $H_i$ is the image of the graph isomorphism  $\varphi_i : H\rightarrow H_i$ defined via the map $\varphi_i(j)=v_{(i-1)(k-2)+j}$ and $e_i=v_{i(k-2)+1}v_{i(k-2)+2}\in E(H_i)$. For $i\in[\tau-1]$ we have that  $e_i=\varphi_{i}(f')=\varphi_{i+1} (e')=V(H_i)\cap V(H_{i+1})$ and  $e_\tau=\varphi_\tau(f')$. 
	\end{dfn}

An example of a simple chain of length 4 when $H$ is $K_4$ is given in Figure \ref{fig:k4chain} with the edges $e_i$ represented by dashed lines.

    \begin{figure}[h]
    \centering
  \includegraphics[scale=1]{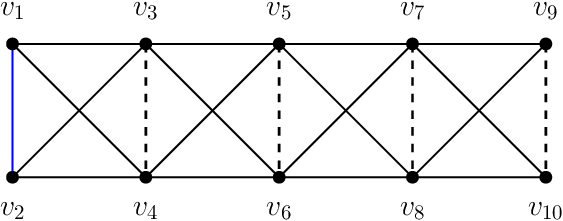}
    \caption{   \label{fig:k4chain} A simple $K_4$-chain.} 
  \end{figure}

	\begin{rem} 
	Note that the definition of a simple chain requires $H$ to have an ordering on its vertices and different orderings of $V(H)$ may produce different chains. The existence of a pair of vertex-disjoint edges  in $H$ (as given in Observation \ref{obs: disjoint edges} for inseparable graphs) is necessary to give a \emph{valid} ordering. 
      In most cases though,  the choice of valid ordering used to give the simple chains  will have no bearing on the following arguments and any choice will suffice.  
	\end{rem}
 For a simple $H$-chain  $(H_i, e_i)_{i\in[\tau]}$ of length $\tau$, one has  $n:=|V(\bigcup_{i=1}^\tau H_i)|= \tau (v(H)-2)+2$ vertices. For many graphs $H$, taking a simple chain and removing the edges $e_i$ can give a starting graph $G_0$ with linear running time. This is the case, for example, with cliques of size at least 4 (as in Figure \ref{fig:k4chain}), and in fact for all \emph{inseparable} graphs $H$, as we will see shortly (Lemmas \ref{lem:chain_runningtime} and \ref{lem:inseparableimpliesproper}). 

\subsection{Proper chains} \label{sec:proper chains} Our main aim with chain constructions is to provide \emph{super-linear} bounds on running times. Therefore, we cannot require such a strict intersection condition as in Definition \ref{def:simple-chain} and the copies $H_i$ of $H$ in our chain will have to overlap more significantly in order to give constructions achieving longer running times. 
When considering such chains,  we will need extra conditions that guarantee  no additional edges are added that interfere with the desired percolation along the chain.  
This leads us to the following definition of \emph{proper} chains, recalling that $\final{G}_H$ denotes the graph at which the $H$-process on $G$ stabilises.

\begin{dfn}[Proper chains] \label{def:proper chain}
	An $H$-chain $(H_i, e_i)_{i\in [\tau]}$ with an additional assigned edge $e_0\in E(H_1)\setminus \{e_1\}$ is called \emph{proper} if:
		\begin{enumerate}[label=(\arabic*)]

\item \label{chain:proper1} $e_i \notin E(\final{H_j}_H)$ for $1\leq j < i \leq \tau$;
		\item \label{chain:proper2} for any $e\in E(H)$ and copy $F$ of $H-e$ in $\bigcup_{i=1}^\tau \final{H_i}_{H}$,  there is a unique $j\in [\tau]$ such that $F \subseteq \final{H_j}_H$. Furthermore,  $F \not\subseteq \left(\bigcup_{i\in[\tau]\setminus j}\final{H_i}_H \bigcup  H_j \right)-\{ e_{j-1},e_j \}$.
		\end{enumerate}
        
\end{dfn}

We will refer to the additional assigned edge $e_0$ in a proper chain as the  \emph{origin} of the chain. This is an edge whose addition will kickstart the percolation along the chain.  For the sake of brevity, we will often suppress the mention of the origin, especially if it has no bearing on the statement or argument in question. When referring to a proper chain or proving that a chain is proper or not, it is taken to be implicit that it has an origin defined. In particular, when using simple chains as in Definition \ref{def:simple-chain},   we will always take a canonical choice of origin as $e_0:=v_1v_2 =\phi_1(e')$. In Figure \ref{fig:k4chain}, this choice is highlighted in blue. However, note that the chain $(H_i,e_i)_{i\in[4]}$ in Figure \ref{fig:k4chain} (and indeed any simple $K_4$-chain) is \emph{not} actually proper as, for example, the vertex set $\{v_1,v_3,v_4,v_5\}$ hosts a copy of $K_4^-$ that is  not contained in one of the $H_i=\final{H_i}_{K_4}$, violating condition \ref{chain:proper2}. This example shows that it is not necessary  for an $H$-chain $(H_i, e_i)_{i\in [\tau]}$ with underlying graph $G=\bigcup_{i=1}^\tau H_i$ to be proper in order to have $\tau_H(G-\{e_1,\ldots,e_\tau \}) \geq \tau$. Indeed one can check that a simple $K_4$-chain $(H_i,e_i)_{i\in[\tau]}$ as in Figure \ref{fig:k4chain} adds the edge $e_i$ at time step $i$ for $i\in [\tau]$.

However, the conditions of proper chains are natural and allow more refined control over the percolation process, which will be essential for our purposes.  Condition \ref{chain:proper1} ensures that an edge $e_j$ is not added prematurely before step $j$ in the process. Condition \ref{chain:proper2} of Definition \ref{def:proper chain} guarantees that the only edges added during the process are those that we expect from running the process individually on each $H_i$ (see Lemma \ref{lem:final graph proper} below) so we can be sure that there are no unwanted edges being added that will accelerate the process.  Finally the `furthermore' part of condition \ref{chain:proper2} implies that for each $j\geq 2$, the process on $H_j$ is not triggered until the addition of the edge $e_{j-1}$. 

From this point onwards we will deal exclusively with proper chains and we begin by proving properties of the percolation processes based on these chains. 

\begin{lem} \label{lem:final graph proper}
    Let $(H_i,e_i)_{i\in[\tau]}$ be a proper $H$-chain with underlying graph $G := \bigcup_{i=1}^\tau H_i$. Then we have that  $\final{G}_H=\bigcup_{i=1}^\tau \final{H_i}_H$.
\end{lem}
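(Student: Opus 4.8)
The plan is to prove the two inclusions $\final{G}_H\supseteq\bigcup_{i=1}^\tau\final{H_i}_H$ and $\final{G}_H\subseteq\bigcup_{i=1}^\tau\final{H_i}_H$ separately; the first is essentially free, while the second is where the conditions of properness do the work. For the inclusion $\final{G}_H\supseteq\bigcup_{i=1}^\tau\final{H_i}_H$: since $H_i\subseteq G$ for each $i$, Observation~\ref{obs:hom} (applied to the inclusion map) gives $\final{H_i}_H\subseteq\final{G}_H$, and taking the union over $i$ yields the claim. Here I am using the monotonicity consequence of Observation~\ref{obs:hom} that $G'\subseteq G$ implies $\final{G'}_H\subseteq\final{G}_H$.

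For the reverse inclusion, the natural strategy is to show that $G^\star:=\bigcup_{i=1}^\tau\final{H_i}_H$ is itself $H$-stable, i.e.\ $\final{G^\star}_H=G^\star$; then since $G\subseteq G^\star$, Observation~\ref{obs:final} (the intersection characterisation of the final graph, or again the monotonicity of $\langle\cdot\rangle_H$ plus $H$-stability of $G^\star$) forces $\final{G}_H\subseteq G^\star$. To prove $G^\star$ is $H$-stable, suppose for contradiction that some edge $e\notin E(G^\star)$ completes a copy of $H$ using only edges of $G^\star$. That copy, minus $e$, is a copy $F$ of $H-e'$ for the corresponding $e'\in E(H)$ sitting inside $G^\star=\bigcup_i\final{H_i}_H$. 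Now condition~\ref{chain:proper2} of Definition~\ref{def:proper chain} applies: there is a unique $j\in[\tau]$ with $F\subseteq\final{H_j}_H$. But then $F+e$ is a copy of $H$ inside $\final{H_j}_H$ completed by the edge $e$; since $\final{H_j}_H$ is $H$-stable by definition of the final graph, we must have $e\in E(\final{H_j}_H)\subseteq E(G^\star)$, contradicting $e\notin E(G^\star)$. Hence no such $e$ exists and $G^\star$ is $H$-stable.

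I expect the main subtlety to be a small gap in the reduction "$e$ completes a copy of $H$ in $G^\star$ $\Rightarrow$ $F:=(\text{that copy})-e$ is a copy of $H-e'$ for some $e'\in E(H)$ contained in $G^\star$" — one must be careful that $F$ really is a copy of $H-e'$ (i.e.\ that the completed copy of $H$ together with $e$ is genuinely a copy of $H$ with $e$ playing the role of one of $H$'s edges), which is immediate from the definition of the percolation step, but needs to be stated cleanly so that condition~\ref{chain:proper2} can be invoked verbatim. The "furthermore" clause of \ref{chain:proper2} and condition~\ref{chain:proper1} are not needed for this lemma — they govern the \emph{timing} of edge additions rather than the final graph — so the proof should use only the uniqueness-of-$j$ part of \ref{chain:proper2}. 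Everything else is routine bookkeeping, so I would keep the write-up to the two inclusions above.
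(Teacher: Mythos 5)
Your proposal is correct and follows essentially the same route as the paper: one inclusion via Observation~\ref{obs:hom}, and the other by noting $G\subseteq\bigcup_{i=1}^\tau\final{H_i}_H$ and showing this union is $H$-stable (via condition~\ref{chain:proper2}) so that Observation~\ref{obs:final} applies. The only difference is that you spell out the stability argument that the paper leaves implicit, and you correctly observe that only the existence part of condition~\ref{chain:proper2} is needed here.
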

\begin{proof}
    Certainly we have that $\final{H_i}_H\subseteq \final{G}_H $ for all $i\in [\tau]$ due to Observation \ref{obs:hom}. For the other inclusion, note that $G\subseteq \bigcup_{i=1}^\tau \final{H_i}_H$ and so by Observation \ref{obs:final} it suffices to establish that $\bigcup_{i=1}^\tau \final{H_i}_H$ is $H$-stable, which follows from condition \ref{chain:proper2} of Definition \ref{def:proper chain}. 
\end{proof}

We now show that proper chains can be used to give lower bounds on running times. 

	\begin{lem}[Running times for proper chains]
	    \label{lem:chain_runningtime}
	Let $(H_i,e_i)_{i\in[\tau]}$ be a proper $H$-chain with underlying graph $G := \bigcup_{i=1}^\tau H_i$.
	Then the  graph $G_0:=G - \{ e_1,\ldots,e_\tau \}$ satisfies $\tau_H(G_0) \geq \tau$, and therefore
		\[
		M_H\left( v(G) \right)\geq \tau.
		\]
	\end{lem}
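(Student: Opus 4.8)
The plan is to show by induction on $i \in \{0,1,\ldots,\tau\}$ that the $H$-process $(G_i)_{i\geq 0}$ started from $G_0 = G - \{e_1,\ldots,e_\tau\}$ satisfies two things: first, that $e_i \in E(G_i)$ (taking $e_0$ to be the origin of the chain, which by definition lies in $E(H_1)\subseteq E(G)$, and which we may assume is \emph{not} one of the removed edges $e_1,\ldots,e_\tau$, so $e_0 \in E(G_0)$); and second, the more delicate statement that $e_j \notin E(G_i)$ for all $j > i$. Together these imply that a genuinely new edge is added at each of the steps $1,2,\ldots,\tau$ (namely $e_i$ is present at time $i$ but was absent at time $i-1$), so that $\tau_H(G_0)\geq \tau$, and hence $M_H(v(G))\geq \tau_H(G_0)\geq \tau$ as claimed.

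The base case $i=0$ is immediate: $e_0\in E(G_0)$ by the remark above, and $e_j\notin E(G_0)$ for $j\in[\tau]$ by construction. For the inductive step, suppose the claim holds up to time $i-1$. First I would verify that $e_i$ gets added at time $i$. By the induction hypothesis $e_{i-1}\in E(G_{i-1})$, and every edge of $H_i$ other than $e_{i-1}$ and $e_i$ lies in $G_0\subseteq G_{i-1}$ (since the only edges of $G$ removed to form $G_0$ are the $e_j$, and by the `furthermore' clause of condition \ref{chain:proper2} no $e_j$ with $j\neq i-1,i$ is an edge of $H_i$ --- one needs here that distinct copies in a proper chain cannot share another linking edge, which follows from the uniqueness statement applied to $F = H_i - e_i$). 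Hence $H_i - e_i \subseteq G_{i-1}$, so adding $e_i$ completes a copy of $H$ not present in $G_{i-1}$; provided $e_i\notin E(G_{i-1})$, this means $n_H(G_{i-1}+e_i) > n_H(G_{i-1})$ and $e_i \in E(G_i)$. The hypothesis $e_i\notin E(G_{i-1})$ is exactly the $j=i$ instance of the second part of the induction hypothesis at time $i-1$.

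The main obstacle is the second half of the inductive step: showing $e_j \notin E(G_i)$ for all $j>i$. Since $G_i \subseteq \final{G_0}_H \subseteq \final{G}_H = \bigcup_{\ell=1}^\tau \final{H_\ell}_H$ by Observation~\ref{obs:hom} and Lemma~\ref{lem:final graph proper}, it suffices to rule out that $e_j$ is added at step $i$ itself; so suppose for contradiction that some copy $F'$ of $H$ in $G_i$ has $E(F')\setminus E(G_{i-1}) \ni e_j$, i.e.\ $F := F' - e_j$ is a copy of $H - e_j$ contained in $G_{i-1}$. Now $F \subseteq G_{i-1} \subseteq \bigcup_\ell \final{H_\ell}_H$, so by condition \ref{chain:proper2} there is a unique $m$ with $F \subseteq \final{H_m}_H$. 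I would argue $m = j$: since $e_j \in E(H_j) \cap E(H_{j+1})$ is the linking edge and $F + e_j$ is a copy of $H$, the structure forces $F$ to be a copy of $H - e_j$ sitting inside $\final{H_j}_H$ (using that $e_j\notin E(\final{H_\ell}_H)$ for $\ell<j$ by \ref{chain:proper1}, and a symmetric/uniqueness argument on the $H_{j+1}$ side). But the `furthermore' part of \ref{chain:proper2} with this $j$ says $F \not\subseteq \big(\bigcup_{\ell\neq j}\final{H_\ell}_H \cup H_j\big) - \{e_{j-1},e_j\}$; on the other hand $G_{i-1} \subseteq \big(\bigcup_\ell \final{H_\ell}_H\big) - \{e_i,\ldots,e_\tau\}$ by the induction hypothesis (here is where I use that none of $e_i, e_{i+1},\ldots,e_\tau$ --- in particular $e_{j-1}$ when $j-1\geq i$, and $e_j$ --- is present at time $i-1$), and one checks that this latter graph is contained in $\big(\bigcup_{\ell\neq j}\final{H_\ell}_H \cup H_j\big) - \{e_{j-1},e_j\}$ because every copy $\final{H_j}_H$-edge that could appear must in fact already be in $G_{i-1}$ hence not one of $e_{j-1},e_j$. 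This contradicts $F\subseteq G_{i-1}$, completing the induction. The bookkeeping of exactly which removed edges are present at which time, and matching it precisely against the `furthermore' clause, is the part that requires care.
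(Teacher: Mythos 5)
Your overall strategy (induction over time, using condition \ref{chain:proper1} to get $e_i$ added at step $i$ and condition \ref{chain:proper2} to rule out premature additions) is the same as the paper's, and the first half of your inductive step is essentially fine, except that the fact you need there — that $e_j\notin E(H_i)$ for $j>i$, so that $H_i-e_i\subseteq G_{i-1}$ — follows from condition \ref{chain:proper1} (since $H_i\subseteq\final{H_i}_H$), not from the ``furthermore'' clause of \ref{chain:proper2}; edges $e_j$ with $j<i-1$ may well lie in $H_i$, but that is harmless because they are already in $G_{i-1}$.

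The genuine gap is in the second half. Your induction hypothesis only records which linking edges $e_\ell$ are present, and you try to substitute the bound $G_{i-1}\subseteq\big(\bigcup_\ell\final{H_\ell}_H\big)-\{e_i,\ldots,e_\tau\}$ when invoking the furthermore part of \ref{chain:proper2}. That substitute is too weak: the furthermore clause only forbids $F\subseteq\big(\bigcup_{\ell\neq m}\final{H_\ell}_H\cup H_m\big)-\{e_{m-1},e_m\}$, and the containment you say ``one checks'' is false in general, because $\final{H_m}_H$ may contain edges lying in no other $\final{H_\ell}_H$ and not in $H_m$ (edges created while percolating $H_m$ alone), and nothing in your hypothesis prevents such edges from being in $G_{i-1}$. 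This is precisely why the paper strengthens the induction to the statement \eqref{eq:ladder_time}, whose second half, $G_t\subseteq\bigcup_{\ell\leq t}\final{H_\ell}_H\cup\bigcup_{\ell>t}H_\ell$, guarantees that for any not-yet-reached index $m\geq t$ every edge of $G_{t-1}$ lies in $\bigcup_{\ell\neq m}\final{H_\ell}_H\cup H_m$, which is exactly what makes the furthermore clause bite. A second, smaller problem is your claim that the unique index from \ref{chain:proper2} satisfies $m=j$: all one can deduce is $m\geq j$ (since $\final{H_m}_H$ is $H$-stable, the completing edge $e_j$ must already lie in $\final{H_m}_H$, and \ref{chain:proper1} rules out $m<j$); the ``symmetric/uniqueness argument on the $H_{j+1}$ side'' does not establish equality. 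This part is repairable — with the strengthened hypothesis one argues, as the paper does, that the unique index satisfies $m\leq i$ and then contradicts \ref{chain:proper1} because $e_j\in E(\final{H_m}_H)$ with $m\leq i<j$ — but without carrying the containment \eqref{eq:ladder_time} through the induction your argument does not close.
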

	\begin{proof}
	The lemma follows from the fact that in the $H$-process $(G_t)_{t\geq 0}$ on $G_0=G -\{ e_1,\ldots,e_\tau \}$ we have
	 \begin{linenomath} 	\begin{equation*}
		E(G_t)\cap \{ e_1,,\ldots,e_\tau \} = \{ e_1,\ldots,e_t\}
		\end{equation*} \end{linenomath}
	for every $0\leq t \leq \tau$.
	To prove this fact we induct on $t$ to show the following stronger statement
     \begin{linenomath}
		\begin{equation}\label{eq:ladder_time}
			E(G_t)\cap \{ e_1,,\ldots,e_\tau \} = \{ e_1,\ldots,e_t\} \quad\text{ and }\quad G_t\;\subseteq\; \bigcup_{i=1}^{t}\final{H_i}_H \:\cup\: \bigcup_{i=t+1}^\tau H_i.
		\end{equation}
        \end{linenomath}
	The case $t=0$ follows immediately from the definition of $G_0$.
	Let $t\geq 1$.
	The induction hypothesis yields
     \begin{linenomath}
		\begin{equation}\label{eq:ladder_induction1}
		E(G_{t-1})\cap \{ e_1,\ldots,e_\tau \} = \{ e_1,\ldots,e_{t-1}\}
		\end{equation}
        \end{linenomath}
	and
     \begin{linenomath}
		\begin{equation}\label{eq:ladder_induction2}
		G_{t-1}\;\subseteq\; \bigcup_{i=1}^{t-1}\final{H_i}_H \:\cup\: \bigcup_{i=t}^\tau H_i.
		\end{equation}
        \end{linenomath}
	Now property \ref{chain:proper1} of Definition \ref{def:H-chain} implies that $E(H_t)\cap \{e_t,\ldots,e_\tau\}=\{e_t\}$ and so  $H_t-e_t\subseteq G_0 + \{e_1,\ldots,e_{t-1}\} \subseteq  G_{t-1}$, where \eqref{eq:ladder_induction1} was used in the second subgraph inclusion. Therefore we have that  $e_t\in E(G_t)$, as required.
	Also property \ref{chain:proper2}  of Definition \ref{def:H-chain} and \eqref{eq:ladder_induction2} guarantee that every copy of $H$ completed at time $t$ lies in $\final{H_j}_H$ for some $j\in[\tau]$, while \eqref{eq:ladder_induction1}, \eqref{eq:ladder_induction2} and the `furthermore' part of  property \ref{chain:proper2} imply that $j\leq t$ for any such $j\in [\tau]$.
	Therefore, $G_t \subseteq \bigcup_{i=1}^{t}\final{H_i}_H \cup \bigcup_{i=t+1}^{\tau} H_i$.

	Finally, we have to make sure that $e_{i'}\notin E(G_t)$ for $t < i' \leq \tau$.
	Suppose for a contradiction there was some $i'\in[t+1,\tau]$ with $e_{i'}\in E(G_t)$.
	Then $e_{i'}$ lies in $E(G_t)\setminus E(G_{t-1})$ because of \eqref{eq:ladder_induction1}.
	Let $H'$ be a copy of $H$ completed by $e_{i'}$ at time $t$. In the previous paragraph, we showed that any copy of $H$ completed at time $t$, in particular $H'$, lies in $\final{H_j}_H$ for some $j\leq t$. 
	However, property \ref{chain:proper1} of being proper gives $e_{i'}\notin E(\final{H_j}_H)$ so we have arrived at the desired contradiction.
	Thus $\{ e_{t+1},\ldots,e_\tau \} \cap E(G_t) = \varnothing$, concluding the induction step.
	\end{proof}

Lemma \ref{lem:chain_runningtime} shows that proper chains can give lower bounds on running times. However  it is not clear that proper $H$-chains (of arbitrary length) exist for all $H$. Indeed, for example, there are no such chains if $\delta(H) = 1$ because many unwanted copies of $H-e$  can be found  when $e$ is taken to be the edge incident to the degree 1 vertex. We also saw that simple $K_4$-chains
 as in Figure \ref{fig:k4chain} are not proper. We close this section by showing that, for many graphs $H$, simple chains \emph{do} give examples of proper chains. First, we show this under the assumption that $H$ is inseparable. As simple chains exist for inseparable graphs (Observation \ref{obs: disjoint edges}), this gives linear sized proper chains.

	\begin{lem}\label{lem:inseparableimpliesproper}
	If $H$ is $(2,1)$-inseparable or $(1,1,1)$-inseparable then any simple $H$-chain is proper. 
	\end{lem}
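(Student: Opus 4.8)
The goal is to verify the two conditions of Definition \ref{def:proper chain} for a simple $H$-chain $(H_i,e_i)_{i\in[\tau]}$ when $H$ is inseparable. The crucial preliminary observation is that, since $H$ is inseparable, it is self-stable: indeed, an inseparable graph has no vertex of degree $\leq 2$ (if it did, removing that vertex's $\leq 2$ neighbours together with an incident edge would disconnect the graph), so any $e_1\notin E(H)$ and any $e_0\in E(H)$ leave $H-e_0+e_1$ with a vertex of degree $\leq 1$... actually the cleaner route is: in a simple chain the only intersection between $H_i$ and $H_j$ for $i<j$ is $e_i$ if $j=i+1$ and empty otherwise, and since $H=\final{H_i}_H$ fails we need to be careful — but in fact for inseparable $H$ one should first argue $\final{H_i}_H = H_i$. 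I would establish this via Observation~\ref{obs:self-stable}-style reasoning adapted to inseparability, or more robustly, note that if $e\in\binom{V(H)}{2}\setminus E(H)$ completed a copy $F$ of $H$ using only edges of $H$, then $F-e\subseteq H-e'$ for some $e'$, and inseparability forces structural constraints incompatible with the vertex-degree sequence of $H$. This self-stability is what makes condition \ref{chain:proper1} automatic, since then $\final{H_j}_H=H_j$ and $e_i\notin E(H_j)$ for $i>j$ by the simple-chain structure (the vertex pair defining $e_i$ is not even contained in $V(H_j)$ when $i>j+1$, and when $i=j+1$ one checks $e_{j+1}=\varphi_{j+1}(f')\notin E(H_j)$ directly).

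The heart of the argument is condition \ref{chain:proper2}. Since the chain is simple and (by the above) $\final{H_i}_H=H_i$, we have $\bigcup_i\final{H_i}_H=\bigcup_i H_i=G$, the underlying graph, whose only overlaps between consecutive $H_i$, $H_{i+1}$ are the single edges $e_i$. Take any $e\in E(H)$ and any copy $F$ of $H-e$ inside $G$. I would argue $F$ must be contained in a single $H_j$: the vertex set of $F$ is all of $V(H-e)=V(H)$, which has $k$ vertices, and $F$ is connected (here I invoke inseparability of $H$, which gives that $H-e$ is $2$-connected, hence $F\cong H-e$ is connected). A connected subgraph of $G$ on $k$ vertices — where $G$ is a chain of $k$-vertex blocks glued along single edges, consecutive blocks sharing $2$ vertices — cannot spread across two or more blocks while still having only $k$ vertices and being connected, unless it lies in one block: if $V(F)$ met both $V(H_j)\setminus V(H_{j+1})$ and $V(H_{j+1})\setminus V(H_j)$, then since $|V(F)|=k=|V(H_j)|$ we would need $V(F)\subseteq V(H_j)\cup V(H_{j+1})$ with $|V(H_j)\cup V(H_{j+1})|=2k-2$, so $F$ would omit at least $k-2\geq 2$ vertices of $V(H_j)\cup V(H_{j+1})$; but connectivity of $F$ through the cut pair $e_j=V(H_j)\cap V(H_{j+1})$ forces both endpoints of $e_j$ into $V(F)$, and then one shows the induced subgraph cannot have enough edges — more simply, $H-e$ has $e(H)-1$ edges, all of which must be edges of $G$; edges of $G$ lying in $H_j$ but not in the overlap, and edges in $H_{j+1}$ but not the overlap, cannot both be present in a connected $k$-vertex subgraph because such a subgraph would need the full vertex set of one block. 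I would make this precise by a short counting/connectivity argument, using that $e(H-e)=e(H)-1$ and that any block $H_i$ contributes at most the edges of a $k$-vertex graph. Uniqueness of $j$ then follows because distinct $H_j$'s share at most one edge, far fewer than the $e(H)-1$ edges of $F$.

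For the `furthermore' clause, I must show $F\not\subseteq\big(\bigcup_{i\neq j}\final{H_i}_H\cup H_j\big)-\{e_{j-1},e_j\}$. Since everything collapses to $\bigcup_i H_i=G$ and we have just shown $F\subseteq H_j$ with $V(F)=V(H_j)$, the containment would force $F\subseteq H_j-\{e_{j-1},e_j\}$ (edges of other blocks incident to $V(H_j)$ are exactly $e_{j-1},e_j$ and these are removed, so no extra edges are gained from other blocks). Now $e_{j-1}=\varphi_j(e')$ and $e_j=\varphi_j(f')$ correspond under the isomorphism $\varphi_j$ to the two \emph{vertex-disjoint} edges $e'=\{1,2\}$ and $f'=\{k-1,k\}$ of $H$. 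So the containment would give a copy of $H-e$ inside $H-\{e',f'\}$, i.e. an injection $V(H)\to V(H)$ (necessarily a bijection, as both sides have $k$ vertices) sending $E(H)\setminus\{e\}$ into $E(H)\setminus\{e',f'\}$. Counting edges, $e(H)-1\leq e(H)-2$ is impossible. This contradiction completes \ref{chain:proper2}.

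\textbf{Main obstacle.} The delicate point is the claim that a connected $k$-vertex subgraph of the chained underlying graph $G$ must lie within a single block $H_j$ — the naive vertex-count argument needs to be combined carefully with the connectivity of $F=H-e$ (which is where inseparability, giving $2$-connectedness of $H-e$ via Lemma~\ref{lem:insep edges in cycle} or directly, is essential) and with the fact that consecutive blocks overlap in exactly two vertices forming an edge. I expect to spend most of the effort making this step airtight, in particular ruling out the borderline configuration where $F$ uses almost all of one block plus the shared edge into the next; here the edge count $e(H-e)=e(H)-1$ against the fact that the non-overlap part of a neighbouring block contributes vertices that $F$ cannot afford is what closes the gap, but it must be phrased to cover both the $(2,1)$-inseparable and $(1,1,1)$-inseparable bipartite cases uniformly (for the latter one additionally uses Lemma~\ref{lem:staying bipartite} to know $\final{H_i}_H$ stays bipartite, hence the overlap structure is unaffected).
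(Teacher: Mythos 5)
There is a genuine gap, and it sits exactly where you flagged your "main obstacle". First, your reduction to $\final{H_i}_H=H_i$ is unjustified: inseparability does not imply self-stability, and neither of your sketched arguments works (a $(2,1)$-inseparable graph has minimum degree at least $4$, so $H-e_0+e_1$ certainly has no vertex of degree $\leq 1$, and "structural constraints incompatible with the degree sequence" is not an argument). The paper never needs this reduction: it works throughout with $\hat G=\bigcup_i\final{H_i}_H$, using only that every edge of $\final{H_i}_H$ has both endpoints in $V(H_i)$ (Observation \ref{obs:hom}), which also disposes of condition \ref{chain:proper1} of Definition \ref{def:proper chain} since the endpoints of $e_i$ do not even lie in $V(H_j)$ for $j<i$. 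Second, and more seriously, your argument that a copy $F$ of $H-e$ must lie in a single block invokes only connectivity (or $2$-connectivity) of $H-e$ plus a vertex/edge count, and that is not enough: $F$ could in principle pass through the two shared vertices of a linking edge $e_j$ and spread over two blocks while staying connected, and the counting fallback you gesture at does not close this case — all the more so because, without self-stability, each block is $\final{H_j}_H$ and may be much denser than $H_j$, so "not enough edges" is simply false as an upper bound on what the blocks supply.

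The property you actually need is the full strength of inseparability: for $(2,1)$-inseparable $H$, the graph $H-e$ is $3$-connected, so $F$ remains connected after deleting \emph{two} vertices. The paper's proof uses exactly this: if $V(F)$ is not contained in a single $V(H_j)$, choose the smallest $j$ with $V(F)\cap V(H_j)\setminus(V(H_{j+1})\cup\cdots\cup V(H_\tau))\neq\varnothing$; since $\bigcup_{i\leq j}\final{H_i}_H$ and $\bigcup_{i>j}\final{H_i}_H$ meet precisely in the edge $e_j$, deleting the two endpoints of $e_j$ disconnects $F$ — a contradiction. In the $(1,1,1)$-inseparable case one additionally needs that the two endpoints of $e_j$ land in different parts of the bipartition of $F$, which is where Lemma \ref{lem:staying bipartite} (bipartiteness of $\hat G$) enters; your mention of that lemma does not engage with this point. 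Once $V(F)\subseteq V(H_j)$, simplicity of the chain gives $\hat G[V(H_j)]=\final{H_j}_H$, hence $F\subseteq\final{H_j}_H$, and your edge count for the `furthermore' clause is essentially the paper's: the graph in that clause has only $e(H)-2$ edges inside $V(H_j)$ (the union over $i\neq j$ contributes only $e_{j-1},e_j$ there, and these are removed, while the clause deliberately uses $H_j$ rather than $\final{H_j}_H$), so it cannot contain the $e(H)-1$ edges of $F$. So the skeleton of your write-up can be salvaged, but the self-stability claim must be dropped and the single-block step must be rerun through $3$-connectivity of $H-e$ rather than connectivity plus counting.
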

	\begin{proof}
	Let $(H_i, e_i)_{i\in[\tau]}$ be a simple $H$-chain with origin $e_0$, and let $\hat G := \bigcup_{i=1}^\tau \final{H_i}_H$. Condition \ref{chain:proper1} of Definition \ref{def:proper chain} is clearly satisfied by the definition of the simple chain. 
	Now let $e\in E(H)$, and let $F$ be a copy of $H-e$ in $\hat G$.
	Suppose that $V(F)$ does not coincide with a single $V(H_j)$.
	Pick the smallest $j\in[\tau-1]$ such that $V(F)\cap V(H_j)\setminus (V(H_{j+1})\cup\ldots\cup V(H_\tau) )\neq \varnothing$.
	Since the intersection of $\bigcup_{i=1}^j \final{H_i}_H$ and $\bigcup_{i=j+1}^\tau \final{H_i}_H$ is precisely the edge $e_j$, we have that removing the two vertices of the edge $e_j$ disconnects $F$, contradicting that $H$ is inseparable. In the case that $H$ is $(1,1,1)$-inseparable, we used here that $\hat G$ is also bipartite by Lemma \ref{lem:staying bipartite} and the embedding of $F$ must respect the partition of $\hat G$, which in turn implies that the vertices of $e_j$ are indeed in different parts of the bipartition of $F$. So  
 we must have that $V(F)\subseteq V(H_j)$.
	All edges in $\hat G[V(H_j)]$ lie in $\final{H_j}_H$ because the chain is simple, so $F \subseteq \final{H_j}_H$, establishing the first part of condition \ref{chain:proper2} in Definition \ref{def:proper chain}.
	Finally, in the graph $G':=\left(\bigcup_{i\in [\tau]\setminus j}\final{H_i}_H \bigcup  H_j \right)-\{ e_{j-1},e_j \}$ there are only $e(H)-2$ edges with both endpoints in $V(H_j)$.
	Therefore we cannot have that $F\subset G'$ and  the `furthermore' part of condition \ref{chain:proper2} in  Definition \ref{def:H-chain} is also satisfied. This completes the proof. 
	\end{proof}

We remark that the definitions of $(2,1)$-inseparable graphs (Definition \ref{def:inseparable}) and $(1,1,1)$-inseparable  graphs (Definition \ref{def:bip insep}) stem precisely from Lemma \ref{lem:inseparableimpliesproper} as the conditions that arise when trying to prove properness of simple chains. 
However, even when a graph $H$ is not inseparable, it is still possible to have proper chains. This is the case for the wheel graph, as we now show. 

\begin{lem} \label{lem:wheel simple}
    For $7\leq k\in \mathbb{N}$ and let $W_k$ be the wheel graph with  $V(W_k)=[k+1]$ such that $v\in \{3,\ldots,k-1\}$ is a universal hub vertex and  $W_k-v$ is a cycle of length $k$ containing edges $\{1,2\}$ and $\{k,k+1\}$. Then for any $\tau\in \mathbb{N}$, the simple $H$-chain $(H_i,e_i)_{i\in [\tau]}$ of length $\tau$ is proper. 
\end{lem}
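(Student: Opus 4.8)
# Proof Proposal for Lemma \ref{lem:wheel simple}

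The plan is to verify conditions \ref{chain:proper1} and \ref{chain:proper2} of Definition \ref{def:proper chain} for the simple $W_k$-chain. Since $W_k$ is self-stable by Observation \ref{obs:self-stable}, we have $\final{H_i}_{W_k} = H_i$ for each $i$, which considerably simplifies matters: condition \ref{chain:proper1} becomes $e_i \notin E(H_j)$ for $j<i$, which is immediate from the structure of a simple chain (the copies $H_j$ with $j<i$ share no edge with $H_{i}$ except possibly $e_{i-1}$, and $e_i \ne e_{i-1}$). Condition \ref{chain:proper2} also simplifies, since $\bigcup_i \final{H_i}_{W_k} = \bigcup_i H_i = G$, so we must show that every copy $F$ of $W_k - e$ (for any $e \in E(W_k)$) inside $G$ lies in a unique $H_j$, and moreover $F \not\subseteq (\bigcup_{i\ne j} H_i \cup H_j) - \{e_{j-1},e_j\} = G - \{e_{j-1},e_j\}$.

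First I would analyze the structure of $W_k - e$. There are two cases: $e$ is a spoke (incident to the hub) or $e$ is a rim edge (on the cycle). If $e$ is a rim edge, $W_k - e$ still has a vertex of degree $k$ (the hub) and is $2$-connected; in fact it contains the hub plus a Hamilton path on the rim, so it is reasonably well-connected. If $e$ is a spoke, the hub now has degree $k-1$ but is still the unique vertex of degree $\ge k-1 > 4$ (using $k \ge 7$), and $W_k - e$ remains connected. In both cases $W_k - e$ contains a spanning structure dense enough that it cannot be split by removing the two endpoints of a single edge $e_j$: indeed the hub is adjacent to all but at most one other vertex, so $W_k - e$ minus any two vertices is still connected. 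Hence, exactly as in the proof of Lemma \ref{lem:inseparableimpliesproper}, if $V(F)$ met two "blocks" of the chain it would be disconnected by deleting the two vertices of some $e_j$, a contradiction. This forces $V(F) \subseteq V(H_j)$ for a unique $j$, and since the chain is simple all edges of $G$ on $V(H_j)$ lie in $H_j$, giving $F \subseteq H_j$.

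For the "furthermore" part, I would count edges: the graph $G - \{e_{j-1},e_j\}$ restricted to $V(H_j)$ has exactly $e(W_k) - 2$ edges (the two removed edges both have both endpoints in $V(H_j)$, since $e_{j-1} = \varphi_j(e')$ and $e_j = \varphi_j(f')$), whereas $W_k - e$ has $e(W_k) - 1$ edges. Since $e(W_k) - 1 > e(W_k) - 2$, $F$ cannot embed, completing the verification.

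The main obstacle is the connectivity claim for $W_k - e$ — specifically, confirming that removing any two vertices from $W_k - e$ leaves it connected, so that the "splitting across blocks" argument of Lemma \ref{lem:inseparableimpliesproper} applies even though $W_k$ itself is not $(2,1)$-inseparable (it has degree-$3$ rim vertices, so deleting two neighbors of a rim vertex plus an incident edge can disconnect it — but crucially this requires deleting an edge \emph{incident to the isolated rim vertex}, whereas in the chain argument the deleted "edge" structure is $e_j$, a rim edge far from wherever the hub sits). I would need to check carefully that when $F = W_k - e$ straddles blocks $H_j$ and $H_{j+1}$, the only cut vertices available are the two endpoints of $e_j$, both of which are rim vertices of each copy, and that $W_k - e$ has no vertex cut of size $2$ consisting of two rim vertices. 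This holds because the hub has degree $\ge k-1 \ge 6$ and remains adjacent to everything outside any such cut, keeping $W_k - e$ minus those two rim vertices connected. I would also handle the small edge cases at the ends of the chain ($j=1$, where $e_0$ is the origin, and $j = \tau$) separately but these are routine.
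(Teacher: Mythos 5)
There is a genuine gap in the central connectivity claim. You assert that $W_k-e$ minus any two vertices is still connected (later refined to: $W_k-e$ has no $2$-cut consisting of two rim vertices, because the hub stays adjacent to everything outside the cut). This is false precisely in the case that matters: if the missing edge $e$ of the copy $F$ is a \emph{spoke}, then its rim endpoint $u$ has degree $2$ in $W_k-e$, and deleting $u$'s two rim neighbours (both rim vertices) isolates $u$. So the argument of Lemma \ref{lem:inseparableimpliesproper} does not transfer: a copy $F$ of $W_k-e$ straddling the link $e_j$, with the degree-$2$ vertex $u$ on one side and its two neighbours at the endpoints of $e_j$, is \emph{not} ruled out by your separation argument, and this is exactly the configuration that needs a dedicated argument. (Your parenthetical remark that the problematic separations of $W_k$ require deleting an edge incident to the isolated rim vertex misses the point: that deleted edge is the edge $e$ missing from the copy $F$ itself, which can be any spoke, and has nothing to do with $e_j$.) A second, smaller inaccuracy: even for $e$ a rim edge, $W_k-e$ minus two vertices can be disconnected if one of them is the hub; this case can be rescued, but only by observing that the endpoints of $e_j$ have degree at most $5$ in $G$ while the hub of $F$ has degree at least $k-1\geq 6$, an argument you gesture at but do not deploy for this purpose.

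The paper's proof takes a different route which avoids both problems: it first uses the degree comparison ($d_F(v')\geq k-1\geq 6$ versus $d_G\leq 5$ for every non-hub vertex of $G$) to force the hub $v'$ of $F$ to coincide with the hub of some copy $H_j$; then, if the missing edge is not a spoke, every other vertex of $F$ is a neighbour of $v'$ and hence lies in $V(H_j)$, while if it is a spoke, $F-v'-w$ (with $w$ the degree-$2$ vertex) is a path on $k-1$ vertices all adjacent to $v'$, hence a subpath of the rim cycle of $H_j$, and no vertex outside $V(H_j)$ can close that cycle, forcing $w\in V(H_j)$ as well. Your treatment of condition \ref{chain:proper1}, the uniqueness of $j$, and the `furthermore' edge count agree with the paper, but without the hub-location step and the spoke-case path argument the proof of condition \ref{chain:proper2} is incomplete.
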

\begin{proof}
    Recall that $W_k$ is self-stable (Observation \ref{obs:self-stable}). 
    Now consider the simple chain $(H_i,e_i)_{i\in [\tau]}$ as given by Definition \ref{def:simple-chain} with $H=W_k$, the vertex ordering as in the statement of the lemma and the canonical choice of origin $e_0=\{v_1,v_2\}$. We certainly have that $e_i\notin E(\final{H_j}_H)=E(H_j)$ for $1\leq j<i\leq \tau$ by construction and so the chain satisfies condition \ref{chain:proper1} of being proper (Definition \ref{def:proper chain}). For condition \ref{chain:proper2}, fix some $e\in E(H)$ and a copy $F$ of $H-e$ in $G=\bigcup_{i=1}^\tau\final{H_i}_H=\bigcup_{i=1}^{\tau}H_i$. We need to show that there is a unique $j\in [\tau]$ such that $F\subseteq \final{H_j}_H=H_j$ and as $G[V(H_j)]=H_j$, it suffices to show that $V(F)\subseteq V(H_j)$. Let $v'$ be the vertex in $F$ corresponding to the universal vertex $v$. As $d_F(v')\geq d_H(v)-1\geq 6$, we have that there is some $j\in [\tau]$ such that $v'$ is the universal vertex of $H_j$. Indeed all other vertices in $G$ have $d_G(v)\leq 5$. If $v\notin e$ then all other vertices of $F$ are adjacent to $v'$ and so lie in $V(H_j)$. If $v\in e$, we have that there is some $w\in V(F)$ such that $F-v'-w$  is a path $P$ with $k-1$ vertices, all of which are adjacent to $v'$ in $G$. Thus $V(P)\subseteq V(H_j)$ and $P$ is a subpath of the cycle $H_j-v'$. This implies that $w\in V(H_j)$ as no vertex outside of $V(H_j)$ can complete a cycle with $P$.  This proves that $V(F)\subseteq V(H_j)$ and so $F\subseteq H_j$. The `furthermore' part of condition \ref{chain:proper2} for being proper follows because $(G-\{e_{j-1},e_j\})[V(H_j)]$ has $e(H)-2=2k-2$ edges and so cannot contain a copy of $F$. \end{proof}

    \section{Linking collections of chains} \label{sec:linking}

In order to get  lower bounds for running times of graph bootstrap processes, we can use Lemma \ref{lem:chain_runningtime} which reduces the question to exhibiting long proper chains. In this section we  show how to construct such chains by \emph{linking} a collection of smaller proper chains. The birth of this idea is due to Balogh, Kronenberg, Pokrovskiy and Szab\'o \cite{balogh2019maximum} who  linked chains to prove their lower bound on the running time of the $K_5$-process. As with the previous section, our aim here is to axiomatise this procedure in order to produce general results with wide applicability. We begin by extending the definition of a proper chain (Definition \ref{def:proper chain}) to a collection of chains.

\begin{dfn}[Proper collection of chains] \label{def:proper chaincoll}
	Let $A$ be an indexing set, $V$ a vertex set and $\tau\in \mathbb{N}$. Suppose that for each $a\in A$ we have a proper $H$-chain $(H^a_i, e^a_i)_{i\in [\tau]}$   with origin $e_0^a$ and   underlying graph $G^a=\bigcup_{i\in [\tau]}H^a_i$ on $V$.   We say that the collection  $\cH=\{(H^a_i, e^a_i)_{i\in [\tau]}:a\in A\}$ is \emph{proper} if:
		\begin{enumerate}[label=(\roman*)]
\item \label{chaincoll: edges not early} for any  $a\neq b\in A$ and $i\in \{0,\ldots,\tau\}$ we have that  $e_i^a\notin \final{G^b}_H$;
\item \label{chaincoll:H-e new 1} for any $e\in E(H)$ and copy $F$ of $H-e$ in $\bigcup_{a\in A} \final{G^a}_{H}$,  there exists a unique $b\in A$ such that $F\subseteq \final{G^b}_H$.
Furthermore, for the unique $j\in [\tau]$ such that $F \subseteq \final{H_j^b}_H$, we have that  $F\nsubseteq \left( \bigcup_{a\in A\setminus b} \final{G^a}_H\bigcup_{i\in [\tau]\setminus j} \final{H_i^b}_H \bigcup H^b_j\right)-\{e^b_{j-1},e^b_j\}$.
		\end{enumerate}
        \end{dfn}
        
Note that due to Lemma \ref{lem:final graph proper}, we have that for each $a\in A$, the final graph  of the chain $(H^a_i,e^a_i)_{i\in [\tau]}$  has the form $\final{G^a}_H=\bigcup_{i=1}^\tau \final{H_i^a}_H$.  Note also that in condition \ref{chaincoll:H-e new 1}, we implicitly use that as $F\subseteq \final{G^b}_H=\bigcup_{i=1}^\tau \final{H_i^b}_H$ and the chain $(H^b_i,e^b_i)_{i\in [\tau]}$ is proper, there is some unique $j\in [\tau]$ such that $F\subseteq \final{H_j^b}_H$. 
We will soon see how to construct a single long proper chain  from a proper collection of chains. First though, we derive some implications of the definition. We begin by showing that the second condition  simplifies when $H$ is self-stable.

\begin{rem}\label{rem:self stable coll}
	If  $\final{H}_H = H$, then condition \ref{chaincoll:H-e new 1}  can be replaced by the following  condition:
 \begin{enumerate}[label=(\roman**)]
 \addtocounter{enumi}{1}
 \item \label{chaincoll:proper 2 star} for any $e\in E(H)$ and copy $F$ of $H-e$ in $\bigcup_{a\in A} G^a$,  there is a unique $b\in A$  such that $F \subseteq G^b$.
 \end{enumerate}
\end{rem}
\begin{proof} 
    Using that $\final{H}_H=H$, we have that $\final{G^a}_H=\bigcup_{i=1}^\tau\final{H_i^a}_H=\bigcup_{i=1}^\tau H^a_i=G^a$ for each $a\in A$. 
 Thus the first part of condition \ref{chaincoll:H-e new 1} reduces to \ref{chaincoll:proper 2 star} when $H$ is self-stable. To see that the `furthermore' part of condition \ref{chaincoll:H-e new 1} is redundant when $\final{H}_H=H$, note that if our copy $F$ of $H-e$ is contained in $\final{H^b_j}_H=H_j^b$ and $F\subseteq \left( \bigcup_{a\in A\setminus b} \final{G^a}_H\bigcup_{i\in [\tau]\setminus j} \final{H_i^b}_H \bigcup H^b_j\right)-\{e^b_{j-1},e^b_j\}$, then we have that $F\subseteq H^b_j-\{e^b_{j-1},e^b_j\}$ which is impossible as $H^b_j-\{e^b_{j-1},e^b_j\}$ does not contain enough edges. 
\end{proof}

For some of our constructions, it will be convenient for us to work with stronger conditions. We make the following definition of being \emph{strongly proper.} 

\begin{dfn}[Strongly proper collection of chains] \label{def:stronger-proper}
Let $\cH=\{(H^a_i, e^a_i)_{i\in [\tau]}:a\in A\}$ be a collection of proper $H$-chains such that for each $a\in A$, the chain  $(H^a_i, e^a_i)_{i\in [\tau]}$  has  underlying graph $G^a=\bigcup_{i\in [\tau]}H^a_i$.   We say that the collection  $\cH$ is \emph{strongly proper} if the following  hold:
     \begin{enumerate}[label={(\Roman*)}]
     \item \label{chaincol:strong proper edge}
     the graphs $\{\final{G^a}_H:a\in A\}$ are pairwise edge-disjoint.
 \item \label{chaincoll:strong proper H-e} for any $e\in E(H)$ and copy $F$ of $H-e$ in $\bigcup_{a\in A} \final{G^a}_H$,  there is a  $b\in A$  such that $F \subseteq \final{G^b}_H$.
 \end{enumerate}
\end{dfn}

As hinted by our terminology, we now show that strongly proper collections are indeed proper.

\begin{lem}
\label{lem:strong implies proper}
Suppose $\cH=\{(H^a_i, e^a_i)_{i\in [\tau]}:a\in A\}$ is a strongly proper collection of  $H$-chains. Then $\cH$ is proper. 
\end{lem}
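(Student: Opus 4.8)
The goal is to verify that the two conditions defining a \emph{strongly proper} collection, namely \ref{chaincol:strong proper edge} and \ref{chaincoll:strong proper H-e}, imply the two conditions \ref{chaincoll: edges not early} and \ref{chaincoll:H-e new 1} of a \emph{proper} collection. These should follow more or less formally from the definitions together with what we already know about individual proper chains (Definition \ref{def:proper chain} and Lemma \ref{lem:final graph proper}).

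First I would check condition \ref{chaincoll: edges not early}. Fix $a\neq b\in A$ and $i\in\{0,\dots,\tau\}$. We want $e_i^a\notin \final{G^b}_H$. Since the chain $(H^a_i,e^a_i)_{i\in[\tau]}$ is a proper $H$-chain, its origin $e_0^a$ and each $e_i^a$ for $i\in[\tau]$ lies in $E(H^a_1)$ or in $E(H^a_i)\cap E(H^a_{i+1})$, hence in $E(G^a)\subseteq E(\final{G^a}_H)$. So $e_i^a\in \final{G^a}_H$, and by the edge-disjointness in \ref{chaincol:strong proper edge} it cannot also lie in $\final{G^b}_H$. That is all that is needed. (One small point: for $i=0$ we use $e_0^a\in E(H^a_1)\subseteq E(G^a)$, and for $i=\tau$ we use $e_\tau^a\in E(H^a_\tau)$; both hold by Definition \ref{def:H-chain} and Definition \ref{def:proper chain}.)

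Next I would verify condition \ref{chaincoll:H-e new 1}. Take $e\in E(H)$ and a copy $F$ of $H-e$ inside $\bigcup_{a\in A}\final{G^a}_H$. Condition \ref{chaincoll:strong proper H-e} gives \emph{some} $b\in A$ with $F\subseteq\final{G^b}_H$; uniqueness of $b$ follows because $F=H-e$ has at least one edge (indeed $H$ is inseparable so $e(H)\geq\text{a few}$, but we only need $e(H-e)\geq 1$), and any edge of $F$ can lie in only one $\final{G^a}_H$ by the edge-disjointness \ref{chaincol:strong proper edge}, so two distinct indices containing $F$ would contradict that. This gives the first assertion of \ref{chaincoll:H-e new 1}. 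For the `furthermore' part, since the chain $(H^b_i,e^b_i)_{i\in[\tau]}$ is proper, condition \ref{chain:proper2} of Definition \ref{def:proper chain} (applied to $F\subseteq\final{G^b}_H=\bigcup_{i=1}^\tau\final{H_i^b}_H$, using Lemma \ref{lem:final graph proper}) supplies the unique $j\in[\tau]$ with $F\subseteq\final{H_j^b}_H$ and moreover tells us $F\not\subseteq\bigl(\bigcup_{i\in[\tau]\setminus j}\final{H_i^b}_H\cup H^b_j\bigr)-\{e^b_{j-1},e^b_j\}$. It remains to upgrade this to the larger graph $\bigl(\bigcup_{a\in A\setminus b}\final{G^a}_H\cup\bigcup_{i\in[\tau]\setminus j}\final{H_i^b}_H\cup H^b_j\bigr)-\{e^b_{j-1},e^b_j\}$. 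Here I would argue that adding the extra pieces $\final{G^a}_H$ for $a\neq b$ contributes no new edges with \emph{both} endpoints available to extend $F$: every edge of $F$ lies in $\final{G^b}_H$ (since $F\subseteq\final{G^b}_H$), and any edge of $\final{G^a}_H$ with $a\neq b$ is edge-disjoint from $\final{G^b}_H$ by \ref{chaincol:strong proper edge}, so no edge of $F$ can lie in $\final{G^a}_H$. Hence $F\subseteq\bigl(\bigcup_{a\in A\setminus b}\final{G^a}_H\cup\cdots\bigr)-\{e^b_{j-1},e^b_j\}$ would force $F\subseteq\bigl(\bigcup_{i\in[\tau]\setminus j}\final{H_i^b}_H\cup H^b_j\bigr)-\{e^b_{j-1},e^b_j\}$, contradicting condition \ref{chain:proper2}.

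The argument is essentially bookkeeping, so there is no serious obstacle; the one place that needs a moment's care is the `furthermore' part, specifically spelling out that edge-disjointness of the $\final{G^a}_H$ lets one discard the contributions of chains $a\neq b$ when checking whether $F$ embeds. The only subtlety is that $F$ is a subgraph (not necessarily induced), so one must phrase the reduction in terms of edges of $F$ all lying in $\final{G^b}_H$ — which is immediate since $F\subseteq\final{G^b}_H$ — rather than anything about vertices. With that observation in place the reduction to Definition \ref{def:proper chain}\ref{chain:proper2} is immediate, and the proof concludes.
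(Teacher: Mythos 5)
Your proposal is correct and follows essentially the same route as the paper: condition \ref{chaincoll: edges not early} from $e_i^a\in E(G^a)\subseteq E(\final{G^a}_H)$ plus edge-disjointness \ref{chaincol:strong proper edge}; existence of $b$ from \ref{chaincoll:strong proper H-e} with uniqueness from edge-disjointness; and the `furthermore' part by noting that no edge of $F$ lies in any $\final{G^a}_H$ with $a\neq b$, so a violation would reduce to a violation of condition \ref{chain:proper2} for the single proper chain indexed by $b$. No gaps — your extra care about $F$ being a subgraph rather than induced is exactly the right (and only) point needing attention.
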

\begin{proof}
    We need to verify that the conditions \ref{chaincol:strong proper edge} and \ref{chaincoll:strong proper H-e} imply the conditions \ref{chaincoll: edges not early} and \ref{chaincoll:H-e new 1}. So let us fix some strongly proper collection $\cH=\{(H^a_i, e^a_i)_{i\in [\tau]}:a\in A\}$ (and an implicit choice of origin $e_0^a$ for each $a\in A$). Firstly, if $a\neq b\in A$ and $i\in \{0,\ldots,\tau\}$ then we certainly have that $e_i^a\in H_i^a\subset \final{G^a}_H$ (or $e_i^a\in H_{i+1}^a$ if $i=0$) and so by \ref{chaincol:strong proper edge} there is no other $b\in A\setminus a$ such that $e_i^a\in \final{G^b}_H$, verifying \ref{chaincoll: edges not early}. 

Next, suppose that $e\in E(H)$ and $F$ is a copy of $H-e$ in $\bigcup_{a\in A}\final{G^a}_H$. Then by \ref{chaincoll:strong proper H-e}, we have that there is a  $b\in A$ such that $F\subseteq \final{G^b}_H$ 
and this choice of $b\in A$ is unique by \ref{chaincol:strong proper edge} establishing the first part of \ref{chaincoll:H-e new 1}. 
For the `furthermore' part of condition \ref{chaincoll:H-e new 1}, taking the $j\in [\tau]$ such that $F\subset \final{H_j^b}$, we have that no edge of $F$ lies in $\bigcup_{a\in A\setminus b}\final{G^a}_H$ by condition \ref{chaincol:strong proper edge}. Therefore if the `furthermore' condition of \ref{chaincoll:H-e new 1} is violated, then $F\subset \left(\bigcup_{i\in [\tau]\setminus j}\final{H_i^b}_H\bigcup H_j^b\right)-\{e^b_{j-1},e^b_j\}$ which contradicts condition \ref{chain:proper2} of the   chain $(H^b_i, e^b_i)_{i\in [\tau]}$ being proper (Definition \ref{def:proper chain}). We therefore conclude that $\cH$ is indeed proper. 
\end{proof}

\subsection{Long chains from proper collections} \label{sec:long}
We are now ready to form long chains  from proper collections of chains via linking. The idea here is simple. Taking a proper collection of chains $\cH=\{(H_i^a,e_i^a)_{i\in [\tau]}:a\in A\}$   on a vertex set $V$ (with origins $e_0^a$), we introduce an order on $A$ and for each pair of consecutive $a$ and $b$ in $A$, we introduce a new chain $(J^{a}_\ell,e_\ell)_{\ell\in [2k]}$  which runs between $H^a_{\tau}$ and $H^b_1$ and whose vertices are new apart from at the edge that connects to $H^a_{\tau}$ (which will be $e_\tau^a$) and the edge that connects to $H^b_1$ (which will be $e_0^b$). Moreover, we guarantee that all the new vertices added for distinct $J_\ell^{a}$ are disjoint. This results in one long chain with length $|A|\tau+ 2k\cdot (|A|-1)\geq |A|\tau$. We have also added roughly  $2v(H)k|A|$ new vertices for these linking chains but this will be $O(|V|)$ in applications. Finally, we will use that $\cH$ was a proper collection to derive that this new long chain is also proper.

\begin{lem}[Long chains from proper collections] \label{lem:linking chains}
     Let $H$ be an $(2,1)$-inseparable graph, $ \tau\leq n \in \mathbb{N}$  and  $V$  an $n$-vertex set. Suppose that $\cH=\{(H^a_i,e^a_i)_{i\in [\tau]}:a\in A\}$ is a proper collection of  $H$-chains  on the vertex set $V$. 
	Then there exists a proper $H$-chain of length at least $\tau |A|$ on at most $2v(H)^2 n$ vertices.
Moreover, the same holds if $H$ is $(1,1,1)$-inseparable and $\bigcup_{a\in A,i\in [\tau]} H^a_i$ is bipartite on $V$. 
\end{lem}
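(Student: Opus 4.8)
The plan is to construct the long chain explicitly by concatenation and then verify the two properness conditions of Definition \ref{def:proper chain}. First I would fix an arbitrary linear order $a_1 < a_2 < \cdots < a_m$ on $A$ (so $m = |A|$). For each consecutive pair $a_r, a_{r+1}$ I will introduce an auxiliary \emph{linking chain} that joins the last copy $H^{a_r}_\tau$ to the first copy $H^{a_{r+1}}_1$. Concretely, by Observation \ref{obs: disjoint edges} (valid since $H$ is inseparable) we may fix a valid vertex ordering of $H$ and take a simple $H$-chain of length $2k$ (where $k = v(H)$) on a fresh vertex set, then identify its first edge $e'$ with $e^{a_r}_\tau$ and its last edge $f'$ with $e^{a_{r+1}}_0$; all other $2k(k-2)-2$ vertices of this linking chain are brand new and chosen disjoint from $V$ and from all other linking chains. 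The choice of length $2k$ is a convenient buffer: it is large enough that a copy of $H-e$ straddling the junction between a linking chain and one of the original underlying graphs $G^{a_r}$ would have to be separated by the two vertices of $e^{a_r}_\tau$ (resp.\ $e^{a_{r+1}}_0$), contradicting inseparability, and it keeps the linking chains pairwise far apart. Stringing everything together in the order
\[
(H^{a_1}_i)_{i\in[\tau]},\ (J^{a_1}_\ell)_{\ell\in[2k]},\ (H^{a_2}_i)_{i\in[\tau]},\ (J^{a_2}_\ell)_{\ell\in[2k]},\ \ldots,\ (H^{a_m}_i)_{i\in[\tau]}
\]
gives a single $H$-chain $(\widehat H_t, \widehat e_t)_{t\in[T]}$ with $T = m\tau + 2k(m-1) \geq \tau|A|$, origin $e^{a_1}_0$, on at most $n + (m-1)\cdot 2k(k-2) \leq 2k^2 n$ vertices (using $m \leq n$ and $k \geq 2$), which matches the claimed bound $2v(H)^2 n$.

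Next I would verify properness. For condition \ref{chain:proper1}, that $\widehat e_t \notin E(\langle \widehat H_s\rangle_H)$ for $s < t$: within a single block this is condition \ref{chain:proper1} of the original chains or of the simple linking chains (Lemma \ref{lem:inseparableimpliesproper}); across blocks it follows from condition \ref{chaincoll: edges not early} of the proper collection together with the disjointness of the new vertex sets, since $\langle \widehat H_s\rangle_H$ lives inside $\langle G^{a_r}\rangle_H$ or inside the (new, otherwise-disjoint) linking chain's underlying graph, and $\widehat e_t$ is not there. For condition \ref{chain:proper2}, let $e \in E(H)$ and let $F$ be a copy of $H-e$ in $\bigcup_t \langle \widehat H_t\rangle_H$. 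By Lemma \ref{lem:final graph proper} applied to each block, this union is $\bigcup_{a\in A}\langle G^a\rangle_H$ together with the (stable closures of the) linking chains. Using inseparability and the fact that distinct blocks overlap only in a single edge (the linking edges $e^{a_r}_\tau = e^{a_{r+1}}_0$-type identifications, plus within the original collection the control given by \ref{chaincoll:H-e new 1}), I argue that $V(F)$ must lie inside a single block: if $V(F)$ met two blocks, deleting the two endpoints of the single connecting edge would disconnect $F \subseteq H-e$, and in the $(1,1,1)$-inseparable/bipartite case one additionally uses Lemma \ref{lem:staying bipartite} to see these two endpoints lie in different parts, contradicting inseparability either way. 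Once $F$ is confined to one block, uniqueness of the hosting $\langle \widehat H_j\rangle_H$ and the `furthermore' clause follow directly: if the block is an original $G^a$ we invoke condition \ref{chaincoll:H-e new 1} of the proper collection; if the block is a linking chain we invoke Lemma \ref{lem:inseparableimpliesproper}, noting the edge-count obstruction $e(H-\{\widehat e_{j-1},\widehat e_j\}) = e(H)-2$ rules out $F$ surviving the deletion of the two linking edges.

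The bipartite case is handled in parallel: one only needs that simple $H$-chains are proper for $(1,1,1)$-inseparable $H$ (Lemma \ref{lem:inseparableimpliesproper}), that bipartiteness is preserved along the $H$-process (Lemma \ref{lem:staying bipartite}), so that $\bigcup_t\langle \widehat H_t\rangle_H$ remains bipartite and every embedded copy of $F$ respects the bipartition, and that an inseparable graph cannot be disconnected by removing one edge and one vertex from each part — which is exactly what lets the separation argument go through when $F$ straddles two blocks. The main obstacle I anticipate is the straddling-copy analysis in condition \ref{chain:proper2}: one must carefully enumerate all the ways a copy of $H-e$ could sit across a junction — between two original chains' underlying graphs inside the collection, between an original chain and an adjacent linking chain, or between a linking chain and the next original chain — and in each case pin down the relevant single-edge cut and apply the right inseparability statement; the bookkeeping of which edges $\widehat e_{j-1}, \widehat e_j$ correspond to in the original indexing (especially at the seams where a block ends and a linking chain begins) is where errors are easiest to make, so I would set up notation for the seam edges very explicitly before starting that case analysis.
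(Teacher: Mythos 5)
Your proposal follows essentially the same route as the paper: concatenate the chains in an arbitrary order via fresh simple linking chains of length $2k$ glued at $e^{a}_\tau$ and $e^{a+1}_0$, then verify condition (1) from property (i) of the proper collection and condition (2) by a separation/inseparability argument confining any copy of $H-e$ either to $V$ (where property (ii) of the collection applies) or to a single linking block (where properness of simple chains, Lemma \ref{lem:inseparableimpliesproper}, applies). The only details the paper additionally handles, which you should fold into your seam bookkeeping, are the possibility that $e^{a}_\tau$ and $e^{a+1}_0$ share a vertex (so one works with the two truncated simple subchains $(J^a_\ell)_{\ell=1}^{2k-1}$ and $(J^a_\ell)_{\ell=2}^{2k}$ rather than one simple chain of length $2k$) and, in the bipartite case, choosing the labelling of the seam identifications so that the enlarged underlying graph stays bipartite before invoking Lemma \ref{lem:staying bipartite}.
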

\begin{proof}
For $a\in A$, let $e_0^a$ be the origin of the chain $(H_i^a,e^a_i)_{i\in[\tau]}$.   We begin by (arbitrarily) introducing an order on $A$ and so we can assume that $A=[s]$ with $s=|A|\in \NN$.  We also fix $k:=v(H)$, let $e',f'\in E(H)$ be a pair of disjoint edges in $H$ (Observation \ref{obs: disjoint edges}) and introduce an order on  $V(H)=[k]$ such that 
    $e'=\{1,2\}$ and $f'=\{k-1,k\}$. If $H$ is bipartite, we ensure that vertices $1$ and $k-1$ are in the same part of $H$ with vertices $2$ and $k$ in the other part.
    
    For $a\in [s-1]$,  let $U^a$ be  a new vertex set disjoint from $V$ with $|U^a|=2k(k-2)-2$ so that $U^a\cap U^b=\emptyset$ for $a\neq b\in [s-1]$. We label $U^a$ as $U^a=\{u^a_3,\ldots,u^a_{2k(k-2)}\}$ and also label the vertices of $e^a_\tau$ as $\{u^a_1,u^a_2\}$ and the vertices of $e^{a+1}_0$ as $\{u^a_{2k(k-2)+1},u^{a}_{2k(k-2)+2}\}$. If $G'=\bigcup_{a\in A,i\in [\tau]}H_i^a$ is bipartite (and thus $H$ itself is bipartite), we ensure that the labelling of $e^a_\tau$ and $e_0^{a+1}$ is such that $u^a_1$ and $u^a_{2k(k-2)+1}$ are in the same part of the bipartition of $G'$ (and thus $u_2^a$ and $u^{a}_{2k(k-2)+2}$ are in the other part). Finally for $a\in [s-1]$ and $\ell\in [2k]$ we define $J^a_\ell$ to be the image of the graph isomorphism  $\varphi^a_\ell : H\rightarrow J^a_\ell$ defined via  $\varphi^a_\ell(j)=u^a_{(\ell-1)(k-2)+j}$ for $j\in [k]$ and fix  $f^a_\ell:=\phi^a_\ell(f')=u^a_{\ell(k-2)+1}u^a_{\ell(k-2)+2}\in E(J^a_\ell)$. 

    Note   that for $a\in [s-1]$, the collection $(J_\ell^a,f_\ell^a)_{\ell \in [2k]}$ defines a  simple $H$-chain (with respect to the ordering on $V(H)= [k]$ given above)   if $e^{a}_\tau\cap e^{a+1}_0=\emptyset$.  
This is not the case if $e^{a}_\tau$ and  $e^{a+1}_0$ intersect but we certainly have  that both $(J_\ell^a, f_\ell^a)_{\ell=1}^{2k-1}$ and $(J_\ell^a,f_\ell^a)_{\ell=2}^{2k}$ are both  simple $H$-chains. 
 
 We are now in a position to define our  long chain $(H^*_i,e^*_i)_{i\in [\tau^*]}$ with $\tau^*:= \tau s+2k(s-1)\geq \tau |A|$. This chain will run through each  of the chains $(H^a_i,e^a_i)_{i\in [\tau]}$ in order of the $a\in A=[s]$ and in between $(H^a_i,e^a_i)_{i\in [\tau]}$ and $(H^{a+1}_i,e^{a+1}_i)_{i\in [\tau]}$ the chain $(H^*_i,e^*_i)_{i\in [\tau^*]}$ will run through $(J^a_\ell,f^a_\ell)_{\ell\in [2k]}$. Formally we define 
 \begin{linenomath}
\begin{equation}
(H^*_i,e^*_i)=\begin{cases}
			(H^a_{i'},e^a_{i'}), & \text{ if } i=(a-1)(\tau+2k)+i' \text{ with } a\in [s], i'\in [\tau]; \text{ or } \\
            (J^a_{\ell},f^a_{\ell}), & \text{ if } i=(a-1)(\tau+2k) + \tau + \ell \text{ with } a\in [s-1], \ell\in [2k].
		 \end{cases}
\end{equation}
\end{linenomath}
We also define $e_0^*$ to be the origin $e_0^1$ of the first chain  $(H^1_i,e^1_i)_{i\in [\tau]}$ in the collection.   With this, it follows from  the fact that each of the $(H^a_i,e^a_i)_{i\in [\tau]}$, $(J_\ell^a, f_\ell^a)_{\ell=1}^{2k-1}$ and $(J_\ell^a,f_\ell^a)_{\ell=2}^{2k}$ are themselves chains, that $e^*_{i}\in E(H_i^*)\cap E(H^*_{i+1})$ for each $i\in [\tau^*-1]$. Thus $(H^*_i,e^*_i)_{i\in[\tau^*]}$  is indeed a chain of length $\tau^*\geq \tau s$ with origin $e_0^*$. The underlying graph $G^*:=\bigcup_{i\in [\tau^*]}H_i^*$ has vertex set $V\cup(\bigcup_{a=1}^{s-1}U^a)$ of size $n+(\tau-1)(2k(k-2)-2)\leq n+2(\tau-1)k^2\leq 2k^2 n$ using here that $\tau\leq n$. Moreover, note that if  $H$ is bipartite and so is $\bigcup_{a\in A,i\in [\tau]} H^a_i$, then $G^*$ is also bipartite due to how we labelled the vertices of $H$ and the edges $e_\tau^a$ and $e_0^{a+1}$ for $a\in [s-1]$.  

 It remains to show that this new chain is proper. For condition \ref{chain:proper1}  of Definition \ref{def:proper chain} we fix some $1\leq j\neq i\leq \tau^*$ and suppose that 
 $e_{i}^*\in E(\final{H^*_{j}}_H). $
 Then we need to show that $j>i$. 
 Suppose first that $e_{i}^*=f_{\ell}^a$ for some $a\in [s-1]$ and $\ell\in [2k-1]$. As $e_{i}^*\in E(\final{H^*_{j}}_H)$ we must have that $H^*_j$ is equal to  $J^a_{\ell+1}$  as this is the only copy of $H$ amongst the $H_i^*$ (other than $J^a_{\ell}$) whose vertices contain the edge $f_\ell^a$. Hence $j=i+1$ and we are done. 
 
 If $e_i^*\neq f_{\ell}^a$ for some $a\in [s-1]$ and $\ell\in [2k-1]$, then we necessarily have that $e_i^*=e_{i'}^a$ for some $i'\in\{0,\ldots,\tau\}$ and $a\in [s]$. Therefore by property \ref{chaincoll: edges not early} of the collection being proper $e_i^*\notin \final{H^b_{i''}}$ for any $b\neq a$ and $i''\in [\tau]$. In 
 particular, we have that $e_{i'}^a\notin\{e^b_0,e^b_{\tau}:b\neq a\}$.
 Therefore as all of the  $J^{a'}_\ell$ are either vertex disjoint from $V$ or intersect $V$ in some  $e^{b}_0$ or $e^{b}_\tau$, we have that if $H^*_j=J^{a'}_\ell$ for some $a'\in [s-1]$, $\ell\in [2k]$ then either $e_{i'}^a=e_0^a$ and $H^*_j=J^{a-1}_{2k}$ or $e_{i'}^a=e_\tau^a$ and $H^*_j=J^a_{1}$. The former case is ruled out as this would imply that $j=i$  and in the latter case $j=i+1$.   
 Finally, we have to  consider the case that $H_j^*$ is   $H^a_{i''}$ for some $i''\in [\tau]$. Then we have that $i''>i'$ on account of property \ref{chain:proper1} of the chain $(H^a_i,e^a_i)_{i\in [\tau]}$ being proper (Definition \ref{def:proper chain}). Thus $j>i$ and we are done.

   To finish the proof, we need to establish condition \ref{chain:proper2} for the chain $(H^*_i,e^*_i)_{i\in [\tau^*]}$ so fix some $e\in E(H)$ and some copy $F$ of $H-e$ in the graph $\hat G:=\bigcup_{i\in [\tau^*]}\final{H^*_i}_H$.  We will begin by showing that either $V(F)\subset V$ or $V(F)\subset U^a\cup W^a$ for some $a\in [s-1]$ and $W^a\in \{e^{a}_\tau,e^{a+1}_0\}$. Indeed, suppose for a contradiction that this does not hold. Then as $V(F) \nsubseteq V$, there exists some $a\in [s-1]$ and vertex $u\in V(F)\cap U^a$. Suppose first  that $u\in U^a_0:=\{u_3,\ldots,u_{k(k-2)+2}\}$. As $V(F)\nsubseteq U^a \cup \{e^a_\tau\}$, there is some vertex $y\in V(F) \cap (V(\hat G)\setminus (U^a\cup \{e^a_\tau\}))$. We claim that $F-\{u_1,u_2\}$ is disconnected, with $u$ and $y$ in different components, recalling here that $e^a_\tau=\{u_1,u_2\}$. Indeed, we have that \[\dist_{\hat G-\{u_1,u_2\}}(u,y)\geq \min\left\{\dist_{\hat G-\{u_1,u_2\}}(u,w):{w\in e_0^{a+1}}\right\},\] as any path from $u$ to $y$ in $\hat G-\{u_1,u_2\}$ has to go through $e_0^{a+1}$. The quantity on the right hand side of the inequality is at least $k$ as any path from $y$ to $w\in e_0^{a+1}$ must contain at least one vertex from each set $V(J^a_\ell)\setminus \{f^a_{\ell-1}\}$ for $\ell=k,\ldots,2k$. 
   Therefore as $F-\{u_1,u_2\}\subseteq \hat G-\{u_1,u_2\}$, we indeed have that $F-\{u_1,u_2\}$ is disconnected, contradicting the $(2,1)$-inseparability of $H$. If $H$ is $(1,1,1)$-inseparable, we use here that $\hat G$ is also bipartite (from how we defined the underlying graph $G^*$ and  Lemma \ref{lem:staying bipartite})  implying that if $\{u_1,u_2\}\subseteq V(F)$, then the vertices $u_1,u_2$ lie in different parts of the bipartition of $V(F)$. The case that  $u\in U^a\setminus U^a_0=\{u_{k(k-2)+3},\ldots,u_{2k(k-2)}\}$ is analogous and one can see from the same argument that in such a case, removing the vertices of $e_0^{a+1}$ from $F$ disconnects $F$ and contradicts the inseparability of $H$.

We now treat these two cases separately, assuming first that $V(F)\subseteq V$. As $\hat G[V]=\bigcup_{a\in A}\final{G^a}_H$,  by condition \ref{chaincoll:H-e new 1} of the collection being proper, we have that there is a unique $b\in A$  such that $F\subseteq \final{G^b}_H$ and so also a unique $j'\in [\tau]$ such that $F\subseteq \final{H^b_{j'}}_H$. Letting $j=(b-1)(\tau+2k)+j'$, we then have that $j$ is the unique index in $[\tau^*]$ such that $F\subseteq \final{H^*_{j}}_H$. Moreover, note that for any $i\in [\tau^*]\setminus j$, we have that there is some $a\in A$ and $i'\in [\tau]$ such that $E(\final{H^*_{i}})\cap \binom{V}{2}\subseteq E(\final{H^a_{i'}})$. Therefore, we have that $V[\bigcup_{i\in [\tau^*]\setminus j}\final{H^*_i}_H\bigcup H^*_j]=\bigcup_{a\in A\setminus b} \final{G^a}_H\bigcup_{i\in [\tau]\setminus j} \final{H_i^b}_H \bigcup H^b_{j'}$ and the `furthermore' part of condition \ref{chain:proper2} of being proper is implied by the `furthermore' condition \ref{chaincoll:H-e new 1} of the collection being proper.

Finally, suppose that there is some $a\in [s-1]$ and $W^a\in \{e_\tau^a,e_0^{a+1}\}$ such that  $V(F)\subseteq U^a\cup W^a$. In this case, 
we claim that either $V(F)\cap \{u_3,\ldots,u_{k-2}\}=\emptyset$ or $V(F)\cap \{u_{(2k-1)(k-2)+3}, \ldots,u_{2k(k-2)}\}=\emptyset$. Indeed, if this is not the case then $F-W^a$ is disconnected and this contradicts the inseparability of $H$, using here that the shortest path in $\hat G[U^a]$ between a vertex in
$\{u_3,\ldots,u_{k-2}\}$ and a vertex in $\{u_{(2k-1)(k-2)+3}, \ldots,u_{2k(k-2)}\}$ is of length at least $2k> v(H)$. Hence, we have that either  $V(F)\subseteq Y_1:= \bigcup_{\ell=1}^{2k-1} V(J_\ell^a)$ or $V(F)\subseteq Y_2:=\bigcup_{\ell=2}^{2k}V(J_\ell^a)$. Due to the fact that  $\hat G[Y_1]= \bigcup_{\ell=1}^{2k-1} \final{J_\ell^a}_H$ and $\hat G[ Y_2]= \bigcup_{\ell=2}^{2k} \final{J_\ell^a}_H$, 
we have that condition \ref{chain:proper2} for $F$ in $(G_i^*,e_i^*)_{i\in [\tau^*]}$ follows from condition \ref{chain:proper2} holding for the chains $(J_\ell^a, f_\ell^a)_{\ell=1}^{2k-1}$ and $(J_\ell^a,f_\ell^a)_{\ell=2}^{2k}$ which are both proper as they are simple $H$-chains and $H$ is inseparable (Lemma \ref{lem:inseparableimpliesproper}).
\end{proof}

\begin{rem} \label{rem: chains diff size}
    We remark that the condition that each chain in the proper collection has the same size $\tau$ is unnecessary. Indeed the proof of Lemma \ref{lem:linking chains} can be easily adapted to show that if $\tau_a\in \NN$ with $\tau_a\leq n$ for $a \in A$  and $\cH=\{(H_i^a,e_i^a)_{i\in [\tau_a]}:a\in A\}$ is a collection of chains which is proper (with the conditions of Definition \ref{def:proper chaincoll} adapted to the different length chains in the obvious fashion), then there is a proper chain on at most $2v(H)^2n$ vertices of length at least $\sum_{a\in A}\tau_a$. 
\end{rem}

Lemma \ref{lem:linking chains} will be our main tool in proving lower bounds from collections of chains.  However, in one of our key  applications (namely Theorem \ref{thm:wheel}), our graph $H$ is $(2,1)$-separable. In such a situation, we cannot guarantee that the new chains $(J^{a}_\ell,e^a_\ell)_{\ell\in [2k]}$ introduced in the proof of Lemma \ref{lem:linking chains} to link, do not create unwanted copies of $H-e$ for some $e\in E(H)$. We therefore require some extra conditions on our collection, namely that the ends of each chain are disjoint from all of the other chains. This is the content of the following lemma. 

\begin{lem} \label{lem:linking chains plus}
    Let  $4\leq k\in \NN$ and let $H$ be a graph with $V(H)=[k]$ such that $e':=\{1,2\}$ and $f':=\{k-1,k\}$ are both edges of $H$. Moreover,  fix  $5k< \tau\leq n \in \mathbb{N}$  and let $V$ be   an $n$-vertex set. Suppose  that $\cH=\{(H^a_i,e^a_i)_{i\in [\tau]}:a\in A\}$ is a proper collection of simple  $H$-chains, each with underlying graph $G^a=\bigcup_{i\in [\tau]}H^a_i$ on $V$. Moreover, suppose that 
	\begin{equation}\label{eq:additional}
		\bigcup_{i=1}^{k}V(H^a_i) \cap V(G^b) = \varnothing \text{ and } \bigcup_{i=\tau-k}^\tau V(H^a_i) \cap V(G^b) = \varnothing \text{ for } a\neq b\in A.
		\end{equation}
	Then there exists a proper $H$-chain of length at least $\tau |A|$ on at most $2v(H)^2 n$ vertices.
\end{lem}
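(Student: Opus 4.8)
The plan is to follow the same linking construction as in the proof of Lemma \ref{lem:linking chains}, but to exploit the stronger hypothesis \eqref{eq:additional} to handle the fact that $H$ need no longer be inseparable. As before, I would introduce an arbitrary order on $A$, identifying $A$ with $[s]$, set $k:=v(H)$, and for each $a\in[s-1]$ take a fresh vertex set $U^a$ of size $2k(k-2)-2$, pairwise disjoint and disjoint from $V$, labelled $U^a=\{u^a_3,\ldots,u^a_{2k(k-2)}\}$, with the endpoints of $e^a_\tau$ labelled $u^a_1,u^a_2$ and the endpoints of $e^{a+1}_0$ labelled $u^a_{2k(k-2)+1},u^a_{2k(k-2)+2}$. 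For $\ell\in[2k]$ define the copies $J^a_\ell$ and the edges $f^a_\ell$ exactly as there, so that (possibly after truncating the first or last link if $e^a_\tau$ and $e^{a+1}_0$ intersect) $(J^a_\ell,f^a_\ell)_{\ell}$ is a simple $H$-chain. Splicing the $(H^a_i,e^a_i)_{i\in[\tau]}$ together in the order of $A$, separated by the linking chains $(J^a_\ell,f^a_\ell)_{\ell\in[2k]}$, yields a chain $(H^*_i,e^*_i)_{i\in[\tau^*]}$ with $\tau^*=\tau s+2k(s-1)\geq \tau|A|$, with origin $e_0^*:=e_0^1$, on at most $n+2(\tau-1)k^2\le 2k^2 n$ vertices.

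The verification of condition \ref{chain:proper1} for $(H^*_i,e^*_i)_{i\in[\tau^*]}$ goes through verbatim as in Lemma \ref{lem:linking chains}: if $e^*_i$ is a link-edge $f^a_\ell$ it only lies in the next copy $J^a_{\ell+1}$; if $e^*_i=e^a_{i'}$ then condition \ref{chaincoll: edges not early} of the proper collection and the fact that the $J$'s meet $V$ only in some $e^b_0$ or $e^b_\tau$ force the unique later copy, and within a single chain $(H^a_i,e^a_i)_{i}$ we invoke its own properness. For condition \ref{chain:proper2}, fix $e\in E(H)$ and a copy $F$ of $H-e$ in $\hat G:=\bigcup_{i\in[\tau^*]}\final{H^*_i}_H$. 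The crucial structural claim is that $V(F)$ lies entirely inside one ``block'': either $V(F)\subseteq V$, or $V(F)\subseteq U^a\cup\{u^a_1,u^a_2\}\cup\{u^a_{2k(k-2)+1},u^a_{2k(k-2)+2}\}$ for some $a\in[s-1]$. Here is where the inseparability argument of Lemma \ref{lem:linking chains} no longer applies and where hypothesis \eqref{eq:additional} and the bound $\tau>5k$ enter. Instead of a vertex-cut argument I would use a distance/diameter argument: $H-e$ has at most $k$ vertices, hence diameter at most $k-1$ (it is connected, since... actually $H-e$ may be disconnected, so one argues per component, but each component has at most $k$ vertices), while in $\hat G$ the relevant blocks are separated by long sparse ``corridors''. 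Concretely, by \eqref{eq:additional} the first $k$ and last $k$ copies of each chain $(H^a_i,e^a_i)_i$ are vertex-disjoint from every other $G^b$, and the linking chains $J^a_\ell$ add corridors of $\ge 2k$ copies; since $\tau>5k$, any two vertices lying in ``different blocks'' are at distance $>k$ in $\hat G$, so they cannot both lie in a connected subgraph on $\le k$ vertices. Thus each connected component of $F$ is contained in a single block; a short additional argument (using that $F$ has $\ge k-1$ edges and each block's intersection graph is itself built from proper chains, so cannot host a sprinkling of $F$ across a genuine cut) shows all of $F$ lies in one block.

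Once $F$ is confined to a single block, the argument concludes exactly as in Lemma \ref{lem:linking chains}. If $V(F)\subseteq V$, then since $\hat G[V]=\bigcup_{a\in A}\final{G^a}_H$, condition \ref{chaincoll:H-e new 1} of the proper collection gives the unique $b\in A$ and unique $j'\in[\tau]$ with $F\subseteq\final{H^b_{j'}}_H$, hence the unique $j=(b-1)(\tau+2k)+j'$ in $[\tau^*]$; and since for $i\ne j$ the edges of $\final{H^*_i}_H$ inside $\binom{V}{2}$ lie in some $\final{H^a_{i'}}_H$, the restriction to $V$ of $\bigcup_{i\ne j}\final{H^*_i}_H\cup H^*_j$ is exactly the graph appearing in the ``furthermore'' clause of \ref{chaincoll:H-e new 1}, so that clause transfers. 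If $V(F)$ lies in a linking block, then a further diameter argument (the two ends $\{u^a_3,\ldots,u^a_{k-2}\}$ and $\{u^a_{(2k-1)(k-2)+3},\ldots,u^a_{2k(k-2)}\}$ are $\ge 2k>k$ apart in $\hat G[U^a]$) shows $V(F)\subseteq\bigcup_{\ell=1}^{2k-1}V(J^a_\ell)$ or $V(F)\subseteq\bigcup_{\ell=2}^{2k}V(J^a_\ell)$, and condition \ref{chain:proper2} then follows from properness of these simple $H$-chains — but here, crucially, we use that a \emph{simple} $H$-chain has the property that all edges of $\hat G$ with both endpoints in a single $V(J^a_\ell)$ lie in $\final{J^a_\ell}_H=\final{H}_H$ transported, so $F$ maps into one $\final{J^a_\ell}_H$; and the ``furthermore'' part holds because deleting $f^a_{\ell-1},f^a_\ell$ leaves only $e(H)-2$ edges in that vertex set. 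I expect the main obstacle to be making the ``$V(F)$ lies in one block'' claim fully rigorous: one must carefully account for the possibility that $H-e$ is disconnected (so $F$ could a priori have one component in $V$ and another in a corridor), which is precisely why one needs both the $\tau>5k$ lower bound \emph{and} the disjointness hypothesis \eqref{eq:additional} — together they guarantee the corridors are long enough and ``clean'' enough that no connected piece of an $\le k$-vertex graph can straddle them, and a counting argument on the number of edges of $F$ available in each block rules out split configurations.
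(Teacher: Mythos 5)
Your construction and the treatment of condition \ref{chain:proper1} match the paper, but there is a genuine gap in your verification of condition \ref{chain:proper2}: the block dichotomy you assert --- $V(F)\subseteq V$ or $V(F)\subseteq U^a\cup e^a_\tau\cup e^{a+1}_0$ --- is exactly the statement that in Lemma \ref{lem:linking chains} was proved \emph{using} $(2,1)$-inseparability, and it is false without it. A connected copy $F$ of $H-e$ can straddle a junction: it may contain $u^a_3\in U^a$ together with vertices of $V(H^a_\tau)\setminus e^a_\tau\subseteq V$, since all of these are within distance two of the endpoints of $e^a_\tau$. Your distance argument ("any two vertices in different blocks are at distance $>k$") therefore fails precisely at the junctions, which is where the danger lies; long corridors separate the \emph{interiors} of blocks but not their boundaries, and nothing in your proposal handles a copy of $H-e$ spread over $\final{H^a_\tau}_H\cup\final{J^a_1}_H$ (or the symmetric configuration at the other end). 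This is the case the lemma is really about, since for inseparable $H$ one could just quote Lemma \ref{lem:linking chains}.

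The paper's key idea, which is absent from your proposal, is to enlarge each linking chain to $(J^a_\ell,f^a_\ell)_{\ell=-k}^{3k}$ by prepending the last $k+1$ copies of chain $a$ and appending the first $k$ copies of chain $a+1$; this is again a simple $H$-chain (of length $4k+1$), and it is proper because the chains in $\cH$ are simple and proper. One then shows that any $F$ meeting $U^a$ satisfies $V(F)\subseteq W:=U^a\cup\bigcup_{i=\tau-k}^{\tau}V(H^a_i)\cup\bigcup_{i=1}^{k}V(H^{a+1}_i)$ --- here one uses that $H-e$ is connected, which (contrary to your worry) is forced by the hypothesis that proper simple $H$-chains exist, together with the fact that every vertex within distance $k$ of $U^a$ in $\hat G$ lies in $W$. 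Hypothesis \eqref{eq:additional} is what guarantees $\hat G[W]=\bigcup_{\ell=-k}^{3k}\final{J^a_\ell}_H$, i.e.\ that no other chain $G^b$ contributes edges inside $W$, so condition \ref{chain:proper2} for $F$ in $(H^*_i,e^*_i)_{i\in[\tau^*]}$ follows from properness of this extended chain. Your proposal never forms this extended chain and never uses \eqref{eq:additional} in this way (you only invoke it for a separation claim that does not hold), so the straddling configurations remain unhandled and the proof as proposed does not go through.
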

\begin{proof}
The proof here follows exactly the same scheme as that of Lemma \ref{lem:linking chains} and the chain $(H^*_i)_{i\in [\tau^*]}$ is defined identically to how we defined it in that proof. In particular, we define the new vertex sets $U^a$ and chains $(J^a_\ell,f_\ell^a)_{\ell\in [2k]}$ for $a\in [s-1]$, using the ordering on $V(H)=[k]$ (and edges $e'$ and $f'$) in the statement of this lemma. For $a\in [s-1]$, let us also define $(J^a_\ell,f_\ell^a)=(H^{a}_{\tau+\ell},e^a_{\tau+\ell})$ for $\ell=-k,\ldots,0$ and $(J^a_\ell,f_\ell^a)=(H^{a+1}_{\ell-2k},e^{a+1}_{\ell-2k})$ for $\ell=2k+1,\ldots,3k$. With this labelling, by choosing appropriately the labelling of $e_\tau^a$ as $\{u_0,u_1\}$ and $e_0^{a+1}$ as $\{u_{2k(k-2)+1},u_{2k(k-2)+2}\}$ when defining the $J^a_\ell$, we can assume that $(J^a_\ell,f^a_\ell)_{\ell=-k}^{3k}$ is a simple $H$-chain of length $4k+1$. Hence  we have that the chain $(J^a_\ell,f^a_\ell)_{\ell=-k}^{3k}$ is also proper, as the longer chains $(H^a_i,e^a_i)_{i\in[\tau]}$ are simple  and are proper by assumption. 

Now condition \ref{chain:proper1} of $(G^*_i,e^*_i)_{i\in [\tau^*]}$ being proper follows directly from the proof of  Lemma \ref{lem:linking chains} (where at no point did we use the $(2,1)$-inseparability of $H$). For condition \ref{chain:proper2}, take some $e\in E(H)$ and a copy $F$ of $H-e$ in $\hat G:=\bigcup_{i\in[\tau^*]}\final{G^*_i}_H$. If $V(F)\subset V$, we again refer to the proof of Lemma \ref{lem:linking chains} which dealt with this case without appealing to the $(2,1)$-inseparability of $H$. So we are left with the case that there is some $a\in [s-1]$ and vertex $y\in V(F)\cap U^a$. We then have that $V(F)\subseteq W:=U^a\bigcup_{i=\tau-k}^\tau V(H_i^a)\bigcup_{i=1}^k V(H^{a+1}_i)$ as the $F$ is connected (as otherwise simple proper $H$-chains would not exist) and all vertices that are within distance $k=v(H)$ from $y$ in $\hat G$ are contained in $W$. Now property \ref{chain:proper2} of Definition \ref{def:proper chain} for $F$ in the chain $(H^*_i,e^*_i)_{i\in [\tau^*]}$ follows from the fact that $\hat G[W]=\bigcup_{\ell=-k}^{3k}\final{J^a_\ell}_H$ and from property \ref{chain:proper2} for $F$ in the proper chain $(J^a_\ell,f^a_\ell)_{\ell=-k}^{3k}$. This completes the proof. 
\end{proof}

\section{Ladder chains} \label{sec:ladder}

Our first general construction of chains are \emph{ladder chains}. This section builds on ideas of Balogh, Kronenberg, Pokrovskiy and Szab\'o \cite{balogh2019maximum} in their work showing a quadratic maximum running time for cliques with at least 6 vertices. They gave one long continuous chain that wrapped back onto itself whereas here we exhibit a collection of chains and appeal to Lemma \ref{lem:linking chains}. This allows for more flexibility in the construction and also simplifies the exposition and analysis. The key idea  is to use a bipartition to restrict the intersection of different chains (or in their case different segments of the long chain). We will partition the vertex set of $H$ into two as $V(H)=U\cup W$ and the vertex set $V$ of our collection of chains will be composed of a sequence of copies $L_1,\ldots,L_M$ of $U$ with consecutive copies joined at a single vertex as well as a sequence of copies $R_1,\ldots,R_M$ of $W$ with consecutive copies linked at a single vertex. Now each  chain $(H^a_i,e_i^a)_{i\in [\tau]}$ in our collection will come with some \emph{slope} $\lambda_a\in \NN$ and the chain will be simple with $H^a_i$ lying on the vertex set $L_i\cup R_{i+\lambda_a}$ for $i\in [\tau]$. The reason that we refer to this as a collection of \emph{ladder} chains is that we think of $\bigcup_{j\in [M]}L_j$ as the left side, $\bigcup_{j\in [M]}R_j$ as the right side and each of the chains $(H^a_i,e_i^a)_{i\in [\tau]}$ for $a\in A$ defining some \emph{ladder} which uses these fixed sides and whose rungs are determined by the slope $\lambda_a$. 

This construction allows us to place many chains on the same vertex set whilst maintaining a strong control on the intersections of the different chains. Indeed, crucial to our analysis is the fact that when considering the graph $\hat G=\bigcup_{a\in A,i\in [\tau]}\final{H_i^a}_H$, we have that for each $j\in [M-1]$ there is a single vertex at the intersection of $L_j$ and $L_{j+1}$, whose removal separates $\hat G[\bigcup_{j\in M}L_j]$ and similarly on the right side.  Using this we can force that any copy of $H-e$ in $\hat G$ has a vertex set of the form $L_j\cup R_{j'}$ for some $j,j'\in [M]$. 
This works well in the case that $H$ is highly connected and in this case we can  derive quadratic bounds on maximum running times. This is demonstrated  for $(\ceil{k/2},1)$-inseparable $k$-vertex graphs  in Section \ref{sec:lad robust conn} and for random graphs in Section \ref{sec: lad random}.  Finally in Section \ref{sec: lad cube} we use ladder chains with slopes defined by a Sidon set to prove the a running time of $\Omega(n^{3/2})$ when $H$ is a three-dimensional cube.

We continue with  the formal definition of ladder chains.

\vspace{2mm}

\begin{const}[Collection of ladder chains] \label{const:ladder}
    Let $H$ be a graph with $k:=v(H)\geq 4$, $U\subset V(H)$, $W:=V(H)\setminus U$ and $e',f'\in E_H(U,W)$ vertex-disjoint edges. Furthermore, let
$A\subseteq \mathbb{N}$ some index set and fix $m:=\max A$. Then the collection of $H$-chains \[\cH_{\mathrm{Lad}}(H,U,e',f';A)=\{(H^a_i,e^a_i)_{i\in[m]}:a\in A\}\] is defined as follows.

Fix $k_U:=|U|-1$, $k_W:=|W|-1=k-k_U-2$ and label $U$ as $U=\{u_0,\ldots,u_{k_U}\}$ and $W$ as $W=\{w_0,\ldots,w_{k_W}\}$ such that $e'=u_0w_0$ and $f'=u_{k_U}w_{k_W}$. The vertex set $V$ of our collection is partitioned as $V=L\cup R$ with $L=\{\ell_0,\ldots,\ell_{5mk_U}\}$, $R=\{r_0,\ldots,r_{5mk_W}\}$ and for $j\in \{0,\ldots,5m-1\}$, we define 
\[
		L_j := \{ \ell_{jk_U},\ldots, \ell_{(j+1)k_U} \} \quad\text{ and }\quad R_j := \{  r_{j k_W},\ldots, r_{(j+1)k_W} \}.
		\]
For $i\in [m]$ and $a\in A$,  $H_i^a$ is the image of the graph isomorphism defined by the bijection $\phi_i^a:U\cup W\rightarrow L_{i-1}\cup R_{i+4a-1}$ such that \[\phi_i^a(u_s)=\ell_{s+(i-1)k_U} \mbox{ for } s\in \{0,\ldots,k_U\} \mbox{ and } \phi^a_i(w_{s'})=r_{s'+(i+4a-1)k_W} \mbox{ for } s'\in\{0,\ldots,k_W\}.\]  We define $e^a_i:=\phi^a_i(f')=\ell_{ik_U}r_{(i+4a)k_W}$ and we take the origin of the chain $(H^a_i,e^a_i)_{i\in [m]}$ to be $e_0^a:=\phi^a_1(e')=\ell_{0}r_{(4a-1)k_W}$. 
\end{const}

\vspace{2mm}

Note that each $H$-chain $(H_i^a,e_i^a)_{i\in [m]}$ in the collection $\cH$ defined through Construction \ref{const:ladder} is simple (Definition \ref{def:simple-chain}) with respect to an ordering on $V(H)=[k]$ which places the vertices of $e'$ first and the vertices of $f'$ last. 
Thus an application of Lemma \ref{lem:inseparableimpliesproper} gives the following. 

\begin{obs} \label{obs:lad prop}
    If $H$ is $(2,1)$-inseparable or $(1,1,1)$-inseparable and $\cH=\cH_{\mathrm{Lad}}(H,U,e',f';A)$ is some collection of ladder chains as in Construction \ref{const:ladder}, then each chain in $\cH$ is proper. 
\end{obs}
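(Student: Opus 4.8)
The plan is to verify that each chain $(H^a_i, e^a_i)_{i\in[m]}$ produced by Construction~\ref{const:ladder} is a \emph{simple} $H$-chain in the sense of Definition~\ref{def:simple-chain}, and then invoke Lemma~\ref{lem:inseparableimpliesproper}, which states that any simple $H$-chain is proper when $H$ is $(2,1)$-inseparable or $(1,1,1)$-inseparable. The only real content is checking that the maps in Construction~\ref{const:ladder} match the template of Definition~\ref{def:simple-chain} after choosing a suitable ordering of $V(H)$.

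First I would fix an ordering of $V(H)=[k]$ that places the two endpoints of $e'$ in positions $1,2$ and the two endpoints of $f'$ in positions $k-1,k$; concretely, since $e'=u_0w_0$ and $f'=u_{k_U}w_{k_W}$ are vertex-disjoint edges of $H$, we may list $u_0, w_0$ first, then the remaining vertices of $U\cup W$ in any order, ending with $u_{k_U}, w_{k_W}$. This is a valid ordering in the sense of the remark following Definition~\ref{def:simple-chain} precisely because $e'$ and $f'$ are non-incident. With respect to this ordering, the isomorphisms $\phi^a_i$ of Construction~\ref{const:ladder} have images $H^a_i$ on the vertex set $L_{i-1}\cup R_{i+4a-1}$, consecutive copies $H^a_i$ and $H^a_{i+1}$ meet exactly in the single edge $e^a_i=\phi^a_i(f')=\phi^a_{i+1}(e')$ (since $L_{i-1}\cap L_i=\{\ell_{ik_U}\}$ and $R_{i+4a-1}\cap R_{i+4a}=\{r_{(i+4a)k_W}\}$, and no other $L_j$ or $R_{j'}$ blocks overlap), and non-consecutive copies are vertex-disjoint. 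Matching indices, one sees this is literally the structure of Definition~\ref{def:simple-chain} (up to a relabelling of the ambient vertex set), with origin $e^a_0=\phi^a_1(e')$ as prescribed there.

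Having established that each $(H^a_i,e^a_i)_{i\in[m]}$ is a simple $H$-chain, Lemma~\ref{lem:inseparableimpliesproper} applies directly: if $H$ is $(2,1)$-inseparable or $(1,1,1)$-inseparable, then each such chain is proper. (In the $(1,1,1)$-inseparable case one should note, as in the statement of Lemma~\ref{lem:inseparableimpliesproper}, that $H$ is bipartite and that the underlying graph of each chain is bipartite, which is automatic here since $H^a_i$ is a copy of $H$.) This gives the observation.

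The main (and only) obstacle is bookkeeping: confirming that the block decomposition $V=L\cup R$ with the overlaps $L_{j}\cap L_{j+1}$ and $R_{j'}\cap R_{j'+1}$ being single vertices forces the intersection pattern of the $H^a_i$ to be exactly the ``single shared edge between consecutive copies, disjoint otherwise'' pattern of a simple chain — in particular that the shift by $4a$ on the $R$-side keeps distinct slopes from interfering within a single chain. This is a routine index calculation and presents no genuine difficulty; once it is done, the result is immediate from the cited lemma.
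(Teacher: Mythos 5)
Your proposal is correct and follows exactly the paper's route: the paper also observes that each ladder chain is a simple $H$-chain with respect to an ordering of $V(H)$ placing the endpoints of $e'$ first and those of $f'$ last, and then cites Lemma~\ref{lem:inseparableimpliesproper}. The index bookkeeping you describe (consecutive copies meeting precisely in $e^a_i=\phi^a_i(f')=\phi^a_{i+1}(e')$, non-consecutive copies disjoint) is the only content, and you have it right.
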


Note also  that when $m$ is sufficiently large, the vertex set $V$ of a collection of  chains produced by 
Construction \ref{const:ladder} has size $|V|=5m(k_U+k_W)+2\leq 5m k$ and each chain $(H^a_i,e_i^a)_{i\in [m]}$ in the collection $\cH$ has length $m$. In our various applications of this construction, we will show that the collection is proper and appeal to Lemma \ref{lem:linking chains} to give a proper $H$-chain of length at least $m|A|$ on at most $n:=10k^3m$ vertices, thus implying that $M_H(n)=\Omega(|A|n)$. 

Before using these constructions to give lower bounds on running times, we provide  some general lemmas that will be useful in our proofs. 

\begin{lem} \label{lem:ladders crossing edges}
    Let $\cH=\cH_{\mathrm{Lad}}(H,U,e',f';A)$ be some collection of ladder chains  and let $\ell\in L,r\in R$.  
Then there is at most one $a\in A$ such that $\{\ell,r\}\subset V(H_i^a)$ for some $i\in [m]$. In particular, if $\{\ell,r\}\in E( \bigcup_{a\in A,i\in [m]}\final{H_i^a}_H)$, then $\{\ell,r\}\in E( \bigcup_{i\in [m]}\final{H_i^a}_H)$ for a unique $a\in A$. 
    \end{lem}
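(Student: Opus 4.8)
The plan is to convert the containment $\{\ell,r\}\subset V(H_i^a)$ into a pair of arithmetic constraints on the indices $i$ and $a$, and then observe that for a fixed pair $\ell\in L$, $r\in R$ these constraints determine $a$ uniquely. The "in particular" clause will then follow from the trivial fact that an $H$-process never adds vertices.

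First I would write $\ell=\ell_p$ and $r=r_q$ for suitable $p\in\{0,\ldots,5mk_U\}$ and $q\in\{0,\ldots,5mk_W\}$. By Construction~\ref{const:ladder} we have $V(H_i^a)=L_{i-1}\cup R_{i+4a-1}$, so $\{\ell,r\}\subset V(H_i^a)$ is equivalent to $\ell_p\in L_{i-1}$ together with $r_q\in R_{i+4a-1}$. Since $L_j=\{\ell_{jk_U},\ldots,\ell_{(j+1)k_U}\}$, the condition $\ell_p\in L_{i-1}$ reads $(i-1)k_U\le p\le ik_U$, i.e.\ $i\in[\,p/k_U,\ p/k_U+1\,]$; as $k_U=|U|-1\ge 1$ (because $e'$ and $f'$ are vertex-disjoint edges of $E_H(U,W)$, forcing $|U|\ge2$), this interval has length $1$, so it contains at most two integers and these are consecutive. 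Hence any two valid indices $i,i'$ satisfy $|i-i'|\le 1$. Analogously $r_q\in R_{i+4a-1}$ forces $i+4a\in[\,q/k_W,\ q/k_W+1\,]$, again an interval of length $1$ (using $k_W=|W|-1\ge1$), so any two valid values of $i+4a$ differ by at most $1$.

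The key step is then a one-line estimate. Suppose $\{\ell,r\}\subset V(H_i^a)$ and $\{\ell,r\}\subset V(H_{i'}^{a'})$ with $a,a'\in A\subseteq\NN$. By the previous paragraph $|i-i'|\le1$ and $|(i+4a)-(i'+4a')|\le1$, so by the triangle inequality
\[
4|a-a'|=\bigl|(i+4a)-(i'+4a')-(i-i')\bigr|\le|(i+4a)-(i'+4a')|+|i-i'|\le2,
\]
whence $|a-a'|\le\tfrac12$ and therefore $a=a'$ since $a,a'$ are integers. This establishes the first assertion. For the "in particular" part I would use that $V(\final{H_i^a}_H)=V(H_i^a)$ for all $i,a$, because the $H$-process only adds edges. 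Thus if $\{\ell,r\}\in E\bigl(\bigcup_{a\in A,i\in[m]}\final{H_i^a}_H\bigr)$, there are $a\in A$ and $i\in[m]$ with $\{\ell,r\}\in E(\final{H_i^a}_H)$, hence $\{\ell,r\}\subset V(H_i^a)$; by the first part $a$ is the unique index with this property, and since any occurrence of the edge $\{\ell,r\}$ inside some $\final{H_{i'}^{a'}}_H$ likewise forces $\{\ell,r\}\subset V(H_{i'}^{a'})$, we conclude $\{\ell,r\}\in E\bigl(\bigcup_{i\in[m]}\final{H_i^a}_H\bigr)$ for exactly this $a$.

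I do not anticipate a genuine obstacle here: the proof is essentially the displayed triangle-inequality computation. The only point requiring slight care is the bookkeeping that each $\ell_p$ lies in at most two consecutive blocks $L_j$ and each $r_q$ in at most two consecutive blocks $R_j$, which in turn relies only on the mild facts $k_U,k_W\ge1$ coming from $e',f'$ being vertex-disjoint; once that is in place, the factor $4$ in the slope offset $i+4a-1$ does all the work.
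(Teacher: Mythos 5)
Your proof is correct and rests on the same idea as the paper's: a vertex of $L$ (resp.\ $R$) lies in at most two consecutive blocks, so the slope factor $4$ forces the dilation index $a$ to be unique; the paper phrases this by fixing blocks $j,j'$ with $\ell\in L_j$, $r\in R_{j'}$ and arguing that $j'$ must lie in the window $J(j,a)=\{j+4a-2,\ldots,j+4a+2\}$, which pins down $a$. Your bookkeeping via the raw indices $p,q$ and the triangle inequality $4|a-a'|\le|(i+4a)-(i'+4a')|+|i-i'|\le 2$ is in fact marginally tighter than the paper's window argument (whose sets $J(j,a)$, $J(j,a+1)$ touch at an endpoint), so no changes are needed.
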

    \begin{proof}
         Choose $j,j'\in \{0,\ldots,5m-1\}$ such that $\ell\in L_j$ and $r\in R_{j'}$.  By the definition of the construction we have that  if $\{\ell,r\}\subseteq V({H_i^a})$ for some $i\in [m]$ it implies that  $j'\in J(j,a):=\{j+4a-2,j+4a-1,j+4a,j+4a+1, j+4a+2\}$. Indeed we must have that $V(H_i^a)=L_h\cup R_{h'}$ for some $h\in \{j-1,j,j+1\}$ and $h'\in \{j'-1,j',j'+1\}$ such that  $h'=h+4a$. 
    The lemma then follows because the sets $J(j,a)$ are disjoint for different choices of $a\in A$ (and $j$ fixed).
    \end{proof}

This has the following  immediate consequence.

\begin{cor} \label{cor: ladder I II}
    If $\cH=\cH_{\mathrm{Lad}}(H,U,e',f';A)$ is a collection of ladder chains then it satisfies condition  \ref{chaincoll: edges not early} of being proper (Definition \ref{def:proper chaincoll}).
\end{cor}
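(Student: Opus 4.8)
The plan is to deduce condition~\ref{chaincoll: edges not early} of Definition~\ref{def:proper chaincoll} directly from Lemma~\ref{lem:ladders crossing edges}. Fix a collection of ladder chains $\cH = \cH_{\mathrm{Lad}}(H,U,e',f';A)$, and for each $a\in A$ write $G^a = \bigcup_{i\in[m]} H^a_i$. We must show that for all $a\neq b\in A$ and all $i\in\{0,\ldots,m\}$ we have $e^a_i\notin E(\final{G^b}_H)$. First I would recall from Lemma~\ref{lem:final graph proper} (since each chain in $\cH$ is proper, e.g.\ because $H$ is inseparable, or more generally by hypothesis of whatever application invokes this corollary) that $\final{G^b}_H = \bigcup_{i\in[m]}\final{H^b_i}_H$, so an edge of $\final{G^b}_H$ lies in $E(\bigcup_{c\in A, i\in[m]}\final{H^c_i}_H)$.

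The key observation is that each $e^a_i$ is a ``crossing'' edge: by Construction~\ref{const:ladder}, $e^a_i = \phi^a_i(f') = \ell_{ik_U}r_{(i+4a)k_W}$ for $i\in[m]$ (and $e^a_0 = \ell_0 r_{(4a-1)k_W}$), which has one endpoint $\ell\in L$ and one endpoint $r\in R$. Thus $e^a_i = \{\ell,r\}$ with $\ell\in L$, $r\in R$, and clearly $\{\ell,r\}\in E(\bigcup_{i\in[m]}\final{H^a_i}_H)$ since $e^a_i\in E(H^a_i)\subseteq E(\final{H^a_i}_H)$ (for $i=0$ one uses instead $e^a_0\in E(H^a_1)$). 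Now Lemma~\ref{lem:ladders crossing edges} asserts that the index $a$ for which such a crossing edge $\{\ell,r\}$ lies in $E(\bigcup_{i\in[m]}\final{H^c_i}_H)$ is \emph{unique}. Hence there is no $b\neq a$ with $e^a_i = \{\ell,r\}\in E(\final{G^b}_H) = E(\bigcup_{i\in[m]}\final{H^b_i}_H)$, which is exactly condition~\ref{chaincoll: edges not early}.

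There is no real obstacle here; the statement is essentially a restatement of Lemma~\ref{lem:ladders crossing edges} once one checks the two bookkeeping points: (i) that every $e^a_i$, including the origin $e^a_0$, is an edge between $L$ and $R$ (immediate from the explicit formulas in Construction~\ref{const:ladder}), and (ii) that $e^a_i$ indeed lies in the final graph of the $a$-th chain (immediate since $e^a_i$ is an edge of one of the copies $H^a_i$, and $H^a_i\subseteq\final{H^a_i}_H$ by Observation~\ref{obs:hom}). If one wants to be careful about whether Lemma~\ref{lem:final graph proper} applies—it requires the individual chains to be proper—one can either invoke Observation~\ref{obs:lad prop} (when $H$ is inseparable) or simply phrase the conclusion using $\bigcup_{i\in[m]}\final{H^b_i}_H$ in place of $\final{G^b}_H$, since condition~\ref{chaincoll: edges not early} is then read off verbatim from Lemma~\ref{lem:ladders crossing edges}. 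The only mildly delicate point worth a sentence is the boundary index $i=0$: here $e^a_0$ is the origin, an edge of $H^a_1$ rather than $H^a_0$ (which does not exist), and the formula $e^a_0 = \ell_0 r_{(4a-1)k_W}$ still exhibits it as an $L$–$R$ edge belonging to $\final{H^a_1}_H$, so Lemma~\ref{lem:ladders crossing edges} applies unchanged.
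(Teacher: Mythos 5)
Your argument is correct and matches the paper's: the corollary is stated there as an immediate consequence of Lemma \ref{lem:ladders crossing edges}, exactly because every $e^a_i$ (including the origin $e^a_0$) is an $L$--$R$ crossing edge lying in $\bigcup_{i\in[m]}\final{H^a_i}_H$, so the uniqueness in that lemma rules out $e^a_i\in\final{G^b}_H$ for $b\neq a$. Your additional remarks --- checking $e^a_0$ separately and noting that identifying $\final{G^b}_H$ with $\bigcup_{i\in[m]}\final{H^b_i}_H$ uses properness of the individual chains (Observation \ref{obs:lad prop} / Lemma \ref{lem:final graph proper}) --- are sound bookkeeping that the paper leaves implicit.
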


Finally, we give the following general lemma about ladder chains. 
    \begin{lem} \label{lem: ladder induced}
         Let $\cH=\cH_{\mathrm{Lad}}(H,U,e',f';A)$ be a collection of ladder chains as in Construction \ref{const:ladder} and let $\hat G=\bigcup_{a\in A,i\in [m]}\final{H_i^a}_H$. For any $j,j'\in \{0,\ldots,5m-1\}$ the following holds:
    \begin{enumerate}
        \item \label{item: ladd induce full} If there  is some $a\in A$ with $j'=j+4a$ then $\hat G[L_j \cup R_{j'}]=\final{H^a_{j+1}}_H$; 
        \item \label{item: lad induce empty} Otherwise, letting $G_1'=\hat G-\{\ell_{jk_U},r_{(j'+1)k_W}\}$ and $G_2'=\hat G-\{\ell_{(j+1)k_U},r_{j'k_W}\}$, we have that at least one of $E_{G_1'}(L_j,R_{j'})$ and $E_{G'_2}(L_j,R_{j'})$ is empty. In particular,
        $e_{\hat G}(L_j,R_{j'})<k$. 
    \end{enumerate}
    \end{lem}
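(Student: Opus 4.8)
\textbf{Proof plan for Lemma \ref{lem: ladder induced}.}

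The plan is to argue directly from the structure of Construction \ref{const:ladder}, using Lemma \ref{lem:ladders crossing edges} to reduce to a single chain. First recall that by construction, for $a\in A$ and $i\in[m]$ we have $V(H_i^a)=L_{i-1}\cup R_{i+4a-1}$, so $V(H_i^a)$ meets $L_j\cup R_{j'}$ in a ``full'' pattern $L_j\cup R_{j'}$ exactly when $i-1=j$ and $i+4a-1=j'$, i.e.\ when $j'=j+4a$ and $i=j+1$.

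For part \eqref{item: ladd induce full}, suppose such an $a$ exists. Since $\final{H_{j+1}^a}_H\subseteq\hat G$ and $V(\final{H_{j+1}^a}_H)=V(H_{j+1}^a)=L_j\cup R_{j'}$, one inclusion $\final{H_{j+1}^a}_H\subseteq\hat G[L_j\cup R_{j'}]$ is immediate. For the reverse inclusion I would take any edge $e$ of $\hat G$ with both endpoints in $L_j\cup R_{j'}$ and show $e\in E(\final{H_{j+1}^a}_H)$. If $e\subseteq L_j$ or $e\subseteq R_{j'}$, then $e$ lies in some $\final{H_{i'}^{a'}}_H$ whose vertex set contains both endpoints; since consecutive copies $L_{j-1},L_j,L_{j+1}$ share only a single vertex (and likewise on the right), the only copies whose vertex set contains two vertices of $L_j$ (resp.\ $R_{j'}$) are those with left block $L_j$ (resp.\ right block $R_{j'}$), and by Lemma \ref{lem:staying bipartite}-type reasoning one checks such an edge already lies within the single copy $H_{j+1}^a$ (here one should be slightly careful and instead invoke that each chain is simple and proper, so $\hat G[V(H_{j+1}^a)]=\final{H_{j+1}^a}_H$ by condition \ref{chain:proper2}). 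The cleanest route is actually: since $(H_i^a,e_i^a)_{i\in[m]}$ is a simple, proper chain (Observation \ref{obs:lad prop}), Lemma \ref{lem:inseparableimpliesproper}'s proof shows $\hat G^a[V(H_{j+1}^a)]=\final{H_{j+1}^a}_H$ where $\hat G^a=\bigcup_i\final{H_i^a}_H$; and any edge of $\hat G$ inside $L_j\cup R_{j'}$ is a crossing edge or lies in one block, and Lemma \ref{lem:ladders crossing edges} forces crossing edges to come from chain $a$, while in-block edges inside $L_j$ or $R_{j'}$ are forced into $\hat G^a$ as well since no other chain has a copy spanning two vertices of that block together with the rest. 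I would phrase this carefully to conclude $\hat G[L_j\cup R_{j'}]\subseteq\hat G^a[L_j\cup R_{j'}]=\final{H_{j+1}^a}_H$.

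For part \eqref{item: lad induce empty}, suppose no $a\in A$ has $j'=j+4a$. Any edge of $\hat G$ crossing between $L_j$ and $R_{j'}$ must, by Lemma \ref{lem:ladders crossing edges}, lie in $\bigcup_i\final{H_i^a}_H$ for some single $a\in A$, and in fact (tracing the proof of that lemma) only chains $a$ with $j'\in\{j+4a-2,\dots,j+4a+2\}$ can contribute, so $j+4a\in\{j'-2,\dots,j'+2\}\setminus\{j'\}$. For such an $a$ and such a copy $H_i^a$ with $V(H_i^a)=L_h\cup R_{h+4a}$, $h\in\{j-1,j,j+1\}$, $h+4a\in\{j'-1,j',j'+1\}$: the copy meets $L_j$ in a proper nonempty subset of $L_j$ and meets $R_{j'}$ in a proper nonempty subset of $R_{j'}$, and these subsets are initial/terminal segments $\{\ell_{jk_U},\dots\}$ or $\{\dots,\ell_{(j+1)k_U}\}$ and $\{r_{j'k_W},\dots\}$ or $\{\dots,r_{(j'+1)k_W}\}$ determined by whether $h$ equals $j-1$, $j$, or $j+1$ and whether $h+4a$ equals $j'-1$, $j'$, or $j'+1$. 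Checking the (finitely many) cases of $h+4a-j'\in\{-1,0,1\}$, I claim that either every contributing copy contains the endpoint $\ell_{jk_U}$ or the endpoint $r_{(j'+1)k_W}$ (so deleting these two vertices kills all crossing edges, giving $E_{G_1'}(L_j,R_{j'})=\varnothing$), or every contributing copy contains $\ell_{(j+1)k_U}$ or $r_{j'k_W}$ (giving $E_{G_2'}(L_j,R_{j'})=\varnothing$); which of the two happens depends only on whether $4a>j'-j$ or $4a<j'-j$. Finally, $e_{\hat G}(L_j,R_{j'})<k$ follows because all crossing edges lie in at most one copy of $H$ (by Lemma \ref{lem:ladders crossing edges}, they all come from one chain $a$, and within that chain the copies with a crossing edge in $L_j\times R_{j'}$ pairwise share at most one vertex, so at most one of them has such an edge as $H$ has $<k$ edges within any vertex subset of size $k$) — more directly, since all crossing edges lie in $\hat G^a[L_j\cup R_{j'}]$ and this graph has at most $e(H)<k$ edges when $L_j\cup R_{j'}$ is contained in a single copy, and when it is not, the crossing edges still lie in the union of a single chain's copies restricted to this set, which has fewer than $k$ edges by the segment analysis above.

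\textbf{Main obstacle.} The delicate point is the case analysis in part \eqref{item: lad induce empty}: one must verify uniformly over all contributing copies $H_i^a$ that they share one of the two ``extreme'' vertices of $L_j$ or of $R_{j'}$, so that a single pair of deletions removes all crossing edges. Getting the bookkeeping right for the segments (initial vs.\ terminal, and the role of the shared vertex $\ell_{jk_U}$ between $L_{j-1}$ and $L_j$) is where care is needed; everything else is a direct unwinding of Construction \ref{const:ladder} and Lemma \ref{lem:ladders crossing edges}.
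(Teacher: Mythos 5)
Your overall strategy (unwind Construction \ref{const:ladder}, reduce crossing edges to a single chain via Lemma \ref{lem:ladders crossing edges}, then do the segment analysis) is the same as the paper's, but part \eqref{item: ladd induce full} has a genuine gap. For the reverse inclusion $\hat G[L_j\cup R_{j'}]\subseteq \final{H^a_{j+1}}_H$ the hard case is an edge $g$ of $\hat G$ lying \emph{inside} $L_j$ (or inside $R_{j'}$). Your justification, that ``no other chain has a copy spanning two vertices of that block together with the rest,'' is false: for \emph{every} $b\in A$ the copy $H^b_{j+1}$ has vertex set $L_j\cup R_{j+4b}$, so every chain has a copy containing all of $L_j$, and $g$ may well be created only by the percolation of some chain $b\neq a$, i.e.\ $g\in E(\final{H^b_{j+1}}_H)$. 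Nothing in your argument shows that such a $g$ also lies in $\final{H^a_{j+1}}_H$; and your fallback, invoking ``$\hat G[V(H^a_{j+1})]=\final{H^a_{j+1}}_H$ by condition \ref{chain:proper2}'', is circular, since properness of the single chain $a$ only controls $\bigcup_i\final{H^a_i}_H$, not $\hat G$, and the displayed identity is exactly what part \eqref{item: ladd induce full} asserts. The missing step, which is the main content of the paper's proof here, is a transport-by-symmetry argument: all copies $H^b_{j+1}$ carry the same labelled copy of $H$ on the common block $L_j$, so by Observation \ref{obs:hom} the map $\phi^a_{j+1}\circ(\phi^b_{j+1})^{-1}$ is an isomorphism from $\final{H^b_{j+1}}_H$ to $\final{H^a_{j+1}}_H$ fixing $L_j$ pointwise, whence $g\in E(\final{H^a_{j+1}}_H)$; an analogous composition (with $\phi^b_{j'-4b+1}$) handles edges inside $R_{j'}$.

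In part \eqref{item: lad induce empty} your segment analysis is essentially the paper's, but two points need repair. First, your claim that which deletion pair works ``depends only on whether $4a>j'-j$ or $4a<j'-j$'' tacitly assumes all contributing chains lie on the same side of $j'-j$; since slopes are multiples of $4$ and contributing chains satisfy $4a\in\{j'-j-2,\ldots,j'-j+2\}$, when $j'-j\equiv 2\pmod 4$ chains with $4a=j'-j-2$ and $4a=j'-j+2$ can both contribute, and then no single uniform choice is forced by your argument (in that exceptional case each such chain contributes at most the one corner-to-corner edge, so the conclusions actually used later, in particular $e_{\hat G}(L_j,R_{j'})<k$, still hold, but this needs to be said). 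Second, your final count is wrong as written: crossing edges need not all come from one chain, and $e(H)<k$ is false in general (an inseparable $H$ has $e(H)\geq 3k/2$); the statement bounds only the \emph{crossing} edges, and the correct count, as in the paper, is that once every crossing edge meets a fixed pair $x\in\{\ell_{jk_U},\ell_{(j+1)k_U}\}$, $y\in\{r_{j'k_W},r_{(j'+1)k_W}\}$, their number is at most $d_{\hat G}(x,R_{j'}\setminus\{y\})+d_{\hat G}(y,L_j\setminus\{x\})+1\leq k-1$.
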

    \begin{proof}
        For part \eqref{item: ladd induce full}, note that if $j'=j+4a$, we have that $V(H^a_{j+1})=L_j\cup R_{j'}$ and so certainly $\hat G[L_j\cup R_{j'}]$ contains $\final{H^a_{j+1}}_H$. For the other direction, first note that any edge in $E_{\hat G}(L_j,R_{j'})$ comes from the simple chain $(H_i^a,e_i^a)_{i\in [m]}$ by Lemma \ref{lem:ladders crossing edges} and so is contained in $\final{H_{j+1}^a}_H$. If $g\in E(\hat G[L_j])$ with $g\in E(\final{H_{j+1}^b}_H)$ for  some $b\in A$ then the map $\phi^b_{j+1}:U\cup W \rightarrow L_j \cup R_{j+4b}$ induces a graph isomorphism from $H$ to  $H^b_{j+1}$. Using Observation \ref{obs:hom} we see that $\phi^b_{j+1}$ also induces a graph isomorphism from $\final{H}_H$ to $\final{H^b_{j+1}}_H$  that maps an edge of $\final{H}_H$ on the vertex set $U$ to $g$. But then $\phi^a_{j+1}\circ (\phi^b_{j+1})^{-1}$ is again a graph isomorphism from $\final{H^b_{j+1}}_H$ to $\final{H^a_{j+1}}_H$ and thus $g\in \final{H^a_{j+1}}_H$  also. Similarly if $g\in E(\hat G[R_{j'}])$ with $g\in E(\final{H_{j'-4b+1}^b})$ for some $b\in A$, then we see that $g\in E(\final{H_{j+1}^a}_H)$  by considering the graph isomorphism $\phi_{j+1}^a\circ (\phi_{j'-4b+1}^b)^{-1}: \final{H_{j'-4b+1}^b} \rightarrow \final{H_{j+1}^a}_H$. 

        For part \eqref{item: lad induce empty}, suppose that $j'\neq j+4a$ for any $a\in A$ and consider   $g=\{\ell,r\}\in E(\hat G)$ with $\ell\in L_j\setminus \{\ell_{jk_U}\}$ and $r\in R_{j'}\setminus \{r_{(j'+1)k_W}\}$. We will show that in such a case we either have that $\ell=\ell_{(j+1)k_U}$ or $r=r_{j'k_W}$ (or both) which will establish the first part of \eqref{item: lad induce empty}. As $g\in E(\hat G)$, we have that $g\in E(\final{H_i^a}_H)$ for some $a\in A$ and $i\in [m]$. Fixing such an $a$ and $i$, we therefore have that $\ell\in  L_{i-1}\cap L_j$ and $r\in R_{i+4a-1}\cap R_{j'}$. This in turn implies that $j\in \{i-2,i-1\}$, ruling out $j=i$ here as otherwise $\ell=\ell_{jk_U}$. Similarly, $j'\in \{i+4a-1,i+4a\}$. If $j=i-2$ then $\ell=\ell_{(j+1)k_U}$ and we are done. Similarly, we are done if $j'=i+4a$ in which case $r=r_{j'k_W}$. The only remaining case is that $(j,j')=(i-1,i+4a-1)$ which is ruled out as we are not in case \eqref{item: ladd induce full}. The `in particular' statement of \eqref{item: lad induce empty} follows because the number of edges in $\hat G$ between $L_j$ and $R_{j'}$ is at most \begin{equation} \label{eq:crossing count}
        d_{\hat G}(x,R_{j'}\setminus \{y\})+d_{\hat G}(y,L_j\setminus \{x\})+1\leq |L_j|+|R_{j'}|-1=k-1   
        \end{equation} for some choice of $(x,y)\in \{(\ell_{jk_U},r_{(j'+1)k_W}),(\ell_{(j+1)k_U},r_{j'k_W})\}$ (the $+1$ in \eqref{eq:crossing count}  coming from the possible edge $xy$). 
    \end{proof}

\subsection{Ladder chains for $(\ceil{k/2},1)$-inseparable $k$-vertex  graphs} \label{sec:lad robust conn}

Recall the definition of \emph{$(\ceil{k/2},1)$-inseparable} graphs (Definition \ref{def:robust conn}). In this section, we use Construction \ref{const:ladder} to show that any $(\ceil{k/2},1)$-inseparable $k$-vertex graph has a quadratic maximum running time. As shown in Lemma \ref{lem:min deg implies robust conn}, this implies a quadratic running time for $k$-vertex graphs $H$ with minimum degree $\delta(H)>3k/4$ (Theorem \ref{thm;dense quad} part \eqref{thm:dense}). 

\begin{thm} \label{thm: robust conn}
    Let $H$ be a $(\ceil{k/2},1)$-inseparable $k$-vertex graph with $k\geq 6$. Then $M_H(n)=\Omega(n^2)$. 
\end{thm}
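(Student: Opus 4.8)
The plan is to apply the ladder chain machinery developed above. Fix a $(\lceil k/2\rceil,1)$-inseparable graph $H$ on $k\geq 6$ vertices. By Observation~\ref{obs:robust conn min deg} we have $\delta(H)\geq k/2+2$, so in particular $H$ is $4$-connected and hence $(2,1)$-inseparable. Take a balanced partition $V(H)=U\cup W$ with $|U|=\lfloor k/2\rfloor$, $|W|=\lceil k/2\rceil$, and fix a pair of vertex-disjoint edges $e',f'\in E_H(U,W)$ crossing the partition: such edges exist since the minimum degree bound forces many $U$–$W$ edges and, by a short counting argument, two that are vertex-disjoint. Now let $N$ be a large parameter, set $A:=[N]$ (so the slopes are $1,2,\dots,N$ and $m=\max A=N$), and consider the collection of ladder chains $\cH=\cH_{\mathrm{Lad}}(H,U,e',f';A)$ from Construction~\ref{const:ladder}. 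By Observation~\ref{obs:lad prop} each individual chain is proper, and by Corollary~\ref{cor: ladder I II} the collection satisfies condition~\ref{chaincoll: edges not early} of being a proper collection (Definition~\ref{def:proper chaincoll}).

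The crux is verifying condition~\ref{chaincoll:H-e new 1}: for every $e\in E(H)$ and every copy $F$ of $H-e$ inside $\hat G:=\bigcup_{a\in A,\,i\in[m]}\langle H^a_i\rangle_H$, there is a unique $b\in A$ with $F\subseteq\langle G^b\rangle_H$, together with the `furthermore' clause. Here is where $(\lceil k/2\rceil,1)$-inseparability is used. Write $V(F)=(V(F)\cap L)\cup(V(F)\cap R)$. By Lemma~\ref{lem: ladder induced}\eqref{item: lad induce empty}, whenever $(j,j')$ is \emph{not} of the form $j'=j+4a$, the pair $(L_j,R_{j'})$ carries fewer than $k$ crossing edges in $\hat G$, and moreover one can delete a single vertex on the left side (or a single vertex on the right side) to kill all crossing edges between $L_j$ and $R_{j'}$. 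Using the structure of $L=L_0\cup\cdots\cup L_{5m-1}$ as a path-like chain of blocks glued at single vertices $\ell_{jk_U}$ (and similarly for $R$), I claim that if $V(F)$ is not contained in a single $L_j\cup R_{j'}$ with $j'=j+4a$, then $F$ can be disconnected by removing one edge and at most $\lceil k/2\rceil$ vertices — contradicting $(\lceil k/2\rceil,1)$-inseparability. Concretely: the projections $V(F)\cap L$ and $V(F)\cap R$ each either fit inside one block $L_j$, resp.\ one block $R_{j'}$, or they spill across a glue vertex; if the $L$-side spills across the glue vertex $\ell_{jk_U}$, deleting that single vertex separates the part of $F$ in $\bigcup_{h\le j-1}L_h$ from the part in $\bigcup_{h\ge j}L_h$ except for edges routed through $R$, but Lemma~\ref{lem: ladder induced}\eqref{item: lad induce empty} then lets one also delete few vertices to cut those — keeping the total vertex deletion below $\lceil k/2\rceil$ by a careful count (this is where $|U|,|W|\approx k/2$ and $k\ge 6$ enter). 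Once $V(F)\subseteq L_j\cup R_{j'}$ with $j'=j+4a$, Lemma~\ref{lem: ladder induced}\eqref{item: ladd induce full} gives $\hat G[L_j\cup R_{j'}]=\langle H^a_{j+1}\rangle_H$, pinning down $b=a$ uniquely and forcing $F\subseteq\langle H^a_{j+1}\rangle_H$; the `furthermore' clause follows exactly as in the proof of Lemma~\ref{lem:inseparableimpliesproper} since $(\bigcup_{(c,i)\ne(a,j+1)}\langle H^c_i\rangle_H\cup H^a_{j+1})-\{e^a_j,e^a_{j+1}\}$ has only $e(H)-2$ edges inside $V(H^a_{j+1})$.

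Granting that $\cH$ is a proper collection, Lemma~\ref{lem:linking chains} produces a single proper $H$-chain of length at least $\tau|A|=m\cdot N=N^2$ on at most $2v(H)^2|V|\le 2k^2\cdot 5mk=10k^3N$ vertices (for $N$ large). Feeding this into Lemma~\ref{lem:chain_runningtime} (running times for proper chains) yields $M_H(10k^3N)\ge N^2$, i.e.\ for $n$ of the form $10k^3N$ we get $M_H(n)\ge (n/10k^3)^2=\Omega(n^2)$; a standard monotonicity/padding argument (adding isolated vertices, which cannot decrease the maximum running time) extends this to all $n$. Since the trivial upper bound is $M_H(n)=O(n^2)$, this gives $M_H(n)=\Theta(n^2)$ and in particular $M_H(n)=\Omega(n^2)$, as claimed.

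The main obstacle I anticipate is the connectivity bookkeeping in the verification of condition~\ref{chaincoll:H-e new 1}: one must show that the only way a copy of $H-e$ can sit inside $\hat G$ is block-aligned, and this requires carefully arguing that any ``spilled'' copy admits a separator consisting of one edge plus at most $\lceil k/2\rceil$ vertices, using both parts of Lemma~\ref{lem: ladder induced} and the fact that $|L_j|,|R_{j'}|$ are each roughly $k/2$. The threshold $k\geq 6$ and the precise inequalities (analogous to those in Lemma~\ref{lem:min deg implies robust conn} and Remark~\ref{rem:rob conn equiv}) will need to be checked to make the separator small enough; everything else is a routine application of the framework already set up.
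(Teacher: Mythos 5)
Your high-level architecture is the same as the paper's (ladder chains with $A=[m]$, properness of the collection, then Lemma \ref{lem:linking chains} and Lemma \ref{lem:chain_runningtime}), but two steps in your verification of condition \ref{chaincoll:H-e new 1} have genuine problems. The clearest one is your justification of the `furthermore' clause: you claim that $\bigl(\bigcup_{(c,i)\neq(a,j+1)}\final{H^c_i}_H\cup H^a_{j+1}\bigr)-\{e^a_j,e^a_{j+1}\}$ has only $e(H)-2$ edges inside $V(H^a_{j+1})$, importing the edge-count from Lemma \ref{lem:inseparableimpliesproper}. That count is specific to \emph{simple} chains and fails here: for every other slope $c$ the copy $H^c_{j+1}$ has the same left block $L_j$, so the union already contains a full copy of $H[U]$ (indeed of $\final{H}_H[U]$) inside $L_j\subseteq V(H^a_{j+1})$, and likewise a full copy of (the closure of) $H[W]$ inside $R_{j+4a}$; only the crossing edges are restricted to those of $H^a_{j+1}$ minus the two deleted ones. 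So the graph has at least $e(H)-2$ edges on $V(H^a_{j+1})$, and strictly more whenever the relevant closures add edges inside the blocks, so the pigeonhole argument does not transfer. The paper closes this gap by choosing $U$ to \emph{minimize} $e_H(U,W)$ over balanced bipartitions: any copy of $H$ on $L_{j_L}\cup R_{j_R}$ then has at least $e_H(U,W)$ crossing edges, so a copy of $H-e$ needs at least $e_H(U,W)-1$ of them, while only $e_H(U,W)-2$ survive the deletion of $e^a_j,e^a_{j+1}$. Your arbitrary balanced partition does not support any such argument, so this step is a real hole, not just a missing detail.

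The second issue is the block-alignment step, which is the heart of the proof and which you only gesture at. Your proposed separator (delete the glue vertex $\ell_{jk_U}$ and then ``few'' more vertices to cut connections routed through $R$) is not obviously within the budget of one edge plus $\lceil k/2\rceil$ vertices: a priori you have no control on $|V(F)\cap R|$, which may be as large as $\lceil k/2\rceil$ or more and spread over two $R$-blocks, and cutting all $R$-routed paths can cost far more than the glue vertex; moreover this sketch never yields the needed conclusions that $V(F)\cap L$ is exactly one block, that $V(F)\cap R$ is exactly one block, and that $j_R=j_L+4b$. The paper's route is different and worth internalising: it first shows $|V(F)\cap L|\le\lfloor k/2\rfloor$ purely inside $L$ using Remark \ref{rem:rob conn equiv} (every induced subgraph of $H-e$ on at least $\lfloor k/2\rfloor$ vertices is connected, so spilling over a glue vertex immediately contradicts inseparability with no edge deletion needed), then pins each side to a single block via Lemma \ref{lem:ladders crossing edges}, and finally forces $j_R=j_L+4b$ by showing $F$ has at least $3(\lceil k/2\rceil-1)+1\ge k$ crossing edges (via 2-edge-connectivity of balanced induced subgraphs of $H$; this is exactly where $k\ge 6$ enters), so that Lemma \ref{lem: ladder induced} applies. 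As written, your proposal would need both the minimizing choice of $U$ and this quantitative crossing-edge count to be repaired before it constitutes a proof.
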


\begin{proof}
Let $k=v(H)\geq 6$ and let $U\subset V(H)$ be a vertex subset such that $|U|=\floor{k/2}$ and $e_H(U,W)$ is minimised, where $W:=V(H)\setminus U$. Moreover, let
 $e'$ and $f'$ be some pair of non-incident edges in $E_H(U,W)$. To see these exist, note that one can take some  arbitrary $u\in U$ and $ w',w\in W$ and use that $H[U\setminus \{u\}\cup \{w\}]$ is 2-edge-connected to find some edge $e':=u'w$ and then again use that $H[U\setminus \{u'\}\cup \{w'\}]$ is 2-edge-connected to find an edge $f'=u''w'$ disjoint from $e$.

 Next, for $n$ sufficiently large, we let $m:=\floor{n/10k^3}$ and take \[\cH=\cH_{\mathrm{Lad}}(H,U,e',f';[m])=\{(H_i^a,e^a_i)_{i\in [m]}:a\in [m]\}\]
to be the collection of ladder chains on a vertex set $V$ as defined in Construction \ref{const:ladder} with $A:=[m]$ (with other notation as defined there). 
We further define \[L^{\mathrm{int}}:=L\setminus \{\ell_{jk_U}:j=0,\ldots,5m\} \quad \mbox{and} \quad R^{\mathrm{int}}:=R\setminus \{r_{jk_W}:j=0,\ldots,5m\}. 
\]
As $H$ is $(2,1)$-inseparable, each chain in $\cH$ is proper (Observation \ref{obs:lad prop}) and we will show $\cH$ is a proper collection of $H$-chains and so by Lemma \ref{lem:linking chains}, we will get a proper $H$-chain of length at least $m^2$, on at most $2k^2|V|\leq n$ vertices. Hence by Lemma \ref{lem:chain_runningtime} we have that $M_H(n)\geq m^2=\Omega(n^2)$, as required.

Thus it remains to prove that $\cH$ is indeed a proper collection of $H$-chains and by Corollary \ref{cor: ladder I II}, we only need to establish condition \ref{chaincoll:H-e new 1} of Definition \ref{def:proper chaincoll}. To this end, we let $G^a:=\bigcup_{i\in [m]}H_i^a$ for $a\in [m]$ and we fix some $e\in E(H)$ and a copy $F$ of $H-e$ in \[\hat G:= \bigcup_{a\in [m]}\final{G^a}_H=\bigcup_{a\in [m],i\in[m]}\final{H^a_i}_H.\]
 We also define $U':=V(F)\cap L$ and $W':= V(F)\cap R$. 
 We have to show that there is a unique $b\in A=[m]$ such that $F\subseteq \final{G^b}_H$.  
 We   proceed  in stages \ref{S1}-\ref{S5}, with the outcomes of each stage highlighted before being followed by their respective proofs.

\begin{enumerate}[\textbf{(S1)}]
    \item \label{S1} We have that $|U'| = \floor*{{k/2}}$ and $|W'| = \ceil*{k/2}$. \end{enumerate}
Suppose for a contradiction that $|U'| > \floor{k/2}$.
Let $j\in\{0,\ldots,5m-1\}$ be the minimum  value such that $U' \cap  (L_{j}\setminus \{\ell_{(j+1)k_U}\}) \neq \varnothing$. As $|U'|> |U|=|L_{j}|$, we have that $U'\setminus L_{j}\neq \varnothing$. But then $F[U'\setminus \{\ell_{(j+1)k_U}\}]$ is disconnected, which contradicts that $H$ is $(\ceil{k/2},1)$-inseparable   as  $|U'\setminus \{\ell_{(j+1)k_U}\}|\geq \floor{k/2}$ (see Remark \ref{rem:rob conn equiv}). Thus $|U'|\leq \floor{k/2}$ and an analogous argument gives $|W'|\leq \ceil{k/2}$. As $|U'|+|W'|=k$, we get \ref{S1}. In particular,  $F[U']$ and $F[W']$ are connected by Remark \ref{rem:rob conn equiv}.

\begin{enumerate}[\textbf{(S2)}]
    \item \label{S2} We have that either $U'\cap L^{\mathrm{int}}\neq \emptyset$ or $W'\cap R^{\mathrm{int}}\neq \emptyset$.\end{enumerate}

Suppose for a contradiction we have that 
$U'\subseteq \{\ell_{jk_U}:j=0,\ldots,5m\}$ and $W'\subseteq \{r_{jk_W}:j=0,\ldots,5m\}$. Then as $F[U']$ and $F[W']$ are both connected, we must have that both $F[U']$ and $F[W']$ are paths. However, we have that there is a single edge that can be added to $F$ to obtain some copy $H'$ of $H$ and  so we have that either $F[U']=H'[U']$ or $F[W']=H'[W']$ (or both). Therefore at least one of $F[U']$ and $F[W']$ is 2-edge-connected, contradicting that they are both paths.

\begin{enumerate}[\textbf{(S3)}]
    \item \label{S3} 
If  $U'\cap L^{\mathrm{int}}\neq \emptyset$ then there exists $j_R\in \{0,\ldots,5m-1\}$ such that  $W'=R_{j_R}$. 
\end{enumerate}

Assuming that $U'\cap L^{\mathrm{int}}\neq \emptyset$, there is some $j_*\in \{0,\ldots,5m-1\}$ such $U'\cap L^{\mathrm{int}}_{j_*}\neq \emptyset $ where $ L^{\mathrm{int}}_{j_*}:=L_{j_*}\setminus \{\ell_{j_*k_U},\ell_{(j_*+1)k_U}\}$. Now for any pair of vertices $r,r'\in W'$, we have that $X=U'-\{\ell_{j_*k_U},\ell_{(j_*+1)k_U}\}+\{r,r'\}$ has size at least $
\floor{k/2}$ and hence $F[X]$ is connected, and in particular at least one of $r$ and $r'$ is adjacent in $F\subset \hat G$ to a vertex in $L^{\mathrm{int}}_{j_*}$. 
Therefore at least one of $r$ and $r'$ lies in $\mathcal{R}:=\{R_{j'}:j'=j_*+4a \mbox{ for some } a\in A=[m]\}$ by Lemma \ref{lem:ladders crossing edges} as the sets in $\cR$ are the only vertices that have neighbours in $L^{\mathrm{int}}_{j_*}$ in $\hat G$. Repeating this for different choices of $\{r,r'\}\subseteq R$, we get that there is some $W''\subseteq W'$ with $|W''|= |W'|-1$ such that $W''\subset \bigcup \cR$. In fact, we cannot have vertices $u,v\in W''$ in distinct sets in $\cR$ as $F[W']\subset \hat G$ is connected and $W'$  is obtained from $W''$ be adding a single vertex of $R$ (which cannot have neighbours in two distinct sets in $\cR$). Therefore, we have that $W''\subset R_{j_R}$ for some $j_R\in \{0,\ldots, 5m-1\}$. Now suppose for a contradiction that the vertex $r_{s}\in W'\setminus W''$ is such that $r_s\notin R_{j_R}$. If $s<j_Rk_W$ then, fixing $y$ to be some vertex in $L^{\mathrm{int}}_{j_*}$ we have that $Y=W'-\{r_{j_Rk_W}\}+\{y\}$ has size  at least $\floor{k/2}$ and thus $F[Y]$ is connected and $r_{s}y\in E(\hat G)$ and again by Lemma \ref{lem:ladders crossing edges}, we get that $r_s\in \cR$. As $r_s\notin R_{j_R}$, we get a contradiction as $F[W']$ is connected. The case that $s>(j_R+1)k_W$ is analogous.

\begin{enumerate}[\textbf{(S3')}]
    \item \label{S3'} 
If  $W'\cap R^{\mathrm{int}}\neq \emptyset$ then there exists $j_L\in \{0,\ldots,5m-1\}$ such that  $U'=L_{j_L}$. 
\end{enumerate}

The proof of this is identical to the proof of \ref{S3} with the sides reversed.

\begin{enumerate}[\textbf{(S4)}]
    \item \label{S4} We have that $V(F)=V(H^b_{i'})$ for some unique $i'\in [m]$ and $b\in {[m]}$.  \end{enumerate}

Firstly, we have  $j_L,j_R\in \{0,\ldots,5m-1\}$ such that  $U' = L_{j_L}$ and $W' = R_{j_R}$. This follows from appealing to \ref{S2} and then either \ref{S3} followed by \ref{S3'} if we are in the first case or vice versa if we are in the second. 
We will appeal to Lemma \ref{lem:ladders crossing edges}  and show that $e_{\hat G}(U',W')\geq k$ and so $j_R=j_L+4b$ for some $b\in [m]$ and $U'\cup W'=L_{j_L}\cup R_{j_R}=V(H_{i'}^b)$ with $i'=j_L+1$.  To count edges in $E_{\hat G}(U', W')$, note that $d_{F}(w,U')\geq 1$ for all
vertices $w\in W'$ as $F[U'\cup \{w\}]$ is connected. In fact, for all
but at most one $w\in W'$, we have $d_{F}(w,U')\geq 3$. Indeed, let $g$ be the edge corresponding to $e$ in the copy $F$ of $H-e$  and suppose 
$w$ is not an endpoint of  $g$ with the other  vertex of $g$ in $U'$  (which  happens  for all but at most one $w$). Then letting $H':=F+g$, we have that  $H'[U'\cup \{w\}]$ is 2-edge-connected and so $w$ has at least two neighbours  in $U'$ (in both $H'$ and $F$). Moreover, taking $u$ to be one of these neighbours, we have that $H'[(U'\setminus \{u\})\cup \{w\}]$
 is again 2-edge-connected and so $w$ has at least two $F$-neighbours in $U'$ \emph{other than} $u$. Thus $e_{\hat G}(U',W')\geq e_{F}(U',W')\geq 3\left(\ceil*{\tfrac{k}{2}}-1\right)+1\geq k$ as required, using here that $k\geq 6$.

\begin{enumerate}[\textbf{(S5)}]
    \item \label{S5} We have that $F\subseteq \final{H^b_{i'}}_H\subseteq \final{G^b}_H$. \end{enumerate}
This final stage follows from Lemma \ref{lem:ladders crossing edges} part \eqref{item: ladd induce full}.

\vspace{2mm}
This completes the proof of the first part of condition \ref{chaincoll:H-e new 1} of Definition \ref{def:proper chaincoll}. To finish the proof, we need to show the `furthermore' part of condition \ref{chaincoll:H-e new 1}. As $\hat G[V(F)]=\final{H_{i'}^b}_H$ by Lemma \ref{lem:ladders crossing edges} \eqref{item: ladd induce full}, we have to show that $F\nsubseteq \final{H_{i'}^b}_H-\{e_{i'-1}^b,e_i^b\}$. If this was not the case, then we would have that $e_F(U',W')\leq e_H(U,W)-2$  and  we could add some edge $h$ to $F$ to get some copy $H'$ of $H$ such that $e_{H'}(U',W')<e_{H}(U,W)$ contradicting our choice of $U$ which minimised the number of crossing edges between $U$ and $W$. This completes the proof.
\end{proof}

\subsection{Ladder chains for random graphs} \label{sec: lad random}

In this section we prove the second part of Theorem \ref{thm:random}, showing that if $p$ is above the connectivity threshold $\log k/k$, then a.a.s.\ the maximum running time $M_H(n)$ for the binomial random graph $H=G(k,p)$ is quadratic in $n$. 

\begin{proof}[Proof of Theorem \ref{thm:random} part \eqref{random 1}]
    Fix  $0<\eps<1/150$, $k$ sufficiently large and some partition of $[k]$ as $[k]=U\cup W$ with $|U|=\floor{k/2}$ and $|W|=\ceil{k/2}$. Let $p=\omega(\log k/k)$ and let $H=G(k,p)$ on the vertex set $[k]$. We have that a.a.s.\ $H$ satisfies all the properties of Lemma \ref{lem:G(k,p) properties}. In particular, if $p\geq 4/5$, then by Lemma \ref{lem:G(k,p) properties} \eqref{G(k,p): degrees U W}, we have that a.a.s.\  $\delta(H)> 3k/4$ and $M_H(n)=\Omega(n^2)$ by Theorem \ref{thm;dense quad} part \eqref{thm:dense}. Thus, we can assume that $p<4/5$ and  so a.a.s.\ $H$ is self-stable due to Fact \ref{fact: G(k,p) self-stable}. 
So let us fix an instance of $H$ satisfying all the properties of  Lemma \ref{lem:G(k,p) properties}  as well as Fact \ref{fact: G(k,p) connected} and Fact \ref{fact: G(k,p) self-stable}  which  a.a.s.\ occur. With such an instance $H$, we will prove that $M_H(n)=\Omega(n^2)$, thus proving the theorem. 

With $U$ and $W$ as above, using the assumed outcome of Lemma \ref{lem:G(k,p) properties} \eqref{G(k,p): degrees U W} in $H$, we have that there is some pair of disjoint edges $e',f'\in E_H(U,W)$. Fixing such a pair of edges arbitrarily, for $n$ large, we let $m=\floor{n/10k^3}$ and 
    take \[\cH=\cH_{\mathrm{Lad}}(H,U,e',f';[m])=\{(H_i^a,e^a_i)_{i\in [m]}:a\in [m]\}\]
to be the collection of ladder chains on a vertex set $V$ as defined in Construction \ref{const:ladder} with $A:=[m]$. As $H$ is $(2,1)$-inseparable (Fact \ref{fact: G(k,p) connected}), by  Observation \ref{obs:lad prop}, Lemma \ref{lem:linking chains} and Lemma \ref{lem:chain_runningtime}, it suffices to show that $\cH$ is a proper collection of chains in order to show that $M_H(n)=\Omega(n^2)$. 

By Corollary \ref{cor: ladder I II}, we have that condition  \ref{chaincoll: edges not early} of the collection being proper (Definition \ref{def:proper chaincoll}) is satisfied. Letting $G^a=\bigcup_{i\in [m]}H^a_i$, we have that as $H$ is self-stable (Fact \ref{fact: G(k,p) self-stable}), in order to show that $\cH$ is proper, we can show condition \ref{chaincoll:proper 2 star} in Remark \ref{rem:self stable coll} which states that if $e\in E(H)$ and $F$ is a copy of $H-e$ in $\hat G:=\bigcup_{a\in [m]}G^a$, then there is a unique $b\in A=[m]$ such that $F\subseteq G^b$. To this end, let us fix some $e\in E(H)$ and $F$ some copy of $H-e$ in $\hat G$. Take $U':=V(F)\cap L$ and $W':=V(F)\cap R$. As in the proof of Theorem \ref{thm: robust conn}, we  will proceed  in stages  with the outcomes of each stage highlighted before being followed by their respective proofs. For this proof we have   four stages \ref{T1}-\ref{T4}.

\begin{enumerate}[\textbf{(T1)}]
    \item \label{T1} There are  $j_L, j_R\in \{0,\ldots, 5m-1\}$ such that $Z:=V(F)\setminus (L_{j_L}\cup R_{j_R})$ has size $|Z|\leq 4\eps k$. \end{enumerate}

Assume first that $|U'|\geq 2\eps k$. For $j\in \{0,\ldots, 5m-1\}$, let $L_j^-= L_{0}\cup \cdots \cup L_{j}$ and $L_j^+=L_j\cup \cdots \cup L_{5m-1}$ and let $L^-_{-1},L^+_{5m}:=\emptyset$.   Define  $j_L$ to be the minimum index $j\in\{0,\ldots,5m-1\}$ such that $|U'\cap L_j^-|\geq \eps k+1$. Then by definition we have that $|U'\cap L^-_{j_L-1}|\leq \eps k$. Moreover $|U'\cap (L^+_{j_L+1}\setminus\{\ell_{(j_L+1)k_U}\})|\leq \eps k$. Indeed, this is immediate if $j_L=5m-1$ and if not then this follows from (the assumed conclusion of) Lemma \ref{lem:G(k,p) properties} \eqref{G(k,p):hole} in $H$ as there is no edge in $F$ between $U'\cap (L^+_{j_L+1}\setminus \{\ell_{(j_L+1)k_U}\})$ and $U'\cap (L^-_{j_L}\setminus\{\ell_{(j_L+1)k_U}\})$ as there is no edge in $\hat G$ between these sets. As the latter set has size at least $\eps k$, we get that the former set must have size at most $\eps k$ and so we have that $U'\setminus L_{j_L}$ has size at most $2\eps k$. In particular, we have that $|U'|\leq |L_{j_{L}}|+2 \eps k\leq (1/2+3\eps) k$ and so $|W'|\geq 2 \eps k$ and running an analogous argument on the right side gives some $j_R\in \{0,\ldots, 5m-1\}$ with $|W'\setminus R_{j_R}|\leq 2\eps k$. This proves \ref{T1}. Note that we began the proof by assuming that $|U'|\geq 2\eps k$ but if this was not the case then we would certainly have $|W'|\geq 2\eps k$ and we could run the same proof by looking at the right side first.

\begin{enumerate}[\textbf{(T2)}]
    \item \label{T2} We have that  $U'=L_{j_L}$ and $W'=R_{j_R}$.\end{enumerate}

We will show that $Z=\emptyset$ from which the statement follows. For  $z\in V(F)$, we have that \begin{equation} \label{eq: degs lower random}
d_F(z)\geq d_H(z)-1\geq (1-3\eps)kp \end{equation} by the assumed conclusion of Lemma \ref{lem:G(k,p) properties} \eqref{G(k,p): degrees U W}. Moreover,  if $z\in Z$ then \begin{equation} \label{eq: degs upper  random}d_F(z,L_{j_L}\cup R_{j_R})\leq d_{\hat G}(z,L_{j_L}\cup R_{j_R})\leq 1 + (1/2+\eps)kp\leq (1/2+2\eps)kp.\end{equation} Indeed, for example if $z\in Z\cap U'$, then $z$ has at most one neighbour in $L_{j_L}$ and at most $(1/2+\eps)kp$ neighbours in $R_{j_R}$ because of Lemma \ref{lem:G(k,p) properties} \eqref{G(k,p): degrees U W} and the fact that edges from $z$ to $R_{j_R}$ in $\hat G$ correspond to edges from a vertex of $U$ to $W$ in some copy $H_{j'}^{a'}$ of $H$. Combining \eqref{eq: degs lower random} and \eqref{eq: degs upper  random}, for all $z\in Z$ we get that $d_F(z,Z)\geq (1/2-5\eps)kp\geq kp/4$ and so $e(F[Z])\geq |Z|kp/8$. As $|Z|\leq 4\eps k$, we must have that $Z=\emptyset$ as otherwise we violate the assumed conclusion of Lemma \ref{lem:G(k,p) properties} \eqref{G(k,p): dense small} in $H$.

\begin{enumerate}[\textbf{(T3)}]
    \item \label{T3} 
There exists a unique $b\in [m]$ such that $j_R=j_L+4b$.
\end{enumerate}
As in \eqref{eq: degs lower random}, we have that every vertex $v\in V(F)$ has $d_F(v)\geq (1-3\eps) kp$ and if $v\in L_{j_L}$ then $d_F(v,L_{j_L})\leq d_{\hat G}(v,L_{j_L})\leq (1/2+\eps) kp$ by Lemma \ref{lem:G(k,p) properties} \eqref{G(k,p): degrees U W} and the fact that $\hat G[L_{j_L}]$ is a copy of $H[U]$. Thus for each $v\in L_{j_L}$, we have that $d_{\hat G}(v,R_{j_R})\geq d_{F}(v,R_{j_R})\geq (1/2-4\eps )kp\geq kp/4$ and so $e_{\hat G}(L_{j_L},R_{j_R)})\geq |L_{j_L}|kp/4\geq k^2p/8>k$. By Lemma \ref{lem: ladder induced}, we must indeed have that $j_R=j_L+4b$ for some unique $b\in [m]$.

\begin{enumerate}[\textbf{(T4)}]
    \item \label{T4} We have that $F\subseteq H^b_{j}\subseteq G^b$ with $j:=j_L+1$.  \end{enumerate}

Step \ref{T3} shows that $V(F)\subseteq V(H^b_j)$ and as $\hat G[H_j^b]=H_j^b$  by Lemma \ref{lem: ladder induced} \eqref{item: ladd induce full}, we are done. This concludes the proof of property \ref{chaincoll:proper 2 star} in Remark \ref{rem:self stable coll} for $F$ and thus the proof that $\cH$ is proper. 
\end{proof}

\subsection{Ladder chains for the cube} \label{sec: lad cube}
Our final application of Construction \ref{const:ladder} is to show Theorem \ref{thm:cube} giving a lower bound on the running time for the 3-dimensional cube graph $Q_3$. Unlike the previous two chain constructions, the set $A$ that we use to define our collection of ladder chains is not dense. Indeed, we will use a \emph{Sidon set}, appealing to its defining properties to deduce that the collection of chains  is proper. Recall from Section \ref{sec:additive} that a subset $S\subset \mathbb{Z}$ is called a \emph{Sidon set} if the only solutions to the equation $s_1+s_2=s_3+s_4$ with $s_i\in S$ for $i\in [4]$, are the trivial solutions with $\{s_1,s_2\}=\{s_3,s_4\}$. Recall also that Sidon sets $S\subseteq [m]$ of size $m^{1/2}(1-o(1))$ exist \cite{singer1938theorem}. We now proceed with the proof of Theorem \ref{thm:cube}.

\begin{proof}[Proof of Theorem \ref{thm:cube}]
    We recall that the cube $H=Q_3$ is $(1,1,1)$-inseparable, using Lemma \ref{lem:bip dense insep} and the fact that that each vertex of $H$ has degree 3 while the parts in the bipartition of $H$ have size 4. We will also use that $H=Q_3$ is self-stable (Observation \ref{obs:self-stable}). We fix $U:=\{u_0,\ldots,u_3\}\subset V(H)$ and $W=V(H)\setminus U:=\{w_0,\ldots,w_3\}$ such that both $U$ and $W$ induce copies of $C_4$ in $H$, $u_iw_i\in E(H)$ for $i=0,\ldots, 3$ and $u_0u_3,w_0w_3\notin E(H)$. We also define $e':=u_0w_0$ and $f':=u_3w_3$ and  for $n$ sufficiently large, we let $m:=\floor{n/150}$ and take $A\subseteq [m]$ to be a maximum sized Sidon set in $[m]$. Take the collection of ladder chains 
       \[\cH=\cH_{\mathrm{Lad}}(H,U,e',f';A)=\{(H_i^a,e^a_i)_{i\in [m]}:a\in A\}\]
on a vertex set $V$ as defined in Construction \ref{const:ladder} (with other notation also adopted from there), noting that by Observation \ref{obs:lad prop} each chain in $\cH$ is proper. Note that $G:=\bigcup_{a\in A,i\in [m]}H_i^a$ is bipartite on $V$ with partite sets  
\[X_1:=\{\ell_s:s\equiv 0 \mbox{ mod } 3\}\cup \{r_t:t\not\equiv 0 \mbox{ mod } 3\} \mbox{ and }  X_2:=\{\ell_t:t\not\equiv 0 \mbox{ mod }3\}\cup \{r_s:s\equiv 0 \mbox{ mod } 3\}.\]
Indeed, one can check that with $U$ and $W$ defined (and labelled) as above, any edge in one of the $H_i^a$ given by Construction \ref{const:ladder} goes between a vertex in $X_1$ and one in $X_2$. In particular note that for any  $s,t\in \{0,\ldots, 15m\}$ we have that 
\begin{equation} \label{eq:cubenbrs}
    N_G(\ell_s)\cap R\subset \{r_{s+12a}:a\in A, s+12a\leq 15m\}  \mbox{ and }   N_G(r_t)\cap L\subset \{\ell_{t-12a}:a\in A, 12a\leq t+1\}.
\end{equation}

We now show the following claim. 

\begin{clm} \label{clm:C4s in cube ladders}
    Any copy $C$ of $C_4$ in $G=\bigcup_{a\in A,i\in [m]}H_i^a$ satisfies one of the following:
    \begin{enumerate}
        \item \label{cube:left} $V(C)=L_j$ for some $j\in \{0,\ldots,5m-1\}$;
        \item \label{cube:right} $V(C)=R_j$ for some $j\in \{0,\ldots,5m-1\}$;
        \item \label{cube:cross} $|V(C)\cap L|=|V(C)\cap R|=2$ and both $V(C)\cap L$ and $V(C)\cap R$ host edges of $C$. 
    \end{enumerate}
\end{clm}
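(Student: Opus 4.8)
The plan is to analyze a copy $C$ of $C_4$ in $G = \bigcup_{a\in A, i\in[m]} H_i^a$ by looking at how its four vertices are distributed between $L$ and $R$. Write $C = x_0x_1x_2x_3$ cyclically. Since each edge of $G$ lies in some $H_i^a$ and each such copy is a copy of $Q_3$ on vertex set $L_{i-1}\cup R_{i+4a-1}$, every edge of $G$ has at least one endpoint in $L$ and at least one in $R$ — so $G$ itself is bipartite with respect to the partition $(X_1,X_2)$ described just before the claim. (Note we do not claim $G$ is bipartite between $L$ and $R$; edges within $L$ and within $R$ do occur, coming from the copies of $C_4$ induced on $U$ and on $W$ inside each $Q_3$.) The first step is just to record which of the possible distributions $|V(C)\cap L| \in \{0,1,2,3,4\}$ can occur.

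The cases $|V(C)\cap L| = 0$ and $|V(C)\cap L|=4$ are immediate: if all of $C$ lies in $R$, then $C$ is a $4$-cycle using only edges internal to $R$, and since within a single $H_i^a$ the edges inside $R_{i+4a-1}$ form a copy of $C_4$ on $W$ with $w_0w_3\notin E$, while Lemma~\ref{lem: ladder induced}\eqref{item: lad induce empty} controls edges between different blocks $R_{j'}$ — I would argue that a $4$-cycle inside $R$ must be confined to a single block $R_j$ (using that $\hat G[R_{j'}]$ is a copy of $Q_3[W]=C_4$ and that across distinct blocks there are too few edges to close a $4$-cycle, via the structure in Lemma~\ref{lem: ladder induced}). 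This yields outcome \eqref{cube:right}, and symmetrically $|V(C)\cap L|=4$ yields outcome \eqref{cube:left}. The case $|V(C)\cap L|\in\{1,3\}$ is impossible: if exactly one vertex, say $x_0$, is in $L$ and $x_1,x_2,x_3\in R$, then the edge $x_1x_2$ is an edge of $G$ with both endpoints in $R$, but then following the $4$-cycle $x_0\to x_1$, $x_1\to x_2$ (inside $R$), $x_2\to x_3$ (inside $R$), $x_3\to x_0$ — I'd use a parity/bipartition argument against $(X_1,X_2)$ or, more robustly, observe that $x_1,x_2,x_3$ all lie in $R$ forces at least two consecutive edges of $C$ to be internal to $R$, and trace through the block structure of $R$ to reach a contradiction with the degree/adjacency bounds. (The cleanest route: each vertex $\ell\in L$ has all its $R$-neighbours governed by \eqref{eq:cubenbrs}, and each vertex of $R$ has at most $3$ neighbours in $L$ total since it plays the role of a degree-$3$ vertex of $Q_3$ in at most one $H_i^a$ by Lemma~\ref{lem:ladders crossing edges}; counting around the cycle rules out the $1$–$3$ split.)

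The remaining and main case is $|V(C)\cap L| = |V(C)\cap R| = 2$. Write $V(C)\cap L = \{p,q\}$ and $V(C)\cap R = \{r,s\}$. Going around the $4$-cycle, the two $L$-vertices are either adjacent on $C$ or antipodal on $C$. If they are antipodal, then $C = p\,r\,q\,s$ and all four edges of $C$ are $L$–$R$ edges; then $V(C)\cap L$ hosts no edge of $C$, so I must rule this out. Here I would invoke the Sidon property of $A$: if $p=\ell_{s_0}$, $q=\ell_{s_1}$, $r=r_{t_0}$, $s=r_{t_1}$, then the four $L$–$R$ edges give $t_0 = s_0 + 12a_1 = s_1 + 12a_2$ and $t_1 = s_0 + 12a_3 = s_1 + 12a_4$ for some $a_i\in A$ (after checking the block alignment via \eqref{eq:cubenbrs} and the fact that the $R$-block containing $r_{t}$ determines the chain index once $L$-block is fixed). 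Subtracting gives $a_1 - a_2 = a_3 - a_4 = (s_1 - s_0)/12$, hence $a_1 + a_4 = a_2 + a_3$; since $A$ is Sidon this forces $\{a_1,a_4\}=\{a_2,a_3\}$, and a short case check shows this makes two of the four vertices of $C$ coincide (or collapses $C$ to lie in a single $H_i^a$, where $Q_3$ is bipartite with parts of size $4$ and contains no $C_4$ using two antipodal vertices from each part in that way — actually $Q_3$ has no induced $C_4$ through such a configuration). This contradiction leaves only the case where the two $L$-vertices are adjacent on $C$ (equivalently the two $R$-vertices are adjacent on $C$), i.e.\ $C = p\,q\,r\,s$ with $pq$ an edge inside $L$ and $rs$ an edge inside $R$ — which is exactly outcome \eqref{cube:cross}. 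I expect the Sidon-set argument ruling out the antipodal ($4$-edges-crossing) configuration to be the main obstacle, since it requires carefully translating the cyclic adjacency of $C$ into additive relations among the slopes in $A$ and handling the degenerate subcases; the rest is bookkeeping with the block structure of Construction~\ref{const:ladder} and Lemma~\ref{lem: ladder induced}.
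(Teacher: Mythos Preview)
Your overall plan matches the paper's: split by $|V(C)\cap L|$ and use the Sidon property of $A$ to rule out the all-crossing $2$--$2$ configuration. That Sidon argument is essentially the paper's (up to relabelling), and the case check really does collapse two vertices, so your parenthetical about $Q_3$ is unnecessary. The genuine gap is your treatment of $|V(C)\cap L|\in\{1,3\}$. None of the routes you suggest works: the bipartition $(X_1,X_2)$ mixes $L$- and $R$-vertices and gives no parity obstruction to a $1$--$3$ split; your claim that ``each vertex of $R$ has at most $3$ neighbours in $L$'' is false, since a single $r_t$ lies in $V(H_i^a)$ for many different $a\in A$ (Lemma~\ref{lem:ladders crossing edges} constrains \emph{pairs} $(\ell,r)$, not single vertices); and a bare degree count cannot forbid a $C_4$ anyway. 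The missing observation is the one the paper uses: if $x_0=\ell_s$ has two $R$-neighbours $x_1,x_3$ on the cycle, then by \eqref{eq:cubenbrs} we have $x_1=r_{s+12a}$ and $x_3=r_{s+12a'}$ with $a\neq a'\in A$, so their $R$-indices differ by at least $12$; but $G[R]$ is a chain of $C_4$s on the blocks $R_j$ glued at single cut-vertices, in which any two vertices at distance $2$ have indices differing by at most $4$. Hence no $2$-path in $R$ can join $x_1$ to $x_3$, ruling out the $1$--$3$ split.

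Two smaller corrections. For the $|V(C)\cap L|\in\{0,4\}$ case you invoke Lemma~\ref{lem: ladder induced}\eqref{item: lad induce empty}, but that lemma concerns $L$--$R$ edges, not edges internal to $R$; the correct (and simpler) argument is the cut-vertex structure of $G[R]$ just described, which forces any cycle in $G[R]$ to coincide with a single block $R_j$. And your opening assertion that ``every edge of $G$ has at least one endpoint in $L$ and at least one in $R$'' is false (as you yourself note two lines later) and should be deleted.
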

\begin{claimproof}
    First we rule out that $|V(C)\cap L|=1$.  Indeed, the neighbourhoods as in \eqref{eq:cubenbrs} in particular imply that  for any $\ell\in L$ and any pair $r,r'\in N_G(\ell)\cap R$ have $\dist_G(r,r')>2$ and $\ell$ cannot form a $C_4$ with 3 vertices in $R$. Similarly, for any $r\in R$, its neighbours in $L$ are far apart and cannot be the endpoints of a path with three vertices. 

    This implies that $V(C)$ is either entirely contained in one side or has two vertices in each. In the first case, considering that  $V(G)\cap L$ is composed of a sequence of $C_4$s on the  sets $L_j$, each linked at a single vertex, and similarly for $V(G)\cap R$, we get that we must fall into  \eqref{cube:left} or \eqref{cube:right}. In the second case we only need to rule out that $V(C)\cap L=\{\ell_s,\ell_{s'}\}$ and $V(C)\cap R=\{r_t,r_{t'}\}$ and all edges of $C$ are between $L$ and $R$. However, if this is the case, then using the conditions \eqref{eq:cubenbrs} we have that up to relabelling, there are $a_1,a_2,a_3,a_4\in A$ such that $t-s=12a_1,t'-s'=12a_2, t-s'=12a_3$ and $t'-s=12a_4$. This in turn implies that  $a_1+a_2=a_3+a_4$ and as $A$ is a Sidon set  we must have that $\{a_1,a_2\}=\{a_3,a_4\}$. This is a contradiction to the fact that the vertices $\ell_s,\ell_{s'},r_t,r_{t'}$ are all distinct. Indeed, if $a_1=a_3$ then  $s=s'$ and similarly if $a_1=a_4$ then  $t=t'$. 
\end{claimproof}

\vspace{2mm}

We will show $\cH$ is a proper collection of $H$-chains and so by Lemma \ref{lem:linking chains}, using that $H$ is $(1,1,1)$-inseparable and the fact  that $G$ is bipartite on $V$, we will get a proper $H$-chain of length at least $m|A|\geq m^{3/2}(1+o(1))$, on at most $128|V|\leq n$ vertices. Hence by Lemma \ref{lem:chain_runningtime} we will have that $M_H(n)\geq m^{3/2}/2=\Omega(n^{3/2})$, as required. By Corollary \ref{cor: ladder I II} we already know that $\cH$ satisfies condition \ref{chaincoll: edges not early} of being proper (Definition \ref{def:proper chaincoll}) and so, using that $H$ is self-stable, it remains to prove condition \ref{chaincoll:proper 2 star} from Remark \ref{rem:self stable coll} which states that for any $e\in E(H)$ and copy $F$ of $H-e$ in $G$,  there is a unique $b\in A$  such that $F \subseteq G^b$.

    \begin{figure}[h]
    \centering
  \includegraphics[scale=1]{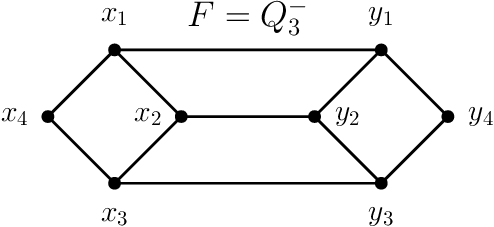}
    \caption{   \label{fig:Q3minus} The graph $F=Q_3^-$.} 
  \end{figure}

So fix a copy $F$ of $Q_3^-$ in $G$ and label the vertices of $F$ as in Figure \ref{fig:Q3minus}. We begin by showing that there are $j,j'\in \{0,\ldots,5m-1\}$ such that $V(F)=L_j\cup R_{j'}$. By Claim \ref{clm:C4s in cube ladders}, the cycle on vertices $X:=\{x_1,x_2,x_3,x_4\}$ must fall into one of the cases \eqref{cube:left}, \eqref{cube:right} or \eqref{cube:cross}.  Let us first consider case \eqref{cube:left} and so there is some $L_j$ such that $X=L_j$. Considering the copy of $C_4$ on $\{x_1,x_2,y_1,y_2\}$  and the structure of $G[L]$, appealing to Claim \ref{clm:C4s in cube ladders} again, we see that we must have $\{y_1,y_2\}\subset R$. Similarly $\{y_2,y_3\}\subset R$ and so by Claim \ref{clm:C4s in cube ladders} we must have that $Y:=\{y_1,y_2,y_3,y_4\}$ is equal to $R_{j'}$ for some $j'\in \{0,\ldots,5m-1\}$. 
The case \eqref{cube:right} in Claim \ref{clm:C4s in cube ladders} where $X= R_{j'}$ is analogous and we will get that $Y=L_j$ for some $j\in \{0,\ldots,5m-1\}$. Therefore it remains to consider the case that the cycle on $X$ falls into case \eqref{cube:cross} of Claim \ref{clm:C4s in cube ladders} and up to relabelling the vertices and switching the roles of the sides, we can assume that $\{x_1,x_2\}\subset L$ and $\{x_3,x_4\}\subset R$. 
By Claim \ref{clm:C4s in cube ladders}, as $x_2$ and $x_3$ lie on different sides, we must have that $y_2\in L$ and $y_3\in R$. Then, considering the cycle on $Y$ gives that $y_1\in L$ and $y_4\in R$. So we have that $\{x_1,x_2,y_1,y_2\}=L_j$ for some $j\in \{0,\ldots,5m-1\}$. Considering that each vertex in $Z:=\{x_3,x_4,y_3,y_4\}\subset R$ has a neighbour in $L_j$ and the vertices in $Z$ form a path in $G$, the neighbourhood condition \eqref{eq:cubenbrs} leads us to conclude that there must be some $j'\in \{0,\ldots,5m-1\}$ with $Z=R_{j'}$.

So we have that $V(F)=L_j\cup R_{j'}$ and  by Lemma \ref{lem: ladder induced}, we must in fact have that $j'=j+4b$ for some (unique)  $b\in A$ as in any bipartition of $V(F)$ there is a matching with at least three edges between the two sides $L_j$ and $R_{j'}$. Moreover, again using Lemma \ref{lem: ladder induced}, we have that $G[L_j\cup R_{j'}]=\final{H^b_{j+1}}_H=H^b_{j+1}$ and so $F\subset H^b_{j+1}\subseteq G^b$ for a unique $b\in A$  as required. This concludes the proof that $\cH$ is proper and the proof of the theorem. 
\end{proof}

\section{Dilation chains} \label{sec:dilation}

In this section we  introduce collections of \emph{dilation chains}. We construct one chain  $(H^a_i,e^a_i)_{i\in [\tau]}$  for each integer $a$ in a collection $A\subseteq \ZZ_p$ of \emph{dilations}. The chain 
$(H^a_i,e^a_i)_{i\in [\tau]}$  will be obtained by taking a simple chain with vertex set $\ZZ_p$ and dilating each of the vertices by a factor of $a$. We will use that $A$ is free of (non-trivial) solutions to certain equations in order to show that there is no unwanted copy  $F$ of $H-e$ for any  $e\in E(H)$ and therefore verify that the collection of chains is proper. We remark that although we reprove the result of Balogh, Kronenberg, Pokrovskiy and Szab\'o \cite{balogh2019maximum} showing that $M_{K_5}(n)=n^{2-o(1)}$ in this section and both proofs use additive constructions, our chain constructions in this section are new and quite distinct from the one used in \cite{balogh2019maximum}. Indeed, their construction has more in common with the ladder chains of the previous section, splitting the vertex set into three parts instead of two and having slopes defined by a set of integers free of solutions to three-term arithmetic progressions (in a similar way to how we prove Theorem \ref{thm:cube} using Sidon sets). Our construction here using dilation chains seems to be more flexible for proving general lower bounds on $M_H(n)$.

Before formally defining the chain construction and  its properties, we need to revisit additive constructions and obtain sets free of non-trivial solutions to several equations at once. We collect what we need in Section \ref{sec:dil add} below, delaying proofs to the Appendix. Recall the definition of non-trivial solutions from Section \ref{sec:additive}. 

\subsection{Solution-free sets} \label{sec:dil add}
Our main interest will be using constructions that avoid non-trivial solutions to all equations with bounded coefficients. We will use  $\Gamma=\mathbb{Z}_p$ as our host abelian group, which will be more convenient for our proofs. 

\begin{dfn}[$(K,h)$-fold solution-free sets] \label{def:K-fold}
    Let $K, h\in \NN$ and $p\in \NN$ be a prime. We say a set $A\subset \mathbb{Z}_p\setminus \{0\}$  is \emph{$(K,h)$-fold solution-free}, if for any equation \begin{equation}
\label{eq:equations bounded}    \sum_{i=1}^h\alpha_ix_i=0\end{equation} in $h$ variables with $\alpha_i\in \ZZ$ and $|\alpha_i|\leq K$ for $i\in [h]$, there are no non-trivial solutions $(x_1,\ldots,x_h)\in A^h$. We denote by $r_{K,h}(\mathbb{Z}_p)$ the size of a largest $(K,h)$-fold solution-free subset of $\ZZ_p$. 
\end{dfn}

Note that a $(K,h)$-fold solution-free set is also a $(K,h')$-fold solution-free set for $1\leq h'\leq h$ as we allow for coefficients $\alpha_i$ to be equal to $0$. 
We also remark that one can find  dense  subsets $B\subset \mathbb{Z}_p$ with $|B|\geq p/8h^2K^2$ that avoid non-trivial solutions to all  equations as in \eqref{eq:equations bounded} such that $\sum_{i=1}^h\alpha_i\neq 0$, using a simple modular based construction (see Lemma \ref{app lem:dense avoiding sum not zero} in Appendix \ref{appendix:add}).
Moreover, using a well-known random translation trick (Lemma \ref{app lem:dilate to mix sets}),
we can adapt any set  $A\subset \mathbb{Z}_p$ avoiding  some equations with $\sum_{i=1}^h\alpha_i=0$, to get a subset $A'\subseteq \ZZ_p$ with $|A'|\geq |A|/8h^2K^2$ that additionally avoids all equations as in \eqref{eq:equations bounded} with $\sum_{i=1}^h\alpha_i\neq 0$. Therefore, up to constants depending on $h$ and $K$, the problem of lower bounding $r_{K,h}(\ZZ_p)$  reduces to finding large sets avoiding all the  equations $\sum_{i=1}^h\alpha_ix_i=0$ with $\sum_{i=1}^h\alpha_i=0$. 

For $h=1$ this immediately gives dense $(K,1)$-fold solution-free sets
as when $\alpha=0$, any solution to $\alpha x=0$ is trivial. Similarly, when $h=2$, we can get  dense $(K,2)$-fold solution-free sets as  $\alpha x_1-\alpha x_2=0$ with $\alpha\neq 0$ necessarily implies that $x_1=x_2$ and the only solutions are trivial.   When $h=3$, a $(K,3)$-fold solution-free set in particular avoids three-term arithmetic progressions and so can no longer be dense. On the other hand, it is well-known (see for example \cite[Theorem 1]{shapira2006behrend}) that Behrend's construction can be adapted to avoid equations of the form $\alpha_1x_1+\alpha_2x_2-(\alpha_1+\alpha_2)x_3=0$ with $\alpha_1,\alpha_2$ bounded. With the aforementioned random translation trick to avoid equations where the $\alpha_i$ do not sum to zero, this gives the following, which is proven for completeness in Appendix \ref{appendix:add}. 

\begin{thm} \label{thm:behrend extended}For any $K\in \NN$, there exists $C>0$ such that for any prime $p\in \NN$, 
\[r_{K,3}(\ZZ_p)\geq p^{1-C/\sqrt{\log p}}.\]
    \end{thm}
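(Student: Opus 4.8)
The goal is to show $r_{K,3}(\ZZ_p)\geq p^{1-C/\sqrt{\log p}}$ for a constant $C=C(K)$. The plan is to build the set in two stages, exactly as the surrounding text hints. First I would produce a large subset of $\ZZ_p$ that avoids all non-trivial solutions to the \emph{balanced} ternary equations, i.e.\ equations $\alpha_1 x_1 + \alpha_2 x_2 + \alpha_3 x_3 = 0$ with $|\alpha_i|\le K$ and $\alpha_1+\alpha_2+\alpha_3 = 0$. Then I would apply the random-translation/dilation trick (Lemma~\ref{app lem:dilate to mix sets} as cited in the text) to pass to a further subset, losing only a factor of $8\cdot 3^2 K^2$, which additionally avoids all the \emph{unbalanced} equations with $\sum\alpha_i\ne 0$. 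Since the factor lost is an absolute constant (depending only on $K$), it is absorbed into the $p^{-C/\sqrt{\log p}}$ error term, so it suffices to handle the balanced case.

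**The balanced case via an adapted Behrend construction.**

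For the balanced equations, observe that up to scaling we may write such an equation as $a x_1 + b x_2 = (a+b) x_3$ with $1\le a,b\le K$ (the cases where some $\alpha_i=0$ reduce to $x_i=x_j$, which are handled automatically since a Behrend-type set has distinct elements and such a "solution" is trivial; and sign patterns are symmetric). The classical Behrend construction places integers on a sphere $\{x : \|x\|_2^2 = R\}$ after writing them in a base-$d$ expansion with digits in $\{0,\dots,d-1\}$, where $d\approx 2^{\sqrt{\log N}}$ and the number of digits is $\approx \sqrt{\log N}$; strict convexity of the sphere forbids $3$-term APs. The key point, well known and recorded e.g.\ in \cite{shapira2006behrend}, is that if one restricts the digits to lie in $\{0,\dots,\lfloor d/(2K)\rfloor\}$ (say), then for any fixed $1\le a,b\le K$ the digit-wise sum $a x_1 + b x_2 = (a+b)x_3$ involves no carries, and then strict convexity of the Euclidean sphere forces $x_1 = x_2 = x_3$ on each digit, hence $x_1=x_2=x_3$. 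Running over the finitely many pairs $(a,b)$ with $1\le a,b\le K$ simultaneously costs nothing since the same digit restriction works for all of them at once. This yields a subset of $[N]$ of size $N^{1-C'/\sqrt{\log N}}$ avoiding every balanced ternary equation with coefficients bounded by $K$; I would take $N = \lfloor p/(3K+1)\rfloor$ or similar so that the relevant linear combinations never wrap around mod $p$, and then view the set inside $\ZZ_p$, where solution-freeness in $[N]$ transfers to solution-freeness in $\ZZ_p$ because all quantities involved have absolute value less than $p$.

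**Assembling and the main obstacle.**

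Putting the pieces together: the Behrend-type set $B\subseteq\ZZ_p$ of size $p^{1-C'/\sqrt{\log p}}$ avoids all balanced ternary equations; applying Lemma~\ref{app lem:dilate to mix sets} produces $A\subseteq B$ of size $|B|/(72K^2)\ge p^{1-C/\sqrt{\log p}}$ (adjusting $C$) which avoids the unbalanced ones as well, hence is $(K,3)$-fold solution-free. I expect the only genuinely delicate step to be the carry-free digit analysis in the adapted Behrend construction: one must choose the digit bound (as a function of $K$) and the relation between $d$, the number of digits, and $N$ carefully enough that (i) no carries occur in $a x_1 + b x_2$ for any $1\le a,b\le K$, (ii) the convexity argument still pins down all digits, and (iii) the size bound $N^{1-C'/\sqrt{\log N}}$ survives the restricted digit range (it does, since shrinking the digit alphabet by a constant factor $2K$ only changes the base of the exponential, not the $\sqrt{\log N}$ in the exponent). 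Everything else — the reduction from $[N]$ to $\ZZ_p$, handling degenerate coefficient patterns, and the constant-factor loss in the translation trick — is routine bookkeeping that I would defer to the appendix as the paper does.
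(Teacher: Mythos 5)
Your overall strategy is the same as the paper's: a Behrend-type set to kill the balanced ternary equations, the dense modular set of Lemma~\ref{app lem:dense avoiding sum not zero} to kill the unbalanced ones, and a random-mixing step to combine them at the cost of a constant factor. Your treatment of the balanced case is a legitimate variant: you build a single sphere construction with digit alphabet shrunk by a factor of order $K$ so that $a x_1+b x_2=(a+b)x_3$ is carry-free for all bounded $a,b$ simultaneously, whereas the paper keeps the classical construction for each equation $x_1+\cdots+x_j=jy$ (Lemma~\ref{app lem: behrend}), reduces a general balanced equation to one of these by repeating variables, and intersects random translates (Corollary~\ref{app cor: behrend}). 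Both routes give $p^{1-O_K(1/\sqrt{\log p})}$; yours is slightly more direct, with only the usual care needed that the digit bound makes the digit-wise sums \emph{strictly} less than the base.

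The one step that does not work as you wrote it is the combination step. You take the Behrend set as the ground set $B$ and invoke Lemma~\ref{app lem:dilate to mix sets} to pass to a subset that \emph{additionally} avoids the unbalanced equations. That lemma cannot be used in this direction: its hypotheses require the equation families being added to lie in $\cE^h_k(0)$, because its proof rests precisely on the fact that solution-freeness is translation-invariant only when $\sum_i\alpha_i=0$; a translate of the modular set of Lemma~\ref{app lem:dense avoiding sum not zero} need not avoid the unbalanced equations at all. The repair is immediate and comes in two flavours: either swap the roles, i.e.\ ground on the modular set and intersect it with random translates of the balanced-avoiding Behrend-type set(s) — this is exactly what the paper's proof of the theorem does, and the final size $p^{1-c/\sqrt{\log p}}/(72K^2)$ is the same — or keep your order but replace random translations by random dilations $x\mapsto\lambda x$ with $\lambda$ uniform in $\ZZ_p\setminus\{0\}$, under which solution-freeness of \emph{every} homogeneous linear equation (balanced or not) is preserved, so a random dilate of the modular set may be intersected with your Behrend set with expected density loss $\Theta(1/K^2)$. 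So the gap is a misapplied lemma rather than a missing idea, but as written the cited lemma simply does not give you the set you claim.
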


The construction  in Theorem \ref{thm:behrend extended} will be used to give a lower bound on running times of the form $M_H(n)\geq n^{2-o(1)}$  for Behrendian graphs in Theorem \ref{thm:coherent almost quad}. 
Analogously, our construction of similar chains for Sidonian graphs (Theorem \ref{thm:bip coherent run time}) which will be used to prove  lower bounds on running times of the form $M_H(n)\geq n^{3/2-o(1)}$, requires dense $(K,4)$-fold solution-free sets. 

As before, lower bounding $r_{K,4}(\ZZ_p)$ (or the analogous question in the integers $[N]$) reduces to finding sets that avoid non-trivial solutions to all equations of the form $\sum_{i=1}^4\alpha_ix_i=0$ with $\sum_{i=1}^4\alpha_i=0$ and each $|\alpha_i|$ bounded by $K$. Such sets are known as \textit{$K$-fold Sidon sets}. They were introduced formally by Lazebnik and Verstraëte \cite{lazebnik_hypergraphs_2003} and studied by Cilleruelo and Timmons \cite{cilleruelo_k-fold_2014}. Lazebnik and Verstraëte \cite{lazebnik_hypergraphs_2003} conjectured that there should exist $K$-fold Sidon sets in $[N]$ of size $\Omega(N^{1/2})$ (which would lead to a lower bound of $r_{K,4}(\ZZ_p)\geq \Omega(p^{1/2})$) and verified the conjecture for $K=2$. Moreover, in \cite{cilleruelo_k-fold_2014} it is claimed that unpublished work of Axenovich and independently Verstra\"ete, establishes an asymptotic version of the conjecture, giving a lower bound of $N^{1/2-o(1)}$ by adapting the methods of Ruzsa \cite[Theorem 7.3]{ruzsa_solving_1993}. Unfortunately we could not recover this proof and we believe that there is a subtle issue that arises when simultaneously trying to avoid all the equations such that $\prod_{i=1}^4\alpha_i$ is not a square in the integers. In lieu of such a lower bound construction for $K$-fold Sidon sets, it turns out that our methods can work for an adapted notion of $(K,4)$-fold solution-free sets where we additionally have a lower bound on the coefficients that feature and a condition on the parity of the coefficients. We prove the following in Appendix \ref{appendix:add} by adapting the constructions of Ruzsa \cite{ruzsa_solving_1993}.

\begin{thm} \label{thm: sidon extended}
    For all $K\in 2\NN$  and $\eps>0$, there exists $m\in \NN$ such that for any sufficiently large prime $p\in \NN$ the following holds. There is a subset $A\subset \ZZ_p\setminus \{0\}$ such that $|A|\geq p^{1/2-\eps}$, $A$ is $(2(m+2K),3)$-fold solution-free and has no non-trivial solutions to equations of the form 
    \begin{equation} \label{eq:sidon extend}
        \alpha_1x_1+\alpha_2x_2+\alpha_3x_3+\alpha_4x_4=0
    \end{equation}
    with $|\alpha_i|\in \{m+1,m+3,\ldots,m+K-1\}$ for $i\in [4]$.
\end{thm}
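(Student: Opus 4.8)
The plan is to build the set through a Bose--Chowla style discrete-logarithm construction in a quadratic extension of a prime field, which converts additive equations into polynomial identities over $\mathbb{F}_q$, and then to exploit the special shape of the coefficients to collapse the one stubborn case.

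First I would reduce to equations whose coefficients sum to zero. Any $4$-variable equation with $|\alpha_i|\in\{m+1,m+3,\dots,m+K-1\}$ and $\sum_i\alpha_i\neq 0$ in fact satisfies $|\sum_i\alpha_i|\geq 2m$, since with four magnitudes all in that window the only way the sum can vanish is with two positive and two negative signs. Hence, by the modular and random-translation mechanism of Lemmas~\ref{app lem:dense avoiding sum not zero} and~\ref{app lem:dilate to mix sets} (intersect with a translate of a dense residue set, at a cost of a constant factor depending on $m$ and $K$), it suffices to produce, for a prime $q$ with $q^2$ comparable to $p$ up to a constant depending on $m,K$, a set $A_0\subseteq\mathbb{Z}_{q^2-1}\hookrightarrow\mathbb{Z}_p$ with $|A_0|\geq q^{1-o(1)}$ that is free of non-trivial solutions to (i) every $3$-variable equation with $\sum_i\alpha_i=0$ and $|\alpha_i|\leq 2(m+2K)$, and (ii) every $4$-variable equation $\alpha_1x_1+\alpha_2x_2=\alpha_3x_3+\alpha_4x_4$ with each $\alpha_i\in\{m+1,\dots,m+K-1\}$ and $\alpha_1+\alpha_2=\alpha_3+\alpha_4$. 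Choosing $q$ so that $p>8(m+2K)q^2$ guarantees a solution modulo $p$ with entries in $[0,q^2-2]$ is an honest integer solution, and throughout, the sum of the positive coefficients of any such equation is bounded by the constant $D:=2(m+2K)$.

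For the construction, fix a prime $q$, an element $\theta\in\mathbb{F}_{q^2}\setminus\mathbb{F}_q$ with minimal polynomial $T^2-t$ over $\mathbb{F}_q$, a generator $g$ of $\mathbb{F}_{q^2}^\times$, and a set $X\subseteq\mathbb{F}_q$ to be specified; set $A_0:=\{\operatorname{ind}_g(\theta+x):x\in X\}$. Given a relation $\sum_i\alpha_i a_i=0$ over $\mathbb{Z}$ with $a_i=\operatorname{ind}_g(\theta+x_i)$, $x_i\in X$ and $\sum_i\alpha_i=0$, reducing modulo $q^2-1$ and exponentiating yields $\prod_i(\theta+x_i)^{\alpha_i}=1$, that is $P_+(\theta)=P_-(\theta)$ for the monic polynomials $P_+(T):=\prod_{\alpha_i>0}(T+x_i)^{\alpha_i}$ and $P_-(T):=\prod_{\alpha_i<0}(T+x_i)^{|\alpha_i|}$, which have the same degree $\alpha\leq D$. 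Thus $R:=P_+-P_-$ has degree at most $\alpha-1$ and $\theta$ as a root, so either $R\equiv 0$ (\emph{Case A}) or $T^2-t\mid R$ and $2\leq\deg R\leq\alpha-1$ (\emph{Case B}). In Case A, unique factorisation in $\mathbb{F}_q[T]$ forces the multisets of roots of $P_+$ and $P_-$ to agree; grouping the indices according to the common value of $x_i$ then produces a partition of the index set into blocks on each of which the $x_i$ are equal and the $\alpha_i$ sum to zero, i.e.\ the solution is trivial in the sense of Section~\ref{sec:additive}. The $3$-variable equations of (i) are handled identically in Case A.

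The main obstacle is Case B, which is forced on us because the target density $p^{1/2}$ only allows a quadratic extension, so a single polynomial identity cannot be collapsed by a dimension count. Here one uses both roots $\pm\theta$ of $T^2-t$: $\prod_i(\theta+x_i)^{\alpha_i}=\prod_i(-\theta+x_i)^{\alpha_i}=1$, so multiplying gives a genuine relation $\prod_i(x_i^2-t)^{\alpha_i}=1$ in $\mathbb{F}_q^\times$, and dividing (using $\sum_i\alpha_i=0$) puts the elements $y_i:=(x_i+\theta)/(x_i-\theta)$ of the norm-one subgroup of $\mathbb{F}_{q^2}^\times$ into a relation $\prod_i y_i^{\alpha_i}=1$. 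I would then choose $X$ to be a Behrend-type subset of $\mathbb{F}_q$ selected so that its images under $x\mapsto x^2-t$ and $x\mapsto(x+\theta)/(x-\theta)$ avoid the relevant bounded-coefficient linear relations in the cyclic groups of order $q-1$ and $q+1$; it is here that the hypotheses that $K$ is even and that all $\alpha_i$ lie in the narrow odd window $\{m+1,\dots,m+K-1\}$ do their work --- the magnitudes differ by only $O(K)$ while sitting at height about $m$, and share a common parity, which makes the reduced equations rigid enough to be killed by such a set --- and this is exactly where the constructions of Ruzsa~\cite{ruzsa_solving_1993} are adapted. Since this thinning costs only a $q^{o(1)}$ factor we get $|A_0|=|X|\geq q^{1-o(1)}\geq p^{1/2-\eps}$ for $p$ large, and assembling Cases~A and~B with the reduction of the first paragraph finishes the proof. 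I expect Case~B --- making the odd-window coefficient hypothesis rule out the residual genus-one degeneracies that obstruct the naive approach to general $K$-fold Sidon sets --- to be where essentially all the difficulty lies.
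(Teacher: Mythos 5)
Your reduction to zero-sum equations, the embedding of $\ZZ_{q^2-1}$-representatives into $\ZZ_p$ via the size bound, and the Case~A analysis are all fine, but the proof has a genuine gap exactly where you say the difficulty lies: Case~B is not resolved, and the fix you sketch cannot work at the density required. In Case~B the only information available is the single multiplicative relation $\prod_i(\theta+x_i)^{\alpha_i}=1$ in $\mathbb{F}_{q^2}^{\times}$; splitting it via the norm map and the M\"obius map into relations in the cyclic groups of orders $q-1$ and $q+1$ is just the CRT decomposition of that same relation, so nothing is gained unless you kill one of the two factors separately. But the theorem forces you to handle equations with $\{\alpha_1,\alpha_2\}=\{-\alpha_3,-\alpha_4\}$, e.g.\ $\alpha_1=\alpha_2=-\alpha_3=-\alpha_4=m+1$; in Case~B this reduces in $\ZZ_{q-1}$ to $(m+1)(w_1+w_2-w_3-w_4)\equiv 0$ for $w_i=\operatorname{ind}(x_i^2-t)$, and any image set free of non-trivial solutions to this is in particular a Sidon set modulo $(q-1)/\gcd(m+1,q-1)$, hence of size $O(\sqrt q)$. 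Since $x\mapsto x^2-t$ is at most $2$-to-$1$, this forces $|X|=O(\sqrt q)$, i.e.\ $|A_0|=O(p^{1/4})$, not $p^{1/2-\eps}$: the claim that the thinning "costs only a $q^{o(1)}$ factor" fails precisely for the equal-pair (Sidon-type) equations, which are the bottleneck of the whole theorem, and the odd-window/parity hypotheses give no rigidity there. If instead you only demand that the two cyclic relations never hold \emph{simultaneously}, that is equivalent to the statement you are trying to prove, and no construction is offered; a count shows a dense $X$ typically carries about $q^{2}$ Case~B tuples already for $|\alpha_i|=m+1$ (solutions of $(\theta+x_1)(\theta+x_2)=\zeta(\theta+x_3)(\theta+x_4)$ with $\zeta^{m+1}=1$, $\zeta\neq1$, i.e.\ two $\mathbb{F}_q$-equations in four unknowns), far too many to remove by deletion. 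In effect, Case~B is exactly the unresolved issue with dense $K$-fold Sidon sets discussed in Section~\ref{sec:dil add}.

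For comparison, the paper avoids the field-theoretic route entirely: it partitions the admissible equations into those with non-zero coefficient sum (residue-class set, Lemma~\ref{app lem:dense avoiding sum not zero}), three-variable Behrend-type equations (Corollary~\ref{app cor: behrend}), the Sidon-type class $\alpha x_1+\beta x_2-\alpha x_3-\beta x_4$, handled at density $p^{1/2-\eps}$ by the explicit two-digit construction $\{b+(2k_1+1)qb':b\in B,\ b'\equiv b^2 \bmod q\}$ over a Behrend base $B$ (Lemma~\ref{app lem: 4 var type 1}), and the window equations with equal coefficient sums but unequal multisets, killed at density $p^{1-\eps}$ by a base-$d$ digit construction that uses the narrow odd window to make the minimal coefficient behave like $-1$ modulo $d$ (Lemma~\ref{app lem: 4 var type 2}); these sets are then combined by intersecting random translates (Lemma~\ref{app lem:dilate to mix sets}). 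To make your Bose--Chowla approach work you would need a genuinely new idea for Case~B, not a Behrend-type thinning of the base set $X$.
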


\subsection{Constructing dilation chains} \label{sec:Dilation construction}
We now proceed with the definition of dilation chains. There are some technicalities in this definition that arise from requiring the construction to be  applicable to the wheel graph (Theorem \ref{thm:wheel}). We will elaborate on these in due course but  recommend that on first read, the reader ignores  the two distinct sets $V$ and $W$ in Construction \ref{const:dil} and identifies them as one vertex set in the natural way (with $v_i$ and $w_i$ identified for all $i\in [p-1]$). This removes the need for the case distinction in \eqref{eq:dil ends cases} and will illuminate the key idea of the construction. 
 
\vspace{2mm}

\begin{const}[Collection of dilation chains] \label{const:dil}
    Let $4\leq k \in \NN$ and suppose $H$ is a graph with $V(H)=[k]$ such that $e':=\{1,2\}$ and $f':=\{k-1,k\}$ are both edges of $H$.  Let $p\in \NN$ be a prime with $p\geq k^3$ and  $A\subseteq \ZZ_p\setminus {0}$ be a  set of \emph{dilations}. Then fixing $\tau:=\floor{\tfrac{p-3}{k-2}}$,  the collection  \[\cH_{\mathrm{Dil}}(H;A,p)=\{(H^a_i,e^a_i)_{i\in[\tau]}:a\in A\}\] of $H$-chains is defined as follows. 
    Let $V=\{v_1,\ldots,v_{p-1}\}$ and $W=\{w_1,\ldots,w_{p-1}\}$ be two vertex sets indexed by $\ZZ_p\setminus \{0\}$ (with indices always considered modulo $p$). For 
    $a\in A$ and $i\in [\tau]$ the graph $H_i^a$ is the image of the graph isomorphism  $\varphi^a_i : H\rightarrow H^a_i$ defined via the map \begin{equation} \label{eq:dil ends cases}
    \varphi^a_i(j)=\begin{cases}
		v_{a\cdot((i-1)(k-2)+j)}	, & \text{ if } k+2 \leq i    
  \leq  \tau-k-2 \\ &\text{ or } i=k+1, j\notin \{1,2\} \text{ or }i=\tau-k-1, j\notin \{k-1,k\} ; \\
           w_{a\cdot((i-1)(k-2)+j)}, & \text{ if } 1\leq i\leq k \text{ or } \tau-k\leq i\leq \tau  \\ &\text{ or } i=k+1, j\in \{1,2\} \text{ or }i=\tau-k-1, j\in \{k-1,k\}, 
		 \end{cases}
      \end{equation}
      for $j\in [k]=V(H)$. We further define $e^a_i=w_{a\cdot(i(k-2)+1)}w_{a\cdot(i(k-2)+2)}\in E(H^a_i)$ for $i\in\{1,\ldots,k\}\cup \{\tau-k-1,\ldots \tau\}$ and  $e^a_i=v_{a\cdot(i(k-2)+1)}v_{a\cdot(i(k-2)+2)}\in E(H^a_i)$  for $k+1\leq i\leq \tau-k-2$. 
\end{const}

\vspace{2mm}

Note that each dilation chain $(H_i^a,e_i^a)_{i\in[\tau]}$  is a \emph{simple} $H$-chain of length $\tau$  on vertex set 
\[
   V^a:=\{w_a,\ldots, w_{a(k(k-2)+2)}, v_{a(k(k-2)+3)}, \ldots, v_{a(\tau-k-1)(k-2)}, w_{a((\tau-k-1)(k-2)+1)},\ldots,w_{a(\tau(k-2)+2)}\}\] 
   as in Definition \ref{def:simple-chain}, using here that the numbers $1,\ldots, \tau(k-2)+2$ are all distinct modulo $p$ and so their images under multiplication by $a\neq 0$ modulo $p$ are also distinct. 
 For $i\in[\tau-1]$ we have that  $e^a_i=\varphi^a_{i}(f')=\varphi^a_{i+1} (e')=V(H^a_i)\cap V(H^a_{i+1})$ and  $e^a_\tau=\varphi^a_\tau(f)$. As usual with simple chains, we take the canonical choice of origin for the chain  $(H_i^a,e_i^a)_{i\in[\tau]}$ to be $e_0^a:=w_aw_{2a}=\phi_1^a(e')$.  
 
The following observation is at the heart of Construction \ref{const:dil} and we will use it repeatedly. 

\begin{obs} \label{obs:edge equation}
    Suppose that $\cH=\cH_{\mathrm{Dil}}(H;A,p)$ and let $g=u_{\lambda_1}u_{\lambda_2}$ such that $u_{\lambda_{i'}}\in \{v_{\lambda_{i'}},w_{\lambda_{i'}}\}$ and $\lambda_{i'}\in \ZZ_p\setminus \{0\}$ for $i'=1,2$. If $a\in A$ is such that the chain $(H^a_i,e^a_i)_{i\in [\tau]}$ is proper with underlying graph $G^a$ and $g\in E(\final{G^a}_H)$, then there exists $\alpha\in \ZZ\setminus\{0\}$ with $|\alpha|\leq k$ such that \[\lambda_2-\lambda_1=\alpha a \mbox{ mod } p.\] 
  \end{obs}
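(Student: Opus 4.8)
The plan is to reduce the claim to a single chain in the collection and then exploit the fact that the map defining a dilation chain is affine in the index. First, since the chain $(H^a_i,e^a_i)_{i\in[\tau]}$ is proper with underlying graph $G^a$, Lemma \ref{lem:final graph proper} gives $\final{G^a}_H=\bigcup_{i=1}^\tau\final{H^a_i}_H$, so the hypothesis $g\in E(\final{G^a}_H)$ yields some index $j\in[\tau]$ with $g\in E(\final{H^a_j}_H)$. This step is exactly where properness is needed.

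Next I would observe that the isomorphism $\varphi^a_j:H\to H^a_j$ carries $\final{H}_H$ onto $\final{H^a_j}_H$. Indeed $\varphi^a_j$ is an injective graph homomorphism, so Observation \ref{obs:hom} gives $\varphi^a_j(\final{H}_H)\subseteq\final{H^a_j}_H$; applying the same observation to the inverse isomorphism $(\varphi^a_j)^{-1}$ gives the reverse inclusion, hence $\varphi^a_j(\final{H}_H)=\final{H^a_j}_H$. Consequently the edge $g$ is the image under $\varphi^a_j$ of some edge $\{\ell_1,\ell_2\}$ of $\final{H}_H$, with $\ell_1\neq\ell_2$ in $[k]=V(H)$ (distinct because $g$ has two endpoints and $\varphi^a_j$ is injective).

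Now I would simply read off the indices. By the defining formula \eqref{eq:dil ends cases}, whichever of the two cases applies to the pair $(i,j)=(j,\ell)$, the vertex $\varphi^a_j(\ell)$ has subscript $a\cdot\big((j-1)(k-2)+\ell\big)$ in $\ZZ_p$, regardless of whether it lies in $V$ or in $W$. Since the two endpoints of $g$ are $\varphi^a_j(\ell_1)=u_{\lambda_1}$ and $\varphi^a_j(\ell_2)=u_{\lambda_2}$, we get $\lambda_{i'}\equiv a\cdot\big((j-1)(k-2)+\ell_{i'}\big)\pmod p$ for $i'=1,2$, so $\lambda_2-\lambda_1\equiv a(\ell_2-\ell_1)\pmod p$. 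Setting $\alpha:=\ell_2-\ell_1\in\ZZ$, we have $\alpha\neq0$ since $\ell_1\neq\ell_2$, and $|\alpha|\leq k-1\leq k$ since $\ell_1,\ell_2\in\{1,\dots,k\}$, which is exactly the claim.

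There is no genuine obstacle here: the only point that requires the structural hypothesis is the passage from "$g$ lies in the final graph of the whole chain" to "$g$ lies in the final graph of a single copy $H^a_j$", handled by Lemma \ref{lem:final graph proper}; everything else is bookkeeping with the affine index map $\ell\mapsto a\big((j-1)(k-2)+\ell\big)$ and the fact that the $H$-process does not alter vertex sets.
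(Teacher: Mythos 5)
Your proposal is correct and follows essentially the same route as the paper: properness plus Lemma \ref{lem:final graph proper} locates $g$ in $\final{H^a_j}_H$ for a single $j$, and then the affine index formula in Construction \ref{const:dil} gives $\lambda_2-\lambda_1=a(\ell_2-\ell_1)\bmod p$ with $0<|\ell_2-\ell_1|\leq k$. The only cosmetic difference is that you invoke Observation \ref{obs:hom} to identify $\final{H^a_j}_H$ with $\varphi^a_j(\final{H}_H)$, whereas the paper only uses that the endpoints of $g$ lie in $V(H^a_j)$, which already suffices.
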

    \begin{proof}
  Note that $\lambda_1\neq \lambda_2$ as the vertex set $V^a$ of $(H^a_i,e_i^a)_{i\in [\tau]}$ has no pair of vertices $v_\lambda,w_\lambda$ with the same index. As the chain $(H_i^a,e_i^a)_{i\in [\tau]}$ is proper, Lemma \ref{lem:final graph proper} gives that there is some $i_*\in [\tau]$ such that $g\in E(\final{H_{i_*}^a})$. This in turn implies that there are $j_1\neq j_2\in [k]$ such that $\lambda_1=a((i_*-1)(k-2)+j_1) \mbox{ mod } p$ and $\lambda_2=a((i_*-1)(k-2)+j_2) \mbox{ mod } p$ and so $\lambda_2-\lambda_1=a\alpha \mbox{ mod } p$ for some $\alpha =j_2-j_1\neq 0$ with $|\alpha|\leq k$.
     \end{proof}

     In particular, this has the following easy consequence.

      \begin{cor} \label{cor: dilation unique edge}
       Let $\cH=\cH_{\mathrm{Dil}}(H;A,p)=\{(H^a_i, e^a_i)_{i\in [\tau]}:a\in A\}$ be a collection of $H$-chains such that for each $a\in A$, the chain  $(H^a_i, e^a_i)_{i\in [\tau]}$  is proper and has  underlying graph $G^a=\bigcup_{i\in [\tau]}H^a_i$.   If  $A$ is $(k,2)$-fold solution-free then  
the graphs $\{\final{G^a}_H:a\in A\}$ are pairwise edge-disjoint. \end{cor}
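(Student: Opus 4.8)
The plan is to fix two distinct dilations $a\neq b\in A$ and to show by contradiction that $\final{G^a}_H$ and $\final{G^b}_H$ have no common edge. So suppose $g$ is an edge lying in both final graphs. Since both final graphs live on the common vertex set $V\cup W$, we may write $g=u_{\lambda_1}u_{\lambda_2}$ with $u_{\lambda_{i'}}\in\{v_{\lambda_{i'}},w_{\lambda_{i'}}\}$ and $\lambda_{i'}\in\ZZ_p\setminus\{0\}$ for $i'=1,2$. I would then apply Observation \ref{obs:edge equation} twice. Applied to the proper chain $(H^a_i,e^a_i)_{i\in[\tau]}$ with underlying graph $G^a$ (recall each chain in $\cH$ is assumed proper in the hypothesis of the corollary), it yields some $\alpha\in\ZZ\setminus\{0\}$ with $|\alpha|\leq k$ and $\lambda_2-\lambda_1\equiv\alpha a\pmod p$; applied to $(H^b_i,e^b_i)_{i\in[\tau]}$ with underlying graph $G^b$, it yields some $\beta\in\ZZ\setminus\{0\}$ with $|\beta|\leq k$ and $\lambda_2-\lambda_1\equiv\beta b\pmod p$.

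Comparing the two congruences gives $\alpha a-\beta b=0$ in $\ZZ_p$, so that $(a,b)\in A^2$ is a solution of the two-variable equation $\alpha x_1+(-\beta)x_2=0$, whose coefficients satisfy $|\alpha|,|\beta|\leq k$. As $A$ is $(k,2)$-fold solution-free, this solution must be trivial, i.e.\ there must be a partition of $\{1,2\}$ into parts on which the coefficients sum to zero and the corresponding variables agree. The only options are the partition $\{1\}\cup\{2\}$, which forces $\alpha=0$ and $\beta=0$, and the partition $\{1,2\}$, which forces $\alpha-\beta=0$ together with $a=b$. Both options contradict our assumptions (that $\alpha,\beta$ are nonzero and $a\neq b$), so no such common edge $g$ can exist, and the graphs $\{\final{G^a}_H:a\in A\}$ are pairwise edge-disjoint, as required.

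There is no serious obstacle here: the statement is essentially immediate from Observation \ref{obs:edge equation} together with the definition of a $(k,2)$-fold solution-free set. The only point that warrants a little care is unpacking what a \emph{trivial} solution means in the case $h=2$, so as to rule out both possible partitions of the index set and thereby close the argument.
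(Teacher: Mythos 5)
Your proof is correct and follows essentially the same route as the paper: apply Observation \ref{obs:edge equation} to each of the two proper chains, equate $\alpha a \equiv \beta b \pmod p$, and invoke the $(k,2)$-fold solution-free property of $A$; your only addition is spelling out explicitly why the solution is non-trivial, which the paper asserts directly.
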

\begin{proof}
     Let $g=u_{\lambda_1}u_{\lambda_2}$ such that $u_{\lambda_i}\in \{v_{\lambda_i},w_{\lambda_i}\}$ and $\lambda_i\in \ZZ_p\setminus \{0\}$ for $i=1,2$.
      Then if $g\in E(\final{G^a}_H)\cap E(\final{G^b}_H)$  for some $a\neq b\in A$, Observation \ref{obs:edge equation} gives that there are $\alpha_a, \alpha_b\neq 0$ with $|\alpha_a|,|\alpha_b|\leq k$ such that $\lambda_2-\lambda_1=\alpha_a a=\alpha_b b \mbox{ mod } p$. 
      Then $(a,b)$ form a non-trivial solution to the equation $\alpha_ax_1-\alpha_bx_2=0$, contradicting that $A$ is $(k,2)$-fold solution-free. 
\end{proof}

The collection $\cH_{\mathrm{Dil}}(H;A,p)$ has vertex set $V\cup W$ of size $2p-2$ and each chain has length $\tau= \Omega(p)$. We will use Lemma \ref{lem:linking chains} with appropriately chosen primes $p$ to obtain lower bounds on running times $M_H(n)$ for all $n\in \NN$. In fact, in our applications using Construction \ref{const:dil}, we will not always be working with $(2,1)$-inseparable $H$ and so we have to appeal to Lemma \ref{lem:linking chains plus} which allows us to derive lower bounds from proper collections of chains with the added condition that the start and end of each chain is disjoint from the vertices of all the other chains. This is precisely the reason for the additional vertex set $W$ in Construction \ref{const:dil} which is imposed to separate the start and end of each chain and can be ignored (with all the vertices mapped to $V$ in the natural way) in the case that Construction \ref{const:dil} is applied with $(2,1)$-inseparable $H$. 

\begin{lem} \label{lem:dil start end}
Suppose  that $H$ is a $k$-vertex graph, $p\geq k^3$ is a prime and $A\subset \ZZ_p$ is $(2k^2,2)$-fold solution-free. Then taking  $\cH_{\mathrm{Dil}}(H;A,p)=\{(H_i^a,e_i^a)_{i\in[\tau]}:a \in A\}$ to be the collection of dilation $H$-chains, each with underlying graph $G^a=\bigcup_{i\in [\tau]}H^a_i$, we have that 
	\begin{equation}\label{eq:additional dil}
		\bigcup_{i=1}^{k}V(H^a_i) \cap V(G^b) = \varnothing \text{ and } \bigcup_{i=\tau-k}^\tau V(H^a_i) \cap V(G^b) = \varnothing \text{ for } a\neq b\in A.
		\end{equation}            
\end{lem}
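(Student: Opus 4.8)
The plan is to unpack Construction~\ref{const:dil} and observe that all vertices appearing in the first $k$ and the last $k$ copies of $H$ of any dilation chain are $w$-vertices with small index, so that a collision between the chains for $a$ and $b$ would force a forbidden two-variable linear relation between $a$ and $b$. No properness is needed; the statement is purely about the vertex sets prescribed by the construction.

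\emph{Step 1 (locating the vertices).} From the case distinction in \eqref{eq:dil ends cases}, for every $1\le i\le k$ and every $\tau-k\le i\le\tau$ the vertex $\varphi^a_i(j)$ is a $w$-vertex for all $j\in[k]$. Writing $I_{\mathrm{start}}:=\{1,\dots,k(k-2)+2\}$ and $I_{\mathrm{end}}:=\{(\tau-k-1)(k-2)+1,\dots,\tau(k-2)+2\}$, and using that the intervals $\{(i-1)(k-2)+1,\dots,(i-1)(k-2)+k\}$ overlap for consecutive $i$, one reads off that
\[
\bigcup_{i=1}^{k}V(H^a_i)=\{\,w_{a\ell}:\ell\in I_{\mathrm{start}}\,\}
\qquad\text{and}\qquad
\bigcup_{i=\tau-k}^{\tau}V(H^a_i)=\{\,w_{a\ell}:\ell\in I_{\mathrm{end}}\,\}.
\]
Likewise, the description of $V^a=V(G^a)$ in Construction~\ref{const:dil} shows that the $w$-vertices of $V(G^b)$ are precisely $\{\,w_{b m}:m\in I_{\mathrm{start}}\cup I_{\mathrm{end}}\,\}$. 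Since $V$ and $W$ are disjoint vertex sets, a $w$-vertex of chain $a$ can only coincide with a $w$-vertex of $G^b$; hence, for $a\neq b$, either intersection in \eqref{eq:additional dil} is non-empty only if $a\ell\equiv b m\pmod p$ for some $\ell,m\in I_{\mathrm{start}}\cup I_{\mathrm{end}}$.

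\emph{Step 2 (the additive contradiction).} Because $p\ge k^3$ and $\tau=\lfloor(p-3)/(k-2)\rfloor$, one has $p-k-1<\tau(k-2)\le p-3$, so $p-\tau(k-2)\in\{3,\dots,k\}$; a short computation then gives that each $n\in I_{\mathrm{start}}\cup I_{\mathrm{end}}$ is congruent modulo $p$ to an integer $\tilde n$ with $1\le|\tilde n|\le k^2$ (take $\tilde n=n$ for $n\in I_{\mathrm{start}}$ and $\tilde n=n-p$ for $n\in I_{\mathrm{end}}$). A collision $a\ell\equiv b m\pmod p$ then reads $\tilde\ell\cdot a-\tilde m\cdot b\equiv0\pmod p$, i.e.\ $(a,b)\in A^2$ is a solution in $\ZZ_p$ to the two-variable equation $\tilde\ell\,x_1-\tilde m\,x_2=0$, whose integer coefficients are non-zero and of absolute value at most $k^2\le2k^2$. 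As $A$ is $(2k^2,2)$-fold solution-free, this solution must be trivial; but a trivial solution of a two-variable equation with both coefficients non-zero necessarily has $x_1=x_2$ (the partition into singletons would force a coefficient to vanish). This contradicts $a\neq b$, and \eqref{eq:additional dil} follows.

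I do not expect a real obstacle here: once Step~1 is in place the result is a one-line application of the definition of a $(2k^2,2)$-fold solution-free set. The only point requiring care is the estimate in Step~2 that every element of $I_{\mathrm{end}}$ reduces modulo $p$ to an integer of absolute value at most $k^2$, which is exactly where the hypothesis $p\ge k^3$ (via $\tau=\lfloor(p-3)/(k-2)\rfloor$) is used, and which also explains why the construction forces a coefficient bound of order $k^2$ rather than order $k$.
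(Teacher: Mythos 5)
Your proposal is correct and follows essentially the same route as the paper's proof: you identify the first and last $k$ copies as consisting entirely of $w$-vertices whose indices are $a\cdot\ell$ with $\ell$ congruent modulo $p$ to a non-zero integer of absolute value at most $2k^2$, observe that the $w$-vertices of $G^b$ are of the same form with $b$ in place of $a$, and conclude that a common vertex would give a non-trivial solution to a two-variable equation with coefficients bounded by $2k^2$, contradicting the $(2k^2,2)$-fold solution-freeness of $A$. The only differences are cosmetic (your slightly sharper bound $k^2$ on the reduced indices and the explicit verification that such a solution cannot be trivial), so no changes are needed.
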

\begin{proof}
    Note that for each $a\in A$, we have that
    \[\bigcup_{i=1}^{k}V(H^a_i)  = \{w_a,\ldots, w_{a(k(k-2)+2)}\}\subseteq \{w_{a\alpha}:\alpha=1,\ldots,k^2 \}.  \] 
    Similarly,  \[\bigcup_{i=\tau-k}^\tau V(H^a_i)=  \{w_{a((\tau-k-1)(k-2)+1)},\ldots,w_{a(\tau(k-2)+2)}\}\subseteq \{w_{a\alpha}:\alpha=-1,\ldots,-2k^2 \} ,\]
as $(\tau-k-1)(k-2)+1\geq p-(k+2)(k-2)\geq p-2k^2$. Moreover,  all other vertices in $V(G^a)$ are vertices in $V$. Therefore if $a\neq b$ and there was some vertex lying in the intersection of $\cup_{i=1}^{k}V(H^a_i)\cup_{i=\tau-k}^{\tau}V(H^a_i)$ and $V(G^b)$, we would necessarily have that the vertex lies in $W$ and is of the form $w_{\lambda}$ such that  $\lambda=\alpha a \mbox{ mod } p$ and $\lambda=\beta b \mbox{ mod } p$ with $|\alpha|, |\beta|\leq 2k^2$. This contradicts that $A$ is $(2k^2,2)$-fold solution-free.
    \end{proof}

\subsection{Dilation chains for Behrendian graphs}
\label{sec: dil coh}
We now show that when $H-e$ is Behrendian  for any $e\in E(H)$ (Definition \ref{def:coherent}) and $A$ is a large $(K,3)$-fold solution-free set (Definition \ref{def:K-fold}) for an appropriate $K\in \NN$, Construction \ref{const:dil} can be used to give lower bounds on running times of the form $M_H(n)\geq n^{2-o(1)}$. 

\begin{thm} \label{thm:coherent almost quad}
    Suppose that $4\leq k\in \NN$ and $H$ is a graph with $V(H)=[k]$ such that for all $\tau\in \NN$, the simple $H$-chain $(H_i,e_i)_{i\in[\tau]}$ is proper. 
    If $H-e$ is Behrendian for any choice of $e\in E(H)$, then we have that $M_H(n)\geq n^{2-O(1/\sqrt{\log n})}=n^{2-o(1)}$.
\end{thm}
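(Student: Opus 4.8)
The plan is to apply the dilation chain construction of Construction~\ref{const:dil} with the set of dilations taken to be a large $(2k^2,3)$-fold solution-free set, as produced by Theorem~\ref{thm:behrend extended}. Fix $n$ large and choose a prime $p$ with $p\geq k^3$ and $p=\Theta(n)$ (implied constants here and throughout may depend on $k$); by Theorem~\ref{thm:behrend extended} there is a $(2k^2,3)$-fold solution-free set $A\subseteq\ZZ_p\setminus\{0\}$ with $|A|\geq p^{1-C/\sqrt{\log p}}$ for some $C=C(k)$. Set $\cH=\cH_{\mathrm{Dil}}(H;A,p)=\{(H^a_i,e^a_i)_{i\in[\tau]}:a\in A\}$ with $\tau=\floor{(p-3)/(k-2)}$. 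Each member of $\cH$ is a simple $H$-chain and hence proper by hypothesis. I would first show that $\cH$ is \emph{strongly proper} in the sense of Definition~\ref{def:stronger-proper}: condition~\ref{chaincol:strong proper edge} (pairwise edge-disjointness of the $\final{G^a}_H$) is exactly Corollary~\ref{cor: dilation unique edge}, since $A$ is in particular $(k,2)$-fold solution-free. The substance of the argument is condition~\ref{chaincoll:strong proper H-e}.

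The hard part is to show that for any $e\in E(H)$ and any copy $F$ of $H-e$ in $\hat G:=\bigcup_{a\in A}\final{G^a}_H$ there is some $b\in A$ with $F\subseteq\final{G^b}_H$. Colour each edge $g\in E(F)$ by the unique $a(g)\in A$ with $g\in E(\final{G^{a(g)}}_H)$, which is well defined by Corollary~\ref{cor: dilation unique edge}. Since $F=H-e$ is Behrendian by hypothesis, it is connected and has no isolated vertices, so if this colouring were monochromatic with colour $b$ then $F\subseteq\final{G^b}_H$, as required. Suppose then it is not monochromatic. By Definition~\ref{def:coherent} there is a non-monochromatic cycle $C$ in $F$ that is the union of at most three monochromatic paths. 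Writing the vertices of $\hat G$ as $v_\lambda,w_\lambda$ with $\lambda\in\ZZ_p\setminus\{0\}$, Observation~\ref{obs:edge equation} gives that each edge $g=u_\mu u_{\mu'}$ of $C$ satisfies $\mu'-\mu\equiv\gamma_g\,a(g)\pmod p$ for some $\gamma_g\in\ZZ$ with $0<|\gamma_g|\leq k$. Traversing $C$ once and summing, $0\equiv\sum_{g\in E(C)}\gamma_g\,a(g)\pmod p$. Grouping the edges of $C$ by the (at least two, at most three) distinct colours $a_1,\dots,a_r$ occurring on $C$, this reads $\sum_{j=1}^r\beta_j a_j\equiv 0\pmod p$ with $\beta_j:=\sum_{g:\,a(g)=a_j}\gamma_g\in\ZZ$ and $\sum_{j=1}^r|\beta_j|\leq k\,|E(C)|\leq k^2$. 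As $A$ is $(k^2,3)$-fold solution-free, $(a_1,\dots,a_r)$ is a \emph{trivial} solution of $\sum_j\beta_j x_j=0$; since the $a_j$ are pairwise distinct, every block of the associated partition must be a singleton, forcing $\beta_j=0$ for all $j$.

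To complete the verification of condition~\ref{chaincoll:strong proper H-e} I would rule this out. For a fixed colour $a_j$, the $a_j$-coloured edges of $C$ form a single subpath $Q_j$ of $C$ (the colour $a_j$ occurs on one or two of the monochromatic paths constituting $C$, but not on all three, as $C$ is non-monochromatic), and by Lemma~\ref{lem:final graph proper} every vertex of $Q_j$ lies in the vertex set $V^{a_j}$ of the chain indexed by $a_j$. Summing the index differences along $Q_j$ gives $\beta_j\equiv\lambda''-\lambda'\pmod p$, where $u_{\lambda'},u_{\lambda''}$ are the two distinct endpoints of $Q_j$; so $\beta_j=0$ would force $\lambda'=\lambda''$ in $\ZZ_p$, contradicting that distinct vertices of $V^{a_j}$ have distinct indices. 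This contradiction shows the colouring is monochromatic, establishing condition~\ref{chaincoll:strong proper H-e} and hence that $\cH$ is strongly proper, so proper by Lemma~\ref{lem:strong implies proper}.

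It then remains to assemble the pieces. The collection $\cH$ consists of simple chains and, since $A$ is $(2k^2,2)$-fold solution-free, it satisfies the disjointness condition~\eqref{eq:additional} by Lemma~\ref{lem:dil start end}; hence Lemma~\ref{lem:linking chains plus} yields a proper $H$-chain of length at least $\tau|A|$ on at most $2v(H)^2(2p-2)$ vertices. Applying Lemma~\ref{lem:chain_runningtime} with $N:=2v(H)^2(2p-2)$ gives $M_H(N)\geq\tau|A|$, and since $\tau=\Theta(p)$ and $|A|\geq p^{1-C/\sqrt{\log p}}$ we obtain $M_H(N)\geq p^{2-O(1/\sqrt{\log p})}=N^{2-O(1/\sqrt{\log N})}$ (constant factors being absorbed into the exponent error term). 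As $M_H$ is non-decreasing -- adding an isolated vertex to a starting graph leaves the running time unchanged -- and one may choose the prime $p$ so that $N=\Theta(n)$ with $N\leq n$, we conclude $M_H(n)\geq M_H(N)\geq n^{2-O(1/\sqrt{\log n})}=n^{2-o(1)}$. The main obstacle is the middle two paragraphs: extracting from a hypothetical non-monochromatic copy of $H-e$ a non-trivial low-coefficient linear relation among the dilations, and checking that the degenerate case of all-zero coefficients is incompatible with the injectivity of the dilated indexing; the rest is bookkeeping with the lemmas already established.
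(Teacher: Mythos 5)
Your proposal is correct and follows essentially the same route as the paper: dilation chains with a $(2k^2,3)$-fold solution-free (Behrend-type) dilation set, strong properness via colouring edges of a putative copy of $H-e$ by their chain and using the Behrendian property to extract a cycle that is a union of at most three monochromatic paths, yielding a bounded-coefficient relation among at most three dilations that contradicts solution-freeness, followed by Lemma~\ref{lem:dil start end}, Lemma~\ref{lem:linking chains plus} and Lemma~\ref{lem:chain_runningtime}. The only (cosmetic) difference is that you first deduce from triviality that all colour-class coefficients $\beta_j$ vanish and then contradict this by the telescoping endpoint argument, whereas the paper shows each path's coefficient is non-zero up front; both are equivalent, and your grouping by colour even handles the case of two like-coloured paths slightly more explicitly.
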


Before proving Theorem \ref{thm:coherent almost quad}, we discuss its implications. In Lemma \ref{lem: H minus coherent}, we showed that if $H$ is an odd wheel graph, contains the square of a Hamilton cycle, or $\delta(H)\geq k/2+1$, then $H-e$ is Behrendian for any choice of $e\in E(H)$. In the latter two cases, the graphs are $(2,1)$-inseparable (Observation \ref{obs:mindegree_inseparable and square ham}) and so simple chains of arbitrary length exist for $H$ by  Observation \ref{obs: disjoint edges} and Lemma \ref{lem:inseparableimpliesproper} implies that these chains are proper. In the case that $H$ is an odd wheel with at least $8$ vertices, Lemma \ref{lem:wheel simple} gives the existence of proper simple chains of arbitrary length. Therefore in all these cases of $H$, the conditions of Theorem \ref{thm:coherent almost quad} are satisfied and thus Theorems \ref{thm;dense quad}  (part \eqref{thm: min deg almost quad}), \ref{thm: square ham cycle} and the lower bound of Theorem \ref{thm:wheel} are all corollaries. 

\begin{proof}[Proof of Theorem \ref{thm:coherent almost quad}]
    Let $H$ be a graph with $V(H)=[k]$ as in the statement of the theorem and let $n$ be sufficiently large. Let $p$ be a prime such that $n/8k^2\leq p \leq n/4k^2$ (which exists by Bertrand's postulate/Chebyshev's Theorem). Next we take $A\subset \ZZ_p\setminus \{0\}$ to be a $(2k^2,3)$-fold solution-free subset of size $p^{1-O(1/\sqrt{\log p})}$, as guaranteed by Theorem \ref{thm:behrend extended}, and take 
    \[\cH:=\cH_{\mathrm{Dil}}(H;A,p)=\{(H^a_i,e^a_i)_{i\in[\tau]}:a\in A\},\]
    to be the collection of chains given by Construction \ref{const:dil}, with vertex set $V
    \cup W$  as in the construction.  The chains in $\cH$ are simple and thus proper by assumption. 
    For each $a\in A$, let  $G^a=\bigcup_{i\in [\tau]}H^a_i$ be the underlying graph of the chain. Then we have that 
	\begin{equation}\label{eq:additional 3}
		\bigcup_{i=1}^{k}V(H^a_i) \cap V(G^b) = \varnothing \text{ and } \bigcup_{i=\tau-k}^\tau V(H^a_i) \cap V(G^b) = \varnothing \text{ for } a\neq b\in A,
		\end{equation}
by Lemma \ref{lem:dil start end} and the fact that $A$ is $(2k^2,3)$-fold solution-free (and so $(2k^2,2)$-fold solution-free). Therefore, by Lemma \ref{lem:linking chains plus}, if $\cH$ is a proper collection of simple $H$-chains then there is a proper chain on at most $2k^2(2p-2)\leq n$ vertices, of length at least \[\tau |A|=\left\lfloor\frac{p-3}{k-2}\right\rfloor p^{1-O(1/\sqrt{\log p})}=n^{2-O(1/\sqrt{\log n})},\]
which in turn proves the theorem via an application of Lemma \ref{lem:chain_runningtime}. 

Thus it remains to prove that $\cH$ is a proper collection of simple $H$-chains. We will in fact show that the collection is  strongly proper (Definition \ref{def:stronger-proper}) which suffices by Lemma \ref{lem:strong implies proper}. The collection satisfies property \ref{chaincol:strong proper edge} (by Corollary \ref{cor: dilation unique edge}) of being strongly proper. We thus only need to show property \ref{chaincoll:strong proper H-e} of Definition \ref{def:stronger-proper} and so we fix some $e\in E(H)$ and a copy  $F$ of $H-e$ in $\hat G:=\bigcup_{a\in A}\final{G^a}_H$. We will show that there is a  $b\in A$ such that $F\subseteq \final{G^b}_H$. 

Colour the edges of $F$ with colours from $A$ such that each edge $g\in E(F)$ receives the colour $a\in A$ such that $g\in E(\final{G^{a}}_H)$, noting that there is just one choice of $a$ due to Corollary \ref{cor: dilation unique edge}. Supposing for a contradiction that $F$ is not contained in $\final{G^b}_H$ for some $b\in A$, we have that this colouring is non-monochromatic. Therefore, as $F$ is Behrendian by assumption, there is some non-monochromatic cycle $C$ which is a subgraph of $F$, such that $C$ is the union at most three monochromatic paths. 

Let  $P_1$ be some monochromatic path featuring in $C$ with the edges of $P_1$ belonging to $\final{G^{a_1}}_H$. Suppose that 
the endpoints of $P_1$ are $u_{\rho}\in \{v_{\rho},w_{\rho}\}$ and $u_{\mu}\in \{v_{\mu},w_{\mu}\}$ with $\rho, \mu \in [p-1]$. Note that $\rho \neq \mu$ as $P_1$ is a path in $\final{G^{a_1}}_H$ which has vertex set $V^{a_1}$ which contains no pairs $v_\lambda,w_\lambda$ with the same index. Now we claim that there is some $\beta_1\neq 0$ with $|\beta_1|\leq k^2$ such that $\mu-\rho= \beta_1a_1 \mbox{ mod } p$.
Indeed, we can label the vertices of the path $P_1$ as $u_{\lambda_1},\ldots,u_{\lambda_{\ell}}$ 
with $u_{\lambda_j}\in \{v_{\lambda_j},w_{\lambda_j}\}$ for $j=1,\ldots,\ell$ and $\lambda_1=\rho, \lambda_\ell=\mu$. 
Note that we necessarily have that $2\leq \ell \leq k$ as $P$ is a path contained in $F$. By Observation \ref{obs:edge equation}, we have that for $j=1,\ldots,\ell-1$, there is some $\alpha_j$ with $|\alpha_j|\leq k$ such that $\lambda_{j+1}-\lambda_j=\alpha_ja_1 \mbox{ mod } p$. Hence, again modulo $p$, we have that 
\[\mu-\rho=\lambda_\ell-\lambda_1=(\lambda_\ell-\lambda_{\ell-1})
+\ldots+(\lambda_2-\lambda_1)=(\alpha_{\ell-1}+\ldots+\alpha_1)a_1.\]
Therefore, we can fix $\beta=\sum_{j=1}^{\ell-1}\alpha_j$ and we have that $|\beta_1|\leq (\ell-1)\max_{j}|\alpha_j|\leq (\ell-1)k\leq k^2$ and $\beta_1\neq 0$ as $\rho \neq \mu$. 

If $C$ is the union of two monochromatic paths, then we have some $a_2\neq a_1$ and a path $P_2$ in $\final{G^{a_2}}_H$ with endpoints $u_\rho$ and $u_\mu$ and so the above argument gives some $\beta_2\neq 0$ with $|\beta_2|\leq k^2$ such that $\mu-\rho =\beta_2a_2 \mbox{ mod } p$. However, then we get that $(a_1,a_2)\in A^2$ is a solution to the equation $\beta_1x_1-\beta_2 x_2=0 \mbox{ mod } p$. This solution is non-trivial as $\beta_1,\beta_2\neq 0$ and $a_1\neq a_2$. This contradicts that $A$ is $(k^2,2)$-fold solution-free. Similarly, if $C$ is the union of 3 monochromatic paths $P_1,P_2,P_3$, then we have 3 distinct indices $\rho,\mu,\nu\in [p-1]$ and integer coefficients $\beta_1,\beta_2, \beta_3\neq 0$ with $|\beta_i|\leq k^2$ for $i\in [3]$, such that 
\[\mu-\rho=\beta_1a_1 \mbox{ mod } p, \qquad \rho-\nu=\beta_2a_2 \mbox{ mod } p, \qquad \nu-\mu=\beta_3a_3 \mbox{ mod } p.\]
This contradicts that $A$ is $(k^2,3)$-fold solution-free as we have that $(a_1,a_2,a_3)\in A^3$ forms a solution to the equation $\beta_1x_1+\beta_2x_2+\beta_3x_3=0 \mbox{ mod } p$, which is non-trivial due to the fact that the $a_i$ are distinct and the $\beta_i$ are non-zero. This shows that no such non-monochromatic cycle $C$ can exist and so $F$ must indeed be contained in $\final{G^b}_H$ for some $b\in A$, establishing property \ref{chaincoll:strong proper H-e} of being a strongly proper collection, and completing the proof.
    \end{proof}

\subsection{Constructing bipartite dilation chains} \label{sec:bip dil chains}

In the case that $H$ is bipartite, we utilise the bipartite structure for our collection of chains. We emphasise that the two vertex sets $V_X$ and $V_Y$ appearing in the bipartite Construction \ref{const: bip dil} below are \emph{not} analogous to the two sets $V$ and $W$ appearing in Construction \ref{const:dil} above. Indeed the sets $V_X$ and $V_Y$ correspond to the two parts of the bipartition of the graph $H$ whilst the set $W$ in Construction \ref{const:dil} was introduced to separate the ends of each chain from the other chains. We could add sets $W_X$ and $W_Y$ to Construction \ref{const: bip dil} below to achieve the same purpose. This would lead to a construction that can handle bipartite graphs $H$ that are not $(1,1,1)$-inseparable but nonetheless have proper simple chains (via an application of Lemma \ref{lem:linking chains plus}). However, as we will only apply the  Construction \ref{const: bip dil} to $(1,1,1)$-inseparable graphs, we omit the extra sets $W_X$ and $W_Y$ for simplicity.

\vspace{2mm}

\begin{const}[Collection of dilation chains - Bipartite case] \label{const: bip dil}
 Let $4\leq k \in \NN$ and suppose $H$ is a $k$-vertex bipartite graph with vertex parts $X,Y$ such that   $V(H)=X\cup Y\subseteq[2k]$ with $\{1,2k-1\}\subseteq X\subseteq \{1,3,\ldots,2k-1\}$,  $\{2,2k\}\subseteq Y\subseteq \{2,4,\ldots,2k\}$ and both  $e':=\{1,2\}$ and $f':=\{k-1,k\}$ are  edges of $H$.   Moreover let $m\in \NN \cup \{0\}$, $p\in \NN$  a prime with $p\geq m+2k^3$ and  $A\subseteq \ZZ_p\setminus {0}$ a  set of \emph{dilations}. Then fixing $\tau:=\floor{\tfrac{p-1-2k-m}{2k-2}}$,  the collection  \[\cH_{\mathrm{BipDil}}(H;A,p,m)=\{(H^a_i,e^a_i)_{i\in[\tau]}:a\in A\}\] of $H$-chains is defined as follows. 
    Let $V_X=\{x_1,\ldots,x_{p-1}\}$ and $V_Y=\{y_1,\ldots,y_{p-1}\}$ be two vertex sets indexed by $\ZZ_p\setminus \{0\}$ (with indices always considered modulo $p$). For 
    $a\in A$ and $i\in [\tau]$ the graph $H_i^a$ is the image of the graph isomorphism  $\varphi^a_i : H\rightarrow H^a_i$ defined via the map
		\begin{equation}
    \varphi^a_i(j)=\begin{cases}
		x_{a\cdot((i-1)(2k-2)+j)}	, & \text{ if } j\in X\subseteq [2k]  ; \\
           y_{a\cdot(i(2k-2)+j+m)}, & \text{ if } j\in Y\subseteq [2k]. 
		 \end{cases}
      \end{equation}
      We further define $e^a_i=x_{a\cdot(i(2k-2)+1)}y_{a\cdot((i+1)(2k-2)+2+m)}\in E(H^a_i)$ for $i\in
      [\tau]$. 
\end{const}

\vspace{2mm}

As with the non-bipartite Construction \ref{const:dil}, we have that each dilation chain $(H_i^a,e_i^a)_{i\in[\tau]}$  from Construction \ref{const: bip dil} is a simple $H$-chain of length $\tau$  on vertex set $V_X^a\cup V_Y^a$ where
\[
   V_X^a:=\{x_{a\ell}:\ell\in [\tau(2k-2)+2] \mbox{ and } \ell\mbox{ mod }(2k-2) \in X\} \]
   and \[V_Y^a:=\{y_{a(\ell+2k-2+m)}:\ell\in [\tau(2k-2)+2] \mbox{ and } \ell\mbox{ mod }(2k-2) \in Y\}.\] 
   We again take the canonical choice of origin for the chain  $(H_i^a,e_i^a)_{i\in[\tau]}$ to be $e_0^a:=x_ay_{a(2k+m)}=\phi_1^a(e')$.  We also note that,  taking $G^a=\bigcup_{i\in [\tau]}H^a_i$ to be the underlying graph for each chain $(H_i^a,e_i^a)_{i\in [\tau]}$ with $a\in A$, we have that $\bigcup_{a\in A}G^a$ is bipartite, with each edge between a vertex in $V_X$ and $V_Y$. 

   Finally, we remark about the shift in indices for the vertices in $V_Y$. Indeed, compared to their $V_X$ counterparts, we have that the vertices in $V_Y$ have all their indices translated by a factor of $2k-2+m$ before then being multiplied by some $a\in A$. The reason for this is highlighted by the following observation, which is the bipartite counterpart of Observation \ref{obs:edge equation}. 

   \begin{obs} \label{obs: dil edge equation bip}
       Suppose $H$ is $(1,1,1)$-inseparable, $\cH=\cH_{\mathrm{BipDil}}(H;A,p,m)$ and $g=x_{\lambda}y_{\mu}$ such that $\lambda,\mu \in \ZZ_p\setminus \{0\}$. If $G^a$ is the underlying graph for the chain $(H^a_i,e^a_i)_{i\in [\tau]}\in \cH$   and $g\in E(\final{G^a}_H)$, then there exists  $\alpha\in \{m+1,m+3,\ldots,m+4k-3\}$ such that \[\mu-\lambda=\alpha a \mbox{ mod } p.\] 
  \end{obs}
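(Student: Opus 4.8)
The plan is to mirror the proof of the non-bipartite analogue, Observation~\ref{obs:edge equation}, while carefully tracking the asymmetric index shift that Construction~\ref{const: bip dil} places between the two sides of the bipartition. First I would observe that each chain $(H^a_i,e^a_i)_{i\in[\tau]}$ in $\cH_{\mathrm{BipDil}}(H;A,p,m)$ is a simple $H$-chain, so by Lemma~\ref{lem:inseparableimpliesproper} (using that $H$ is $(1,1,1)$-inseparable) it is proper. Lemma~\ref{lem:final graph proper} then gives $\final{G^a}_H=\bigcup_{i\in[\tau]}\final{H^a_i}_H$, so the edge $g=x_\lambda y_\mu$ lies in $E(\final{H^a_{i_*}}_H)$ for some $i_*\in[\tau]$.

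Next I would use that the $H$-process never changes the vertex set, whence $V(\final{H^a_{i_*}}_H)=V(H^a_{i_*})=\varphi^a_{i_*}(V(H))$; in particular both endpoints $x_\lambda$ and $y_\mu$ of $g$ lie in $V(H^a_{i_*})$. Writing $x_\lambda=\varphi^a_{i_*}(j_X)$ and $y_\mu=\varphi^a_{i_*}(j_Y)$, and using that $\varphi^a_{i_*}$ maps $X$ into $V_X$ and $Y$ into $V_Y$ with $V_X\cap V_Y=\varnothing$, one gets $j_X\in X$ and $j_Y\in Y$. Reading off the defining formula of $\varphi^a_{i_*}$ from Construction~\ref{const: bip dil} yields $\lambda\equiv a\bigl((i_*-1)(2k-2)+j_X\bigr)$ and $\mu\equiv a\bigl(i_*(2k-2)+j_Y+m\bigr)\pmod p$, and subtracting gives $\mu-\lambda\equiv a\alpha\pmod p$ with $\alpha:=(2k-2)+(j_Y-j_X)+m$.

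It then remains to check that $\alpha$ lies in the claimed arithmetic progression. Since $X\subseteq\{1,3,\dots,2k-1\}$ and $Y\subseteq\{2,4,\dots,2k\}$, the integer $j_Y-j_X$ is odd and lies in $[3-2k,\,2k-1]$; hence $(2k-2)+(j_Y-j_X)$ is odd and lies in $[1,4k-3]$, i.e.\ it belongs to $\{1,3,\dots,4k-3\}$, and therefore $\alpha\in\{m+1,m+3,\dots,m+4k-3\}$, as required.

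I do not anticipate a genuine obstacle: this is essentially a bookkeeping exercise. The step most prone to error is handling the asymmetric shift — the $Y$-vertices of $H^a_i$ carry the extra summand $(2k-2)+m$ relative to the $X$-vertices — since it is precisely this shift that forces $\alpha-m$ to be odd and at least $1$, and that parity-and-size constraint is exactly what later allows the construction to be paired with the $(2(m+2K),3)$-fold solution-free $K$-fold-Sidon-type sets of Theorem~\ref{thm: sidon extended}, whose defining equations have coefficients of a fixed parity bounded away from $0$. One should also remember that the localisation of $g$ to a single copy $\final{H^a_{i_*}}_H$ genuinely uses the $(1,1,1)$-inseparability hypothesis (via properness): without it $\final{G^a}_H$ could contain edges not accounted for by any single $H^a_i$.
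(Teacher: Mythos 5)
Your proposal is correct and follows essentially the same route as the paper's proof: localise $g$ to a single $\final{H^a_{i_*}}_H$ via properness of the simple chain (Lemma~\ref{lem:inseparableimpliesproper}) and Lemma~\ref{lem:final graph proper}, read off the index formulas from $\varphi^a_{i_*}$ to get $\alpha=(2k-2)+j_Y-j_X+m$, and finish with the same parity and range check $j_Y-j_X\in\{3-2k,\ldots,2k-1\}$ odd. The only difference is that you make explicit the (correct) bookkeeping step that the $H$-process preserves vertex sets so both endpoints of $g$ lie in $V(H^a_{i_*})$, which the paper leaves implicit.
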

    \begin{proof}
 As $H$ is $(1,1,1)$-inseparable, the simple chain $(H_i^a,e_i^a)_{i\in [\tau]}$ is proper by Lemma \ref{lem:inseparableimpliesproper}. Therefore Lemma \ref{lem:final graph proper} gives that there is some $i_*\in [\tau]$ such that $g\in E(\final{H_{i_*}^a})$. This in turn implies that there are $j_x\in X\subseteq [2k]$ and $ j_y\in Y\subseteq  [2k]$ such that $\lambda=a((i_*-1)(2k-2)+j_x) \mbox{ mod } p$ and $\mu=a(i_*(2k-2)+j_y+m) \mbox{ mod } p$ and so 
 \[\mu-\lambda=a(i_*(2k-2)+j_y+m) - a((i_*-1)(2k-2)+j_x)=
 a((2k-2)+j_y-j_x+m) \mbox{ mod }p.\]
Now defining $\alpha:=(2k-2)+j_y-j_x+m$, we have that  $\alpha\neq m \mbox{ mod }2$ as $j_y\in Y\subseteq \{2,4,\ldots,2k\}$ and $j_x\in X \subseteq \{1,3,\ldots,2k-1\}$ have different parities.  Moreover $\alpha\in \{m+1,m+3,\ldots,m+4k-3\}$ as required because 
$-2k+3\leq j_y-j_x\leq 2k-1$ with the minimum being achieved when $j_y=2$ and $j_x=2k-1$ and the maximum when $j_y=2k$ and $j_x=1$. 
\end{proof}

   The shift of $2k-2+m$ in the $V_Y$ indices actually occurs for two reasons. Firstly, the shift of $2k-2$ guarantees that edges that appear in $\final{H^a_i}_H$ for some $a\in A$ and $i\in [\tau]$ have \textit{`positive slope'}, that is, that the $\alpha$ in Observation \ref{obs: dil edge equation bip} is positive.
    This turns out to be crucial in our analysis showing that there are no unwanted copies of $F=H-e$ for any $e\in E(H)$, when proving that collections of dilation chains are proper. The reason for the extra shift by $m$ is more technical and stems from our need to consider equations in which there is also a lower bound on the size of the coefficients as in Theorem \ref{thm: sidon extended}. We also remark that we place $X$ on odd integers and $Y$ on even integers, in order to have $\alpha \neq m \mbox{ mod }2$ which is needed to apply Theorem \ref{thm: sidon extended}.  For other applications of Construction \ref{const: bip dil} such as Theorem \ref{thm:complete bip} or indeed if the sets $A$ in Theorem \ref{thm: sidon extended} can be replaced by $(K,4)$-fold solution-free sets, we can simply take $m=0$ when appealing to Construction \ref{const: bip dil} and take $V(H)=X\cup Y=[k]$, adjusting the construction accordingly. 

  As in Corollary \ref{cor: dilation unique edge} in the non-bipartite case, Observation \ref{obs: dil edge equation bip} has the  simple consequence that under mild assumptions on the dilation set $A$, the edges that feature in the process on these chains belong to a unique chain. 

   \begin{cor} \label{cor:bip dilation unique edge}
       Let $H$ be $(1,1,1)$-inseparable and $\cH=\cH_{\mathrm{BipDil}}(H;A,p,m)$  a collection of $H$-chains such that for  $a\in A$, the chain  $(H^a_i, e^a_i)_{i\in [\tau]}$   has  underlying graph $G^a=\bigcup_{i\in [\tau]}H^a_i$.   If $A$ is $(m+4k,2)$-fold solution-free, then the graphs $\{\final{G^a}_H:a\in A\}$ are pairwise edge-disjoint.   
\end{cor}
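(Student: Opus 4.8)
The plan is to follow the template of the proof of Corollary~\ref{cor: dilation unique edge} in the non-bipartite case, now invoking Observation~\ref{obs: dil edge equation bip} in place of Observation~\ref{obs:edge equation}. The idea is that if a single edge $g$ were to appear in the final graphs of two distinct chains, then the two slope equations attached to $g$ would exhibit a non-trivial two-variable linear relation among the dilations with bounded coefficients, contradicting the solution-freeness hypothesis on $A$.

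Concretely, I would first pin down the shape of an edge of $\final{G^a}_H$. Since $H$ is $(1,1,1)$-inseparable, each simple chain $(H^a_i,e^a_i)_{i\in[\tau]}$ is proper by Lemma~\ref{lem:inseparableimpliesproper}, and the underlying graph $\bigcup_{a\in A}G^a$ is bipartite with every edge running between $V_X$ and $V_Y$. Hence by Lemma~\ref{lem:staying bipartite} every $\final{G^a}_H$ is bipartite with respect to the same bipartition $(V_X,V_Y)$, so any $g\in E(\final{G^a}_H)$ has the form $g=x_\lambda y_\mu$ with $\lambda,\mu\in\ZZ_p\setminus\{0\}$.

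Next, suppose for contradiction that $g=x_\lambda y_\mu\in E(\final{G^a}_H)\cap E(\final{G^b}_H)$ for some $a\neq b\in A$. Applying Observation~\ref{obs: dil edge equation bip} to each of $a$ and $b$ produces coefficients $\alpha_a,\alpha_b\in\{m+1,m+3,\ldots,m+4k-3\}$ with $\mu-\lambda\equiv\alpha_a a\equiv\alpha_b b\pmod p$, so that $(a,b)\in A^2$ is a solution in $\ZZ_p$ of the equation $\alpha_a x_1-\alpha_b x_2=0$. Its coefficients satisfy $|\alpha_a|,|\alpha_b|\le m+4k-3<m+4k$, so it remains to check that this solution is non-trivial in the sense of Section~\ref{sec:additive}: the partition $[2]=\{1,2\}$ would force $\alpha_a-\alpha_b=0$ together with $a=b$, while the partition $[2]=\{1\}\cup\{2\}$ would force $\alpha_a=0$ (and $\alpha_b=0$); both are impossible since $a\neq b$ and $\alpha_a,\alpha_b\ge m+1\ge 1$. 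This contradicts $A$ being $(m+4k,2)$-fold solution-free, and the claimed pairwise edge-disjointness follows.

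The only point requiring any care is the non-triviality verification, which is precisely why the hypothesis asks for $(m+4k,2)$-fold solution-freeness rather than freeness from a single fixed equation; everything else is routine and directly parallel to Corollary~\ref{cor: dilation unique edge}. I do not anticipate a genuine obstacle: Construction~\ref{const: bip dil} was engineered (via the $2k-2+m$ index shift and the placement of $X$ and $Y$ on integers of fixed parity, as discussed after Observation~\ref{obs: dil edge equation bip}) exactly so that the coefficients $\alpha_a$ land in the window $\{m+1,\ldots,m+4k-3\}$, making this argument go through cleanly.
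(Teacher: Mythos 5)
Your proposal is correct and follows essentially the same route as the paper: apply Observation \ref{obs: dil edge equation bip} to a shared edge $g=x_\lambda y_\mu$ to get $\mu-\lambda=\alpha_a a=\alpha_b b$ with $\alpha_a,\alpha_b\in\{m+1,\ldots,m+4k-3\}$, and conclude that $(a,b)$ would be a non-trivial solution to a two-variable equation with coefficients bounded by $m+4k$, contradicting $(m+4k,2)$-fold solution-freeness. Your explicit use of Lemma \ref{lem:staying bipartite} to rule out edges inside $V_X$ or $V_Y$, and your careful non-triviality check, are details the paper leaves implicit, but the argument is the same.
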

 \begin{proof}
     If we have an edge $g=x_{\lambda}y_{\mu} \in E(\final{G^a}_H)\cap E(\final{G^b}_H)$ then Observation \ref{obs: dil edge equation bip} implies that $\mu-\lambda=a\alpha=b\beta$ for $\alpha,\beta\in \{m+1,\ldots, m+4k-3\}$. This is prohibited for $a\neq b$ by the fact that $A$ has no non-trivial solutions to $\alpha x-\beta b=0$. 
   \end{proof}

As was the case for the proof of Theorem \ref{thm:coherent almost quad} using the  Construction \ref{const:dil} of dilation chains, proving lower bounds on running times using the bipartite Construction \ref{const: bip dil} will reduce to showing that certain configurations in the final graph $\hat G:=\bigcup_{a\in A}\final{G^a}_H$ lead to non-trivial solutions to equations. Before embarking on the full applications of Construction \ref{const: bip dil}, we show that using a set of dilations that is $(K,3)$-fold solution-free for an appropriate $K$ already greatly restricts the types of $C_4$ that we see in the final graph. 

\begin{lem} \label{lem:C4s in dil bip}
       Let $H$ be $(1,1,1)$-inseparable and $\cH=\cH_{\mathrm{BipDil}}(H;A,p,m)$  such that for  $a\in A$, the chain  $(H^a_i, e^a_i)_{i\in [\tau]}\in \cH$   has  underlying graph $G^a=\bigcup_{i\in [\tau]}H^a_i$.   If $A$ is $(2m+8k,3)$-fold solution-free, and $C$ is a copy of $C_4$ in $\hat G:=\bigcup_{a\in A}\final{G^a}_H$, then either $C$ is entirely contained in $\final{G^b}_H$  for some $b\in A$ or each edge of $C$ belongs to a distinct graph from  $\{\final{G^a}_H:a\in A\}$. 
\end{lem}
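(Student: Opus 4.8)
Lemma \ref{lem:C4s in dil bip} should follow the same template as the argument proving property \ref{chaincoll:strong proper H-e} in the proof of Theorem \ref{thm:coherent almost quad}: colour the edges of $C$ by which chain's final graph they belong to (this is well-defined by Corollary \ref{cor:bip dilation unique edge}, using that $A$ being $(2m+8k,3)$-fold solution-free is in particular $(m+4k,2)$-fold solution-free), and translate the combinatorial structure of a $C_4$ into an equation over $\ZZ_p$ that the dilation set is assumed to avoid. The plan is to suppose for a contradiction that the colouring of $E(C)$ is neither monochromatic nor rainbow, so that exactly two colours $a_1 \neq a_2$ appear, and show this forces a non-trivial solution to a $3$-variable equation with bounded coefficients.

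First I would set up notation: label the four vertices of $C$ cyclically, alternating between $V_X$ and $V_Y$ (this is forced since $\hat G$ is bipartite with all edges between $V_X$ and $V_Y$), as $x_{\lambda_1}, y_{\mu_1}, x_{\lambda_2}, y_{\mu_2}$ with edges $x_{\lambda_1}y_{\mu_1}$, $y_{\mu_1}x_{\lambda_2}$, $x_{\lambda_2}y_{\mu_2}$, $y_{\mu_2}x_{\lambda_1}$. By Observation \ref{obs: dil edge equation bip}, each edge $x_\lambda y_\mu$ lying in $\final{G^a}_H$ yields $\mu - \lambda = \alpha a \bmod p$ for some $\alpha \in \{m+1, m+3, \ldots, m+4k-3\}$; in particular $|\alpha| \leq m + 4k$. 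Assigning to the four edges coefficients $\alpha_1, \beta_1, \alpha_2, \beta_2$ and colours $c_1, d_1, c_2, d_2 \in A$ respectively, we get
\begin{equation*}
\mu_1 - \lambda_1 = \alpha_1 c_1, \qquad \mu_1 - \lambda_2 = \beta_1 d_1, \qquad \mu_2 - \lambda_2 = \alpha_2 c_2, \qquad \mu_2 - \lambda_1 = \beta_2 d_2,
\end{equation*}
all modulo $p$. Summing with alternating signs, $(\mu_1 - \lambda_1) - (\mu_1 - \lambda_2) + (\mu_2 - \lambda_2) - (\mu_2 - \lambda_1) = 0$, so
\begin{equation*}
\alpha_1 c_1 - \beta_1 d_1 + \alpha_2 c_2 - \beta_2 d_2 = 0 \pmod p,
\end{equation*}
an equation in (at most) four variables with each coefficient of absolute value at most $m + 4k$.

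Now if exactly two colours appear, I would do a short case analysis on how $\{c_1, d_1, c_2, d_2\}$ splits into the two classes. Opposite edges of $C$ have the same colour in two natural ways; the remaining configurations have one colour on two adjacent edges and the other colour on the other two. In each case, identifying the equal colour-variables collapses the four-variable relation to a relation in $\{c_1, c_2\}$ (two distinct values) or into a genuine three-variable equation, and one checks that the resulting equation with bounded coefficients has $(a_1, a_2)$ or $(a_1, a_2, \text{third colour})$ as a solution that is \emph{not} trivial in the sense of Section \ref{sec:additive} — here one must be slightly careful, because triviality requires a partition of the index set into blocks on each of which the coefficients sum to zero and the variables agree; since the two colour-values are distinct and the coefficients $\alpha_i, \beta_i$ are all at least $m+1 \geq 1$ in absolute value, no such partition can exist unless the coefficients within a block cancel, which forces a contradiction with the vertices of $C$ being distinct (this is the same mechanism as in the Sidon-set argument of Claim \ref{clm:C4s in cube ladders} and the three-path argument of Theorem \ref{thm:coherent almost quad}). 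So such a $C$ cannot exist, and we conclude $C$ is monochromatic or rainbow. The main obstacle, and the part requiring the most care, is verifying non-triviality in every sub-case of the two-colour split — one must keep track of which $\lambda_i, \mu_i$ are forced equal by a coincidence of colours and coefficients, and rule this out using that the four vertices $x_{\lambda_1}, x_{\lambda_2}, y_{\mu_1}, y_{\mu_2}$ are pairwise distinct; everything else is bookkeeping with Observation \ref{obs: dil edge equation bip}.
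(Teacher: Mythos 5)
Your overall strategy is the same as the paper's: colour the four edges of $C$ by the unique $a\in A$ with the edge in $\final{G^a}_H$ (Corollary \ref{cor:bip dilation unique edge}), convert each edge into a relation $\mu-\lambda=\alpha a \bmod p$ via Observation \ref{obs: dil edge equation bip}, take the alternating sum around the cycle to get a four-term relation, and, when the colouring is neither monochromatic nor rainbow, collapse it by colour classes into an equation in at most three variables with coefficients bounded by $2m+8k$, contradicting solution-freeness once non-triviality is checked. Two points need attention, one minor and one substantive. The minor one: the negation of ``monochromatic or rainbow'' for four edges allows \emph{three} distinct colours as well as two; your text first asserts ``exactly two colours'' and only later mentions a possible third, so the case enumeration should be cleaned up (the three-colour case is in fact easy, since then at least one colour class is a single edge and its collapsed coefficient is a single non-zero $\pm\alpha$).

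The substantive issue is your mechanism for non-triviality in the hardest two-colour sub-case, namely when the repeated colour sits on \emph{opposite} edges of $C$, say $x_{\lambda_1}y_{\mu_1}$ and $x_{\lambda_2}y_{\mu_2}$ have colour $b$ and the other two edges colour $b'$. The collapsed relation is $(\alpha_{11}+\alpha_{22})b-(\alpha_{12}+\alpha_{21})b'=0 \bmod p$, and triviality can only be avoided by showing $\alpha_{11}+\alpha_{22}\neq 0$ (and likewise for the other block). You propose to rule out cancellation ``using that the four vertices are pairwise distinct'', but distinctness of $\lambda_1,\lambda_2,\mu_1,\mu_2$ gives nothing here: if the coefficients from Observation \ref{obs: dil edge equation bip} were allowed to be negative, a $C_4$ alternating between two chains with $\alpha_{11}=-\alpha_{22}$ would produce a trivial collapsed equation without any vertex coincidence. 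What saves the argument is that every $\alpha$ produced by Observation \ref{obs: dil edge equation bip} lies in $\{m+1,m+3,\ldots,m+4k-3\}$ and is in particular \emph{positive} --- the ``positive slope'' property engineered by the index shift of $2k-2$ in Construction \ref{const: bip dil} --- so a sum of two of them cannot vanish. (Vertex distinctness is the right tool only for the adjacent-edge configurations, where the collapsed coefficients are differences such as $\alpha_{11}-\alpha_{12}=\mu_1-\mu_2$.) You did record the positive coefficient range in your setup, so the fix is available, but as written the plan would stall at precisely this sub-case, which is the crux of the lemma.
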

\begin{proof}
    Note that $\hat G$ is bipartite with parts $V_X$ and $V_Y$ due to Lemma \ref{lem:staying bipartite} as $\bigcup_{a\in A}G^a$ is bipartite and $H$ is $(1,1,1)$-inseparable. 
    Let $x_{\lambda_1},x_{\lambda_2}\in V_X$ and $y_{\mu_1},y_{\mu_2}\in V_Y$ be the vertices of $C$. 
    Further, for each choice of  $i,j\in \{1,2\}$  let $g_{ij}=x_{\lambda_i}y_{\mu_j}\in E(C)$ and  $a_{ij}\in A$ be such that $g_{ij}\in \final{G^{a_{ij}}}_H$, noting that the choice of $a_{ij}$ is unique due to Corollary \ref{cor:bip dilation unique edge}. By Observation \ref{obs: dil edge equation bip}, we have that for each choice of $i,j\in \{1,2\}$ there is some  $\alpha_{ij}\in [m+4k]$ with  $\mu_j-\lambda_i=\alpha_{ij}a_{ij} \mbox{ mod }p$. In turn, this gives that  
    \begin{equation} \label{eq: C4 dil sum}
       \alpha_{11}a_{11}-\alpha_{12}a_{12}+\alpha_{22}a_{22}-\alpha_{21}a_{21}=(\mu_1-\lambda_1)+(\lambda_1-\mu_2)+(\mu_2-\lambda_2)+(\lambda_2-\mu_1)=0\end{equation}
  with equalities being  modulo $p$. If the values of $a_{ij}$ are all the same or all different we are done, as this corresponds to the two situations in the statement of the lemma. For a contradiction then, suppose that there is some set $B\subseteq A$ with $|B|\in \{2,3\}$ and $a_{ij}\in B$ for all $i,j\in \{1,2\}$. This implies that 
  \begin{equation} \label{eq:beta b}
      \sum_{b\in B}\beta_b b=0 \mbox{ with } \beta_b:=\sum\left\{(-1)^{i+j}\alpha_{ij}:(i,j)\in \{1,2\}^2 \mbox{ and } a_{ij}=b\right\} \mbox{ for each } b\in B.
  \end{equation}
  Now note that $\beta_b$ is the sum of at most three values in $\{\alpha_{11},-\alpha_{12},-\alpha_{21},\alpha_{22}\}$, at most two of which have the same sign and so $|\beta_b|\leq 2\max_{ij}|\alpha_{ij}|\leq 2(2m+4k)=2m+8k$. Thus the collection $B$ forms a solution to a two or three variable equation modulo $p$ with coefficients bounded by $2m+8k$. This is a contradiction to the fact that $A$ is $(2m+8k,3)$-fold solution-free, as long as the solution given by $B$ is non-trivial. As all the values in $B$ are distinct, for non-triviality it suffices to show that we do not have  $\beta_b= 0$ for all $b\in B$. If there is some $b'$ such that $\beta_{b'}\in\{\alpha_{11},-\alpha_{12},-\alpha_{21},\alpha_{22}\}$  then certainly  $\beta_{b'}\neq 0$ as all the  $\alpha_{ij}$ are non-zero. Therefore the remaining case to consider is that $|B|=2$ and each element of $B$ is the sum of two values in $\{\alpha_{11},-\alpha_{12},-\alpha_{21},\alpha_{22}\}$. Letting $b\in B$ be such that $a_{11}=b$, we have that $\beta_b\in \{\alpha_{11}-\alpha_{12},\alpha_{11}-\alpha_{21},\alpha_{11}+\alpha_{22}\}$. Note that $\alpha_{11}-\alpha_{12}=\mu_1-\mu_2\neq 0$ as $\mu_1\neq \mu_2$. Similarly $\alpha_{11}-\alpha_{21}=\lambda_2-\lambda_1\neq 0$. Finally $\alpha_{11}+\alpha_{22}\neq 0$ as both $\alpha_{11},\alpha_{22}\geq 1$. This shows that the solution given by $B$ cannot be trivial and completes the proof.  
\end{proof}

We remark that the final argument of the proof of the previous lemma, where we showed that $\alpha_{11}+\alpha_{22}\neq 0$, is precisely where we need the `positive slope' property, namely that the $\alpha$ values produced by Observation \ref{obs: dil edge equation bip} are always positive. This is why we shift the indices for $V_Y$ by $2k-2$ and without this, we cannot guarantee that there are not copies of $C_4$ whose edges alternate between some pair of graphs $\final{G^a}_H$ and $\final{G^b}_H$.  Such copies of $C_4$ would then be troublesome in our proof showing that there are no  copies $F$ of  $H-e$ such that the edges of  $F$ span multiple graphs $\final{G^a}_H$. 

In general when appealing to Construction \ref{const: bip dil}, we will need to show that there are no such $F$ spanning multiple $\final{G^a}_H$. Whilst Lemma \ref{lem:C4s in dil bip} will be able to rule out many of these, the lemma says nothing about `rainbow' configurations of $F$ or $C_4$, where each edge belongs to a distinct $\final{G^a}_H$. In the next two sections, we use two different strategies to handle these. The first, for Sidonian graphs in Theorem \ref{thm:bip coherent run time}, uses a dilation set $A$ that also avoids certain equations in four variables. The second approach, used for complete bipartite graphs in proving Theorem \ref{thm:complete bip}, uses a random sparsification of the set of dilations in Lemma \ref{lem:C4s in dil bip}.
 
\subsection{Dilation chains for Sidonian graphs} \label{sec: dil bip coh}
We now apply Construction \ref{const: bip dil} to get lower bounds on running times for graphs $H$ such that $H-e$ is Sidonian for any choice of $e\in E(H)$, recalling the definition of Sidonian graphs from Definition \ref{def:bicoh}. 

\begin{thm} \label{thm:bip coherent run time}
    If $H$ is $(1,1,1)$-inseparable and $H-e$ is Sidonian for any choice of $e\in E(H)$, then we have that $M_H(n)\geq n^{3/2-o(1)}$.
\end{thm}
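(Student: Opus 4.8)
The plan is to apply the bipartite dilation chain construction (Construction~\ref{const: bip dil}) with a cleverly chosen dilation set $A$, following the same blueprint as the proof of Theorem~\ref{thm:coherent almost quad} but adapted to the bipartite/Sidonian setting. First I would fix $n$ large, pick a prime $p$ with $n/(ck^2)\leq p\leq n/(c'k^2)$ for suitable constants, and invoke Theorem~\ref{thm: sidon extended}: setting $K$ to be an appropriate even integer (roughly $4k$) and choosing the parameter $m$ from that theorem, we obtain a set $A\subset\ZZ_p\setminus\{0\}$ with $|A|\geq p^{1/2-\eps}$ that is simultaneously $(2(m+2K),3)$-fold solution-free and free of non-trivial solutions to all four-variable equations $\alpha_1x_1+\alpha_2x_2+\alpha_3x_3+\alpha_4x_4=0$ with $|\alpha_i|\in\{m+1,m+3,\ldots,m+K-1\}$. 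We then take $\cH:=\cH_{\mathrm{BipDil}}(H;A,p,m)$, noting that $H$ is $(1,1,1)$-inseparable so each simple chain in $\cH$ is proper by Lemma~\ref{lem:inseparableimpliesproper}, and $\bigcup_{a\in A}G^a$ is bipartite. If we can show $\cH$ is a proper collection of simple $H$-chains, then (since $\bigcup_{a\in A, i\in[\tau]}H^a_i$ is bipartite and $H$ is $(1,1,1)$-inseparable) Lemma~\ref{lem:linking chains} yields a proper chain of length at least $\tau|A|=\Omega(p)\cdot p^{1/2-\eps}=n^{3/2-o(1)}$ on at most $2v(H)^2\cdot(2p-2)\leq n$ vertices, and Lemma~\ref{lem:chain_runningtime} finishes the proof. (One should double check whether Lemma~\ref{lem:linking chains} suffices here since $H$ is $(1,1,1)$-inseparable and the underlying graph is bipartite — that is exactly the ``moreover'' clause of that lemma — so no appeal to Lemma~\ref{lem:linking chains plus} is needed, unlike in Theorem~\ref{thm:coherent almost quad} where $H$ was merely self-stable.)

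The heart of the argument is verifying that $\cH$ is proper, and as in Theorem~\ref{thm:coherent almost quad} I would aim to prove the stronger statement that $\cH$ is \emph{strongly proper} (Definition~\ref{def:stronger-proper}), which suffices by Lemma~\ref{lem:strong implies proper}. Condition~\ref{chaincol:strong proper edge} (the graphs $\final{G^a}_H$ are pairwise edge-disjoint) follows from Corollary~\ref{cor:bip dilation unique edge}, since $A$ is $(2(m+2K),3)$-fold solution-free and hence $(m+4k,2)$-fold solution-free. For condition~\ref{chaincoll:strong proper H-e}, fix $e\in E(H)$ and a copy $F$ of $H-e$ in $\hat G:=\bigcup_{a\in A}\final{G^a}_H$; we must find $b\in A$ with $F\subseteq\final{G^b}_H$. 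Colour each edge $g\in E(F)$ by the unique $a\in A$ with $g\in E(\final{G^a}_H)$ (uniqueness by Corollary~\ref{cor:bip dilation unique edge}). Since $F$ is Sidonian, if this colouring is non-monochromatic then there is a non-monochromatic copy $C$ of $C_4$ in $F$. By Lemma~\ref{lem:C4s in dil bip} (applicable since $A$ is $(2m+8k,3)$-fold solution-free, which is implied by being $(2(m+2K),3)$-fold solution-free for $K\geq 4k$), a non-monochromatic $C_4$ in $\hat G$ must be \emph{rainbow}: its four edges lie in four \emph{distinct} graphs $\final{G^{a_{11}}}_H,\final{G^{a_{12}}}_H,\final{G^{a_{21}}}_H,\final{G^{a_{22}}}_H$. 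Writing the $C_4$ on vertices $x_{\lambda_1},x_{\lambda_2}\in V_X$, $y_{\mu_1},y_{\mu_2}\in V_Y$ and applying Observation~\ref{obs: dil edge equation bip} to each edge, we get coefficients $\alpha_{ij}\in\{m+1,m+3,\ldots,m+4k-3\}$ with $\mu_j-\lambda_i=\alpha_{ij}a_{ij}\pmod p$, and telescoping around the cycle gives
\begin{equation*}
\alpha_{11}a_{11}-\alpha_{12}a_{12}+\alpha_{22}a_{22}-\alpha_{21}a_{21}=0\pmod p.
\end{equation*}
Since the $a_{ij}$ are four distinct elements of $A$ and the coefficients have absolute value in $\{m+1,m+3,\ldots,m+4k-3\}\subseteq\{m+1,m+3,\ldots,m+K-1\}$ (choosing $K=4k-2$, even as required), this is a non-trivial solution to an equation of the forbidden form in Theorem~\ref{thm: sidon extended} — a contradiction. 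Hence the colouring is monochromatic, so all edges of $F$ lie in a single $\final{G^b}_H$, establishing~\ref{chaincoll:strong proper H-e}.

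The main obstacle, and the point requiring the most care, is threading the parameters correctly: Theorem~\ref{thm: sidon extended} gives a set avoiding four-variable equations only for coefficients in the \emph{window} $\{m+1,m+3,\ldots,m+K-1\}$, and one must check that the shift-by-$m$ convention built into Construction~\ref{const: bip dil} makes precisely these windowed coefficients appear — this is exactly why Construction~\ref{const: bip dil} was set up with the $+m$ offset and with $X$ on odd and $Y$ on even residues, so that Observation~\ref{obs: dil edge equation bip} produces $\alpha\in\{m+1,m+3,\ldots,m+4k-3\}$ of the correct parity. One also needs the ``positive slope'' feature (the $2k-2$ shift) so that Lemma~\ref{lem:C4s in dil bip} genuinely forces non-monochromatic $C_4$'s to be rainbow rather than two-coloured with alternating edges; a two-coloured $C_4$ would give a two-variable equation $\alpha a + \gamma b = 0$ that could potentially have trivial solutions and hence would not yield a contradiction. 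Finally, one must confirm that for $H$ as in the theorem a valid vertex ordering with $e'=\{1,2\}$, $f'=\{k-1,k\}$ edges and the parity constraints on $X,Y$ exists — this follows from $(1,1,1)$-inseparability via Observation~\ref{obs: disjoint edges}, taking a pair of vertex-disjoint edges $e',f'$ each crossing the bipartition and labelling accordingly. Everything else is bookkeeping analogous to the proof of Theorem~\ref{thm:coherent almost quad}.
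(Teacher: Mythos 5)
Your proposal is correct and follows essentially the same route as the paper's proof: bipartite dilation chains with a dilation set from Theorem \ref{thm: sidon extended}, strong properness via Corollary \ref{cor:bip dilation unique edge}, and the Sidonian property plus Lemma \ref{lem:C4s in dil bip} and Observation \ref{obs: dil edge equation bip} to turn a rainbow $C_4$ into a forbidden four-variable equation, finishing with Lemmas \ref{lem:linking chains} and \ref{lem:chain_runningtime}. The only deviation is the choice $K=4k-2$ instead of the paper's $K=4k$, which is immaterial since both make the window $\{m+1,m+3,\ldots,m+4k-3\}$ fit and keep $2(m+2K)\geq 2m+8k$.
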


    Any bipartite graph with parts $X,Y$ with $|X|=r\geq 3$, $|Y|=s\geq 3$  such that $d(x)\geq s/2+1$ for all $x\in X$ and $d(y)\geq r/2+1$ for all $y\in Y$, satisfies the conditions of Theorem \ref{thm:bip coherent run time} and so Theorem \ref{thm:bip dense} follows as a corollary. Indeed the fact that such graphs are $(1,1,1)$-inseparable was shown in Lemma \ref{lem:bip dense insep} and the Sidonian condition  was shown in Lemma \ref{lem:dense bip coh}. 

\begin{proof}
    Let $k=v(H)$ and $K=4k$. Moreover, let $\eps>0$ be arbitrary, and let $m\in \NN$ be the integer output by Theorem \ref{thm: sidon extended} with input $\eps$. 
    Next, let $n$ be sufficiently large and let $p$ be a prime such that $n/8k^2\leq p \leq n/4k^2$ (which exists by Bertrand's postulate/Chebyshev's Theorem). We take $A\subset \ZZ_p\setminus \{0\}$ to be a $(2(m+K),3)$-fold solution-free subset of size $p^{1/2-\eps}$, with no 
    non-trivial solutions to equations of the form 
    \begin{equation} \label{eq:c4 in proof bip coh} 
\alpha_1x_1+\alpha_2x_2+\alpha_3x_3+\alpha_4x_4=0  \end{equation}
    with $|\alpha_i|\in \{m+1,m+3,\ldots,m+K-1\}$ for $i\in [4]$.
    The existence of such a set $A$ is guaranteed by Theorem \ref{thm: sidon extended}. Finally, we take 
    \[\cH:=\cH_{\mathrm{BipDil}}(H;A,p,m)=\{(H^a_i,e^a_i)_{i\in[\tau]}:a\in A\},\]
    to be the collection of chains given by Construction \ref{const: bip dil}, with vertex set $V_X
    \cup V_Y$. Each chain in the collection $\cH$ is proper  by  Lemma \ref{lem:inseparableimpliesproper}   as $H$ is $(1,1,1)$-inseparable and  the chains  are simple. For each $a\in A$, let  $G^a=\bigcup_{i\in [\tau]}H^a_i$ be the underlying graph of the chain corresponding to $a\in A$. By Lemma \ref{lem:linking chains}, using that $H$ is $(1,1,1)$-inseparable and $\bigcup_{a\in A}G^a$ is bipartite, if $\cH$ is a proper collection of  $H$-chains then there is a proper chain on at most $2k^2(2p-2)\leq n$ vertices, of length at least \[\tau |A|=\left\lfloor\frac{p-1-2k-m}{2k-2}\right\rfloor p^{1/2-\eps}\geq n^{3/2-2\eps},\]
using in the last inequality that $n$ is sufficiently large in terms of $\eps,m,k$. As $\eps>0$ was arbitrary, the theorem follows via an application of Lemma \ref{lem:chain_runningtime}. 

As in the proof of Theorem \ref{thm:coherent almost quad}, we will show that $\cH$ is in fact strongly proper, which will conclude the proof. For property \ref{chaincol:strong proper edge}, we can appeal to Corollary \ref{cor:bip dilation unique edge} as $A$ is certainly $(m+4k,2)$-fold solution-free. For property \ref{chaincoll:strong proper H-e}, we fix some $e\in E(H)$ and a copy $F$ of $H-e$ in $\hat G:=\bigcup_{a\in A}\final{G^a}_H$. Colour the edges $g\in E(F)$ with elements in $A$ such that $g$ receives colour $a\in A$ if  $g\in E(\final{G^a}_H)$, noting that each edge receives one colour by Corollary \ref{cor:bip dilation unique edge}. If there is some $b\in A$ such that all edges of $F$ are coloured with $b$ then we have that $F\subseteq \final{G^b}_H$ and we are done. Suppose for a contradiction then, that $F$ is coloured in a non-monochromatic fashion.  Therefore, as $F$ is Sidonian by assumption, there is some copy $C$ of $C_4$ in $F$ that is non-monochromatic. By Lemma \ref{lem:C4s in dil bip}, as $A$ is $(2m+8k,3)$-fold solution-free, we must have that $C$ is \emph{rainbow}, that is, each edge of $C$ is coloured by a different $a\in A$. Label the vertices of $C$ as $x_{\lambda_1},x_{\lambda_2}\in V_X$ and $y_{\mu_1},y_{\mu_2}\in V_Y$, and edges of $C$ as $e_{ij}=x_{\lambda_i}y_{\mu_j}\in E(C)$ with $e_{ij}$ coloured by $a_{ij}\in A$ for $i,j\in \{1,2\}$. By Observation \ref{obs: dil edge equation bip}, we have that for each choice of $i,j
\in \{1,2\}$ there is some $\alpha_{ij}\in \{m+1,m+3,\ldots,m+4k-3\}$ such that $\mu_j-\lambda_i=\alpha_{ij}a_{ij}$. But then we have that 
$(a_{11},a_{12},a_{22},a_{21})\in A^4$ form a solution to the equation \[\alpha_{11}x_1-\alpha_{12}x_2+\alpha_{22}x_3-\alpha_{21}x_4=0 \mbox{ mod }p,\]
which is necessarily non-trivial as each of the $a_{ij}$ are distinct and the $\alpha_{ij}$ are non-zero. This contradicts that $A$ is free from non-trivial solutions to equations as in \eqref{eq:c4 in proof bip coh} and completes the proof.
\end{proof}

\subsection{Dilation chains for complete bipartite graphs}
\label{sec: dil comp bip}

For our second application of Construction \ref{const: bip dil}, we prove Theorem \ref{thm:complete bip} giving a lower bound on $M_H(n)$ when $H$ is a complete bipartite graph. 

\begin{proof}[Proof of Theorem \ref{thm:complete bip}]
    Let $3\leq r\leq s$ and let $H'$ be the graph obtained from $H=K_{r,s}$ by removing a single edge.  Fix $m:=0$, $k:=v(H)=r+s$ and
    let $n$ be sufficiently large.  Fix $p$ to be a prime such that $n/8k^2\leq p \leq n/4k^2$ and take $A\subset \ZZ_p\setminus \{0\}$ to be a $(8k,3)$-fold solution-free subset of  size $N:=p^{1-O(1/\sqrt{\log p})}$, as guaranteed by Theorem \ref{thm:behrend extended}. Moreover, let 
    $\cH:=\cH_{\mathrm{BipDil}}(H;A,p,0)=\{(H^a_{i},e^a_i)_{i\in [\tau]}:a\in A\}$ be as in Construction \ref{const: bip dil} and let $G^a:=\bigcup_{i\in [\tau]}H^a_i$ be the corresponding underlying graphs. Letting $\hat G:=\bigcup_{a\in A}\final{G^a}_H$, we claim that there is some subset $A'\subseteq A$ such that 
    \begin{enumerate}
        \item  \label{item: A' size} $|A'|\geq \tfrac{1}{4} N \left(\tfrac{N}{p^k}\right)^{\tfrac{1}{r(s-1)-1}}=p^{1+\tfrac{1-k}{r(s-1)-1}-O\left(\tfrac{1}{\sqrt{\log p}}\right)}$,  and 
        \item \label{item:A' colours} For any copy $F$ of $H'$ in $\hat G$, there is a collection $B\subseteq A'$ with $|B|<r(s-1)$ such that $F\subseteq \bigcup_{b\in B}\final{G^b}_H$. 
    \end{enumerate}

Indeed, fix $q\in (0,1)$ with $q=\tfrac{1}{2}\left(\tfrac{N}{p^k}\right)^{\tfrac{1}{r(s-1)-1}}$ and consider a random set $A_q\subseteq A$ with each $a\in A$ being a member of $A_q$ independently with probability $q$. For each copy $F$ of $H'$ in $\bigcup_{a\in A_q}\final{G^a}_H$, let $B_F$ be the collection of $b\in A_q$ such that $F$ has an edge belonging to $\final{G^b}_H$, noting that each edge belongs to  exactly one $\final{G^b}_H$ by Corollary \ref{cor:bip dilation unique edge}. Now let $A'_q\subseteq A_q$ be a subset of $A_q$ obtained by deleting one $b\in B_F\subseteq A_q$ for every copy $F$ of $H'$ such that $|B_F|\geq r(s-1)$. For each $r(s-1)\leq t\leq rs-1$, letting $\cF^t_q:=\{F\subseteq \bigcup_{a\in A_q}\final{G^a}_H \mbox{ copies of }H': |B_F|=t\},$ we have that
\begin{equation} \label{eq:bad F expec}
\mathbb{E}\left[|\cF^t_q|\right]\leq p^{k}q^t\leq p^kq^{r(s-1)}=
p^k\frac{1}{2^{r(s-1)}}\left(\frac{N}{p^k}\right)^{1+\tfrac{1}{r(s-1)-1}}
\leq \frac{1}{4r} N \left(\frac{N}{p^k}\right)^{\tfrac{1}{r(s-1)-1}},    
\end{equation}
using that the number of copies of $F$ in $\hat G$ is at most $p^k$ and each copy of $F$ which spans exactly $t$ of the graphs $\final{G^a}_H$, lands in $\cF_q^t$ with probability $q^t$. Therefore we have that 
\begin{equation} \label{eq:expectation comp bip}
\mathbb{E}[|A'_q|]\geq \mathbb{E}[|A_q|]-\sum_{t=r(s-1)}^{rs-1}\mathbb{E}[|\cF_t|]\geq q|A|- \frac{1}{4} N \left(\frac{N}{p^k}\right)^{\tfrac{1}{r(s-1)-1}}\geq \frac{1}{4} N \left(\frac{N}{p^k}\right)^{\tfrac{1}{r(s-1)-1}},\end{equation}
and so choosing   an instance of $A_q$   in which the lower bound of \eqref{eq:expectation comp bip} is achieved and fixing $A'=A'_q$, we get that indeed $A'$ satisfies both  properties \eqref{item: A' size} and \eqref{item:A' colours}. 

 Now with such an $A'$ fixed,  we take 
    \[\cH':=\cH_{\mathrm{BipDil}}(H;A',p,0)=\{(H^a_i,e^a_i)_{i\in[\tau]}:a\in A'\}.\]
   The chains in $\cH'$ are simple and thus proper by  Lemma \ref{lem:inseparableimpliesproper}  as $H$ is $(1,1,1)$-inseparable (Lemma \ref{lem:bip dense insep}). By Lemma \ref{lem:linking chains}, using that $H$ is $(1,1,1)$-inseparable and $\bigcup_{a\in A}G^a$ is bipartite, if $\cH'$ is a proper collection of  $H$-chains then there is a proper chain on at most $2k^2(2p-2)\leq n$ vertices, of length at least \[\tau |A|=\left\lfloor\frac{p-1-2k-m}{2k-2}\right\rfloor p^{1+\tfrac{1-k}{r(s-1)-1}-O\left(\tfrac{1}{\sqrt{\log p}}\right)}= n^{2-\tfrac{r+s-1}{r(s-1)-1}-o(1)},\]
and so the theorem follows via an application of Lemma \ref{lem:chain_runningtime}. 

As in the proof of Theorem \ref{thm:bip coherent run time}, we show that $\cH'$ is actually strongly proper and condition  \ref{chaincol:strong proper edge}  follows immediately from Corollary \ref{cor:bip dilation unique edge} upon noticing that $A'$ is $(4k,2)$-fold solution-free as a subset of $A$. To finish, we show Condition \ref{chaincoll:strong proper H-e} of being strongly proper and fix some copy $F$ of $H'$ in $\bigcup_{a\in A'}\final{G^a}_{H}$. Let $F'$ be the graph obtained from $F$ by deleting the vertex of degree $r-1$ from the side of $F$ of size $s$, so that $F'$ is a copy of $K_{r,s-1}$. Note that due to property \eqref{item:A' colours} of the set $A'$, we cannot have that  all $r(s-1)$ edges of $F'$ belong to distinct graphs in the collection $\{\final{G^a}_H:a\in A'\}$ and so there exists some $b\in A'$ and two distinct edges $e_1,e_2\in E(F')$ such that $e_1,e_2\in E(\final{G^b}_H)$. We will show that $F\subseteq \final{G^b}_H$ and so $\cH'$ satisfies property \ref{chaincoll:strong proper H-e} and is indeed strongly proper. To see this, first note that by Lemma \ref{lem:C4s in dil bip} and the fact that $A'\subseteq A$ is $(8k,3)$-fold solution-free, we have the following

\begin{enumerate}[\textbf{(*)}]
    \item \label{(*)} 
 Any copy of $C_4$ in $F$ which contains two edges in $\final{G^b}_H$, must be entirely contained in $\final{G^b}_H$.
\end{enumerate}
  Fixing $C_*\subseteq F'\subseteq F$ to be some copy of $C_4$ that contains $e_1$ and $e_2$, we show that all edges of $F$ are in $\final{G^b}_H$ by considering the following edges sequentially:
\begin{enumerate}[label=(\roman*)]
    \item \textbf{Edges of $C_*$:}  This follows directly from applying \ref{(*)} to $C_*$. 
    \item \textbf{Edges of $F'$ incident to $V(C_*)$:} If $f=uv\in E(F')$ with $v\in V(C_*)$ and $u\notin V(C_*)$, then let $v'\in V(C_*)\setminus\{v\}$ on the same side as $v$ in $F'$ and $u'\in V(C_*)$ on the other side. Then $v,v',u,u'$ span a $C_4$ containing $f$ and  such that both $vu'$ and $v'u'$ belong to $\final{G^b}_H$ as edges of $C_*$,  using the previous step. Therefore by \ref{(*)}, $f$ also belongs to $\final{G^b}_H$.
    \item \textbf{Remaining edges of $F'$:} If $g=xy\in E(F')$ with $x,y \notin V(C_*)$, then let $x'\in V(C_*)$ on the same side as $x$ in $F$ and $y'\in V(F')$ some arbitrary vertex on the same side as $y$. Then $x,x',y,y'$ span a $C_4$ containing $g$ and such that $x'y$ and $x'y'$ are both in $\final{G^b}_H$ by the previous step. An application of \ref{(*)} gives that $g$ also belongs to $\final{G^b}_H$. 
    \item \textbf{Edges of $F\setminus F'$:} These edges are all incident to the vertex $z$ of degree $r-1$ in the part of $F$ of size $s$. Choosing any two of these edges and some arbitrary further vertex in the part of size $s$ gives a $C_4$ in $F$ with two edges in $F'$, which belong to $\final{G^b}_H$ by the previous step, and so all edges incident to $z$ also belong to $\final{G^b}_H$ by \ref{(*)}. 
 \end{enumerate}
This shows that $F$ is entirely contained in $\final{G^b}_H$ and completes the proof that $\cH'$ satisfies property \ref{chaincoll:strong proper H-e} of Definition \ref{def:stronger-proper}, verifying that $\cH'$ is a strongly proper collection and completing the proof of the theorem.
\end{proof}

\begin{rem} \label{rem:comp bip better}
    With a similar proof, one can actually slightly improve the exponent  in Theorem \ref{thm:complete bip}, showing that in fact $M_{K_{r,s}}(n)=n^{2-\tfrac{r+s-1}{r(s-1)}-o(1)}$. Indeed, in the proof of Theorem \ref{thm:complete bip} above, we obtained $A_q'$ by deleting from $A_q$ some $b\in B_F$ for each `bad' copy $F$ of $H'$ with $|B_F|\geq r(s-1)$. This is somewhat wasteful. We could instead just delete a few edges  of  $\bigcup_{i\in [\tau]}\final{H^b_i}_H$, killing  $F$ whilst leaving most of the chain $(H_i^b,e_i^b)_{i \in [\tau]}$ intact. In more detail, we pick an edge $f_b$ of $F$ that lies in $\bigcup_{i\in [\tau]}\final{H^b_i}_H$. Let $i_*\in [\tau]$ be the maximum index $i\in [\tau]$ such that $f_b\in E(\final{H^b_i}_H)$. Then we can `split' the chain 
 $(H_i^b,e_i^b)_{i \in [\tau]}$ into at most two smaller
 chains $(H_i^b,e_i^b)_{i=1}^{i_*-2}$ and $(H_i^b,e_i^b)_{i =i_*+1}^\tau$ which completely avoid the vertices of $f_b$. This splitting procedure maintains all of the relevant properties of the collection being (strongly) proper and we can still apply a variant of Lemma \ref{lem:linking chains} (see Remark \ref{rem: chains diff size}) to link the chains of the new collection to get one long proper chain. As this linking procedure needs a constant number of new vertices for each link between two chains, we need that the number of chains in the collection is at most $O(n)$. Consequently, we need that the number of splittings we do, and so the number of bad $F$ with $|B_F|\geq r(s-1)$, is at most $O(n)$. By considering the expected number of such bad $F$ as in \eqref{eq:bad F expec}, we get that $n^{k-o(1)}q^{r(s-1)}$ should be comparable to $n$, and thus $q=n^{\tfrac{1-k}{r(s-1)}-o(1)}$ and  $A_q$ has expected size $n^{1+o(1)}q=n^{1-\tfrac{r+s-1}{r(s-1)}-o(1)}$. By taking such an $A_q$ that achieves this size (as well as having at most expected upper bound on the number of bad $F$) and splitting chains that contain an edge of each bad copy $F$ of $H'$, we obtain a strongly proper collection of chains (of potentially different sizes) whose combined length is at least $n^{2-\tfrac{r+s-1}{r(s-1)}-o(1)}$, as required. 
\end{rem}

\section{Line chains} \label{sec:line}

Our third and final construction of a chain  collection simply places chains on edges of a hypergraph. We say a hypergraph $\cL$ is \emph{linear} if for any pair of edges $\ell_1,\ell_2\in E(\cL)$, we have that $|\ell_1\cap \ell_2|\leq 1$. The terminology stems from the geometric fact that two lines can intersect in at most one point and indeed many interesting examples of linear hypergraphs are generated by finite geometries.  We proceed with the definition of our line chains. 

\vspace{2mm}

\begin{const}[Collection of line chains] \label{const:lin}
    Suppose that $4\leq k,L\in \NN$, $H$ is a  graph with $V(H)=[k]$ such that $e':=\{1,2\}$ and $f'=\{k-1,k\}$ are edges of $H$ and  $\cL$ is an $L$-uniform hypergraph with vertex set $V$. Fixing $\tau:=\floor{\tfrac{L}{4(k-2)}}$, a collection 
    \[\cH_{\mathrm{Lin}}(H;\cL)=\{(H^\ell_i,e_i^\ell)_{i\in [\tau]}:\ell\in E(\cL)\}\]
   of $H$-chains indexed by edges $\ell$ of $\cL$ is a collection of \emph{line chains} if for each  $\ell\in E(\cL)$ the chain $(H^\ell_i,e_i^\ell)_{i\in [\tau]}$ is  a simple $H$-chain on the vertices of $\ell$. 
\end{const}

\vspace{2mm}

We do not  specify exactly how to place the simple $H$-chains on each line of $\cL$ and in most cases this can be done arbitrarily, noting that there is plenty of space as a simple $H$-chain of length $\tau$ has $\tau(k-2)+2\leq L/4+2$ vertices. When $H$ is bipartite, we show that this placement can be done in such a way that the union of the line chains is also bipartite.

\begin{lem} \label{lem:bip line chains}
    Let $H$ be a $k$-vertex bipartite graph and suppose $\cL$ is an $N$-vertex linear hypergraph with edges of size $L\geq 200\log N$. Then there is a collection $\cH=\cH_{\mathrm{Lin}}(H;\cL)=\{(H^\ell_i,e_i^\ell)_{i\in [\tau]}:\ell\in E(\cL)\}$ of line chains such that $G:=\bigcup_{\ell\in E(\cL),i\in [\tau]}H_i^\ell$ is bipartite. 
\end{lem}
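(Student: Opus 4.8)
The plan is to commit to a single uniformly random $2$-colouring of the whole vertex set $V$ \emph{before} deciding how the chains are placed, and then to choose, for each hyperedge $\ell$, a simple $H$-chain inside $\ell$ whose canonical bipartition is the restriction of that colouring. Fixing the colouring first is the whole point: it decouples the hyperedges, so that chains on different edges automatically agree on any common vertex and no odd cycle is ever created ``across'' two edges. Thus the only real task is to show that, with positive probability, every hyperedge of $\cL$ is rich enough in both colour classes to accommodate a chain.

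First I would pin down the labelling and the bookkeeping. A simple $H$-chain of length $\tau$ on a set of $\tau(k-2)+2$ vertices is bipartite: each copy of $H$ in it is bipartite, consecutive copies share an edge, and $H$ is connected, so the bipartitions of the copies are forced to agree --- provided the labelling $V(H)=[k]$ is chosen so that vertices $1$ and $k-1$ (hence also $2$ and $k$) lie in the same part of the bipartition of $H$; this can always be arranged, since $H$ is bipartite and, in order to form a simple chain at all, contains two vertex-disjoint edges. I would then record the resulting ``position-colouring'' $\psi\colon\{1,\dots,\tau(k-2)+2\}\to\{0,1\}$ of the chain, namely $\psi\big((i-1)(k-2)+j\big)=\chi_0(j)$ where $\chi_0$ is the bipartition of $H$; the key features are that $\psi$ is the \emph{same} function for every chain (it does not depend on which vertices of $V$ the chain is placed on), and that $|\psi^{-1}(0)|+|\psi^{-1}(1)|=\tau(k-2)+2\le L/4+2$.

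Next I would let $c\colon V\to\{0,1\}$ be uniformly random and call $\ell\in E(\cL)$ \emph{good} if it contains at least $L/3$ vertices of each colour. For a fixed $\ell$, $|\ell\cap c^{-1}(0)|$ is a sum of $L$ independent $\mathrm{Bernoulli}(1/2)$'s with mean $L/2$, so Chernoff's inequality (Theorem \ref{thm:chernoff} with $\delta=1/3$) gives $\PP\big[|\ell\cap c^{-1}(0)|<L/3\big]\le e^{-L/36}$, and likewise for colour $1$, so $\PP[\ell\text{ not good}]\le 2e^{-L/36}$. Since $\cL$ is linear, distinct edges share no pair of vertices, hence $|E(\cL)|\le\binom{N}{2}/\binom{L}{2}\le N^2$; a union bound together with $L\ge 200\log N$ then yields $\PP[\text{some }\ell\text{ not good}]\le 2N^2e^{-L/36}<1$. (We may assume $\cL$ has an edge, so $N\ge L$ and $L$ is large, in particular $L>24$ and $L/3>\tau(k-2)+2$; and we may assume $\tau\ge1$, else all chains are empty and $G$ has no edges.) I would fix a colouring $c$ for which every hyperedge is good.

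Finally, for each good $\ell$ I would choose $|\psi^{-1}(0)|$ of its colour-$0$ vertices and $|\psi^{-1}(1)|$ of its colour-$1$ vertices (possible since $\ell$ has at least $L/3>\tau(k-2)+2$ of each), and let $\sigma_\ell$ be the injection sending the positions in $\psi^{-1}(c')$ bijectively to the chosen colour-$c'$ vertices; placing the simple $H$-chain on $\sigma_\ell(1),\dots,\sigma_\ell(\tau(k-2)+2)$ in this order defines a line-chain collection $\cH_{\mathrm{Lin}}(H;\cL)$ with $c(\sigma_\ell(t))=\psi(t)$ for all $\ell,t$. Then $G=\bigcup_{\ell,i}H_i^\ell$ is bipartite: any edge of $G$ lies in some $H_i^\ell$, so its endpoints sit at positions $(i-1)(k-2)+j_1$ and $(i-1)(k-2)+j_2$ with $\{j_1,j_2\}\in E(H)$, and therefore receive colours $\chi_0(j_1)\ne\chi_0(j_2)$ under $c$. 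I do not expect a serious obstacle here; the only point that needs genuine care is the parity/labelling issue --- arranging that $1$ and $k-1$ lie in the same part, without which a simple $H$-chain, and hence $G$, would \emph{not} be bipartite --- and, correspondingly, the observation that $\psi$ is one fixed function common to all chains, so that the single globally chosen colouring $c$ simultaneously makes every chain bipartite.
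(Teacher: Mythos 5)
Your proof is correct and takes essentially the same route as the paper: a single uniformly random $2$-colouring of $V(\cL)$, a Chernoff bound plus a union bound over the at most $\binom{N}{2}$ edges of the linear hypergraph, and then placing each simple $H$-chain so that its bipartition sits inside the two colour classes of the corresponding edge. One minor quibble: your parenthetical claim that with the ``wrong'' labelling the simple $H$-chain (and hence $G$) would fail to be bipartite is not actually true --- gluing bipartite graphs along single edges always yields a bipartite graph --- but since you arrange the labelling so that your fixed position-colouring $\psi$ is well-defined, this side remark does not affect the validity of the argument.
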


\begin{proof}
    Consider a bipartition of $V=V(\cL)$ as $V=V_1\cup V_2$ where each vertex $v\in V$ is added to $V_j$ with probability $1/2$ for $j\in[2]$, independently from the other vertices. For each $\ell\in E(\cL)$ and $j=1,2$, we have that $\EE[|\ell\cap V_j|]=L/2$. Moreover, by Chernoff's theorem (Theorem \ref{thm:chernoff}), we have that $\PP[|\ell \cap V_j|\leq 3L/8]\leq \exp(-L/64)\leq N^{-3}$ using our lower bound on $L$. Note also that $|E(\cL)|\leq \binom{N}{2}$ as each pair of vertices in $V$ lies in at most one edge of $\cL$. Therefore, taking a union bound over $\ell\in E(\cL)$ and $j=1,2$, we get that with positive probability there is some partition of $V$ as $V_1\cup V_2$, such that $|V_j\cap \ell|\geq 3L/8$  for all $j=1,2$ and $\ell\in E(\cL)$. Fixing such a partition and using that a simple $H$-chain of length $\tau$ has at most $L/4+2$ vertices, for each $\ell\in E(\cL)$ we can place a simple $H$-chain $(H^\ell_i,e_i^\ell)_{i\in [\tau]}$ on the vertices of $\ell$ such that $G^{\ell}:=\bigcup_{i\in [\tau]}H_i^\ell$ is bipartite with one part in $\ell\cap V_1$ and the other part in $\ell\cap V_2$. Consequently, $G=\bigcup_{\ell\in E(\cL)}G^\ell$ is also bipartite. 
\end{proof}

Our construction \ref{const:lin} will be applied with $\cL$ being a linear hypergraph with high \textit{girth}. For our purposes we define a hypergraph $\cC$ to be  a \textit{cycle} if it has at least two edges and  there are distinct vertices $v_1,\ldots,v_r\in V(\cC)$ and  an ordering of $E(\cC)$ as $\ell_1,\ldots,\ell_r$ such that $v_i\in \ell_i\cap \ell_{i+1}$ for $i=1,\ldots,r$ (with $\ell_{r+1}=\ell_1$ here). This is sometimes referred to as a \emph{Berge} cycle in the literature. The \textit{length} of the cycle is the number  of edges $r$ in $E(\cC)$ and 
the \emph{girth} of a hypergraph $\cL$ is the length of the smallest cycle that is a subgraph of  $\cL$. Note that two edges of a hypergraph which intersect in at least two vertices form a cycle of length 2, so any hypergraph with girth at least 3 is necessarily linear. 
We use the following construction of Lazebnik, Ustimenko and Woldar \cite{lazebnik1995new}.  

\begin{thm} \label{thm:high girth hyp}
  For $3\leq g\in \NN$ and $N$ sufficiently large, there is an $L$-uniform hypergraph with $N/2^{2g}\leq |V(\cL)|,|E(\cL)|\leq N$,  girth at least $g$ and $L\geq \tfrac{1}{2}N^{\tfrac{2}{3g-5}}$. 
\end{thm}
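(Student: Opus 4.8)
The plan is to deduce Theorem~\ref{thm:high girth hyp} from the high-girth \emph{graphs} of Lazebnik, Ustimenko and Woldar by the standard conversion of a high-girth bipartite graph into a linear hypergraph of high girth. Recall from \cite{lazebnik1995new} that for every prime power $q$ and every $k$ there is a $q$-regular bipartite graph $D(k,q)$ on $2q^k$ vertices of girth at least $k+4$ (and at least $k+5$ for odd $k$), which for $k\geq 6$ decomposes into $q^{t-1}$ connected components, $t=\floor{(k+2)/4}$, each being a $q$-regular bipartite graph $CD(k,q)$ of girth at least $k+5$ with both parts of size $\Theta(q^{k-t+1})$. First I would fix $g$, choose $k=k(g)=\Theta(g)$ as small as possible so that this girth bound is at least $2g$ (so $k=2g-5$ for $g$ not too small, and $k=2$ for $g=3$), and set $\Gamma:=D(k,q)$ if $k<6$ and $\Gamma:=CD(k,q)$ otherwise. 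Then $\Gamma$ is a $q$-regular bipartite graph with parts $A,B$, girth at least $2g$, and $|A|,|B|=\Theta(q^{c})$ for an exponent $c=c(g)$ that (as one reads off the component-size formula) equals $(3g-5)/2$ when $g$ is odd and is no larger otherwise.

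The conversion step is to define the hypergraph $\cL$ with vertex set $A$ and edge set $\{N_\Gamma(b):b\in B\}$. Since $\Gamma$ is $q$-regular, $\cL$ is $q$-uniform; since $\Gamma$ has girth at least $6$, the neighbourhoods $N_\Gamma(b)$ are pairwise distinct and any two of them meet in at most one vertex (two common neighbours of $b,b'$ would give a $4$-cycle in $\Gamma$), so $\cL$ is linear with $|V(\cL)|=|A|$ and $|E(\cL)|=|B|$. Next I would verify the girth transfer: a Berge cycle $\ell_{b_1},\ldots,\ell_{b_r}$ of $\cL$ with connecting vertices $a_1,\ldots,a_r$ (where $a_i\in N_\Gamma(b_i)\cap N_\Gamma(b_{i+1})$) is, after relabelling, exactly a $2r$-cycle $b_1a_1b_2a_2\cdots b_ra_rb_1$ in $\Gamma$, and every even cycle of $\Gamma$ arises this way; hence $\mathrm{girth}(\cL)=\tfrac12\mathrm{girth}(\Gamma)\geq g$.

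Finally I would tune $q$. By Bertrand's postulate choose the largest prime power $q$ with $\Theta(q^{c})\leq N$ (where the hidden constant is the one coming from $|A|=\Theta(q^{c})$); then $q\geq\tfrac12 N^{1/c}\geq\tfrac12 N^{2/(3g-5)}$, so $L=q\geq\tfrac12 N^{2/(3g-5)}$, while $|V(\cL)|=|E(\cL)|=\Theta(q^{c})$ lies between $N/2^{2g}$ and $N$ — the lower bound because $c\leq 2g$ and we lose only bounded factors from the $\Theta(\cdot)$ and the prime-power gap. This is the desired hypergraph $\cL$.

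The only genuine content is the bookkeeping of the Lazebnik--Ustimenko--Woldar parameters: pinning down $k(g)$, confirming that the girth lower bound $k+5$ survives restriction to a connected component (immediate, being a subgraph of $D(k,q)$), that the components are $q$-regular and essentially balanced, and that $k-t+1=(3g-5)/2$ for $g$ odd and is smaller for $g$ even — all of this being contained in or immediate from \cite{lazebnik1995new}. I therefore expect this parameter arithmetic, rather than any structural argument, to be the main (and essentially the only) obstacle; for even $g$ the construction over-achieves and one simply records the uniform bound $L\geq\tfrac12 N^{2/(3g-5)}$.
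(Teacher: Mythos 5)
Your proposal is correct and follows essentially the same route as the paper: both take a Lazebnik–Ustimenko–Woldar $q$-regular bipartite graph of girth at least $2g$ (the paper uses the $G(q,s)$ formulation with $s=2g-5$, you the $D(k,q)/CD(k,q)$ one with $k=2g-5$), pass to the neighbourhood hypergraph on one side, note that a Berge cycle of length $r$ lifts to a $2r$-cycle in the bipartite graph, and choose the prime power $q$ so that $q\geq\tfrac12 N^{1/(k-t+1)}\geq\tfrac12 N^{2/(3g-5)}$ with vertex/edge counts between $N/2^{2g}$ and $N$. The parameter bookkeeping you flag (including $k-t+1=(3g-5)/2$ for odd $k$) checks out and matches the paper's.
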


The actual construction of Lazebnik, Ustimenko and Woldar \cite[Theorem 3.2]{lazebnik_connectivity_2004} is in the setting of bipartite graphs. They show that if $q$ is a prime power, $s\in \NN$ is odd and $t:=\floor{\tfrac{s+2}{4}}$, then there is a $q$-regular bipartite graph $G(q,s)$ with no cycle of length at most $s+4$, and at most $2q^{s-t+1}$ vertices. In following work, it was proven that $G(q,s)$ has exactly $2q^{s-t+1}$ vertices, first when $q$ is odd \cite{lazebnik1996characterization} and then when $q$ is even \cite{lazebnik_connectivity_2004}.  To derive Theorem \ref{thm:high girth hyp} from this construction, given $g,N\in \NN$, we fix $s=2g-5$, $t=\floor{\tfrac{2g-3}{4}}$ and let $q$ be some prime power such that $\tfrac{1}{2}N^{\tfrac{1}{s-t+1}}\leq q\leq N^{\tfrac{1}{s-t+1}}$. Taking the bipartite graph $G(q,s)$ from \cite[Theorem 3.2]{lazebnik_connectivity_2004} with parts $X,Y$, we define a hypergraph $\cL$ with vertex set $Y$ and edges $\ell_x:=N_{G(q,s)}(x)$ for each $x\in X$. We therefore have that $\cL$ has $q^{s-t+1}$ vertices and edges and so the bounds in Theorem \ref{thm:high girth hyp} for $|V(\cL)|$ and $|E(\cL)|$ are indeed satisfied.   
 Moreover, the edges of $\cL$ have size $q\geq \tfrac{1}{2}N^{\tfrac{2}{3g-5}}$, using that $s-t+1\leq (3g-5)/2$.  Finally, note that a cycle of length $r< g$ in $\cL$ gives rise to a cycle of length $2r< 2g=s+5$ in $G(q,s)$, and so we indeed have that the girth of $\cL$ is at least $g$.

With the hypergraph from Theorem \ref{thm:high girth hyp}, we are ready to prove the main theorem of this section.

\begin{thm} \label{thm:insep sup lin}
   Any $(2,1)$-inseparable or $(1,1,1)$-inseparable graph $H$ with $v(H)=k$ satisfies
   \[M_H(n)\geq \Omega\left(n^{1+\tfrac{2}{3k-2}}\right).\]
\end{thm}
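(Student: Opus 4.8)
The plan is to combine Construction~\ref{const:lin} with the high-girth linear hypergraph of Theorem~\ref{thm:high girth hyp}, showing that the resulting collection of line chains is proper and then applying the linking machinery. Concretely, given $n$, set $g := k+1$ and apply Theorem~\ref{thm:high girth hyp} with parameter $N$ chosen so that the total vertex count after linking is at most $n$; this yields an $L$-uniform linear hypergraph $\cL$ with $|V(\cL)|, |E(\cL)| = \Theta(N)$, girth at least $k+1$, and $L \geq \tfrac12 N^{2/(3(k+1)-5)} = \tfrac12 N^{2/(3k-2)}$. Each edge $\ell$ of $\cL$ has enough room to host a simple $H$-chain of length $\tau = \lfloor L/(4(k-2))\rfloor = \Theta(N^{2/(3k-2)})$. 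In the bipartite case we additionally invoke Lemma~\ref{lem:bip line chains} to ensure the union graph $G := \bigcup_{\ell, i} H_i^\ell$ is bipartite, so that Lemma~\ref{lem:linking chains} applies in its $(1,1,1)$-inseparable form. By Lemma~\ref{lem:inseparableimpliesproper}, each individual simple $H$-chain $(H_i^\ell, e_i^\ell)_{i\in[\tau]}$ is proper since $H$ is $(2,1)$- or $(1,1,1)$-inseparable.

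The main work is verifying that $\cH_{\mathrm{Lin}}(H;\cL)$ is a \emph{proper collection} of chains in the sense of Definition~\ref{def:proper chaincoll}. The key structural input is that $\final{G^\ell}_H = \bigcup_i \final{H_i^\ell}_H$ has all its edges supported on the vertex set of the single edge $\ell$ (this follows from Lemma~\ref{lem:final graph proper} applied to each chain, together with the fact that $H_i^\ell$ lies entirely on $\ell$). For condition~\ref{chaincoll: edges not early}: an edge $e_i^a$ lies on two vertices of $\ell=a$, and by linearity of $\cL$ two distinct edges share at most one vertex, so $e_i^a$ cannot be supported on any $\ell = b \neq a$, hence $e_i^a \notin \final{G^b}_H$. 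For condition~\ref{chaincoll:H-e new 1}: take $e \in E(H)$ and a copy $F$ of $H-e$ in $\bigcup_{\ell}\final{G^\ell}_H$. Since $H$ is inseparable, by Observation~\ref{obs: disjoint edges} and Lemma~\ref{lem:insep edges in cycle}, $H-e$ is $2$-connected and in particular has minimum degree at least $2$ and is connected with at least two independent edges; I will argue that $V(F)$ must lie inside a single edge $\ell$ of $\cL$. If not, $F$ spans vertices from several edges of $\cL$, and since $F$'s edges each live on a single $\ell$, the ``edge hypergraph'' traced out by $F$ (vertices of $F$, with hyperedges $\ell \cap V(F)$ for the relevant $\ell$) would, using connectivity and $2$-connectivity of $F$, contain a hypergraph cycle of length at most $v(F) = k < k+1 = g$, contradicting the girth bound. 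This is the step I expect to be the main obstacle: making precise the argument that a $2$-connected subgraph spread across several linear-hypergraph edges forces a short Berge cycle, and checking the length bound is genuinely $\leq k$ (one should track that a minimal such configuration uses at most $k$ distinct edges of $\cL$, each contributing at least one "new" vertex, linked cyclically). Once $V(F) \subseteq \ell$ for a unique $\ell$, then since $\final{G^\ell}_H[V(\ell)] = \final{G^\ell}_H$ we get $F \subseteq \final{G^\ell}_H$, uniqueness follows from linearity, and within $\final{G^\ell}_H$ the ``furthermore'' clause reduces to condition~\ref{chain:proper2} for the single proper simple chain $(H_i^\ell,e_i^\ell)_{i\in[\tau]}$.

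With properness established, Lemma~\ref{lem:linking chains} (respectively its $(1,1,1)$-inseparable variant, using the bipartiteness from Lemma~\ref{lem:bip line chains}) produces a single proper $H$-chain of length at least
\[
\tau \, |E(\cL)| \;\geq\; \Omega\!\left( N^{2/(3k-2)} \cdot N \right) \;=\; \Omega\!\left( N^{1 + 2/(3k-2)} \right)
\]
on at most $2v(H)^2 \cdot O(N) = O(N)$ vertices. Choosing $N = \Theta(n)$ appropriately and applying Lemma~\ref{lem:chain_runningtime} gives $M_H(n) \geq \Omega\!\left(n^{1 + 2/(3k-2)}\right)$, as required. A small bookkeeping point: one must check $\tau \geq 5k$ (or whatever threshold the linking lemma needs) and $\tau \leq n$, both of which hold for $n$ large since $\tau = \Theta(n^{2/(3k-2)})$ grows without bound while staying polynomially below $n$; also one should confirm that the constants hidden in Theorem~\ref{thm:high girth hyp} (the factor $2^{-2g}$ loss in $|V(\cL)|$, $|E(\cL)|$, with $g = k+1$ fixed) are absorbed into the $\Omega(\cdot)$ since the asymptotics are in $n$ with $k$ fixed.
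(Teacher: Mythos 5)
Your proposal is correct and follows essentially the same route as the paper: line chains on the girth-$(k+1)$ linear hypergraph of Theorem \ref{thm:high girth hyp}, properness of the collection (the paper phrases this as strong properness via Lemma \ref{lem:strong implies proper}, using linearity for edge-disjointness), then Lemmas \ref{lem:linking chains} and \ref{lem:chain_runningtime}, with Lemma \ref{lem:bip line chains} handling the bipartite case. The one step you flag as the main obstacle is resolved in the paper exactly along the lines you sketch: Lemma \ref{lem:insep edges in cycle} produces a cycle of $F$ containing two edges that lie in distinct hyperedges, and traversing this cycle as a closed walk in the vertex--edge incidence graph, deleting the steps where consecutive hyperedges repeat, leaves a circuit and hence a Berge cycle of length at most $k'\leq k$ in $\cL$, contradicting girth at least $k+1$.
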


In particular, note that Theorem \ref{thm:insep sup lin} encompasses Theorem \ref{thm:connectivity} which was stated just for $(2,1)$-inseparable graphs. 

\begin{proof}[Proof of Theorem \ref{thm:insep sup lin}. ]
 For $n$ sufficiently large, fix $N:=\floor{n/2k^2}$, $g:=k+1$ and let $\cL$ be the hypergraph generated by Theorem \ref{thm:high girth hyp}. We then take 
 \[\cH=\cH_{\mathrm{Lin}}(H;\cL)=\{(H^\ell_i,e_i^\ell)_{i\in [\tau]}:\ell\in E(\cL)\}\]
 to be a collection of line chains as in Construction \ref{const:lin}, with any ordering of $V(H)=[k]$ such that both $e'=\{1,2\}$ and $f'=\{k-1,k\}$ are edges noting that such an ordering exists because $H$ is inseparable (Observation \ref{obs: disjoint edges}).  In the case that $H$ is bipartite, we place the simple $H$-chains in $\cH$ in such a way that $G:=\bigcup_{\ell\in E(\cL),i\in [\tau]}H_i^\ell$ is bipartite, using Lemma \ref{lem:bip line chains}.  Each chain is proper by Lemma \ref{lem:inseparableimpliesproper}.
 
 We appeal to Lemma \ref{lem:linking chains}, which states that if $\cH$ is a proper collection of chains, then there is a proper chain of length at least \[\tau |E(\cL)|\geq \frac{LN}{4k}\geq \Omega\left(n^{1+\tfrac{2}{3g-5}}\right)=\Omega\left(n^{1+\tfrac{2}{3k-2}}\right),\]
on at most $n$ vertices which in turn gives the desired lower bound on $M_H(n)$ through an application of Lemma \ref{lem:chain_runningtime}. It remains then to prove that the collection $\cH$ is proper. For this, we again use Lemma \ref{lem:strong implies proper} and in fact show that $\cH$ is strongly proper. 
Letting $G^\ell:=\bigcup_{i\in [\tau]}H^\ell_i$ for $\ell\in E(\cL)$, we have that each edge of $\final{G^\ell}_H$ is contained in $\ell$. As $\cL$ is linear, this implies that the graphs $\{\final{G^\ell}_H:\ell\in E(\cL)\}$ are pairwise edge-disjoint and condition \ref{chaincol:strong proper edge} of being strongly proper is satisfied. 

    It remains to verify condition \ref{chaincoll:strong proper H-e}, so we fix some $e\in E(H)$, a copy $F$ of $H-e$ in $\bigcup_{\ell\in E(\cL)}\final{G^\ell}_H$ and suppose for a contradiction that there are $\ell'\neq \ell''\in E(\cL) $ and edges $f',f''\in E(F)$ with $f'\in E(\final{G^{\ell'}}_H)$ and $f''\in E(\final{G^{\ell''}}_H)$. By Lemma \ref{lem:insep edges in cycle}, there is some cycle $C\subseteq F$ such that $f',f''\in E(C)$. Let $x_1,\ldots, x_{k'}$ be the vertices of $C$, noting that $k'\leq k$ as $C\subseteq F$ and $F$ is a copy of $H-e$.  For each $i\in [k']$, we have that $x_ix_{i+1}$ is an edge in $\bigcup_{\ell\in E(\cL)}\final{G^\ell}_H$ (taking $x_{k'+1}=x_1$ when $i=k'$) and so there is some $\ell_i\in E(\cL)$ such that $x_ix_{i+1}\in\final{G^{\ell_i}}_H$ and so $\{x_i,x_{i+1}\}\subseteq \ell_i$. 
    The edges $\ell_1,\ldots,\ell_{k'}$, which contain both $\ell'$ and $\ell''$, do not necessarily form a cycle, as there may be repeats among the edges. However it is true that there is a cycle of length at most $k'$ as a subgraph of the hypergraph formed by the edges $\{\ell_1,\ldots,\ell_{k'}\}$. Indeed, consider the closed walk $x_1\ell_1x_2\cdots x_{k'}\ell_{k'}$ in the bipartite incidence graph between $V(\cL)$ and $E(\cL)$. If there is some $x_i$ such that $\ell_{i-1}=\ell_i$ (taking $\ell_{i-1}=\ell_{k'}$ if $i=1$) then remove $x_i\ell_i$ from the walk. This results in a circuit as all of the $x_i$ are distinct and so the only possible repeated edges in the walk  $x_1\ell_1x_2\cdots x_{k'}\ell_{k'}$ are of the form  $\ell_{i-1}x_i$ and $x_i\ell_i$. The resulting circuit then contains a cycle in the bipartite incidence graph, corresponding to a cycle in the hypergraph $\cL$ of length at most $k'\leq k$. This contradicts that $\cL$ has girth at least $k+1$ and verifies condition \ref{chaincoll:strong proper H-e} of being strongly proper, completing the proof. 
\end{proof} 

\section{Upper bounds on running times} \label{sec:upper}
Here we prove the upper bounds on running times, where one has to show that the process terminates in a given number of steps, no matter what the $n$-vertex starting graph $G_0$ is.  The main content of this section is the proof of Theorem \ref{thm:wheel} giving an upper bound for the running time $M_H(n)$ when $H$ is an odd wheel graph. 

\subsection{Sparse random graphs} \label{sec:upper sparse random} We start  with a quick and simple proof bounding the running time when $H$ is a sparse random graph, establishing Theorem \ref{thm:random} \eqref{random 0}. In fact, we show something slightly stronger, giving an explicit bound of just 3 for large enough $n$. 

\begin{prop} \label{prop:random}
    The following holds a.a.s.\ for $H=G(k,p)$ when $p=o(\log k/k)$. If $n\geq 2k$, then $M_H(n)\leq 3$. 
\end{prop}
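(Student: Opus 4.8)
The plan is to exploit Fact \ref{fact:G(k,p) isolated edge}, which tells us that a.a.s.\ $H=G(k,p)$ is either empty or has an isolated edge, and then show that in either case the $H$-process on \emph{any} starting graph $G_0$ with $v(G_0)=n\ge 2k$ stabilises within three steps. The empty case is immediate: if $H$ has no edges then $n_H(G)=\binom{v(G)}{k}$ for every graph $G$ with $v(G)\ge k$, so adding an edge never creates a new copy of $H$ and $\tau_H(G_0)=0$. So from now on assume $H$ has an isolated edge $e^\ast=ab$, and write $H_0:=H-\{a,b\}$, a graph on $k-2$ vertices with $H\cong H_0\sqcup K_2$.

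The core of the argument is the following claim, which I would state and prove first: if at some time $t$ the graph $G_t$ contains a (not necessarily induced) copy of $H_0$ on a vertex set $W$ with $|W|=k-2$, then $G_{t+2}=K_n$. To prove it, set $U:=V\setminus W$, so $|U|=n-(k-2)\ge k+2$. For any pair $u,w\in U$ with $uw\notin E(G_t)$, the graph $G_t+uw$ contains a copy of $H$ not present in $G_t$, namely the copy of $H_0$ on $W$ together with the edge $uw$ in the role of the isolated edge $e^\ast$ (a valid injective homomorphism since $u,w\notin W$ and it uses $uw$); hence $uw$ is added at step $t+1$, so $G_{t+1}[U]=K_{|U|}$. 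Next, for any non-edge $xy$ of $G_{t+1}$, the set $U\setminus\{x,y\}$ induces a clique in $G_{t+1}$ with at least $|U|-2\ge k-2$ vertices, which therefore hosts a copy of $H_0$ avoiding $x$ and $y$ (every $(k-2)$-vertex graph embeds in $K_{k-2}$); as before this forces $xy$ to be added at step $t+2$, so $G_{t+2}=K_n$.

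With the claim in hand the proof concludes by a short case analysis on $G_0$. If $G_0$ already contains a copy of $H_0$ on $k-2$ vertices, apply the claim with $t=0$ to get $G_2=K_n$, hence $\tau_H(G_0)\le 2$. Otherwise: if no edge is added at the first step then $\tau_H(G_0)=0$; and if some edge $f$ is added at step $1$, then $G_0+f$ contains a copy of $H$ not present in $G_0$, and since $G_0$ has no copy of $H_0$ this new copy cannot use $f$ as the isolated edge (that would place a copy of $H_0$ inside $G_0$), so $f$ lies in the $H_0$-part of the copy and thus $G_1\supseteq G_0+f$ contains a copy of $H_0$ on $k-2$ vertices. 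Applying the claim with $t=1$ gives $G_3=K_n$, so $\tau_H(G_0)\le 3$. In every case $\tau_H(G_0)\le 3$, and since $G_0$ was an arbitrary $n$-vertex graph with $n\ge 2k$, we conclude $M_H(n)\le 3$.

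The only real care needed is the bookkeeping in the claim: one must verify that each $(k-2)$-vertex clique produced inside $U$ genuinely contains a copy of $H_0$ (trivial, since $H_0$ embeds in $K_{k-2}$) and that deleting the at most two endpoints of a given non-edge still leaves such a clique, which is precisely where the hypothesis $n\ge 2k$ (equivalently $|U|\ge k$) is used. The degenerate possibilities $H=K_2$ or $H_0$ edgeless cause no difficulty, since then ``$G_0$ contains a copy of $H_0$'' holds vacuously and the first case of the analysis applies, giving an even faster stabilisation.
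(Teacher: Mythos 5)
Your proof is correct and follows essentially the same route as the paper: after invoking Fact \ref{fact:G(k,p) isolated edge}, both arguments use the isolated edge together with a copy of the remaining $(k-2)$-vertex part of $H$ found by time $1$ to complete all pairs outside that part in one step, and then use the resulting large clique to finish every remaining edge one step later. Your bookkeeping via $H_0$ and the case split on whether $G_0$ already contains a copy of $H_0$ is just a slightly different packaging of the paper's argument, yielding the same bound $M_H(n)\leq 3$.
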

\begin{proof}
    By Fact \ref{fact:G(k,p) isolated edge}, we have that a.a.s.\ $G(k,p)$ is either empty or has an isolated edge. If the former holds, then  we certainly have $M_H(n)=0$. If the latter holds then fixing $H$ to be such an instance of $G(k,p)$ and $G_0$ to be an arbitrary $n$-vertex graph, we have that if $G_1\neq G_0$, then there is some copy $H_1$ of $H$ in $G_1$. Letting $e_1$ be an isolated edge in $H_1$ and $W=(V(G_0)\setminus V(H))\cup e_1$, we have that $G_2[W]$ is complete as any edge in $\binom{W}{2}$ can replace $e_1$ in a copy of $H$. Finally $G_3$ is then complete as any remaining edge can complete a copy of $H$ with some selection of $k-2$ vertices of $W$. 
\end{proof}

\subsection{The wheel graph} \label{sec: upper wheel}

Before proving the upper bound of Theorem \ref{thm:wheel}, we need some preliminary lemmas. Firstly, we need  the famous  Ruzsa-Szemer\'edi $(6,3)$-problem/theorem. Answering a question of Brown, Erd\H{o}s and S\'os \cite{brown1973some}, Ruzsa and Szemer\'edi \cite{ruzsa1978triple} showed that any $n$-vertex $3$-uniform hypergraph with no $6$ vertices inducing 3 or more edges, must have $o(n^2)$ edges.   The solution of Ruzsa and Szemer\'edi \cite{ruzsa1978triple} was one of the earliest applications of Szemer\'edi's  regularity lemma. The theorem has several equivalent formulations and applications. In particular,  it can be used to give a proof of Roth's theorem showing that sets in $[n]$ that are free of three-term arithmetic progressions, have size $o(n)$. We will use the following equivalent formulation (see for example \cite[Theorem 3.2]{komlos1995szemeredi}), where we say a matching $M$ in a graph $G$ is an \emph{induced matching} if $E(G[V(M)])=E(M)$.  

\begin{thm}[Ruzsa-Szemer\'erdi $(6,3)$-theorem] \label{thm (6,3)}
    For all $\eps'>0$, there exists $n_0\in \NN$ such that if $n\geq n_0$ and $G$ is an $n$-vertex graph which is the union of $n$ induced matchings, then $e(G)\leq \eps' n^2$. 
\end{thm}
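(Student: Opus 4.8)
The plan is to reduce the statement to the triangle removal lemma, which is itself the standard consequence of Szemer\'edi's regularity lemma. First I would observe that we may assume the $n$ induced matchings $M_1,\dots,M_n$ actually partition $E(G)$: for each edge of $G$ pick one matching containing it, and note that a sub-matching $M'\subseteq M_i$ is still induced, since $E(G[V(M')])\subseteq E(G[V(M_i)])=E(M_i)$ and the $M_i$-edges with both endpoints in $V(M')$ are exactly those of $M'$. Fix also an arbitrary linear order on $V(G)=[n]$, so that each edge is written $\{x,y\}$ with $x<y$.

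Next I would build an auxiliary tripartite graph $\Gamma$ on vertex classes $A\sqcup B\sqcup C$, each a copy of $[n]$, so that $v(\Gamma)=3n$. For every $i\in[n]$ and every edge $\{x,y\}\in M_i$ with $x<y$, add the three edges $a_xc_i$, $c_ib_y$ and $a_xb_y$, forming a triangle $T_{i,\{x,y\}}$. The crucial claim is that these are the \emph{only} triangles of $\Gamma$: every triangle has one vertex in each class (the classes are independent), so it is of the form $\{a_x,c_i,b_y\}$, and the presence of its three edges forces $x,y\in V(M_i)$ and $\{x,y\}\in E(G)$; since $M_i$ is \emph{induced} this gives $\{x,y\}\in E(M_i)$, and then the matching property pins the triangle down as $T_{i,\{x,y\}}$. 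Hence $\Gamma$ has exactly $e(G)$ triangles, and — because the $M_i$ partition $E(G)$ — any two of them are pairwise edge-disjoint.

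Then I would invoke the triangle removal lemma: for the given $\eps'$ there is $\gamma=\gamma(\eps')>0$ such that every $m$-vertex graph with at most $\gamma m^3$ triangles can be made triangle-free by deleting at most $(\eps'/9)m^2$ edges. For $n$ large enough in terms of $\eps'$ we have $e(G)\le\binom n2\le\gamma(3n)^3$, so $\Gamma$ can be made triangle-free by deleting at most $(\eps'/9)(3n)^2=\eps' n^2$ edges. On the other hand, destroying all of the $e(G)$ pairwise edge-disjoint triangles $T_{i,\{x,y\}}$ requires deleting at least $e(G)$ edges. Comparing the two bounds yields $e(G)\le\eps' n^2$, with $n_0(\eps')$ coming from the threshold above.

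The only substantial ingredient is the triangle removal lemma itself, and that is where I expect the real work to be. I would prove it in the usual way: apply Szemer\'edi's regularity lemma to $\Gamma$ with a regularity parameter chosen small relative to $\eps'$; delete every edge lying inside a class, inside an irregular pair, or inside a pair of density below a small threshold, which removes $O(\eps' n^2)$ edges; and then argue that the cleaned graph is triangle-free, since any surviving triangle would lie in a triple of pairwise-regular pairs of non-negligible density, and the triangle counting lemma would then produce $\Omega(n^3)$ triangles in $\Gamma$, contradicting $e(G)\le\gamma(3n)^3$. Making the quantitative dependencies between the regularity parameter, the density threshold, $\gamma$ and $\eps'$ line up is the routine but delicate part of the argument.
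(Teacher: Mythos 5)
Your argument is correct: the reduction to edge-disjoint triangles in the auxiliary tripartite graph, the verification that induced-ness and the matching property make those the only triangles, and the application of the triangle removal lemma (itself via Szemer\'edi regularity) together give exactly the stated bound $e(G)\leq \eps' n^2$. The paper does not prove this theorem but cites it from the literature (Ruzsa--Szemer\'edi, via the Koml\'os--Simonovits survey), and your proof is the standard regularity-based argument for this induced-matchings formulation, so there is nothing to reconcile beyond the minor point that in your sketch of the removal lemma the hypothesis should refer to the triangle count of $\Gamma$ (which here happens to equal $e(G)$) rather than to $e(G)$ per se.
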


Secondly, we need the following simple result on $C_k$-processes which we proved in a previous paper  \cite[Lemma 3.5]{FMSz1}.

 \begin{lem}\label{lem:cycles_smalldistance}
    Let $k\geq 5$ be odd and let  $G$ be  
     a connected graph of order at least $k+1$ which contains a copy of $C_k$. 
  Then $\final{ G}_{C_k}$ is a clique.
    \end{lem}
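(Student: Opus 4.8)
The plan is to show that on such a $G$ the $C_k$-process adds every missing edge. Recall that an edge $uv$ enters $G_{i+1}$ precisely when $G_i$ contains a $u$--$v$ path with exactly $k-1$ edges (hence $k$ vertices), since this path together with $uv$ forms a new copy of $C_k$. I would first reduce to the case $v(G)=k+1$, which is treated in the next paragraph. If $v(G)\ge k+2$, pick by connectivity a vertex $w\notin V(C)$ adjacent to some $c_i$ (the edge cut between $V(C)$ and the rest is nonempty), apply the $v(G)=k+1$ case to the connected $(k+1)$-vertex subgraph $G[V(C)\cup\{w\}]$, and use Observation~\ref{obs:hom} to conclude that $\langle G\rangle_{C_k}$ contains a clique on $V(C)\cup\{w\}$, so a clique on at least $k+1$ vertices. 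Now let $K$ be a maximal clique of $\langle G\rangle_{C_k}$. If $K\neq V(G)$, then since $G\subseteq\langle G\rangle_{C_k}$ is connected there is an edge joining $K$ to $V(G)\setminus K$, and the following observation contradicts maximality: \emph{if a graph $\Gamma$ contains a clique $K$ with $|K|\ge k-1$ and a vertex $v\notin K$ with a neighbour $x\in K$, then $\langle\Gamma\rangle_{C_k}$ contains the clique on $K\cup\{v\}$.} Indeed, for each $y\in K\setminus\{x\}$ the path $v,x,z_1,\dots,z_{k-3},y$ with $z_1,\dots,z_{k-3}$ distinct in $K\setminus\{x,y\}$ has length $k-1$, so $vy$ is added; once $v$ is joined to all of $K\setminus\{x\}$, the same path routed through some $y_1\neq x$ adds $vx$, making $K\cup\{v\}$ complete. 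Hence $K=V(G)$ and $\langle G\rangle_{C_k}=K_n$.

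The heart of the matter is the base case $v(G)=k+1$. Write $C=c_0c_1\cdots c_{k-1}$ and let $w$ be the remaining vertex; by connectivity relabel so that $wc_0\in E(G)$. The only place the parity of $k$ enters is the claim that \emph{if $wc_a$ is present at some stage, then $wc_{a+2}$ is present at the next one}: the sequence $w,c_a,c_{a-1},c_{a-2},\dots,c_{a+2}$ (indices mod $k$) visits $w$ and every cycle vertex except $c_{a+1}$, so it is a path on $k$ vertices joining $w$ to $c_{a+2}$ and hence of length $k-1$. Starting from $wc_0$ and iterating, $w$ becomes adjacent to $c_{2j}$ for every $j\ge 0$, and since $\gcd(2,k)=1$ these indices run over all residues mod $k$; thus after finitely many steps $G_t$ contains the whole wheel $W_k$ on $V(C)\cup\{w\}$. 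Finally, from $W_k$ every chord is added in one further step: for non-adjacent $c_i,c_j$ we may rotate so that $i=0$ and $j=d$ with $2\le d\le k-2$, and then
\[
c_0,\; c_{k-1},\; c_{k-2},\; \dots,\; c_{d+1},\; w,\; c_2,\; c_3,\; \dots,\; c_d
\]
is a path using only edges of $W_k$, visiting $w$ and all cycle vertices except $c_1$, hence of length $k-1$; so $c_0c_d$ is added. Therefore $V(C)$ becomes a clique, and together with the spokes $wc_i$ this means $\langle G\rangle_{C_k}$ is complete on all $k+1$ vertices.

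I expect the genuinely delicate step to be the appearance of all spokes in the base case, which relies precisely on $\gcd(2,k)=1$; the statement is indeed false for even $k$ (for instance $G=C_k$ plus a pendant vertex is bipartite, and the $C_k$-process cannot destroy bipartiteness). The rest is routine bookkeeping: checking that the displayed sequences really are paths (distinctness of their vertices), in particular in the boundary cases $d=2$ and $d=k-2$, verifying the path lengths, and using that edges are never removed so that $W_k$, once present, persists and the chords can all be added simultaneously at the next step.
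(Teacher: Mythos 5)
Your proof is correct, but note that this paper never proves the lemma internally: it is imported from the first paper of the series, \cite[Lemma 3.5]{FMSz1}, so there is no in-paper argument to compare against; your write-up is in effect a self-contained replacement. Checking it on its own terms: the characterisation of edge addition (a missing edge $uv$ enters exactly when there is a $u$--$v$ path on $k$ vertices) is right; the base case on $k+1$ vertices is sound, since the path $w,c_a,c_{a-1},\dots,c_{a+2}$ omits only $c_{a+1}$ and so has $k-1$ edges, the step $wc_a\Rightarrow wc_{a+2}$ together with $\gcd(2,k)=1$ yields all spokes, and your chord path avoiding exactly $c_1$ works also in the boundary cases $d=2$ and $d=k-2$; and the clique-growing observation only needs $|K|\geq k-1$, which your $(k+1)$-clique supplies. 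Two small points you should tighten. First, "let $K$ be a maximal clique of $\langle G\rangle_{C_k}$" should be "maximal among cliques containing $V(C)\cup\{w\}$" (or a maximum clique): an arbitrary maximal clique could have fewer than $k-1$ vertices, and then the growing observation does not apply. Second, the contradiction with maximality tacitly uses that the final graph is $C_k$-stable, i.e.\ $\langle\langle G\rangle_{C_k}\rangle_{C_k}=\langle G\rangle_{C_k}$, so the new edges $vy$ must already be present in $\langle G\rangle_{C_k}$; this is immediate but worth one sentence. (The remark about re-adding $vx$ is redundant, since $vx$ is an edge by hypothesis.) Your closing observation that bipartiteness obstructs the even case is also correct and explains why the parity hypothesis is genuinely needed.
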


We remark that Lemma \ref{lem:cycles_smalldistance} does not hold for even cycles as the final graph may be complete bipartite. It is for this reason that we focus here on wheel graphs $W_k$ with $k$ odd. 
Finally we give a simple lemma showing how cliques grow during the $W_k$-process. 

\begin{lem} \label{lem:wheel cliques grow}
    Let $k\geq 7$ be an odd integer and $H=W_k$ the wheel graph  with $k+1$ vertices. Suppose that $(G_i)_{i\geq 0}$ is some $H$-process and  $Y\subseteq V(G_0)$ with $|Y|\geq k+1$ and $G_0[Y]$ being a clique. If $F$ is some copy of $H$ in $G_0$ with $2\leq |V(F)\cap Y|< k+1$, then there is a vertex $z\in V(F)\setminus (Y\cap V(F))$ such that 
    $G_{1}[Y\cup \{z\}]$ is  a clique. Moreover, $G_{k}[Y\cup V(F)]$ is also a clique.
\end{lem}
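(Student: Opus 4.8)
The goal is to show that cliques grow one vertex at a time during the $W_k$-process, and that a whole copy of $H$ gets absorbed into a clique within $k$ steps. The key structural fact is that if $v$ is the hub of a copy $F$ of $W_k$, then $F-v$ is a copy of $C_k$ on the remaining $k$ vertices. We will use Lemma \ref{lem:cycles_smalldistance} applied to $C_k$-processes, together with the observation that a $C_k$-process is ``slower'' than the $W_k$-process in a sense that we can exploit: once we fix a vertex to play the role of the hub, completing a copy of $C_k$ on $k$ other vertices completes a copy of $W_k$.

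\textbf{First part.} Let $F$ be a copy of $H=W_k$ in $G_0$ with $2 \leq |V(F) \cap Y| < k+1$, and let $v$ be the hub of $F$, so $C:=F-v$ is a copy of $C_k$. We split into two cases. If $v \in Y$: consider the graph $G_0' := G_0[Y \cup V(F)]$ and look at vertices of $C$ outside $Y$. Pick $z \in V(C)\setminus Y$; I claim $G_1[Y\cup\{z\}]$ is a clique. For each $y\in Y$ with $yz\notin E(G_0)$, we need a copy of $W_k$ in $G_0$ completed by adding $yz$. Since $|Y|\geq k+1$, there are at least $k-1$ vertices in $Y\setminus\{v\}$ other than $y$; choose the hub to be $v$ (if $v=y$ this needs care — but then $z$'s edge to $v=y$: we can instead use a different argument, choosing any hub $y'\in Y\setminus\{y,z\}$ and building $W_k$ whose rim is a cycle through $y$, $z$ and $k-2$ further vertices of $Y$, which exists since $G_0[Y]$ is a complete graph on $\geq k+1$ vertices minus possibly the edge $yz$, still Hamiltonian-connected enough to route a $C_k$ through both $y$ and $z$). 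The case $v\notin Y$ is similar: since $|V(F)\cap Y|\geq 2$, the cycle $C$ meets $Y$ in at least two vertices (as $v\notin C$... actually $V(F)\cap Y \subseteq V(C)$ here), and we take $z:=v$, the hub of $F$; then for $y\in Y$ with $yv\notin E(G_0)$ we form a $W_k$ with hub $y$ and rim cycle passing through $v$ and through the two vertices of $V(F)\cap Y$ (using edges of $F$ near those vertices) plus enough vertices of $Y$ to close up — again possible since only the single edge $yv$ may be missing and $G_0[Y]$ is otherwise complete on $\geq k+1$ vertices. In both subcases we identify a single $z\in V(F)\setminus(Y\cap V(F))$ with $G_1[Y\cup\{z\}]$ complete.

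\textbf{Second part.} For $G_k[Y\cup V(F)]$ being a clique: iterate the first part. After step $1$ we have absorbed one vertex of $V(F)$ into the clique, growing $Y$ to $Y_1:=Y\cup\{z_1\}$ which is a clique in $G_1$ with $|V(F)\cap Y_1| \geq |V(F)\cap Y|+1$. Now reapply the first part to the $H$-process $(G_{1+i})_{i\geq 0}$ with starting graph $G_1$, the clique $Y_1$, and the same copy $F$ (which is still present in $G_1 \supseteq G_0$): either $|V(F)\cap Y_1|=k+1$ already and we're done, or we get $z_2$ with $G_2[Y_1\cup\{z_2\}]$ a clique. Since $F$ has $k+1$ vertices and we gain at least one new vertex of $V(F)$ per step, after at most $(k+1)-|V(F)\cap Y| \leq k-1$ steps all of $V(F)$ is inside the clique, so $G_{k-1}[Y\cup V(F)]$, hence certainly $G_k[Y\cup V(F)]$, is complete.

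\textbf{Main obstacle.} The delicate point is the boundary case in the first part where the vertex $y\in Y$ whose edge to $z$ is missing coincides with the hub $v$ of $F$ (or more generally, ensuring we can always route a rim $C_k$ through a prescribed pair of vertices $\{y,z\}$ inside $Y\cup\{z\}$ while only the edge $yz$ is possibly absent). Here one wants to invoke Lemma \ref{lem:cycles_smalldistance}: the graph $(G_0[Y\cup\{z\}])$ contains a copy of $C_k$ (take any $C_k$ inside the clique $G_0[Y]$, $|Y|\geq k$), is connected, and has order $\geq k+1$, so its $C_k$-closure is a clique; but we must relate the $C_k$-process to the $W_k$-process. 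The clean way: fix a vertex $w_0\in Y\setminus\{y,z\}$ to serve permanently as a hub, and observe that for any set $S$ of $k$ vertices not containing $w_0$ with $w_0$ adjacent to all of $S$, a copy of $C_k$ on $S$ in the current graph yields a copy of $W_k$. So the $W_k$-process on $G_0$ dominates (contains at every step) the $C_k$-process run on the link graph of $w_0$ restricted to $(Y\cup\{z\})\setminus\{w_0\}$; since $w_0$ is adjacent to all of $Y\cup\{z\}$ once $z$ is absorbed — wait, that is what we are trying to prove. To avoid circularity: note $w_0$ is already adjacent to all of $Y$ (clique) and to $z$ after using any \emph{one} copy of $W_k$ through $z$; so after establishing $zw_0\in E(G_1)$ via the generic argument with hub $\neq y$, all subsequent completions in step $1$ (they happen simultaneously, but the copies of $W_k$ we exhibit only use edges of $G_0$) can use hub $w_0$ and a $C_k$ rim, and Lemma \ref{lem:cycles_smalldistance} guarantees the $C_k$-closure of the relevant link graph — which contains $C_k$ and is connected of order $\geq k$ — fills in, giving all missing edges $yz$. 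I expect the careful bookkeeping of which edges are available in $G_0$ versus $G_1$, and the reduction to Lemma \ref{lem:cycles_smalldistance}, to be the technical heart; everything else is routine.
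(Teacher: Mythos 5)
Your overall strategy is the same as the paper's: absorb one vertex of $V(F)$ into the clique per time step, then iterate to get the ``moreover'' statement in at most $k$ steps (your treatment of the second part is fine and matches the paper). Your case $v\notin Y$ is also essentially correct: taking $z:=v$ works because $v$ is adjacent in $G_0$ to the $\geq 2$ vertices of $V(F)\cap Y\subseteq V(C)$, so for each $y\in Y$ with $yv\notin E(G_0)$ one builds a wheel with hub $y$, rim through $v$ and two of its $F$-neighbours in $Y$, padded inside the clique.

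The genuine gap is in the case $v\in Y$, where you pick $z\in V(C)\setminus Y$ \emph{arbitrarily}. Such a $z$ may have only one $G_0$-neighbour in $Y$, namely the hub $v$ (both of its $C$-neighbours may lie outside $Y$), and then your construction cannot place $z$ on the rim of a wheel inside $Y\cup\{z\}$: a rim vertex needs three neighbours (two rim-neighbours plus the hub), but after adding one missing edge $zy$ the vertex $z$ has only two neighbours in $Y$. This is not merely a gap in the write-up; the claim is false for a bad choice of $z$. Take $G_0$ to be exactly the clique on $Y$ ($|Y|=k+1$) together with a copy $F$ of $W_k$ whose hub is $v\in Y$ and whose rim $C$ meets $Y$ in a single vertex $a$, and let $z\in V(C)$ be at distance at least $3$ from $a$ along $C$. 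A short case analysis on the hub of a putative wheel shows that $G_0+za$ contains no copy of $W_k$ through the edge $za$ (a hub must have degree $\geq k$, forcing it into $Y$; if the hub is in $Y\setminus\{v,a\}$ then $z$ is not in its neighbourhood; if the hub is $v$ the only cycles through $za$ in its neighbourhood are the edge $za$ plus an arc of $C$, whose lengths are never $k$; if the hub is $a$ then $z$ has only one available rim-neighbour), so $za\notin E(G_1)$ and $G_1[Y\cup\{z\}]$ is not a clique. The fix is the selection step you are missing and which the paper makes: since $C$ has vertices both inside and outside $Y$, choose $z\in V(C)\setminus Y$ having a $C$-neighbour in $Y$; together with the hub $v\in Y$ this gives $|N_{G_0}(z,Y)|\geq 2$, and then for every $y\in Y$ with $zy\notin E(G_0)$ the edge $zy$ completes a wheel inside $Y\cup\{z\}$ (hub one of the three available neighbours of $z$, the other two as rim-neighbours, the rest of the rim filled greedily from the clique), exactly because $\delta(W_k)=3$. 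With that selection in hand, the elaborate detour in your ``main obstacle'' paragraph through Lemma 8.2, link graphs and $C_k$-closures is unnecessary: no appeal to the cycle lemma is needed for this statement.
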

\begin{proof} 
 We claim that there is some vertex $z\in V(F)\setminus Y$ such that $|N_{G_{0}}(z, Y)|\geq 2$. Indeed, if the universal vertex $u$ of $F$ lies outside of $Y$, then we can take $z=u$ and use that $|V(F)\cap Y|\geq 2$. If $u$  lies in $Y$, then the cycle $C=F-u$ has vertices both inside and outside of $Y$. Therefore, there is some vertex $z\in V(C)\setminus(V(C)\cap Y)$ such that $|N_C(z,Y)|\geq 1$ and as $z$ also has $u\in Y$ as a neighbour, we indeed have that $|N_{G_{0}}(z,Y)|\geq 2$. Now as the minimum degree of $H$ is 3, for any $y\in Y$, such that $zy\notin E(G_{0})$, we have that $zy$ completes some copy of $H$ with $z$ and vertices of $Y$. Hence $Y\cup \{z\}$ is complete at time $1$, completing the proof.

 The `moreover' statement comes from repeated applications of the main statement of the lemma,  growing the size of $Y$ at each time step to incorporate another vertex in $V(F)$. This then gives that all of $V(F)$ is in a clique with $Y$ after at most $k+1-|V(F)\cap Y|\leq k$ steps.
\end{proof}

We are now in a position to prove the upper bound of Theorem \ref{thm:wheel}. 

\begin{proof}[Proof of upper bound in Theorem \ref{thm:wheel}]
    Let $k\geq 7$ be an odd integer, $H=W_k$ the wheel graph with $k+1$ vertices and $\eps>0$ be arbitrary. We will show that for $n$ sufficiently large, any $n$-vertex graph $G_0$ has $\tau_{H}(G_0)\leq \eps n^2$.  We fix $K=\binom{k}{2}$, $0<\eps':=1/8k!$ and suppose that $n$ is large enough  such that any $n$-vertex graph $G$ which is the union of $Kn$ induced matchings  has $e(G)\leq \eps'n^2$, using Theorem \ref{thm (6,3)}. We fix some $n$-vertex graph $G_0$, suppose for a contradiction that $\tau:=\tau_H(G_0)\geq \eps n^2$ and let $(G_i)_{i=0}^\tau$ be the $H$-process on $G_0$.

    Let $(H_i,e_i)_{i\in [\tau]}$ be some $H$-chain such that $e_i\in E(G_i)\setminus E(G_{i-1})$ for $i\in [\tau]$. As discussed at the beginning of Section \ref{sec:chains}, such a chain exists and can be constructed in reverse order, starting with some edge $e_\tau$ added at time $\tau$ and a copy $H_\tau$ of $H$ completed by $e_\tau$. The edge $e_{\tau-1}$ is then chosen as an edge of $H_\tau$ that is added at time $\tau-1$, which must exist as otherwise $e_\tau$ would be added before time $\tau$. Defining $H_{\tau-1}$ to be a copy of $H$ completed by $e_{\tau-1}$ and continuing this process, one can define the full chain $(H_i,e_i)_{i\in [\tau]}$. 

We further fix $\tau_1:=\ceil{\eps n^2/2}$ and give the following claim. 

\begin{clm} \label{clm:growing cliques}
  There is some $1\leq t_0 \leq \tau_1$ such that for  $s=k+1,\ldots n$,  there is a clique in $G_{\tau_1+2s}$ of size $s$ containing $V(H_{t_0})$. 
\end{clm}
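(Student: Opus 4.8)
The plan is to prove Claim~\ref{clm:growing cliques} by first finding, via the Ruzsa--Szemer\'edi $(6,3)$-theorem, an early copy of $H$ in the chain which already sits inside a large clique, and then to propagate that clique along a sliding window of the vertex set using Lemma~\ref{lem:wheel cliques grow}.

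First I would use the $(6,3)$-theorem to pin down the index $t_0$. Consider the first $\tau_1$ edges $e_1,\ldots,e_{\tau_1}$ of the chain. For each $t\in[\tau_1]$, the copy $H_t$ contains $e_{t-1}$ (or $e_0$, the assigned origin; one can always extend the reversed construction to produce an $e_0\in E(H_1)$) and is completed by $e_t$, so $H_t - e_t \subseteq G_{t-1}$. The idea is that if \emph{no} $H_t$ with $t\le\tau_1$ lay inside a clique of size, say, $k^2$ in $G_{t-1}$, then one could extract a bounded-degree ``matching-like'' structure violating Theorem~\ref{thm (6,3)}. Concretely, for each $t$ let $M_t$ be the edge set of $G_{t-1}$ on the vertex set $V(H_t)$ together with some additional bookkeeping; the absence of a large clique on $V(H_t)$ forces $G_{t-1}[V(H_t)]$ to miss many edges, and after partitioning into at most $K=\binom{k}{2}$ classes according to which pair of $V(H_t)$ the ``new'' edge $e_t$ occupies (relative to a fixed labelling of $H$), each class can be arranged into an induced matching across the $\tau_1 \ge \eps n^2/2 \ge n$ values of $t$. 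Since we have built $G_0 \cup \{e_1,\dots,e_{\tau_1}\}$ (or rather a suitable subgraph) as a union of $\le Kn$ induced matchings, the $(6,3)$-theorem bounds its edge count by $\eps' n^2$, contradicting that $\tau_1$ edges were added. Hence some $t_0\le\tau_1$ has $V(H_{t_0})$ contained in a clique of $G_{t_0-1}$ of size at least $k+1$. This is the step I expect to be the main obstacle: the reduction to an induced-matching structure needs care, because the edges of $G_{t_0-1}$ around $V(H_{t_0})$ are not themselves a matching, so one must instead track only the single added edge $e_t$ per step and argue that the $6$-vertex/$3$-edge configuration arising from two nearby steps of the chain forces either a repeated edge or a large common clique. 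I would likely phrase the contradiction in terms of: if no $H_t$ ($t\le \tau_1$) sits in a $(k+1)$-clique of $G_{t_0-1}$, then the graph $\bigcup_{t\le \tau_1}(H_t - e_t)$ restricted appropriately is $(6,3)$-free after a bounded colouring, hence sparse.

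With $t_0$ fixed, let $Y_0$ be a clique of size exactly $k+1$ in $G_{t_0-1}\subseteq G_{\tau_1}$ with $V(H_{t_0})\subseteq Y_0$ (we may shrink a larger clique, but we must keep all of $V(H_{t_0})$, which has only $k+1$ vertices, so this is automatic). Now I would induct on $s\ge k+1$: given a clique $Y_{s}$ of size $s$ in $G_{\tau_1+2(s-k-1)}$ (say) containing $V(H_{t_0})$, with $s<n$ pick any vertex $w\notin Y_s$. Since $G$ is eventually the complete graph (the $H$-process on \emph{any} graph containing a copy of $H$ stabilises at $K_n$ — $W_k$ self-percolates once $n$ is large), $w$ lies in some copy $F$ of $H$ in the final graph; more carefully, one tracks the first time $w$ gains two neighbours in the current clique. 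Invoking Lemma~\ref{lem:wheel cliques grow} with the current clique in the role of $Y$ and an appropriate copy $F$ of $H$ (which exists because $G_0$ is connected to the clique — here one uses that all relevant vertices are eventually joined), the clique absorbs a new vertex $z$ within one step, and after at most $k$ further steps it absorbs all of $V(F)$. Iterating and being slightly generous with the time budget — each new vertex costs at most $2$ time steps in the bound ``$\tau_1 + 2s$'' — gives a clique of size $s$ in $G_{\tau_1+2s}$ for every $s$ up to $n$.

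Once Claim~\ref{clm:growing cliques} is established, taking $s=n$ shows $G_{\tau_1+2n}=K_n$, so $\tau_H(G_0)\le \tau_1 + 2n = \ceil{\eps n^2/2} + 2n < \eps n^2$ for $n$ large, contradicting $\tau\ge\eps n^2$; since $\eps>0$ was arbitrary, $M_{W_k}(n)=o(n^2)$. The remaining care points are: (i) making sure the copy $F$ of $H$ used at each absorption step really exists inside the current graph and meets the current clique in at least $2$ vertices — this should follow by considering the minimal time at which the growing clique and the rest of the graph are linked, together with connectivity of $G_0$ and the fact that $W_k$ has minimum degree $3$; and (ii) checking the index bookkeeping so that the ``$+2s$'' in the statement genuinely covers the $\le k+1$ absorption steps needed per vertex, which works because we only need to add $n-(k+1)$ vertices total and $k$ is a fixed constant, so amortising two steps per vertex is more than enough once $n \gg k$.
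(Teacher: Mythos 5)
Your induction step rests on a false premise. You justify the existence of the absorbing copy $F$ by saying the process ends in the complete graph because ``$W_k$ self-percolates once $n$ is large'' — but $W_k$ is self-\emph{stable} (Observation \ref{obs:self-stable}), and in general $\final{G_0}_H$ need not be anywhere near $K_n$ (nor is $G_0$ assumed connected), so a vertex $w\notin Y_s$ may never gain two neighbours in the clique, let alone within the two-step budget. The correct source of the copy of $H$ meeting the clique in at least two vertices is the chain itself: take $t'$ minimal with $t'>t_0$ and $V(H_{t'})\nsubseteq W$. Since $W$ is already a clique at time $\tau_1+2(r-1)$ and $H_{\tau_1+2r-1}$ is completed by a \emph{new} edge, $V(H_{\tau_1+2r-1})\nsubseteq W$, so $t'\leq \tau_1+2r-1\leq\tau$; minimality gives $V(H_{t'-1})\subseteq W$, and the shared edge $e_{t'-1}$ forces $|V(H_{t'})\cap W|\geq 2$, after which Lemma \ref{lem:wheel cliques grow} absorbs one vertex within the time budget. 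Without this (or some substitute) use of the chain, your absorption step has no guarantee that a suitable $F$ with $2\leq |V(F)\cap Y_s|<k+1$ exists in the current graph at the required time.

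The base case $s=k+1$ — which you yourself flag as ``the main obstacle'' — is essentially missing, and the sketch you give does not work as stated: no $H_t$ can lie inside a clique of $G_{t-1}$, because its completing edge $e_t$ is absent from $G_{t-1}$, and there is no reason whatsoever that $G_0\cup\{e_1,\ldots,e_{\tau_1}\}$ (with $G_0$ arbitrary, possibly dense) decomposes into $Kn$ induced matchings. The paper's argument is specific to the wheel structure: write $H_t$ as hub $u_t$ plus rim cycle $Q_t=H_t-u_t$, set $\tilde G=\bigcup_{t\leq\tau_0}Q_t$, and split into two cases. If some common neighbourhood $N_v$ contains a connected component with at least $k+1$ vertices containing a copy of $C_k$, then Lemma \ref{lem:cycles_smalldistance} (this is where oddness of $k$ enters) turns it into a $(k+1)$-clique within $\binom{k}{2}$ steps, and Lemma \ref{lem:wheel cliques grow} then pulls in some $V(H_{t_0})$ within $k$ further steps, all before time $\tau_1$. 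Otherwise every such component has exactly $k$ vertices, and it is precisely this dichotomy that makes each $\tilde G_v$ a union of at most $\binom{k}{2}$ matchings that are \emph{induced} in $\tilde G$, so $e(\tilde G)\leq \eps' n^2$ by Theorem \ref{thm (6,3)}; the contradiction then comes from a lower bound on $e(\tilde G)$, obtained by counting either the chain edges $e_t$ that lie in $Q_t\cup Q_{t+1}$ or, when $e_t=u_tu_{t+1}$, the auxiliary rim edges $f_t=x_tu_{t+1}$, each of which can repeat at most $k!$ times. None of these ingredients — the hub/rim decomposition, the odd-cycle percolation lemma, the component-size dichotomy giving inducedness, and the $k!$-multiplicity count — appears in your outline, so the production of $t_0$ remains a genuine gap.
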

    Note that it suffices to prove this claim, as then $G_{\tau_2}$ is a clique with $\tau_2:=\tau_1+2n<\tau$, contradicting that the process stabilises only at time $\tau$.  We prove the claim by induction on $s$ and so assume that for some $r\in \{k+2,\ldots,n\}$, the claim holds for $k+1\leq s<r$. Therefore there is some $1\leq t_0\leq \tau_1$ and a set of vertices $W\supseteq V(H_{t_0})$ of size $r-1$ such that $G_{\tau_1+2(r-1)}[W]$ is a clique. Let \[t':=\min\left\{t:t_0+1\leq t \leq \tau, V(H_t)\nsubseteq W\right\}.\]
    Note that $t'$ is well-defined and in fact $t'\leq \tau_1+2r-1$  as $W$ is already a clique at time $\tau_1+2r-2$ and so $V(H_{\tau_1+2r-1})$ cannot be contained in $W$. We  have that $V(H_{t'-1})\subseteq W$ by the definition of $t'$, using here that $V(H_{t_0})\subseteq W$ if $t'=t_0+1$. Therefore as $e_{t'-1}\in V(H_{t'-1})\cap V(H_{t'})$, we have that $|V(H_{t'})\cap W|\geq 2$, and Lemma \ref{lem:wheel cliques grow} gives that there is some vertex $z\notin W$ such that $W\cup\{z\}$ is a clique at time
     $t'+1\leq \tau_1+2r$, proving the induction step. 

    It remains to prove the base case $s=k+1$. Let $\tau_0=\tau_1-\binom{k+1}{2}$ and for $t\in[\tau_0]$, let $u_t\in V(H_t)$ be the universal vertex of $H_t$ and $Q_t:=H_t-u_t$ be the cycle obtained by removing the universal vertex. Furthermore,  we define $\tilde{G}:=\cup_{t=1}^{\tau_0}Q_t$, $G':=\cup_{t=1}^{\tau_0} H_t$ and for $v\in V(G_0)$, we let 
    \[
    \tilde{G}_v:=\bigcup\{Q_t:1\leq t\leq \tau_0, u_t=v\} \qquad \mbox{ and } \qquad N_v:=N_{G'}(v).
    \]
    Now we claim that we are done if for some $v\in V(G_0)$ we have that $G'[N_v]$ contains a connected component of size at least $k+1$ that contains a copy  of the cycle $C_k$. Indeed, suppose that $X\subseteq N_v$ is a vertex set with $|X|=k+1$ such that $G'[X]$ is connected and contains a copy of $C_k$. Then as all vertices in $N_v$ are adjacent to $v$ in $G_{\tau_0}$, any edge added in the $C_k$-process on $G'[X]$ is added in the $H$-process also. Hence by Lemma \ref{lem:cycles_smalldistance}, $G_{\tau'}[X]$ is  a clique of size $k+1$ for some $\tau'\leq \tau_0+\binom{k}{2}$. We are not quite done as there is no guarantee that $X$ is the vertex set of some $H_{t_0}$ with $1\leq t_0\leq \tau_1$. However there is some  $t_0\in [\tau_0]$ such that $G'[X]$ contains an edge of $V(H_{t_0})$ (as $G'$ is simply the union of the $H_t$ with $1\leq t\leq \tau_0$). Hence  $|V(H_{t_0})\cap X|\geq 2$ and by Lemma \ref{lem:wheel cliques grow}, we have that $V(H_{t_0})$ is a clique of size $k+1$ at time $\tau'+k\leq \tau_1$.

We can therefore assume that for all $v\in V(G_0)$,  
\begin{itemize}[($\dagger$)]
    \item  any connected component in $G'[N_v]$ that contains a copy of $C_k$ has exactly $k$ vertices. 
\end{itemize} 
Moreover, as $\tilde{G}\subseteq G'$, we also have that 
\begin{itemize}[($\tilde \dagger$)]
    \item  any connected component in $\tilde G[N_v]$ that contains a copy of $C_k$ has exactly $k$ vertices. 
\end{itemize} 
This implies that for each $v\in V(G_0)$, the graph $\tilde{G}_v$ is a union of at most $K=\binom{k}{2}$ matchings that are induced in $\tilde{G}$. Indeed, as $\tilde{G}_v$ is simply a union of copies of $C_k$, we have that each non-empty connected component  of $\tilde{G}_v$ contains a copy of $C_k$ and is contained in $\tilde{G}[N_v]$. Hence all non-empty connected components of $\tilde{G}_v$ have exactly $k$ vertices and we can use $K$ colours to colour the edges of $\tilde{G}_v$ such that no two edges in the same connected component receive the same colour. Each colour class is then a matching and it is induced in $\tilde{G}$ as if it was not, then there would be a component of $\tilde{G}[N_v]$ that violates ($
\tilde \dagger$). As $\tilde{G}=\bigcup_{v\in V(G_0)}\tilde{G}_v$, we have that $\tilde{G}$ is the union of at most $Kn$ induced matchings and thus, by Theorem \ref{thm (6,3)}, we have that $\tilde{G}$ has at most $\eps' n^2$ edges. 

Finally then, we will lower bound the number of edges of $\tilde{G}$ to obtain a contradiction. Considering the edges $e_t\in E(H_t)\cap E(H_{t+1})$ for $1\leq t\leq \tau_0-1$, which are all distinct, if $(\tau_0-1)/2>\eps' n^2$ of these lie in $\tilde{G}$, then we are done. Therefore, we can assume that that there are at least $(\tau_0-1)/2$ steps $T\subseteq [\tau_0-1]$ such that for each $t\in T$, we have that $e_t\notin E(Q_t)\cup E(Q_{t+1})$. We claim that for all $t\in T$, we have that $e_t=u_tu_{t+1}$. Indeed, we must have that $e_t$ is incident to both $u_t$ and $u_{t+1}$ as it is not in $Q_t$ nor $Q_{t+1}$. Moreover, if $u_t=u_{t+1}$ then $Q_t$ and $Q_{t+1}$ must intersect (as $e_t\setminus \{u_t\}\subseteq  V(Q_t)\cap V(Q_{t+1})$) and lie on distinct vertex sets (as $e_{t+1}\setminus \{u_t\}\nsubseteq V(Q_t)$) and thus the component of $\tilde G$ in $\tilde{G}[N_{u_t}]$ containing $Q_t\cup Q_{t+1}$ violates condition $(\tilde\dagger)$. 

So indeed, for all $t\in T$ we have that $e_t=u_tu_{t+1}$ and we fix $f_t=x_tu_{t+1}\in E(Q_t)$, choosing $x_t$ to be one of the two neighbours of $u_{t+1}$ on $Q_t$. Note that we must have that  $x_t\in V(Q_{t+1})$ as otherwise $V(Q_{t+1})\cup \{x_t\}$ is a connected graph in $G'[N_{u_{t+1}}]$ violating ($\dagger$), using here that $x_tu_t\in E(H_t)\subseteq E(G')$ and $u_t\in V(Q_{t+1})$ as $e_t\in E(H_{t+1})$. 

Now we claim that the set $F:=\{f_t:t\in T\}\subseteq \tilde G$ has size $|F|\geq (\tau_0-1)/2k!>\eps'n^2$ which gives our desired contradiction. To see this,  we show that each edge $g\in E(\tilde{G})$ appears at most $k!$ times as $f_t$ for some $t\in T$. 
 Fixing some vertex $v\in g$, let $t_0\in T$ 
 be the first time that $e_t=g$ and  $u_{t+1}=v$ (if such a time exists). Then every other time $t_0<t\in T$ such that $g=f_t$ with $u_{t+1}=v$, we must have that $V(Q_{t+1})=V(Q_{t_0+1})$  
as otherwise $Q_{t+1}\cup Q_{t_0+1}\subseteq \tilde{G}[N_{v}]$ would violate ($\tilde \dagger$). 
As there are at most $k!/2$ ways to map a cycle to $k$ vertices, we indeed get that $g$ can appear as $f_t$ at most $k!$ times where the additional factor of two  comes from considering the other vertex of $g$ playing the role of $u_{t+1}$.
\end{proof}

\section{Concluding remarks} \label{sec:conclude}

We close the paper with some remarks about future directions. 

\subsection{Random graphs around the threshold}
In this paper we proved that the binomial random graph $H=G(k,p)$ has a phase transition around $\log k/k$  with $M_H(n)=O(1)$ a.a.s.\ when $p=o(\log k/k)$ and $M_H(n)=\Omega(n^2)$ a.a.s.\ when $p=\omega(\log k/k)$, see Theorem \ref{thm:random}. It is natural to ask whether this threshold is sharper than presented here. The $0$-statement, asserting that $M_H(n)$ is constant when $p$ is small, can in fact be tightened easily. Indeed, the only property of $H=G(k,p)$ that we used for the proof was the existence of an isolated edge, which a.a.s.\ occurs when $p\leq c_0\log k/k$, for any $c_0<1/2$, see for example \cite[Theorem 5.4]{bollobas_random_2001}. The $1$-statement, giving quadratic running time for $p$ much larger than $\log k/k$, is more delicate and in particular, we use that $H=G(k,p)$ is self-stable, applying a result of Kim, Sudakov and Vu \cite{kim2002asymmetry}. Whilst it is believable that results on the asymmetry of $G(k,p)$ can be strengthened to give that $H=G(k,p)$ is self-stable as soon as $p> \log k/k$, we did not find such a result in the literature and it is not immediate how one could prove such a statement. Nonetheless, we believe it is likely that our proof can be pushed to give  $M_H(n)=\Omega(n^2)$ a.a.s.\ already when $p>c_1\log k/k$ for any $c_1>1$. The range $\log k/2k<p<\log k/k$ seems to be the most interesting and it is unclear what to expect for the behaviour of $M_H(n)$ with $H=G(k,p)$. In this range, $H$ will a.a.s.\  have one giant component and some isolated vertices, which can be ignored from the perspective of running times. The giant component however will a.a.s.\ be \textit{separable}  in this range, even having vertices of degree one. This suggests that chain constructions will be useless for proving lower bounds on $M_H(n)$     but it does not preclude the possibility of having a slow maximum running time. Indeed, in \cite{FMSz2} we showed that there are graphs $H$ with $\delta(H)=1$ and $M_H(n)=\Omega(n^2)$. It would also be interesting to determine $M_H(n)$ for $H$ being a random 3-regular graph, which we expect to be (almost) quadratic.


\subsection{The minimum degree threshold for quadratic running time}
Another question that remains open from our work is to fully understand the behaviour of $M_H(n)$ under minimum degree conditions on $H$. We proved in Theorem \ref{thm;dense quad} that $M_H(n)$ is of the form $n^{2-o(1)}$ when $H$ is a $k$-vertex graph with $\delta(H)\geq k/2+1$. This is tight as there are examples ($K_{k/2,k/2}$ and the $H'_{k/2+1}$ in Example \ref{ex:H'k}) which have $\delta(H)=k/2$ and running time $M_H(n)$ polynomially separated from $n^2$ (or even linear in the case of $H'_{k/2+1}$). We also showed that in fact $M_H(n)=\Omega(n^2)$ when $\delta(H)>3k/4$. It would be interesting to determine the exact minimum degree threshold for quadratic running time and in particular, whether it is possible to have dense graphs $H$ with $\delta(H)\geq k/2+1$ that exhibit a running time like the wheel graph $W_k$, having  $M_H(n)=o(n^{2})$.

\subsection{Bipartite graphs}
The results of this paper show that bipartite graphs $H$ have running time $M_H(n)$ polynomially separated from $n^2$. Indeed, in Proposition \ref{prop:bipartite_extremalnumber}, we showed that $M_H(n)=O(\ex(n,H))$ where $\ex(n,H)$ is the extremal number of $H$. We expect that this is never tight. 

\begin{conj} \label{conj:bip upper}
   For all bipartite graphs $H$, we have that 
   \[M_H(n)=o(\ex(n,H)).\]
\end{conj}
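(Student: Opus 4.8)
The plan is to improve on the double counting of Proposition \ref{prop:bipartite_extremalnumber} by exploiting the \emph{temporal} structure of the $H$-process rather than only the fact that the edges added at odd (resp.\ even) times form an $H$-free graph. Fix a starting graph $G_0$ on $n$ vertices with $\tau:=\tau_H(G_0)=M_H(n)$, let $(G_i)_{i=0}^{\tau}$ be the process, and for each $i\in[\tau]$ pick $e_i\in E(G_i)\setminus E(G_{i-1})$ together with a copy $H_i$ of $H$ completed at time $i$, so that $E(H_i)\setminus\{e_i\}\subseteq E(G_{i-1})$. Write $D:=\{e_1,\dots,e_\tau\}$ for the graph of added edges. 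As in Proposition \ref{prop:bipartite_extremalnumber} we have $D=D_0\cup D_1$ with $D_c:=\{e_i: i\equiv c \bmod 2\}$ being $H$-free, and the goal is exactly to show $e(D)=o(\ex(n,H))$.

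The extra information beyond "union of two $H$-free graphs'' is that each $e_i$ is the \emph{last} edge of a copy $H_i-e_i\subseteq G_{i-1}$, that the graphs $G_{i-1}$ are nested, and that $\{e_1,\dots,e_{i-1}\}\subseteq E(G_{i-1})$. I would try to run a regularity/counting argument in the spirit of the Ruzsa--Szemer\'edi $(6,3)$-method used for the wheel in Section \ref{sec:upper}: associate to the process the ordered structure recording, for each $i$, the copy $H_i$ and its "trigger'' $E(H_i)\setminus\{e_i\}$, and show that the assumption $e(D)\ge\eps\,\ex(n,H)$ forces many edge-disjoint or "removable'' copies of $H$ in a configuration contradicting an appropriate supersaturation/removal statement for $H$. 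Since every bipartite $H$ with a cycle has $\ex(n,H)=\omega(n)$ (as $H\supseteq C_{2k}$ for some $k$ and $\ex(n,C_{2k})=\omega(n)$), the natural input is a removal-type principle: a host graph on $n$ vertices with few copies of $H$, in which $\Omega(\ex(n,H))$ edges each lie in exactly one copy of $H$, cannot exist.

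I would first test this on the cleanest cases, $H=K_{r,s}$ and $H=Q_3$, where $\ex(n,H)$ is (nearly) known and the copies of $H$ are rigid, and where the current upper bounds $M_{K_{r,s}}(n)=O(n^{2-1/r})$ (Corollary \ref{cor:extremal_bipartite}) and $M_{Q_3}(n)=O(n^{8/5})$ (Theorem \ref{thm:cube}) already match $\ex(n,H)$ up to the conjectured $o(1)$ factor. A stability variant is also worth attempting: if $\tau\ge(2-\delta)\ex(n,H)$ then both $D_0,D_1$ are near-extremal $H$-free graphs, so for those $H$ for which stability for $\ex(n,H)$ is known one can hope to pin down their structure and contradict the nesting of the $G_i$; this would at least give $M_H(n)\le(2-\Omega(1))\ex(n,H)$ for many $H$.

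The main obstacle, and the reason this remains only a conjecture, is that the bound $\tau\le 2\ex(n,H)$ coming from "a union of two $H$-free graphs has at most $2\ex(n,H)$ edges'' is essentially tight as a statement about $n$-vertex graphs, so any proof must genuinely use the process dynamics; yet the process is constrained only locally (one new copy of $H$ per step), the nested graphs $G_i$ may be arbitrarily dense, and the extremal $G_0$ need not be $H$-free, so it is unclear how to turn the local constraint into a global $o(\ex(n,H))$ saving. Compounding this, $\ex(n,H)$ is itself unknown for many bipartite $H$ of interest, so a general argument must be robust to not knowing the extremal number exactly, and even formulating the correct removal/supersaturation statement is part of the difficulty. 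I expect that finding the right auxiliary combinatorial structure to which a removal-lemma-type theorem applies — and proving that theorem — is the crux.
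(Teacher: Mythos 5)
There is no proof to compare against here: the statement you are addressing is stated in the paper only as Conjecture \ref{conj:bip upper}, in the concluding remarks, and the paper offers no argument for it. Your proposal likewise does not prove it — it is a research plan, and the decisive step is missing. Concretely, everything hinges on the ``removal-type principle'' you invoke (that a host graph with few copies of $H$ in which $\Omega(\ex(n,H))$ edges each lie in essentially one copy of $H$ cannot exist), but you neither formulate this precisely nor prove it, and it is exactly where the difficulty lives. Worse, the analogy with the wheel argument of Section \ref{sec:upper} does not transfer: the Ruzsa--Szemer\'edi $(6,3)$-theorem (Theorem \ref{thm (6,3)}) gives a saving only against the \emph{quadratic} scale $n^2$, which is why it suffices to prove $M_{W_k}(n)=o(n^2)$. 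For bipartite $H$ the target $\ex(n,H)$ is polynomially smaller than $n^2$, and regularity-based removal statements give no saving at that scale; sparse analogues of the removal lemma strong enough to beat $\ex(n,H)$ by a $o(1)$ factor are not available, and for some bipartite $H$ even $\ex(n,H)$ itself is unknown, as the paper notes. So the proposed route, as written, would not yield $o(\ex(n,H))$ even if the bookkeeping of the process $(G_i)$, the chain $(H_i,e_i)$ and the parity classes $D_0,D_1$ were carried out in full.

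Your observation that the bound of Proposition \ref{prop:bipartite_extremalnumber} only uses that the added edges split into two $H$-free graphs, and that any improvement must exploit the temporal/nested structure of the process, is a sound diagnosis and matches the spirit of the paper's discussion; the stability-type weakening you suggest (ruling out $\tau\ge(2-\delta)\ex(n,H)$ for graphs $H$ where stability for the Tur\'an problem is known) is a reasonable and more tractable first target. But as a proof of the statement, the proposal has a genuine gap: the key combinatorial input is conjectural, and no step in your outline closes it.
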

It could well be that a stronger conjecture is true and there is always polynomial separation between $M_H(n)$ and $\ex(n,H)$ for all bipartite graphs $H$.

We also gave lower bounds for many different bipartite graphs $H$, in particular showing in Corollary \ref{cor:compl bip lower} that for $r$ fixed and $s$ large, the maximum running time $M_{K_{r,s}}(n)$ approaches the upper bound given by the extremal number (Corollary \ref{cor:extremal_bipartite}). For smaller bipartite graphs, we expect that the bounds given by Proposition \ref{prop:bipartite_extremalnumber} are far from  tight. In particular, for $K_{3,3}$ and the cube graph $Q_3$, it would be very interesting to improve on any of  the bounds 
\[n^{3/2-o(1)}\leq M_{K_{3,3}}(n)\leq O(n^{5/3}) \quad \mbox{ and } \quad \Omega(n^{3/2})\leq M_{Q_3}(n)\leq O(n^{8/5}), \]
and we tend to believe that the lower bounds are closer to the truth. 





\subsection{Further applications of chain constructions}
We expect that our framework for general chain constructions will have many further applications giving lower bounds for $M_H(n)$. We have already demonstrated the flexibility of the \textit{ladder chains} (Construction \ref{const:ladder}) and \textit{dilation chains} (Constructions \ref{const:dil}
 and \ref{const: bip dil}) in giving general results for different classes of graphs $H$. For \textit{line chains}, we explored applications using different linear hypergraphs $\cL$ but did not include these in the paper. In fact, one can achieve general results using these constructions but they are weaker than the constructions using additive constructions and dilation chains. Indeed, for example, taking a collection of line chains defined by a generalized quadrangle can give a lower bound of $M_H(n)=\Omega(n^{5/3})$ for  graphs $H$ such that $H-e$ is Behrendian for any $e\in E(H)$. In general it seems that the constructions coming from line chains are far from tight but allow to give very general results, for example giving super-linear bounds for all $(2,1)$-inseparable graphs $H$ as we did here in Theorem \ref{thm:insep sup lin}. We expect that they will also be useful for giving non-trivial general lower bounds on $M_H(n)$ for higher uniformity hypergraphs $H$, a direction which we did not explore at all here but has already been studied for cliques \cite{espuny_diaz_long_2022,hartarsky_maximal_2022,noel_running_2022}.

 There are also other previous chain constructions that fit into our general framework and may find further applications in the future. Indeed, Bollob\'as, Przykucki, Riordan and Sahasrabudhe \cite{bollobas2017maximum} used a randomised construction to give lower bounds on $M_{K_k}(n)$ approaching quadratic as $k$ grows. We expect that similar constructions could also be used to achieve the bipartite analogue (Corollary \ref{cor:compl bip lower}) showing that $M_{K_{r,s}}(n)$ approaches $n^{1-1/r}$ as $s$ grows but our approach of using dilation chains and alterations gives a less technical route. Finally, the proof of Balogh, Kronenberg, Pokrovskiy and Szab\'o \cite{balogh2019maximum} showing that $M_{K_5}(n)\geq n^{2-o(1)}$ uses a chain construction that is similar to the ladder constructions given here, except that the vertex sets of the copies of $H=K_5$ (as well as the vertices of the construction $G_0$) are split into three parts rather than two. Exploring this approach further could lead to interesting new classes of chain construction. 

\bibliography{Biblio}

\appendix

\section{Additive constructions} \label{appendix:add}

In this appendix, we prove Theorems \ref{thm:behrend extended} and \ref{thm: sidon extended} giving large sets avoiding non-trivial solutions to several equations. In order to do so, we  adapt classical constructions of Behrend \cite{behrend1946sets} and Ruzsa \cite{ruzsa_solving_1993}.  We start with some notation. For $h,k\in \NN$, let $\cE^{h}$ denote  the set of equations $E=E(x_1,\ldots,x_h)=\sum_{i=1}^h\alpha_ix_i$ in at most $h$ variables (we allow some of the $\alpha_i$ to be zero). 
The set $\cE^h_k\subseteq \cE^h$ is then the equations $E=\sum_{i=1}^h\alpha_ix_i$ with $|\alpha_i|\leq k$ for all $i\in [h]$. 
Finally, $\cE_k^h(0)\subseteq \cE_k^h$ are the equations  $E=\sum_{i=1}^h\alpha_ix_i\in \cE_k^h$ such that $\sum_{i=1}^h\alpha_i=0$. For some collection $\cE$ of equations, $p$ a prime and $n\in \NN$,  $r_{\cE}(\ZZ_p)$ denotes the size of the largest subset of $\ZZ_p$ that has no non-trivial solutions modulo $p$ to equations in $\cE$, whilst  $r_\cE(n)$ is the size of the largest subset of $[n]$ which is free from non-trivial solutions (in the integers) to all equations in $\cE$. When $\cE$ is a single equation, that is, $\cE=\{E\}$, we drop the set brackets and simply write $r_E(\ZZ_p)$ and $r_E(n)$.

\subsection{Preparatory lemmas}
We now proceed with some simple lemmas.

\begin{lem} \label{app lem:mod or not}
Suppose that $h,k\in \NN$ and $\cE\subseteq \cE_k^h$ is some collection of equations. Then for any  prime  $p\in \NN$, fixing  $n:=\floor{(p-1)/hk}$, we have that \[r_{\cE}(\ZZ_p)\geq r_{\cE}(n).\]
\end{lem}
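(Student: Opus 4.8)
The statement asserts that finding a large solution-free set in the integer interval $[n]$ (with $n = \lfloor (p-1)/hk \rfloor$) suffices to find an essentially as large solution-free set in $\mathbb{Z}_p$. The natural approach is to take a set $B \subseteq [n]$ of size $r_{\cE}(n)$ that is free of non-trivial solutions \emph{in the integers} to all equations in $\cE$, and view $B$ as a subset $\overline{B} \subseteq \mathbb{Z}_p$ via the obvious embedding $[n] \hookrightarrow \{0,1,\ldots,p-1\} = \mathbb{Z}_p$. I would then argue that $\overline{B}$ has no non-trivial solutions \emph{modulo $p$}, from which $r_{\cE}(\mathbb{Z}_p) \geq |\overline{B}| = |B| = r_{\cE}(n)$ follows immediately.

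\textbf{Key step.} The crux is the following: suppose $(x_1,\ldots,x_h) \in \overline{B}^h$ is a solution modulo $p$ to some equation $E = \sum_{i=1}^h \alpha_i x_i \in \cE \subseteq \cE_k^h$, i.e.\ $\sum_{i=1}^h \alpha_i x_i \equiv 0 \pmod p$. Lifting each $x_i$ to its representative in $\{0,1,\ldots,n\} \subseteq \{0,\ldots,p-1\}$, we have $x_i \leq n$ and $|\alpha_i| \leq k$, so
\begin{linenomath}
\begin{equation*}
\left| \sum_{i=1}^h \alpha_i x_i \right| \leq \sum_{i=1}^h |\alpha_i| x_i \leq h k n \leq hk \cdot \frac{p-1}{hk} = p-1 < p.
\end{equation*}
\end{linenomath}
Hence the integer $\sum_{i=1}^h \alpha_i x_i$ lies strictly between $-p$ and $p$, and being divisible by $p$, it must equal $0$. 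Therefore $(x_1,\ldots,x_h)$ is in fact a solution to $E$ in the integers. Since $B$ is free of non-trivial solutions to $E$ over $\ZZ$, this solution must be trivial, i.e.\ there is a partition $[h] = T_1 \cup \cdots \cup T_\ell$ with $\sum_{i \in T_j} \alpha_i = 0$ and $x_i = x_{i'}$ for all $i, i' \in T_j$. But the notion of triviality depends only on the coefficients $\alpha_i$ and on equalities among the $x_i$, and these are preserved under the embedding $[n] \hookrightarrow \mathbb{Z}_p$ (distinct elements of $[n]$ remain distinct in $\mathbb{Z}_p$ since $n < p$). So $(x_1,\ldots,x_h)$ is also a trivial solution when viewed in $\mathbb{Z}_p$. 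This shows $\overline{B}$ has no non-trivial solutions modulo $p$ to any equation in $\cE$, completing the argument.

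\textbf{Main obstacle.} There is no serious obstacle here — this is a routine ``lifting'' argument and the only thing to be careful about is the bookkeeping: ensuring the bound $hkn \leq p-1$ genuinely forces the integer linear combination to vanish, and checking that triviality transfers correctly between $\ZZ$ and $\ZZ_p$ (which it does precisely because the embedding is injective on $[n]$, so no spurious coincidences $x_i = x_{i'}$ are created). One minor subtlety worth a sentence: one should note $0 \notin B$ is not required, but even if $0 \in B$ the argument is unaffected since the representative of $0$ in $\mathbb{Z}_p$ is still $0$ and all inequalities above remain valid. I would also remark that the same proof works verbatim when $\cE$ is a single equation, which is the form in which this lemma will be applied in the proofs of Theorems \ref{thm:behrend extended} and \ref{thm: sidon extended}.
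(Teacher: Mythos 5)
Your proposal is correct and follows essentially the same route as the paper: embed the integer solution-free set into $\ZZ_p$ and use the bound $hkn\leq p-1$ to show any solution modulo $p$ is already a solution over $\ZZ$ (equivalently, that non-trivial tuples give a nonzero value of absolute value less than $p$), so no non-trivial solutions modulo $p$ arise. Your extra remark that triviality transfers under the injective embedding is a slightly more careful phrasing of the same argument, but nothing is genuinely different.
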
 
\begin{proof}
    Taking $A\subseteq [n]$ to be a subset free of non-trivial solutions to equations in $\cE$, that achieves the size $r_\cE(n)$, we have that 
    $|E(a_1,\ldots,a_h)|\in \{1,\ldots,p-1\}$ for any $E(x_1,\ldots,x_h)\in \cE$ and any $(a_1,\ldots,a_{h})\in A^h$. Therefore $E(a_1,\ldots,a_h)\neq 0 \mbox{ mod }p$ and considering $A$ as a subset of $\ZZ_p$ in the natural way, we have that $A$ is free from non-trivial solutions modulo $p$ to equations in $\cE$. 
\end{proof}

\begin{lem} \label{app lem:dense avoiding sum not zero}
    For any $k,h\in \NN$ and sufficiently large prime $p\in \NN$,  taking $\cE=\cE^h_k\setminus \cE^h_k(0)$, we have that $r_{\cE}(\ZZ_p) \geq p/8k^2h^2$. 
 \end{lem}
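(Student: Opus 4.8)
The plan is to first establish the analogue over the integers and then transfer it to $\ZZ_p$ using Lemma \ref{app lem:mod or not}. I would begin with the key structural observation that an equation $E = \sum_{i=1}^h \alpha_i x_i$ lying in $\cE = \cE^h_k \setminus \cE^h_k(0)$ has \emph{no} trivial solutions whatsoever: a trivial solution comes equipped with a partition $[h] = T_1 \cup \dots \cup T_\ell$ satisfying $\sum_{i \in T_j}\alpha_i = 0$ for each $j$, which would force $\sum_{i=1}^h \alpha_i = \sum_j \sum_{i\in T_j}\alpha_i = 0$, contradicting $E \notin \cE^h_k(0)$. Consequently, being free of non-trivial solutions to $\cE$ is the same as being free of \emph{all} solutions to $\cE$, and this is what I will verify for an explicit set.

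For the integer version, I would set $d := hk+1$ and take $A := \{\, m \in [n] : m \equiv 1 \pmod d \,\}$. Given any $E = \sum_i \alpha_i x_i \in \cE$ with $|\alpha_i| \le k$, write $s := \sum_i \alpha_i$, so that $0 < |s| \le hk < d$. For any $(x_1,\dots,x_h) \in A^h$ one has $\sum_i \alpha_i x_i \equiv \sum_i \alpha_i \cdot 1 = s \pmod d$, and since $0 < |s| < d$ this forces $\sum_i \alpha_i x_i$ to be a nonzero integer. Hence $A$ contains no solution to $E$, and as $E \in \cE$ was arbitrary, $r_\cE(n) \ge |A| \ge n/d = n/(hk+1)$.

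It then remains to transfer this bound to $\ZZ_p$. Taking $n := \lfloor (p-1)/hk \rfloor$, Lemma \ref{app lem:mod or not} applies since $\cE \subseteq \cE^h_k$, giving $r_\cE(\ZZ_p) \ge r_\cE(n) \ge n/(hk+1)$. For $p$ sufficiently large we have $n \ge (p-1)/hk - 1 \ge p/(2hk)$, so $r_\cE(\ZZ_p) \ge \frac{p/(2hk)}{hk+1} \ge \frac{p}{2hk\cdot 2hk} = \frac{p}{4h^2k^2} \ge \frac{p}{8h^2k^2}$, as claimed. I do not anticipate any genuine obstacle; the single point that deserves care is the reduction ``no non-trivial solutions $\Leftrightarrow$ no solutions'' for equations in $\cE$, which is precisely what makes one arithmetic progression — with common difference just above the total coefficient budget $hk$ — enough to kill all the relevant linear combinations.
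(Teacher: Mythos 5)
Your proof is correct and follows essentially the same route as the paper: reduce to the integer setting $[n]$ with $n=\floor{(p-1)/hk}$ via Lemma \ref{app lem:mod or not}, take the progression of integers congruent to $1$ modulo $hk+1$, and note that every equation outside $\cE^h_k(0)$ evaluates to $\sum_i\alpha_i\neq 0$ modulo $hk+1$, hence to a nonzero integer. Your preliminary remark that such equations admit no trivial solutions at all is a nice clarification the paper leaves implicit, but the construction and bookkeeping are the same.
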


 \begin{proof}
   By Lemma \ref{app lem:mod or not}, it suffices to show that there is some set $B\subseteq [n]$ with $n=\floor{(p-1)/hk}\geq p/2hk$ such that  $|B|\geq n/4hk$ and $B$ has no non-trivial solutions (in the integers) to equations in $\cE^h_k\setminus \cE^h_k(0)$. We take $B'\subseteq [n]$ to be all integers $b\in [n]$ such that $b=1 \mbox{ mod } (hk+1)$. Then certainly $|B|\geq n/4hk$ and for any $(b_1,\ldots,b_h)\in B^h$ and any $E(x_1,\ldots,x_h)=\sum_{i=1}^h\alpha_ix_i\in \cE^h_k\setminus \cE^h_k(0)$, we have that modulo $(hk+1)$,
   \[E(b_1,\ldots,b_h)= \sum_{i=1}^h\alpha_i\neq 0,\]
   using that $E\notin \cE^h_k(0)$. Therefore also in the integers $E(b_1,\ldots,b_h)\neq 0$ and we are done. 
 \end{proof}

 \begin{lem} \label{app lem:dilate to mix sets}
     Suppose that $\ell,h,k\in \NN$ and $\cE_1,\ldots,\cE_\ell\subseteq \cE_{k}^h(0)$ are collections of equations. Suppose  further that $p\in \NN$ is a sufficiently large prime and that there are $\delta_i\in (0,1)$ for $i\in [\ell]$ such that $r_{\cE_i}(\ZZ_p)\geq \delta_ip$ for all $i\in [\ell]$. Then for any $B\subseteq \ZZ_p$, there is some $B'\subseteq B$ such that $|B'|\geq \prod_{i=1}^\ell \delta_i |B|$ and $B'$ has no non-trivial solutions to any equation in $\bigcup_{i=1}^\ell\cE_i.$
 \end{lem}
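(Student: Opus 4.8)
The plan is to use the random translation method (the ``well-known trick'' alluded to in the text). First, for each $i\in[\ell]$ fix a set $A_i\subseteq\ZZ_p$ with $|A_i|\geq\delta_i p$ that is free of non-trivial solutions modulo $p$ to every equation in $\cE_i$; such a set exists by the hypothesis $r_{\cE_i}(\ZZ_p)\geq\delta_i p$. The key structural observation --- and the reason the hypothesis $\cE_i\subseteq\cE^h_k(0)$ is imposed --- is that any equation $E=\sum_{j=1}^h\alpha_j x_j$ with $\sum_{j=1}^h\alpha_j=0$ is invariant under a simultaneous translation of all its variables: for every $t\in\ZZ_p$ one has $E(x_1+t,\dots,x_h+t)=E(x_1,\dots,x_h)$. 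Hence a translate $A_i+t_i$ is again free of non-trivial solutions to $\cE_i$: given $E\in\cE_i$ and a solution $(b_1,\dots,b_h)\in(A_i+t_i)^h$, subtracting $t_i$ from each coordinate yields a solution $(b_1-t_i,\dots,b_h-t_i)\in A_i^h$, and since $b_j-t_i=b_{j'}-t_i\iff b_j=b_{j'}$, the same partition $[h]=T_1\cup\dots\cup T_{\ell'}$ witnesses triviality before and after the shift, so triviality is preserved in both directions.

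Next I would run a first-moment argument to find a good common translate. Choose $t_1,\dots,t_\ell\in\ZZ_p$ independently and uniformly at random and set $B':=B\cap\bigcap_{i=1}^\ell(A_i+t_i)$. For a fixed $b\in B$ and each $i$, the event $b\in A_i+t_i$ (equivalently $t_i\in b-A_i$) has probability $|A_i|/p\geq\delta_i$, and these events are mutually independent over $i$ because the $t_i$ are; therefore $\PP[b\in B']\geq\prod_{i=1}^\ell\delta_i$. Summing over $b\in B$ gives $\EE[|B'|]\geq|B|\prod_{i=1}^\ell\delta_i$, so there is a choice of $t_1,\dots,t_\ell$ with $|B'|\geq|B|\prod_{i=1}^\ell\delta_i$; fix such a choice.

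Finally, $B'$ has the required solution-free property: it is contained in $A_i+t_i$ for every $i$, so by the translation-invariance observation it has no non-trivial solution to any equation of $\cE_i$, and ranging over $i$ it has no non-trivial solution to any equation of $\bigcup_{i=1}^\ell\cE_i$. This completes the proof. I do not anticipate a genuine obstacle here; the only points that require care are the verification that translation preserves the precise notion of ``non-trivial solution'' from Section~\ref{sec:additive} (handled above), and that the relevant probabilities multiply, which is immediate from the independence of the $t_i$. (One could equally use random dilations $c_iA_i$ with $c_i\in\ZZ_p\setminus\{0\}$, exploiting $E(cx)=c\,E(x)$; translation is marginally cleaner and suffices under the stated hypotheses. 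Note also that the conclusion does not actually need $p$ to be large --- this is inherited only from the hypothesis $r_{\cE_i}(\ZZ_p)\geq\delta_i p$.)
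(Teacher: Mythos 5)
Your proposal is correct and follows essentially the same argument as the paper: translation invariance of equations with coefficient sum zero, independent uniform random translates $A_i+t_i$, and a first-moment bound on $|B\cap\bigcap_i(A_i+t_i)|$. Your explicit check that translation preserves the notion of triviality in both directions is a welcome detail the paper leaves implicit, but the route is identical.
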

 \begin{proof}
     For $i\in [\ell]$, let $A_i\subseteq \ZZ_p$ be a set free of non-trivial solutions to equations in $\cE_i$ and achieving the maximum size $r_{\cE_i}(\ZZ_p)$. Note that for any $t\in \ZZ_p$, the subset $A_i+t:=\{a+t:a\in A_i\}$ is also free of non-trivial solutions to equations in $\cE_i$. Indeed, if $(a_1,\ldots,a_h)\in (A_i+t)^h$ is a non-trivial solution to some equation $E(x_1,\ldots,x_h)=\sum_{i=1}^h\alpha_i x_i\in \cE_i$, then \[E(a_1-t,\ldots,a_h-t)=E(a_1,\ldots,a_h)-\left(\sum_{i=1}^h\alpha_i\right)t=0,\]
     using that $E\in \cE_i\subseteq \cE^h_k(0)$ and so $\sum_{i=1}^h \alpha_i=0$. 
     Therefore $(a_1-t,\ldots,a_h-t)\in A_i^h$ and is a non-trivial solution to $E$, which is a contradiction. 

We now consider $t_1,\ldots,t_\ell\in \ZZ_p$ to be independent uniform samples from $\ZZ_p$ and take the resulting random set
     \[A_*= \left(\bigcap_{i=1}^\ell (A_i+t_i)\right) \cap B.\]
     As $A_*$ is a subset of $B$ and $A_i+t_i$ for each $i\in [\ell]$, we have that $A_*$ has no non-trivial solutions to equations in $\cE_*$. Moreover, for each $b\in B$, we have that 
     \[\PP[b\in A_*]=\prod_{i=1}^\ell \PP[b\in (A_i+t_i)]=\prod_{i=1}^\ell \frac{|A_i|}{p}\geq \prod_{i=1}^\ell \delta_i.\]
     Therefore, by linearity of expectation, $\EE[|A_*|]\geq \left(\prod_{i=1}^\ell \delta_i\right)|B|$. Taking $B'$ to be some outcome of $A_*$ that achieves (at least) this expected size establishes the lower bound of the lemma. 
 \end{proof}

\subsection{Avoiding solutions to 3-variable equations} 
 The next lemma is  essentially due to Behrend. 

\begin{lem}[Behrend's construction \cite{behrend1946sets}] \label{app lem: behrend}
    Let $k\geq 2$ and let $E=E(x_1,\ldots,x_{k}, y)=x_1+\ldots+x_k-ky\in \cE^{k+1}_{k}(0)$. Then there exists some $c>0$ such that for all sufficiently large primes $p\in \NN$, we have that
    $r_E(\ZZ_p)\geq p^{1-c/{\sqrt{\log p}}}$.
\end{lem}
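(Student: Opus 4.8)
The statement is a mild generalization of Behrend's classical theorem, stated for the specific equation $x_1+\dots+x_k-ky=0$ rather than just $x_1+x_2-2y=0$. Following the standard approach (e.g.\ the remarks preceding the statement about \cite{shapira2006behrend}), I would first reduce to the integer problem on $[n]$ via Lemma \ref{app lem:mod or not}: it suffices to produce, for every large $n$, a subset $B\subseteq[n]$ with $|B|\geq n^{1-c'/\sqrt{\log n}}$ that has no non-trivial solution (in the integers) to $x_1+\dots+x_k=ky$. Since $n=\floor{(p-1)/(k+1)k}$ differs from $p$ only by a constant factor, such a bound transfers to $r_E(\ZZ_p)\geq p^{1-c/\sqrt{\log p}}$ after absorbing constants into the exponent.

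\textbf{Key steps.} The construction is Behrend's sphere packing. Fix parameters $d$ (dimension) and $m$ (box side length) to be optimized, and represent integers in base $2km$ by their digit vectors $\bm x=(x_1,\dots,x_d)\in\{0,1,\dots,m-1\}^d$, via the map $\psi(\bm x)=\sum_{j=1}^d x_j(2km)^{j-1}$. Let $B_R$ be the set of those integers whose digit vector lies on the sphere $\sum_j x_j^2=R$. By pigeonhole over the $\leq dm^2$ possible values of $R$, some radius gives $|B_R|\geq m^d/(dm^2)$. The crucial point is that $B_R$ has no non-trivial solution to $x_1+\dots+x_k=ky$: if $\psi(\bm a^{(1)})+\dots+\psi(\bm a^{(k)})=k\psi(\bm b)$ with all digit vectors on the sphere of radius $R$, then because each digit is at most $m-1$ and the base $2km$ is large enough that there is no carrying in the sum $\sum_{i=1}^k a^{(i)}_j$ (it is at most $k(m-1)<2km$), the identity holds digit-wise: $\sum_{i=1}^k \bm a^{(i)} = k\bm b$ in $\ZZ^d$. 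Hence $\bm b$ is the centroid $\frac1k\sum_i\bm a^{(i)}$ of $k$ points on a sphere; but the centroid of points on a sphere lies strictly inside unless all the points coincide, so $\bm a^{(1)}=\dots=\bm a^{(k)}=\bm b$, i.e.\ the solution is trivial. (Formally: $\|\bm b\|^2 = \|\frac1k\sum_i\bm a^{(i)}\|^2 \le \frac1k\sum_i\|\bm a^{(i)}\|^2 = R$ by convexity of $\|\cdot\|^2$, with equality iff all $\bm a^{(i)}$ are equal.) Finally, optimize: we need $(2km)^d \gtrsim n$, so take $d=\ceil{\sqrt{\log n}}$ and $m=\floor{\tfrac{1}{2k}n^{1/d}}$, giving $|B_R|\geq m^{d}/(dm^2) = (2km)^d \cdot (2k)^{-d} m^{-2} d^{-1} \geq n^{1-O(1/\sqrt{\log n})}$, where the $(2k)^{-d}=2^{-O(\sqrt{\log n})}$ and $m^{-2}=n^{-O(1/\sqrt{\log n})}$ and $d^{-1}$ factors are all of the form $n^{-O(1/\sqrt{\log n})}$. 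This yields the desired $B$.

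\textbf{Main obstacle.} There is no serious obstacle here — it is a known adaptation of Behrend. The only point requiring a little care is the no-carrying argument and the convexity step ensuring that the sum of $k$ sphere points with prescribed sum can only be the trivial configuration; one must check that the base $2km$ (rather than Behrend's $2m$) is large enough to kill carries when summing $k$ digits, which is exactly why the coefficient bound $k$ appears in the choice of base and in $E\in\cE^{k+1}_k(0)$. A secondary bookkeeping point is tracking that all the lossy factors ($d$, $m^{-2}$, $(2k)^{-d}$, and the constant-factor loss from $n=\floor{(p-1)/(k+1)k}$) combine into a single $p^{-O(1/\sqrt{\log p})}$; this is routine since each is $\exp(O(\sqrt{\log p}))$ or smaller. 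I would state the final constant $c$ as depending on $k$, which is fine since $k=v(H)$ is fixed throughout.
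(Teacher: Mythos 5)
Your proposal is correct and follows essentially the same route as the paper: reduce to the integer setting via Lemma \ref{app lem:mod or not}, run Behrend's sphere construction in a base chosen large enough ($2km$ in your version, $kq-1$ in the paper's) to prevent carries when summing $k$ digits, conclude triviality from the equality case of a convexity/triangle inequality, and pigeonhole over radii with $d\approx\sqrt{\log n}$. The only differences (your Jensen-type argument for $\|\cdot\|^2$ versus the paper's triangle-inequality equality condition, and the slightly different base) are cosmetic.
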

	\begin{proof}
Fixing $n:=\floor{(p-1)/k(k+1)}$, by Lemma \ref{app lem:mod or not}, it suffices to finds some subset $A\subseteq [n]$ such that $A$ has no solutions to $E$ in the integers. Now for any selection of integers $q,d \geq 2$ and $r$, we define the set
		\[
		S_r(d,q) := \left\lbrace a_1+a_2(k q-1)+\ldots+a_d(k q-1)^{d-1} : a_1,\ldots,a_d \in [0,q-1], a_1^2+\ldots+a_d^2 = r \right\rbrace.
		\]
	Note that for  $r>d(q-1)^2$, we have that $S_r(d,q)=\emptyset$. We claim that for any choice of parameters $q,d$ and $r$, the set $S_r(d,q)$ has no non-trivial solutions to $E$.
	Indeed, suppose there exist $a^{(1)},\ldots,a^{(k)},b\in S_r(d,q)$ such that $a^{(1)} + \ldots + a^{(k)} = k b$. 
	For any $a\in\NN$ with digits $a_1,\ldots,a_d$ in base $k q-1$, let $\|a\| := \sqrt{a_1^2+\ldots+a_d^2}$.
	We have that
		\begin{equation} \label{eq:sphere}
		\| a^{(1)} + \ldots + a^{(k)}\| = \| k b\| = k \sqrt{r} = \|a^{(1)}\| +\ldots+ \|a^{(k)}\|,
		\end{equation}
	where we used here that $b\in S_r(d,q)$ and so if $b_1,\ldots, b_d$ are the digits of $b$ in base $kq-1$, then $kb_1,\ldots, kb_d$ are the digits of the expansion of $kb$ in base $kq-1$. Now by \eqref{eq:sphere}, the $k$ vectors $(a^{(j)}_1,\ldots,a^{(j)}_d)\in \RR^d$, $j\in[k]$, must be pairwise linearly dependent as this is the only case in which the triangle inequality is  an equality. As all the vectors have the same modulus (by definition of $S_r(d,q)$), they are all in fact equal and hence $a^{(1)}=\ldots=a^{(k)}=b$. This shows that the solution $(x_1,\ldots,x_k,y)=(a^{(1)},\ldots, a^{(k)},b)$ is trivial.

	It remains to choose parameters $r$, $d$, $q$ such that $|S_r(d,q)|$ is as large as possible with $S_r(d,q) \subset [n]$. Note that for a fixed choice of $d$ and $q$,
		\[
		\sum_{r=0}^{d (q-1)^2} |S_r(d,q)| = q^d.
		\]
	By averaging we can pick $r = r(d,q)$ such that
		\[
		|S_r(d,q)| \geq \frac{q^d}{d (q-1)^2 + 1} \geq \frac{q^{d-2}}{d}
		\]
	With the choices $d := \floor{\sqrt{\log n}}$ and $q := \floor{\frac{n^{1/d}}{k}}$ we have both $S_r(d,q) \subseteq [n]$ and $|S_r(d,q)| \geq n^{1-c'/\sqrt{\log n}} \geq p^{1-c/\sqrt{\log p}}$ for  suitable constants $c',c>0$, as required. 
	\end{proof}

This has the following corollary. 

\begin{cor} \label{app cor: behrend}
    For any $k_{0}\in \NN$, there exists some $c_0>0$ such that for any sufficiently large prime $p\in \NN$ the following holds. Let   $\cE_{+}\subseteq \cE^4_{3k_0}(0)$ be some set of equations $E$ of the form \begin{equation} \label{eq: E + type}
    E(x_1,\ldots,x_4)=\alpha_1 x_1+\alpha_2 x_2+\alpha_3 x_3-(\alpha_1+\alpha_2+\alpha_3) x_4,\end{equation}
   with $\alpha_1,\alpha_2,\alpha_3\in \{0,1,\ldots, k_0\}$. Then we have that
   $r_{\cE_{+}}(\ZZ_p)\geq p^{1-c_0/{\sqrt{\log p}}}$. In particular, we have that $r_{\cE}(\ZZ_p)\geq p^{1-c_0/{\sqrt{\log p}}}$ with $\cE=\cE^3_{k_0}(0)$.  
\end{cor}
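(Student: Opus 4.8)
\textbf{Proof proposal for Corollary \ref{app cor: behrend}.}

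The plan is to reduce the claim about the whole family $\cE_+$ of equations \eqref{eq: E + type} to a \emph{single} equation of the Behrend type handled by Lemma \ref{app lem: behrend}, via a simple ``padding'' trick that makes all the coefficients appearing in \eqref{eq: E + type} uniform. First I would set $k := 3k_0$ and consider the equation $E^* = E^*(x_1,\ldots,x_{3k_0},y) = x_1 + \cdots + x_{3k_0} - 3k_0 y$, which lies in $\cE^{3k_0+1}_{3k_0}(0)$. By Lemma \ref{app lem: behrend} applied with this value of $k$, there is a constant $c_0 = c_0(k_0) > 0$ such that for all sufficiently large primes $p$ there is a set $A \subseteq \ZZ_p$ with $|A| \geq p^{1 - c_0/\sqrt{\log p}}$ having no non-trivial solutions to $E^*$ modulo $p$. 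I claim this same $A$ works for every $E \in \cE_+$.

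The key step is the following observation. Fix $E(x_1,\ldots,x_4) = \alpha_1 x_1 + \alpha_2 x_2 + \alpha_3 x_3 - (\alpha_1 + \alpha_2 + \alpha_3) x_4$ with $\alpha_i \in \{0,1,\ldots,k_0\}$, and suppose $(a_1,a_2,a_3,a_4) \in A^4$ is a solution modulo $p$. Since $\alpha_1 + \alpha_2 + \alpha_3 \leq 3k_0$, I can write $E(a_1,a_2,a_3,a_4) = 0$ in the form
\begin{linenomath}
\begin{equation*}
\underbrace{a_1 + \cdots + a_1}_{\alpha_1} + \underbrace{a_2 + \cdots + a_2}_{\alpha_2} + \underbrace{a_3 + \cdots + a_3}_{\alpha_3} + \underbrace{a_4 + \cdots + a_4}_{3k_0 - (\alpha_1+\alpha_2+\alpha_3)} - 3k_0\, a_4 = 0 \pmod p,
\end{equation*}
\end{linenomath}
which exhibits a solution $(x_1,\ldots,x_{3k_0},y) = (a_1,\ldots,a_1,a_2,\ldots,a_2,a_3,\ldots,a_3,a_4,\ldots,a_4,a_4)$ to $E^*$ in $A^{3k_0+1}$. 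Since $A$ is free of non-trivial solutions to $E^*$, this solution must be trivial: there is a partition of the index set into blocks on which the coefficients of $E^*$ (all equal to $1$, and the $-3k_0$ on $y$) sum to zero and on which the variables are constant. A short case analysis of which indices can share a block — the coefficients being $1$ except for $-3k_0$ on $y$, the only zero-sum partition into constant blocks forces all the $x$-variables and $y$ to agree — shows that $a_1 = a_2 = a_3 = a_4$, i.e.\ the original solution $(a_1,a_2,a_3,a_4)$ of $E$ is trivial. Hence $A$ has no non-trivial solution to $E$, and since $E \in \cE_+$ was arbitrary, $A$ witnesses $r_{\cE_+}(\ZZ_p) \geq p^{1-c_0/\sqrt{\log p}}$.

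For the ``in particular'' statement: an equation $\sum_{i=1}^3 \alpha_i x_i \in \cE^3_{k_0}(0)$ has $\alpha_1 + \alpha_2 + \alpha_3 = 0$ with each $|\alpha_i| \leq k_0$. After relabelling so that the nonnegative coefficients come first, any such equation can be rewritten (possibly after moving negative-coefficient terms to the other side and introducing a dummy variable equal to one of the others, or simply by noting it is a special case with some $\alpha_i$ allowed to be negative) in a shape covered by the argument above — concretely, $\cE^3_{k_0}(0) \subseteq \cE_+$ up to sign conventions once one allows the $x_4$-slot to be unused, so $r_{\cE^3_{k_0}(0)}(\ZZ_p) \geq r_{\cE_+}(\ZZ_p) \geq p^{1-c_0/\sqrt{\log p}}$. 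The main obstacle, such as it is, is purely bookkeeping: checking that the only trivial solutions of $E^*$ with the block structure above force full equality of the $a_i$, and handling the degenerate cases where some $\alpha_i = 0$ (so that variable simply does not appear and the corresponding block is empty), which causes no difficulty since an empty block trivially has coefficient-sum zero.
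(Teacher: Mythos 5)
Your argument is correct, but it takes a genuinely different route from the paper. You reduce every equation \eqref{eq: E + type} to the \emph{single} Behrend-type equation $x_1+\cdots+x_{3k_0}-3k_0y$ by padding a solution $(a_1,a_2,a_3,a_4)$ with $3k_0-(\alpha_1+\alpha_2+\alpha_3)$ extra copies of $a_4$; since every zero-sum block of that equation must contain the $y$-index (all other coefficients are $+1$), a trivial padded solution forces all appearing values to coincide, which in turn makes the original solution trivial, so one application of Lemma \ref{app lem: behrend} with $k=3k_0$ suffices. The paper instead splits $\cE_+$ into the subfamilies $\cF_j$ according to $j=\alpha_1+\alpha_2+\alpha_3$, applies Lemma \ref{app lem: behrend} with $k=j$ to each (padding only with copies of $a_1,a_2,a_3$), and then combines the resulting sets via the random-translation intersection trick of Lemma \ref{app lem:dilate to mix sets}, obtaining $c_0=\sum_j c_j$. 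Your approach is cleaner here: it avoids Lemma \ref{app lem:dilate to mix sets} altogether and yields a single constant; the paper's decomposition-plus-translation pattern is the more flexible template, and it is the one reused elsewhere in the appendix (e.g.\ for Theorems \ref{thm:behrend extended} and \ref{thm: sidon extended}) where the equations to be avoided do not all reduce to one common equation. One small wording point: your claim that triviality of the padded solution gives $a_1=a_2=a_3=a_4$ holds only when all $\alpha_i\geq 1$; if some $\alpha_i=0$ the value $a_i$ is unconstrained, and triviality of the original solution comes from placing index $i$ in its own singleton block of coefficient sum zero in $E$ itself (not from an ``empty block'' of the padded equation). This is exactly the degenerate case you flag, so it is a matter of phrasing rather than a gap; the ``in particular'' step is handled at the same level of brevity as in the paper (every equation of $\cE^3_{k_0}(0)$ is, up to sign and permutation, of the form \eqref{eq: E + type} with $\alpha_3=0$).
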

\begin{proof}
    For each $j\in \{1,\ldots, 3k_0\}$, let $\cF_{j}\subseteq \cE_{+}$ be the collection of equations \begin{equation} \label{eq:beh fixed}E(x_1,\ldots,x_4)=\alpha_1 x_1+\alpha_2 x_2+\alpha_3x_3-(\alpha_1+\alpha_2+\alpha_3)x_4\end{equation} such that $0\leq \alpha_1,\alpha_2,\alpha_3\leq k_0$ and $\alpha_1+\alpha_2+\alpha_3=j$.  Note that $\cE_{+}=\bigcup_{j=1}^{3k_0}\cF_{j}$. 
   By Lemma \ref{app lem: behrend}, there is some $c_j>0$ such that $r_{\cF_j}(\ZZ_p)\geq p^{1-c_j/\sqrt{\log p}}$. Indeed, any non-trivial solution $(a_1,\ldots,a_4)$ to an equation as in \eqref{eq:beh fixed}, gives a non-trivial solution to the equation $E(x'_1,\ldots,x'_{j},y)=x_1'+\ldots+x'_j-jy$ by fixing $x'_i=a_1$ for $i=1,\ldots,\alpha_1$, $x'_i=a_2$ for $i=\alpha_1+1,\ldots,\alpha_1+\alpha_2$, $x'_i=a_3$ for $i=\alpha_1+\alpha_2+1,\ldots,j$ and $y=a_4$. The corollary then follows by fixing $c_0=\sum_{j=1}^{3k_0}c_j$ and applying Lemma \ref{app lem:dilate to mix sets} with $B=\ZZ_p$ to avoid all non-trivial solutions to equations in $\cE_{+}=\bigcup_{j=0}^k\cF_{j}$. The `in particular' statement then follows because every equation $E(x_1,x_2,x_3)\in \cE^3_{k_0}(0)$ has the form \eqref{eq: E + type} with $\alpha_3=0$. 
\end{proof}

The proof of Theorem \ref{thm:behrend extended} follows quickly, recalling the definition of $r_{K,3}(\ZZ_p)$ from Definition \ref{def:K-fold}.

\begin{proof}[Proof of Theorem \ref{thm:behrend extended}]
  Let $c_0>0$ be the constant given by Corollary \ref{app cor: behrend} with input $k_0=K$.   Therefore $r_{\cE^3_K(0)}(\ZZ_p)\geq p^{1-c_0/{\sqrt{\log p}}}$. Let $B\subseteq \ZZ_p$ be a subset free of non-trivial solutions to equations in $\cE^3_K\setminus \cE^3_K(0)$ such that $|B|\geq p/72K^2$ as guaranteed by Lemma \ref{app lem:dense avoiding sum not zero}. Then applying Lemma \ref{app lem:dilate to mix sets}, we get a set $B'\subseteq B$ which is $(K,3)$-fold solution-free and of size 
  \[|B'|\geq \frac{p^{1-c_0/{\sqrt{\log p}}}}{72K^2}\geq p^{1-C/{\sqrt{\log p}}}, \] for an appropriate $C>0$, as required.
\end{proof}

\subsection{Avoiding solutions to four-variable equations}

In order to prove Theorem \ref{thm: sidon extended}, we need to find large sets avoiding non-trivial solutions to certain four-variable equations. We start with some lemmas looking at special cases of equations. Our first lemma builds on an approach of Ruzsa \cite[Theorem 7.3]{ruzsa_solving_1993}.

\begin{lem} \label{app lem: 4 var type 1}
    Let $k_1\in \NN$ and $\eps_1>0$. Further let $\cE_=\subseteq \cE^4_{2k_1}(0)$ be the collection of equations of the form 
    \[E(x_1,x_2,x_3,x_4)=\alpha_1x_1+\alpha_2 x_2-\alpha_1 x_3-\alpha_2 x_4,\]
    with $\alpha_1,\alpha_2\in \{1,\ldots, k_1\}$. Then for sufficiently large primes $p\in \NN$, we have $r_{\cE_=}(\ZZ_p)\geq p^{1/2-\eps_1}$. 
\end{lem}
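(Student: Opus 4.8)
The plan is to follow the approach used by Ruzsa for $B_2^*[g]$-type problems (his Theorem~7.3), adapted to handle the whole family $\cE_=$ of equations with coefficient pairs $(\alpha_1,\alpha_2)\in\{1,\dots,k_1\}^2$ simultaneously. The key observation is that a non-trivial solution to $\alpha_1 x_1+\alpha_2 x_2=\alpha_1 x_3+\alpha_2 x_4$ in a set $A$ records a coincidence $\alpha_1(x_1-x_3)=\alpha_2(x_4-x_2)$ among weighted differences; triviality here means $\{x_1,x_3\}=\{x_1,x_3\}$ with $x_1=x_3$ (forcing $x_2=x_4$) or $x_1=x_2$, $x_3=x_4$ with $\alpha_1=\alpha_2$ absorbing them, so the genuinely dangerous solutions are exactly those with $x_1\ne x_3$ and $x_4\ne x_2$. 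First I would reduce, via Lemma~\ref{app lem:mod or not}, to constructing a large subset $A\subseteq[n]$ free of integer solutions, where $n=\lfloor (p-1)/(8k_1)\rfloor$, which is legitimate since every equation in $\cE_=$ has coefficients summing to zero and absolute value at most $2k_1$, so the left-hand side stays in $(-(p-1),p-1)$.

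The construction: fix a large integer $N$ and a prime-power/integer modulus $q$ to be chosen, and build $A$ as a Sidon-like set inside $[q]$ lifted to $[n]$ by a base-$q$-type digit construction, exactly as Ruzsa does. Concretely, pick a Sidon set (perfect difference set) $S\subseteq\mathbb{Z}_q$ of size $\asymp q^{1/2}$, and consider integers whose base-$(Kq)$ digits, for $K:=2k_1^2$ say, all lie in $S$ (after a suitable affine shift so that no carries or cancellations occur when one forms $\alpha_1 x_1+\alpha_2 x_2-\alpha_1 x_3-\alpha_2 x_4$ with the $\alpha_i\le k_1$). Choosing the number of digits to be $d\asymp \log n/\log q$ gives $|A|\asymp |S|^d \asymp q^{d/2}$, and optimizing $q$ against the digit count yields $|A|\ge n^{1/2-\eps_1}$ for $n$ (hence $p$) large, provided $q\to\infty$ as $\eps_1$ is fixed small; here one uses that $(q^{1/2})^d = (q^d)^{1/2}$ and $q^d\asymp n$. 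The digit-by-digit argument then shows: if $\alpha_1 x_1+\alpha_2 x_2=\alpha_1 x_3+\alpha_2 x_4$ with all $x_i\in A$, then reading off the top nonzero digit of $\alpha_1(x_1-x_3)$ versus $\alpha_2(x_4-x_2)$, the Sidon property of $S$ forces, digit by digit, $x_1=x_3$ and $x_2=x_4$ (or the symmetric trivial collapse), because a coincidence $\alpha_1(s-s')=\alpha_2(t'-t)$ with $s,s',t,t'\in S$ and $1\le\alpha_1,\alpha_2\le k_1$ is impossible unless $s=s'$, $t=t'$ — this last point needs the modulus $Kq$ to be large enough that $\alpha_1(s-s')$ and $\alpha_2(t'-t)$ lie in a range where the Sidon/difference-set structure is injective, and it is here that one must range over all $(\alpha_1,\alpha_2)$ pairs at once, which only costs a factor depending on $k_1$ in the size of $S$ one starts from.

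The main obstacle is precisely handling \emph{all} the equations in $\cE_=$ simultaneously with a single set $A$, rather than one equation at a time: a naive union over the $k_1^2$ coefficient pairs would, in Ruzsa's framework, require the base set $S\subseteq\mathbb{Z}_q$ to be free of the coincidences $\alpha_1(s_1-s_2)=\alpha_2(s_3-s_4)$ for every such pair, i.e.\ a $k_1$-fold-Sidon-type condition in $\mathbb{Z}_q$. I would circumvent this by invoking the dilation/translation trick (Lemma~\ref{app lem:dilate to mix sets}) together with the $h=4$, $K=2k_1$ analogue of the dense modular construction: start from a large Sidon set giving one equation, then intersect with random affine images to kill the remaining equations, each of which lies in $\cE^4_{2k_1}(0)$, paying only a constant (in $\eps_1$, depending on $k_1$) density loss — acceptable since we only need $p^{1/2-\eps_1}$ and $\eps_1$ is an arbitrary positive constant. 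Wrapping up, combining the base construction (size $p^{1/2-\eps_1/2}$, free of one fixed equation after the modular shift) with finitely many applications of Lemma~\ref{app lem:dilate to mix sets} yields a subset of size at least $p^{1/2-\eps_1}$ free of non-trivial solutions to every equation in $\cE_=$, as required.
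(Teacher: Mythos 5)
Your reduction to the integers via Lemma~\ref{app lem:mod or not} and your base construction both follow the right general template, but the final step — handling all coefficient pairs $(\alpha_1,\alpha_2)$ with a single set — contains a genuine gap, and it is exactly the crux of the lemma. You acknowledge that the digit-with-Sidon-digits construction only works if the base set $S\subseteq\ZZ_q$ is free of the weighted coincidences $\alpha_1(s_1-s_2)=\alpha_2(s_3-s_4)$ for \emph{all} pairs, i.e.\ is a $k_1$-fold-Sidon-type set; but the existence of such sets of size $\asymp q^{1/2}$ is precisely the open problem of Lazebnik--Verstra\"ete discussed in Section~\ref{sec:dil add}, so you cannot assume it. Your proposed circumvention via Lemma~\ref{app lem:dilate to mix sets} does not repair this: that lemma multiplies densities, and every equation in $\cE_=$ has $\sum_i\alpha_i=0$ and is of Sidon type, so for each fixed pair $(\alpha_1,\alpha_2)$ a solution-free set in $\ZZ_p$ has size $O(\sqrt{p})$ (the map $(a,b)\mapsto \alpha_1a+\alpha_2b$ must be essentially injective on $A\times A$), hence density $O(p^{-1/2})$. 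Intersecting your size-$p^{1/2-\eps_1/2}$ base set with even one random translate of such a set leaves an expected size of order $p^{-\eps_1/2}<1$. The ``constant density loss'' only holds for the equations with $\sum_i\alpha_i\neq 0$ (Lemma~\ref{app lem:dense avoiding sum not zero}); there is no dense modular analogue for the zero-sum equations you need to kill, so the wrap-up step fails.

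For comparison, the paper avoids this obstacle by a different lift: it takes $B\subseteq\ZZ_q$ (with $q\asymp\sqrt{n}/k_1$) to be a Behrend-type set free of non-trivial solutions to all three-variable equations $\beta_1x_1+\beta_2x_2-(\beta_1+\beta_2)x_3$ with $0\le\beta_1,\beta_2\le 2k_1$ — such sets can have size $q^{1-o(1)}$ by Corollary~\ref{app cor: behrend} — and sets $A=\{b+(2k_1+1)q\,b' : b\in B,\ b'\equiv b^2 \bmod q\}$. A putative solution of $\alpha_1a_1+\alpha_2a_2-\alpha_1a_3-\alpha_2a_4=0$ then splits, by reading the two ``digits'', into an exact integer relation $\alpha_1(b_1-b_3)=\alpha_2(b_4-b_2)$ and a quadratic relation $\alpha_1(b_1^2-b_3^2)\equiv\alpha_2(b_4^2-b_2^2)\bmod q$; dividing the latter by the common nonzero factor and combining with the former produces a non-trivial solution in $B$ to a forbidden three-variable equation, handling every pair $(\alpha_1,\alpha_2)$ simultaneously without any $k$-fold Sidon hypothesis. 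If you want to salvage your write-up, you would need to replace the Sidon-digit base set and the translation step by a mechanism of this kind.
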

\begin{proof}
    Fix $n=\floor{(p-1)/8k_1}$ and let $q\in \NN$ be some prime between $\sqrt{n}/8k_1$ and $\sqrt{n}/4k_1$. Further, fix some maximal set $B\subset \ZZ_q\setminus\{0\}$ such that $B$ has no non-trivial solutions to equations of the form 
    \begin{equation} \label{eq betas app}
   \beta_1x_1+\beta_2 x_2-(\beta_1+\beta_2)x_3,     
    \end{equation}
    with $\beta_1,\beta_2\in \{0,\ldots,2k_1\}$. By Corollary \ref{app cor: behrend}, there is some $c_0>0$ such that  $|B|\geq q^{1-c_0/\sqrt{\log q}}$.

    Next we fix \[A:=\{b+(2k_1+1)qb': b\in B, b'\in [q], b'=b^2 \mbox{ mod }q\}\subseteq [n].\]
    We will show that $A$ has no non-trivial solutions (in the integers) to equations in $\cE_=$ and so by Lemma \ref{app lem:mod or not}, we get that 
    \[r_{\cE_=}(\ZZ_p)\geq |A|=|B|\geq q^{1-c_0/\sqrt{\log q}}\geq p^{1/2-\eps_1},\]
    for $p$ sufficiently large, as required. So suppose there is some equation $E(x_1,x_2,x_3,x_4)=\alpha_1x+\alpha_2x_2-\alpha_1 x_3-\alpha_2x_4\in \cE_=$ with a non-trivial solution $(a_1,a_2,a_3,a_4)\in A^4$. Without loss of generality, we can assume that $\alpha_1\geq \alpha_2$. 
    Letting $a_i=b_i+(2k_1+1)qb_i'$ with $b_i\in B$ for $i\in [4]$, we get that 
    \begin{equation} \label{eq:++--=1}
        \alpha_1(b_1-b_3) +\alpha_2(b_2-b_4)+(2k_1+1)q\left(\alpha_1(b'_1-b'_3) +\alpha_2(b'_2-b'_4)\right)=0.
    \end{equation}
    Taking \eqref{eq:++--=1} modulo $(2k_1+1)q$, we get that $D=\alpha_1(b_1-b_3) +\alpha_2(b_2-b_4)$ is congruent to $0$ modulo $(2k_1+1)q$. However we also have that $|D|\leq 2k_1q$ and so we must in fact have that $D=0$ in the non-modular setting, that is \begin{equation} \label{eq:simple ++--}
    \alpha_1(b_1-b_3)=\alpha_2(b_4-b_2). \end{equation}  

    We can also now reduce \eqref{eq:++--=1} to 
    \begin{equation} \label{eq:++--=2}
        \alpha_1(b_1^2-b_3^2)=\alpha_2(b_4^2-b_2^2) \mbox{ mod } q. 
    \end{equation}
    If $b_1=b_3$ or $b_2=b_4$ then both equalities must hold and $(a_1,a_2,a_3,a_4)$ is a trivial solution. Therefore  we can divide the left and right hand side of \eqref{eq:++--=2} by $\alpha_1(b_1-b_3)=\alpha_2(b_4-b_2)\neq 0$ to obtain that  $b_1+b_3=b_4+b_2 \mbox{ mod }q$. Using this and adding $\alpha_1(b_1+b_3)$ to \eqref{eq:simple ++--}, we get that 
    \[2\alpha_1b_1=(\alpha_1+\alpha_2) b_4+(\alpha_1-\alpha_2)b_2 \mbox{ mod }q.\]
    As $b_2\neq b_4$, this gives a non-trivial solution to an equation as in \eqref{eq betas app} with $\beta_1=\alpha_1-\alpha_2\geq 0$, 
 and $\beta_2=\alpha_1+\alpha_2$, contradicting the definition of $B$. 
\end{proof}

Our next lemma tackles other 4 variable equations, building on ideas from \cite[Theorem 7.5]{ruzsa_solving_1993}. 

\begin{lem} \label{app lem: 4 var type 2}
     Let $k_2\in 2\NN$ be fixed. Then for all $\eps_2>0$, there exists $m\in \NN$ such that the following holds. Let $\cE_{\neq}\subseteq \cE^4_{m+k_2}(0)$ be the collection of equations of the form 
    \[E(x_1,x_2,x_3,x_4)=\alpha_1x_1+\alpha_2 x_2-\alpha_3 x_3-\alpha_4 x_4,\]
    with $\alpha_i\in \{m+1,m+3,\ldots,m+k_2-1\}$  for $i\in [4]$  such that $\{\alpha_1,\alpha_2\}\neq \{\alpha_3,\alpha_4\}$ and $\alpha_1+\alpha_2=\alpha_3+\alpha_4$.  Then for sufficiently large primes $p\in \NN$, we have $r_{\cE_{\neq}}(\ZZ_p)\geq p^{1-\eps_2}$. 
\end{lem}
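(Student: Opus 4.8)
The plan is to reduce the statement to a question about a \emph{single} invariant four-variable equation, peel off the solutions with a repeated coordinate using the Behrend-type sets already constructed in Corollary \ref{app cor: behrend}, and then handle the remaining all-distinct solutions by adapting Ruzsa's construction \cite{ruzsa_solving_1993}.

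First I would set up the reduction. The family $\cE_{\neq}$ is finite (at most $(k_2/2)^4$ equations), and every $E\in\cE_{\neq}$ is invariant, since its signed coefficients $\alpha_1,\alpha_2,-\alpha_3,-\alpha_4$ sum to $\alpha_1+\alpha_2-\alpha_3-\alpha_4=0$. So, applying the mixing lemma (Lemma \ref{app lem:dilate to mix sets}) with $B=\ZZ_p$ to these finitely many invariant equations, it suffices to prove that for each fixed $E\in\cE_{\neq}$ and each fixed $\eps>0$ one has $r_E(\ZZ_p)\geq p^{1-\eps}$ for all large primes $p$: the mixing lemma then combines the finitely many sets at the cost of a bounded factor in the exponent, and one takes $\eps:=\eps_2/(k_2/2)^4$. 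By Lemma \ref{app lem:mod or not} it in turn suffices to exhibit, for $n:=\lfloor(p-1)/(4(m+k_2))\rfloor$, a set $A\subseteq[n]$ with $|A|\geq n^{1-\eps/2}$ and no nontrivial integer solution to $E$. Fix $E$, say $\alpha_1x_1+\alpha_2x_2-\alpha_3x_3-\alpha_4x_4=0$. A short check shows $E$ has genus one: a zero-sum part of size one would force a coefficient to vanish, and a zero-sum part of size two would force $\{\alpha_1,\alpha_2\}=\{\alpha_3,\alpha_4\}$ (using $\alpha_1+\alpha_2=\alpha_3+\alpha_4$), contrary to hypothesis. Hence a solution of $E$ is trivial precisely when $x_1=x_2=x_3=x_4$.

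Next I would take $A$ to be a set avoiding every nontrivial solution of every invariant \emph{three}-variable equation with coefficients bounded by $K_0:=2(m+k_2)$; Corollary \ref{app cor: behrend} provides such an $A$ with $|A|\geq n^{1-o(1)}$. I claim such an $A$ already has no solution $(a_1,a_2,a_3,a_4)\in A^4$ of $E$ with two coordinates equal. Indeed, if $a_1=a_2$ then $(\alpha_1+\alpha_2)a_1=\alpha_3a_3+\alpha_4a_4$, an instance of the invariant three-variable equation $\alpha_3y_1+\alpha_4y_2-(\alpha_3+\alpha_4)y_3=0$ (genus one, coefficients at most $K_0$), forcing $a_1=a_3=a_4$ and hence all four equal; the case $a_3=a_4$ is symmetric. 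If instead $a_1=a_3$ then $(\alpha_1-\alpha_3)a_1+\alpha_2a_2-\alpha_4a_4=0$; this is invariant, has coefficients at most $K_0$, and since $\alpha_1\neq\alpha_3$ (otherwise $\{\alpha_1,\alpha_2\}=\{\alpha_3,\alpha_4\}$) all three coefficients are nonzero, so it too has genus one, forcing $a_1=a_2=a_4$ and again all four equal. The coincidences $a_1=a_4$, $a_2=a_3$, $a_2=a_4$ are handled identically, and any solution with three or four equal coordinates contains one of these. So after this step the only possible nontrivial solutions of $E$ in $A$ have $a_1,a_2,a_3,a_4$ pairwise distinct.

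Finally — and this I expect to be the main obstacle — one must refine the choice of $A$ so that $E$ has no solution with $a_1,a_2,a_3,a_4$ pairwise distinct, while keeping $|A|\geq n^{1-\eps/2}$. The plain Behrend sphere set does \emph{not} suffice here: two chords of a sphere can meet at a common interior point, so the convexity argument that kills three-variable invariant equations breaks down for a genuinely four-variable one. This is exactly where Ruzsa's methods enter: adapting \cite[Theorem~7.5]{ruzsa_solving_1993}, one builds a digit/quadric-type set tuned to the fixed equation $E$, of density $n^{1-\eps/2}$ (taking $m$ large in terms of $\eps$) and with no all-distinct solution of $E$, here using the lower bound $\alpha_i\geq m+1$ and the common parity of the $\alpha_i$ to sidestep the ``$\prod\alpha_i$ not a square'' obstruction that Ruzsa's construction meets for general coefficients — this being the reason $m$ is chosen depending on $\eps_2$ in the statement. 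Intersecting this set with the Behrend-type set from the previous step via a random dilation (Lemma \ref{app lem:dilate to mix sets}), all equations involved being invariant, preserves both properties up to a bounded exponent loss and yields the desired $A$; repeating over all $E\in\cE_{\neq}$ and combining once more gives $r_{\cE_{\neq}}(\ZZ_p)\geq p^{1-\eps_2}$.
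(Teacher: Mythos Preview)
Your framework is sound through the penultimate step: the reduction to a single equation via Lemma~\ref{app lem:dilate to mix sets} is fine, the genus-one check is correct, and your elimination of solutions with a repeated coordinate via a Behrend-type three-variable set is valid (though, as it turns out, unnecessary). The gap is entirely in the last step, and your stated intuition for it is off.

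You write that the large lower bound $\alpha_i\geq m+1$ and the common parity are there ``to sidestep the $\prod\alpha_i$ not a square obstruction'' from Ruzsa's construction. That obstruction belongs to Ruzsa's \emph{quadric} construction (the $B_2$-type one behind \cite[Theorem~7.3]{ruzsa_solving_1993}), which in any case only gives density $p^{1/2}$ and would not yield $p^{1-\eps_2}$ here. The paper instead uses a \emph{digit} construction, and the key idea you are missing is the choice of base. One groups the equations in $\cE_{\neq}$ by $d:=\min_i\alpha_i+1$ and writes integers in base $d$ with digits drawn from a Behrend-type set $B\subset[d/8k_2,d/4k_2]$ that avoids the equations $\beta_1y_1+\beta_2y_2+y_3-(\beta_1+\beta_2+1)y_4=0$ with $\beta_1,\beta_2\in[k_2]$. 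Modulo $d$ the minimal coefficient $\alpha_3=d-1$ reduces to $-1$, while the other three (each in $[d+1,d+k_2-3]$) reduce to positive integers $\gamma_1,\gamma_2,\gamma_4\in[1,k_2]$ summing to $\gamma_1+\gamma_2+1=\gamma_4$; so the original four-variable equation collapses, digit by digit, to exactly the Behrend-type form that $B$ forbids. The role of parity is only to guarantee $\alpha_i\neq d$ and hence $\gamma_i\neq 0$ for $i\in\{1,2,4\}$, so that the reduced equation keeps all four variables. The role of large $m$ is purely density: the digit set has size roughly $|B|^{\log_d n}\approx n^{1-\log_d(8k_2)}$, and one needs $d\geq m+2$ large to make $\log_d(8k_2)<\eps_2/k_2$. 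Neither feature has anything to do with $\prod\alpha_i$ being a square.
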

\begin{proof}
Fix $\eps'=\eps_2/k_2$. For $d\in \{m+2,\ldots, m+k_2\}$, let $\cF_d\subseteq \cE_{\neq}$ be all equations as in the statement of the lemma such that $\min\{\alpha_1,\alpha_2,\alpha_3,\alpha_4\}=d-1$. Note that $\cE_{\neq}=\bigcup_{d=m+2}^{m+k_2}\cF_d$ (with the union here going over $d$ such that $d=m\mbox{ mod }2$) and so by Lemma \ref{app lem:dilate to mix sets} (applied with $B=\ZZ_p$), it suffices to show that for each $d\in \{m+2,\ldots, m+k_2\}$, we have that $r_{\cF_d}(\ZZ_p)\geq p^{1-\eps'}$. So let us fix some $d\in \{m+2,m+4,\ldots, m+k_2\}$ for the remainder of the proof.

    Fix $n=\floor{(p-1)/4(m+k_2)}$ and let $q\in \NN$ be some prime between $d/8k_2$ and $d/4k_2$. We assume that $m$ (and hence $d$ and  $q$) is sufficiently large so that there is a set $B\subseteq \ZZ_q\setminus \{0\}$ of size at least $q^{1-\eps'/4}$ with $B$ avoiding non-trivial solutions to equations in \begin{equation} \label{eq:E+ for 4 var}
\cE_+:=\{\beta_1x_1+\beta_2x_2+x_3-(\beta_1+\beta_2+1)x_4: \beta_1,\beta_2\in \{1,\ldots,k_2\}\},        
    \end{equation}
     using Corollary \ref{app cor: behrend}. Now we define $J:=\floor{\log_d(n)}-2$ and the set 
    \[A:=\left\{\sum_{j=0}^{J}b_j d^j:b_j\in B \mbox{ for }0\leq j \leq J \right\}\subseteq [n].\]
    We will show that $A\subseteq [n]$ has no non-trivial solutions (in the integers) to equations in $\cF_d$ and hence, by Lemma \ref{app lem:mod or not}, we get that 
   \[ r_{\cF_d}(\ZZ_p)\geq |B|^{J+1}\geq \left(q^{1-\tfrac{\eps'}{4}}\right)^{\log_d(n)-2}\geq \left(\frac{d}{8k_2}\right)^{\log_d(n)\left(1-\tfrac{2}{\log_d(n)}\right)\left(1-\tfrac{\eps'}{4}\right)}.\]
   Now $(d/8k_2)^{\log_d(n)}=n^{1-\tfrac{1}{\log_{8k_2}(d)}}\geq n^{1-\eps'/4}$  for $m$ (and hence $d$)  sufficiently large in terms of $k_2$. Therefore, we have that 
   \[r_{\cF_d}(\ZZ_p)\geq n^{\left(1-\tfrac{\eps'}{4}\right)^2\left(1-\tfrac{3}{\log_d(n)}\right)}\geq \left(\frac{p}{4(m+k_2)}\right)^{1-\tfrac{3\eps'}{4}}\geq p^{1-\eps'},\]
   for $p$ (and hence $n$) sufficiently large. 

   It remains to show that $A$ has no non-trivial solutions to equations in $\cF_d$. Suppose to the contrary that there is some $E(x_1,x_2,x_3,x_4)=\alpha_1x_1+\alpha_2x_2-\alpha_3x_3-\alpha_4x_4\in \cF_d$ with a non-trivial solution $(a_1,a_2,a_3,a_4)\in A^4$. Without loss of generality, we can assume that 
   \begin{equation} \label{eq: alpha train}
       d-1= \alpha_3 \mbox{ and } d+1 \leq\alpha_1\leq \alpha_2<\alpha_4\leq m+k_2,
   \end{equation}
   due to the fact that $\{\alpha_1,\alpha_2\}\neq \{\alpha_3,\alpha_4\}$, $\alpha_1+\alpha_2=\alpha_3+\alpha_4$ and $d-1=\min\{\alpha_1,\alpha_2,\alpha_3,\alpha_4\}$. For $i\in [4]$, let 
   $a_i=\sum_{j=0}^{J}b^{(i)}_{j}d^j$ with $b^{(i)}_j\in B$ for $0\leq j \leq J$. Let $j_*$ be the minimum $j$ with $0\leq j\leq J$ such that not all the $b^{(i)}_j$, $i\in [4]$ are equal. Note that such a $j_*$ must exist, otherwise all the $a_i$ are equal and the solution is trivial. Taking $E(a_1,a_2,a_3,a_4)$ modulo $d^{j_*+1}$, we get that \[\alpha_1 b^{(1)}_{j_*} d^{j_*} + \alpha_2 b^{(2)}_{j_*} d^{j_*} -\alpha_3 b^{(3)}_{j_*} d^{j_*}- \alpha_4 b^{(4)}_{j_*} d^{j_*} =0 \mbox{ mod }d^{j_*+1}.\]
Therefore considering $E(a_1,a_2,a_3,a_4)/d^{j_*}$ modulo $d$, we get that 
\[\gamma_1 b^{(1)}_{j_*}  + \gamma_2  d^{j_*} -\gamma_3 b^{(3)}_{j_*} -\gamma_4 b^{(4)}_{j_*}  =0 \mbox{ mod }d,\]
where $\gamma_i=\alpha_i \mbox{ mod }d$. Now note that $\gamma_3=-1$, $\gamma_i\geq 1$ for $i=1,2,4$ and $\gamma_1+\gamma_2-\gamma_3-\gamma_4=\alpha_1+\alpha_2-\alpha_3-\alpha_4=0$. Therefore modulo $d$ we have that  $(b^{(1)}_{j_*},b^{(2)}_{j_*}, b^{(3)}_{j_*},b^{(4)}_{j_*})\in B^4$ is a solution to the equation $E_*(x_1,x_2,x_3,x_4)=\gamma_1x_1+\gamma_2x_2+x_3-(\gamma_1+\gamma_2+1)x_4$.   Moreover, as $\gamma_1,\gamma_2\in \{1,\ldots,k_2\}$ we have that $|E_*(b^{(1)}_{j_*},b^{(2)}_{j_*}, b^{(3)}_{j_*},b^{(4)}_{j_*})|\leq (2k_2+1)q<d$ and so $E_*(b^{(1)}_{j_*},b^{(2)}_{j_*}, b^{(3)}_{j_*},b^{(4)}_{j_*})=0$ in the non-modular setting (and so also modulo $q$). Therefore, as $E_*\in \cE_+$ (see \eqref{eq:E+ for 4 var}), we have that the solution is trivial and all of the $b^{(1)}_{j_*}$ coincide. This contradicts the definition of $j_*$ and concludes the proof. 
   \end{proof} 

We are now in a position to prove Theorem \ref{thm: sidon extended}. 

\begin{proof}[Proof of Theorem \ref{thm: sidon extended}]
Fix $\eps_1=\eps_2=\eps/4$ and let $m$ be as output by Lemma \ref{app lem: 4 var type 2} with input $k_2=K$ and $\eps_2$. Let  $\cE':=\cE^4_{2(m+2K)}\setminus \cE^4_{2(m+2K)}(0)$ and  fix $\cE_+=\cE_{3k_0}^4(0)$ as in Corollary \ref{app cor: behrend} with $k_0=2(m+2K)$. Further, fix  $\cE_=\subseteq \cE^4_{2k_1}(0)$ as in Lemma \ref{app lem: 4 var type 1} with $k_1=2(m+K)$, and $\cE_{\neq}\subseteq \cE^4_{m+k_2}(0)$ as in Lemma \ref{app lem: 4 var type 2} with $m$ as defined and $k_2=K$. Finally, take 
$\cE:=\cE'\cup \cE_+\cup \cE_= \cup \cE_{\neq}$. Due to Lemmas \ref{app lem:dense avoiding sum not zero}, \ref{app lem:dilate to mix sets}, \ref{app lem: 4 var type 1} and \ref{app lem: 4 var type 2} as well as Corollary \ref{app cor: behrend}, we have that for $p$ sufficiently large, $r_\cE(\ZZ_p)\geq p^{1/2-\eps}$. 

It remains to verify that all of the equations for which we want to avoid non-trivial solutions in Theorem \ref{thm: sidon extended}, are contained in $\cE$. Firstly note that any set avoiding solutions to $\cE'$ and $\cE_+$ is certainly $(2(m+2K),3)$-fold solution-free. Moreover for any equation 
\[E(x_1,x_2,x_3,x_4)=\alpha_1x_1+\alpha_2x_2+\alpha_3x_3+\alpha_4x_4,\]
with $|\alpha_i|\in \{m+1,m+3,\ldots,m+K-1\}$ for $i\in [4]$ as in \eqref{eq:sidon extend}, we have that either $E\in \cE'$ or $\sum_{i\in [4]}\alpha_i=0$. In the latter case, let $t$ be the number of positive $\alpha_i$ with $i\in [4]$ and note that we can assume that $t\in \{2,3\}$ as any solution to the equation $-E$ also gives a solution to $E$. 
If $t=3$, then we have that $E\in \cE_+$. If $t=2$, then without loss of generality $\alpha_1,\alpha_2$ are positive and $\alpha_1+\alpha_2=-\alpha_3-\alpha_4$. 
If $\{\alpha_1,\alpha_2\}=\{-\alpha_3,-\alpha_4\}$, 
then $E\in \cE_{=}$, whilst if $\{\alpha_1,\alpha_2\}\neq\{-\alpha_3,-\alpha_4\}$ 
then  $E\in\cE_{\neq}$. This completes the proof. 
\end{proof}

\end{document}